\newcounter{counter_thm}
\newcounter{counter_conj-thm}
\newtheorem{thm}{Theorem}[subsection]
\newtheorem{thm_app}{Theorem}[section]
\newtheorem{thm_int}[counter_thm]{Theorem}
\newtheorem{lem}[thm]{Lemma}
\newtheorem{lem_app}[thm_app]{Lemma}
\newcounter{counter_conj-lem}
\newtheorem{conj-lem}[thm]{Conjecture-Lemma}
\newtheorem{prop}[thm]{Proposition}
\newtheorem{prop_app}[thm_app]{Proposition}
\newcounter{counter_conj-prop}
\newtheorem{conj-prop}[counter_conj-prop]{Conjecture-Proposition}
\newtheorem{conj-thm}[counter_conj-thm]{Conjecture-Theorem}
\newtheorem{cor}[thm]{Corollary}
\newtheorem{cor_app}[thm_app]{Corollary}
\newtheorem{defi}[thm]{Definition}
\theoremstyle{remark}
\newtheorem{rem}[thm]{Remark}
\newtheorem{rem_app}[thm_app]{Remark}
\newtheorem{ex}[thm]{Example}
\newcommand{\black}{\color{black}}
\newcommand{\red}{\color{red}}
\DeclareMathOperator{\C}{\mathbb{C}}
\DeclareMathOperator{\BPF}{BPF}
\DeclareMathOperator{\divi}{div}
\DeclareMathOperator{\ord}{ord}
\DeclareMathOperator{\res}{Res}
\DeclareMathOperator{\Id}{Id}
\DeclareMathOperator{\Ker}{Ker}
\DeclareMathOperator{\Spec}{Spec}
\DeclareMathOperator{\Q}{\mathbb{Q}}
\DeclareMathOperator{\psef}{Psef}
\DeclareMathOperator{\PL}{PL}
\DeclareMathOperator{\Alg}{Alg}
\DeclareMathOperator{\PGL}{PGL}
\DeclareMathOperator{\Pic}{Pic}
\DeclareMathOperator{\Pg}{\mathbb{P}}
\DeclareMathOperator{\ch}{ch}
\DeclareMathOperator{\num}{N}
\DeclareMathOperator{\numd}{N}
\DeclareMathOperator{\rat}{ { \sim} }
\DeclareMathOperator{\Hom}{ Hom }
\DeclareMathOperator{\Ch}{ch}
\DeclareMathOperator{\Cc}{BPF}
\DeclareMathOperator{\nef}{Nef}
\DeclareMathOperator{\reldeg}{reldeg}
\DeclareMathOperator{\nefd}{Nef}
\DeclareMathOperator{\IC}{CI}
\DeclareMathOperator{\Int}{Int}
\newcommand{\relmor}[2]{\underset{#2}{\overset{#1}{\to}}}
\newcommand{\relrat}[2]{\underset{#2}{\overset{#1}{\dashrightarrow}}}
\DeclareMathOperator{\actd}{\llcorner}
\DeclareMathOperator{\psefd}{Psef}
\title[Degrees of iterates of rational maps]{Degrees of iterates of rational maps on normal projective varieties}
\author{Nguyen-Bac Dang}
\date{\today}
\begin{document}
\maketitle
\begin{abstract} Let $X$ be a normal projective variety defined over an algebraically closed field of arbitrary characteristic. 
We study the sequence of intermediate degrees of the iterates of a dominant rational selfmap of $X$, recovering former results by Dinh, Sibony~\cite{dinh_sibony_une_borne_sup}, and by Truong~\cite{tuyen2}. 
Precisely, we give a new proof of the submultiplicativity properties of these degrees and of their birational invariance.
Our approach exploits intensively positivity properties in the space of numerical cycles of arbitrary codimension. 
In particular, we prove an algebraic version of an inequality first obtained by Xiao~\cite{xiao} and Popovici~\cite{popovici}, which generalizes Siu's inequality (see \cite{trapani}) to algebraic cycles of arbitrary codimension.  This allows us to show that 
the degree of a map is controlled up to a uniform constant by the norm of its action by pull-back
on the space of numerical classes in $X$.
\end{abstract}

\section*{Introduction}

Let $f : X \dashrightarrow X$ be any dominant rational self-map of a normal projective variety $X$ of dimension $n$ defined over an algebraically closed field $\C$ of arbitrary characteristic. 
If $X$ is not normal then one can always consider its normalization. Moreover, if the field is not algebraically closed, then we shall take its algebraic closure.
 
 Given any big and nef (e.g ample) Cartier divisor $H_X$ on $X$, and any integer $ 0 \leqslant i \leqslant n$, one defines the \textit{$i$-th degree of $f$} as the integer:
\[ \deg_{i, H_X}(f) = (\pi_1^* H_X^{n-i} \cdot \pi_2^* H_X^i), \]
where $\pi_1$ and $\pi_2$ are the projections from the normalization of the graph of $f$ in $X \times X$ onto the first and the second factor respectively and where $( \cdot )$ denotes the intersection product on this graph.

Our main theorem can be stated as follows.

\begin{thm_int} \label{thm_int_A} Let $X$ be a normal projective variety of dimension $n$ and let $H_X$ be a big and nef Cartier divisor on $X$. 
\begin{enumerate}
\item[(i)] There is a positive constant $C>0$ 
such that for any dominant rational self-maps $f, g$ on $X$, one has:
\begin{equation*}
\deg_{i,H_X}(f \circ g ) \leqslant C \deg_{i,H_X} (f) \deg_{i,H_X}(g).
\end{equation*}
\item[(ii)] For any big nef Cartier divisor $H_X'$ on $X$, there exists a constant $C>0$ such that for any rational self-map $f$ on $X$, one has:
\begin{equation*}
\dfrac{1}{C} \leqslant \dfrac{\deg_{i,H_X}(f)}{\deg_{i,H_X'} (f)} \leqslant C.
\end{equation*}
\end{enumerate}
\end{thm_int}

Observe that Theorem \ref{thm_int_A}.(ii) implies that the degree growth of $f$ is a birational invariant, in the sense that there is a positive constant $C$ such that for any birational map $g: X' \dashrightarrow X$ with $X'$ projective, and any big nef Cartier divisor $H_{X'}$ on $X'$, one has
\[ \dfrac{1}{C} \leqslant \dfrac{\deg_{i, H_X} (f^p)}{ \deg_{i,H_{X'}} (g^{-1} \circ f^p \circ g)} \leq C,\]
for any $p \in \mathbb{N}$. 
Indeed, by applying Theorem \ref{thm_int_A}.(ii) for the induced action by $f$ on the normalization of the graph of $g$, one deduces that the growth of the degrees on the graph of $g$ and on $X$ and $X'$ are controlled by a strictly positive constant. 
Fekete's lemma and Theorem \ref{thm_int_A}.(i) also imply the existence of the dynamical degree (first introduced in \cite{russakovskii_shiffman} for rational maps of the projective space) as the following quantity:
\[\lambda_i(f):=\lim_{p \rightarrow + \infty} \deg_{i,H_X}(f^p)^{1/p}~.\] 
The independence of $\lambda_i(f)$ under the choice of $H_X$, and its birational invariance are the consequence of Theorem \ref{thm_int_A}.(ii) .
\medskip

When $\C = \mathbb{C}$, Theorem \ref{thm_int_A} was proved by Dinh and Sibony in \cite{dinh_sibony_une_borne_sup}, and further generalized to compact K\"ahler manifolds in \cite{dinh_sibony_reg_currents_entropy}. 
 The core of their argument relied on a procedure of regularization for closed positive currents of any bidegree (\cite[Theorem 1.1]{dinh_sibony_reg_currents_entropy}) and was therefore transcendental in nature.
When $\C$ is a field of characteristic zero, there exists an inclusion of the field $\C$ in $\mathbb{C}$ by Lefschetz principle (\cite{lefschetz_algebraic_geometry}) and Dinh and Sibony's argument  proves that the $i$-th dynamical degree of any rational dominant map is well-defined. 
 Recently, Truong \cite{tuyen1} managed to get around this problem and proved Theorem \ref{thm_int_A} for arbitrary smooth varieties using an appropriate Chow-type moving lemma. 
 He went further in \cite{tuyen2} and obtained Theorem \ref{thm_int_A} for any normal variety in all characteristic by applying de Jong's alteration theorem (\cite{de_jong}). 
 Note however that he had to deal with correspondences since a rational self-map can only be lifted as a correspondence through a general alteration map.
 Our approach avoids this technical difficulty.     

\bigskip

To illustrate our method, let us explain the proof of Theorem \ref{thm_int_A}, when $X$ is smooth, $i=1$ and $f$, $g$ are regular following the method initiated in \cite[Proposition 3.1]{boucksom_favre_jonsson_deggrowth}. 
Recall that a divisor $\alpha$ on $X$ is \textit{pseudo-effective} and one writes $\alpha \geqslant 0$ if for any ample Cartier divisor $H$ on $X$, and any rational $\epsilon >0$, a suitable multiple of the $\mathbb{Q}$-divisor $\alpha + \epsilon H$ is linearly equivalent to an effective one. 

Recall also the fundamental Siu inequality\footnote{this inequality is also referred to as the weak transcendantal holomorphic Morse inequality in \cite{lehmann_xiao}} (\cite{trapani}, \cite[Theorem 2.2.13]{lazarsfeld_positivity_1}, \cite{cutkosky_teissier_pb}) which states:
\begin{equation}\label{int_siu_ineq}
\alpha \leqslant n \dfrac{(\alpha \cdot  \beta^{n-1})}{(\beta^n)} \beta, 
\end{equation}
for any nef divisor $\alpha$, and any big and nef divisor $\beta$. 
 
Since the pullback by a dominant morphism of a big nef divisor remains big and nef, 
we may apply \eqref{int_siu_ineq} to the big nef divisors $\alpha = g^* f^* H_X$ and $\beta = f^* H_X$, and we get
\begin{equation*}
 g^*f^*  H_X \leqslant n \dfrac{\deg_{1,H_X}(f)}{(H_X^n)}  g^* H_X~.
\end{equation*}
Intersecting with the cycle $H_X^{n-1}$ yields the submultiplicativity of the degrees with the constant $C = n/(H_X^n)$. 

We observe that the previous inequality \eqref{int_siu_ineq} can be easily extended to complete intersections by cutting out by suitable ample sections. 
In particular, we get a positive constant $C$ such that for any big nef divisors $\alpha$ and $\beta$, one has:
\begin{equation} \label{int_ineq_siu_gen}
\alpha^i \leqslant C \dfrac{(\alpha^i \cdot \beta^{n-i})}{(\beta^n)} \beta^i.
\end{equation}
Such inequalities have been obtained by Xiao (\cite{xiao}) and Popovici (\cite{popovici}) in the case $\C = \mathbb{C}$. 
Their proof uses the resolution of complex Monge-Ampère equations and yields a constant $C ={ n \choose i}$. 
On the other hand, our proof applies in arbitrary characteristic and in fact to more general classes than complete intersection ones. 
We refer to Theorem \ref{thm_int_Siu} below and the discussion preceding it for more details.
Note however that we only obtain $C = (n-i+1)^i$, far from the expected optimal constant $C = { n \choose i}$ of Popovici.
Once \eqref{int_ineq_siu_gen} is proved, Theorem \ref{thm_int_A} follows by a similar argument as in the case $i = 1$.

\bigskip
Going back to the case where $X$ is a complex smooth projective variety, recall that the degree of $f$ is controlled up to a uniform constant by the norm of the linear operator $f^{\bullet,i}$, induced by pullback on the de Rham cohomology space $H^{2i}_{dR} (X)_\mathbb{R}$ (\cite[Lemma 4]{dinh_sibony_une_borne_sup}).
One way to construct $f^{\bullet,i}$ is to use the Poincar\'e duality isomorphisms $\psi_X: H^{2i}_{dR}(X,\mathbb{R}) \to {H_{2n-2i}}(X,\mathbb{R})$, $\psi_{\Gamma_f} : H^{2i}_{dR}(\Gamma_f,\mathbb{R}) \to {H_{2n-2i}}(\Gamma_f,\mathbb{R})$ where $H_i(X,\mathbb{R})$ denotes the $i$-th simplicial homology group of $X$.
 The operator $f^{\bullet,i}$ is then defined following the commutative diagram below:
\[
\xymatrix{ 
H_{dR}^{2i}(\Gamma_f, \mathbb{R})\ar[r]^-{\psi_{\Gamma_f}}  &  {H_{2n-2i}}(\Gamma_f,\mathbb{R}) \ar[r]^{{\pi_1}_*} & {H_{2n-2i}}(X,\mathbb{R}) \ar[d]^{\psi_{X}^{-1}} \\
H_{dR}^{2i}(X, \mathbb{R}) \ar[rr]_{f^{\bullet,i}} \ar[u]^{\pi_2^*}& &   H_{dR}^{2i}(X,\mathbb{R}),}\]
where $\Gamma_f$ is a desingularization of the graph of $f$ in $X \times X$, and $\pi_1, \pi_2$ are the projections from $\Gamma_f$ onto the first and second factor respectively. 

\medskip
In order to state an analogous result in our setting, we need to find a replacement for the de Rham cohomology group $H^{2i}_{dR}(X)_\mathbb{R}$ and define suitable pullback operators. 
When $X$ is smooth, one natural way to proceed  is to consider the spaces $\numd^i(X)_\mathbb{R}$ of algebraic $\mathbb{R}$-cycles of codimension $i$ modulo numerical equivalence. 
The operator $f^{\bullet, i}$ is then simply given by the composition ${\pi_1}_* \circ \pi_2^* : \numd^i(X)_\mathbb{R} \to \numd^i(X)_\mathbb{R}$. 

When $X$ is singular, then the situation is more subtle because one cannot intersect arbitrary cycle classes in general \footnote{an arbitrary curve can only be intersected with a Cartier divisor, not with a general Weil divisor.}. 
One can consider two natural spaces of numerical cycles $\numd^i(X)_\mathbb{R}$ and $\num_i(X)_\mathbb{R}$ on which pullback operations and pushforward operations by proper morphisms are defined respectively. 
More specifically, the \textit{space of numerical $i$-cycles} $\num_i(X)_\mathbb{R}$ is defined as the group of $\mathbb{R}$-cycles of dimension $i$ modulo the relation $z \equiv 0 $ if and only if $(p^* z \cdot D_1 \cdot \ldots \cdot D_{e+i}) = 0$ for any proper flat surjective map $p : X' \to X$ of relative dimension $e$ and any Cartier divisors $D_j$ on $X'$.
One can prove that $\num_i(X)_\mathbb{R}$ is a finite dimensional vector space and one defines $\numd^i(X)_\mathbb{R}$ as its dual $\Hom(\num_i(X)_\mathbb{R}, \mathbb{R})$.

Note that our presentation differs slightly from Fulton's definition (see Appendix \ref{appendix} for a comparison), but we also recover the main properties of the numerical groups. This approach is more suitable to compare cycles using positivity estimates on complete intersections.

\bigskip
As in the complex case, we are able to construct Poincar\'e duality  maps $\psi_{X} : \numd^i(X)_\mathbb{R} \to \num_{n-i}(X)_\mathbb{R}$ and $\psi_{\Gamma_f} :  \numd^i(\Gamma_f)_\mathbb{R} \to \num_{n-i}(\Gamma_f)_\mathbb{R}$, but they are not necessarily isomorphisms due to the presence of singularities. As a consequence, we are only able to define a linear map $f^{\bullet,i}$ as $  f^{\bullet,i} := {\pi_1}_* \circ \psi_{\Gamma_f} \circ \pi_2^* : \numd^i(X)_\mathbb{R} \to \num_{n-i}(X)_\mathbb{R}$ between two distinct vector spaces. 
Despite this limitation, we prove a result analogous to one of Dinh and Sibony. The next theorem was obtained by Truong for smooth varieties (\cite[Theorem 1.1.(5)]{tuyen2}).

\medskip
\begin{thm_int}\label{thm_int_B} Let $X$ be a normal projective variety of dimension $n$. Fix any norms on $\numd^i(X)_\mathbb{R}$ and $\num_{n-i}(X)_\mathbb{R}$, and denote by $\|\cdot\|$ the induced operator norm on linear maps from $\numd^i(X)_\mathbb{R}$ to $\num_{n-i}(X)_\mathbb{R}$. Then there is a constant $C>0$ such that for any rational selfmap $f: X \dashrightarrow X$, one has:
\begin{equation}
  \dfrac{1}{C} \leqslant \dfrac{|| (f)^{\bullet,i}||}{\deg_{i,H_X}(f)} \leqslant C.
\end{equation} 
\end{thm_int}

\smallskip

Our proof of Theorem \ref{thm_int_B} exploits a natural notion of positive classes in $\numd^i(X)_\mathbb{R}$ combined with a strengthening of \eqref{int_ineq_siu_gen} to these classes that we state below (see Theorem \ref{thm_int_Siu}).

To simplify our exposition, let us suppose again that $X$ is smooth. 
As in codimension $1$, one can define the \textit{pseudo-effective cone} $\psefd^i(X)$ as the closure in $\numd^i(X)_\mathbb{R}$ of the cone generated by effective cycles of codimension $i$. Its dual with respect to the intersection product is the \textit{nef cone} $\nefd^{n-i}(X)$, which however does not behave well when $i \geqslant 2$ (see \cite{debarre_ein_lazarsfeld_voisin}). 
Some alternative notions of positive cycles have been introduced by Fulger and Lehmann in \cite{fulger_lehmann}, among which the notion of basepoint free classes emerges. Basepoint free classes have many good properties such as being both pseudo-effective and nef, being invariant by pull-backs by morphisms and by intersection products, and forming a salient convex cone with non-empty interior. The terminology comes from the fact that the basepoint free classes always have a cycle representing them with intersects any subvariety with the expected dimension. 
 Denote by $\BPF^i(X)$ the cone of basepoint free classes. It is defined as the closure in $\numd^i(X)_\mathbb{R}$ of the cone generated by $\mathbb{R}$-cycles of the form $p_* (D_1 \cdot \ldots \cdot D_{e+i})$ where $D_j$ are ample Cartier $\mathbb{R}$-divisors and $p : X' \to X$ is a flat surjective proper morphism of relative dimension $e$. 

For basepoint free classes, we are able to prove the following generalization of \eqref{int_ineq_siu_gen}. 

\begin{thm_int} \label{thm_int_Siu} Let $X$ be a normal projective variety of dimension $n$. Then there exists a constant $C>0$ such that for any basepoint free class $\alpha \in \BPF^i(X)$, for any big nef divisor $\beta$, one has in $\numd^i(X)_\mathbb{R}$:
  \begin{equation} \label{int_ineq_siu_pliante}
  \alpha \leqslant C \dfrac{(\alpha \cdot \beta^{n-i})}{(\beta^n)} \times \beta^i. 
\end{equation}  
 
 \end{thm_int}
Theorem \ref{thm_int_B} follows from~\eqref{int_ineq_siu_pliante} by observing that  $f^{\bullet,i} \BPF^i(X) \subset \psefd^i(X)$, so that the operator norm $||f^{\bullet,i}||$ can be computed by evaluating $f^{\bullet,i}$ only on basepoint free classes.

In the singular case, the proof of Theorem \ref{thm_int_B} is completely similar but the spaces  $\numd^i(X)_\mathbb{R}$ and $\num_{n-i}(X)_\mathbb{R}$ are not necessarily isomorphic in general. 
As a consequence, several dual notions of positivity appear in $\numd^i(X)_\mathbb{R}$ and $\num_i(X)_\mathbb{R}$
that make the arguments more technical.

\bigskip
Finally, using the techniques developed in this paper, we give a new proof of the product formula of Dinh, Nguyen, Truong (\cite[Theorem 1.1]{dinh_nguyen_2011},\cite[Theorem 1.1]{dinh_nguyen_truong}) which they proved when $\C = \mathbb{C}$ and which was later generalized by Truong (\cite[Theorem 1.1.(4)]{tuyen2}) to normal projective varieties over any field.

The setup is as follows. 
Let $q : X \to Y$ be any proper surjective  morphism between normal projective varieties, and
fix two big and nef divisors  $H_X$, $H_Y$  on $X$ and $Y$ respectively.   
Consider  two dominant rational self-maps $f: X \dashrightarrow X$, $g : Y \dashrightarrow Y $, which are semi-conjugated by 
$q$, i.e. which satisfy $q \circ f = g \circ q$. To simplify notation we shall write
$X/_qY \relrat{f}{g} X/_q Y$ when these assumptions hold true.

Recall that the \textit{$i$-th relative degree} of $X/_q Y \relrat{f}{g} X/_q Y$ is given by the intersection product
\begin{equation*}
\reldeg_{i} (f) := (\pi_1^*( H_X^{\dim X -\dim Y - i}  \cdot q^* H_Y^{\dim Y}) \cdot \pi_2^* H_X^i),
\end{equation*}
where $\pi_1$ and $\pi_2$ are the projections from the graph of $f$ in $X \times X$ onto the first and the second component respectively. 
One can show a relative version of Theorem \ref{thm_int_A}  (see Theorem \ref{thm_sub_multipicativity}), and define as in the absolute case, the \textit{$i$-th relative dynamical degree} $\lambda_i(f, X/Y)$ 
as the limit $\lim_{p\rightarrow +\infty} \reldeg_i(f^p)^{1/p}$. 
It is also a birational invariant in the sense that if $\varphi : X' \dashrightarrow X$, $\psi: Y' \dashrightarrow Y$ such that $q' = \psi^{-1}\circ q\circ \varphi$ is regular, then $\lambda_i( \varphi^{-1} \circ f \circ  \varphi , X'/Y') = \lambda_i(f, X/Y)$, and does not depend on the choices of $H_X$ and $H_Y$. 
When $q : X\dashrightarrow Y$ is merely rational and dominant, then we define (see Section \ref{section_semi_conj_rational}) the $i$-th relative degree of $f$ by replacing $X$ with the normalization of graph of $q $. 
We prove the following theorem.
\begin{thm_int} \label{thm_int_C}
Let $X,Y$ be normal projective varieties.
For any dominant 
rational self-maps $f: X \dashrightarrow X$, $g : Y \dashrightarrow Y $ which are  semi-conjugated by 
a dominant rational map $q: X \dashrightarrow Y$, we have
\begin{equation}
\lambda_i(f)  = \max_{\max(0, i- l ) \leqslant j \leqslant \min(i,e)} ( \lambda_{i-j}(g) \lambda_{j} (f, X/Y) )~.
\end{equation}
\end{thm_int}
%

Our proof follows closely Dinh and Nguyen's method from \cite{dinh_nguyen_2011} and relies on a fundamental inequality (see Corollary \ref{cor_ku_pullback} below) which follows from K\"unneth formula at least when $\C = \mathbb{C}$.
To state it precisely, consider $\pi : X' \to X$ a surjective generically finite morphism and $q : X \to Y$ a surjective morphism where $X'$, $X$ and $Y$ are normal projective varieties such that   $n= \dim X=\dim X'$ and such that $l = \dim Y$. 
We prove that for any basepoint free classes $\alpha \in \Cc^i(X')$ and $\beta\in \Cc^{n-i}(X')$, one has:
\begin{equation} \label{ineq_int_ku}
(\beta \cdot \alpha) \leqslant C \sum_{ \max(0 , i-l) \leq j \leq \min(i,e)} U_j(\alpha) \times ( \beta \cdot \pi^* (q^* H_Y^{i-j} \cdot H_X^j)),
\end{equation}  
where $H_Y$ and $H_X $ are big and nef divisors on $Y$ and $X$ respectively, 
and $U_j(\alpha)$ is the intersection product given by $ U_j(\alpha) = (\pi^*(q^*H_Y^{l-i+j} \cdot H_X^{e-j}) \cdot \alpha)$.

In the singular case, Truong has obtained this inequality using Chow's moving intersection lemma. 
We replace this argument by a suitable use of Siu's inequality and Theorem \ref{thm_int_Siu} in order to prove a positivity property for a class given by the difference between a basepoint free class in $X' \times X'$ and the fundamental class of the diagonal of $X'$ in $X' \times X'$ (see Theorem \ref{thm_ku_pullback}).
Inequality \eqref{ineq_int_ku} is a weaker version of \cite[Proposition 2.3]{dinh_nguyen_2011} proved by Dinh-Nguyen when $Y$ is a complex projective variety, and was extended to a field arbitrary characteristic by Truong when $Y$ is smooth (\cite[Lemma 4.1]{tuyen2}).

\bigskip

\subsection*{Organization of the paper}

In the first Sections \ref{section_chow} and \ref{section_num}, we review the background on the Chow groups and recall the definitions of the spaces of numerical cycles and provide their basic properties. 
In \S\ref{section_positivity}, we discuss the various notions of positivity of cycles and prove Theorem \ref{thm_int_Siu}. 
In \S\ref{section_relative}, we define relative numerical cycles and canonical morphisms which are the analogous to the Poincaré morphisms $\psi_X$ in a relative setting. 
In \S\ref{section_applications}, we prove Theorem \ref{thm_int_A}, Theorem \ref{thm_int_B} and Theorem \ref{thm_int_C}. 
Finally we give an alternate proof of Dinh-Sibony's theorem in the K\"ahler case (\cite[Proposition 6]{dinh_sibony_une_borne_sup}) in \S\ref{section_kahler} using Popovici \cite{popovici} and Xiao's inequality \cite{xiao}. Note that these inequalities allow us to avoid regularization techniques of closed positive currents but rely on a deep theorem of Yau. 
In Section \ref{appendix}, we prove that our presentation and Fulton's definition of numerical cycles are equivalent, hence proving that any numerical cycles can be pulled back by a flat morphism. 
\bigskip

\begin{acknowledgements}
Firstly, I would like to thank my advisor C. Favre for his patience and our countless discussions on this subject. I thank also S. Boucksom for some helpful discussions and for pointing out the right argument for the appendix, S. Cantat, L. Fantini, M. Fulger, T. Truong,  B. Lehmann, R. Mboro and J. Xie for their precious comments on my previous drafts and for providing me some references.
The author is supported by the ERC-starting grant project "Nonarcomp" no.307856, and is supported by ANR project ``Lambda'' ANR-13-BS01-0002
\end{acknowledgements}
%

\bigskip

\section{Chow group} \label{section_chow}

\subsection{General facts}

Let $X$ be a normal projective variety of dimension $n$ defined over an algebraically closed field $\C$ of arbitrary characteristic.

The space of cycles $Z_i(X)$ is the free abelian group generated by irreducible subvarieties of $X$ of dimension $i$,   
 and $Z_i(X)_\mathbb{Q}$, $Z_i(X)_\mathbb{R}$ will denote the tensor products $Z_i(X) \otimes_\mathbb{Z} \mathbb{Q} $ and $Z_i(X) \otimes_\mathbb{Z} \mathbb{R}$.

\smallskip

Let $q: X \to Y$ be a morphism where $Y$ is a normal projective variety. Since $X$ and $Y$ are respectively projective, the map $q$ is proper. Following \cite{fulton}, we define the proper pushforward of the  cycle $[V] \in Z_i(X)$ as the element of $Z_i(Y)$ given by: 
\begin{equation*}
q_* [V] = \left \lbrace \begin{array}{lll}
0   & \text{if} &\dim (q(V)) < \dim V \\
 \left [ \C(\eta) : \C(q(\eta)) \right ] \times [q(V)] & \text{if}& \dim V = \dim(q(V)), 
\end{array} \right .
\end{equation*} 
where $V$ is an irreducible subvariety of $X$ of dimension $i$, $\eta$ is the generic point of $V$ and $\C(\eta)$, $\C(q(\eta))$ are the residue fields of the local rings $\mathcal{O}_\eta$ and $\mathcal{O}_{q(\eta)}$ respectively.
We extend this map by linearity and obtain a morphism of abelian groups $q_* : Z_i(X) \to Z_i(Y)$.

\smallskip

Let $C$ be any closed subscheme of $X$ of dimension $i$ and denote by $C_1, \ldots, C_r$ its $i$-dimensional irreducible components. Then $C$ defines a fondamental class $[C]\in Z_i(X)$ by the following formula: 
\begin{equation*}
[C] := \sum_{j=1}^r l_{\mathcal{O}_{C_j, C}}(\mathcal{O}_{C_j,C})[C_j],
\end{equation*}
where $l_{A}(M)$ denotes the length of an $A$-module $M$ (\cite[section 2.4]{eisenbud_commutative}).
\medskip

For any flat morphism $q: X \to Y$ of relative dimension $e$ between normal projective varieties, we can define a flat pullback of cycles $q^*: Z_i(Y) \to Z_{i+e}(X)$ (see \cite[section 1.7]{fulton}). If $C$ is any subscheme of $Y$ of dimension $i$, the cycle $q^* [C]$ is by definition the fundamental class of the scheme-theoretic inverse by $q$:
 \begin{equation*}
 q^* [C] := [q^{-1}(C)] \in Z_{i+e}(X).
\end{equation*}  

\medskip

Let $W$ be a subvariety of $X$ of dimension $i+1$ and $\varphi $ be a rational map on $W$. Then we define a cycle on $X$ by:
\begin{equation*}
[\divi(\varphi)]:= \sum \ord_V(\varphi) [V], 
\end{equation*}
where the sum is taken over all irreducible subvarieties $V$ of dimension $i$ of $W \subset X$. 
A cycle $\alpha$ defined this way is rationally equivalent to $0$ and in that case we shall write $\alpha \rat 0$. 
\smallskip

The $i$-th Chow group $A_i(X)$ of $X$ is the quotient of the abelian group $Z_i(X)$ by the free group generated by the cycles that are rationally equivalent to zero. We denote by $A_\bullet(X)$ the abelian group $\oplus A_i(X)$.

We recall now the functorial operations on the Chow group, which result from the intersection theory developped in \cite{fulton}.

\begin{thm} Let $q: X \to Y$ be a morphism between normal projective varieties. Then we have:
\begin{enumerate}
\item[(i)] 
The morphism of abelian groups $q_* : Z_i(X) \to Z_i(Y)$ induces a morphism of abelian groups $q_* : A_i(X) \to A_i(Y)$. 
\item[(ii)] 
If the morphism $q$ is flat of relative dimension $e$, then the morphism $q^* : Z_i(Y) \to Z_{i+e}(X)$ induces a morphism of abelian groups $q^* : A_i(Y) \to A_{i+e}(X)$. 
\end{enumerate}
\end{thm}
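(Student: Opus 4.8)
The plan is to recall that both statements are standard consequences of Fulton's intersection theory, and that the key point is the compatibility of proper pushforward and flat pullback with rational equivalence. I will treat the two parts separately.

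For part (1), I would first reduce to the case of a single irreducible cycle $[\divi(\varphi)]$ with $\varphi$ a rational function on a subvariety $W \subset X$ of dimension $k+1$. The goal is to show that $q_*[\divi(\varphi)]$ is again rationally equivalent to zero on $Y$. The argument splits according to whether $\dim q(W)$ equals $k+1$ or is strictly smaller. If $\dim q(W) < k+1$, one checks directly that $q_*[\divi(\varphi)]$ is supported on $q(W)$, which has dimension $\leqslant k$, and that each appearing coefficient vanishes (this is the classical lemma that pushforward of a principal divisor under a map that drops dimension is zero). If $\dim q(W) = k+1$, then $q|_W : W \to q(W)$ is generically finite of some degree $d$, and the relevant identity is the norm compatibility: $q_*[\divi_W(\varphi)] = [\divi_{q(W)}(N_{\kappa(W)/\kappa(q(W))}(\varphi))]$, where $N$ denotes the field norm. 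This is precisely Fulton's Proposition 1.4 together with the determinant/valuation computation in its proof. I would cite \cite[Theorem 1.4]{fulton} for this. Once the generators $[\divi(\varphi)]$ are handled, linearity gives that $q_*$ descends to $A_k(X) \to A_k(Y)$, and functoriality $(q'\circ q)_* = q'_* \circ q_*$ at the level of cycles passes to the Chow groups automatically.

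For part (2), with $q$ flat of relative dimension $e$, the analogous task is to show $q^*$ preserves rational equivalence. Here the key observation is that flat pullback commutes with taking fundamental classes of subschemes, and more importantly with the divisor-of-a-rational-function construction in the following sense: if $W \subset Y$ has dimension $k+1$ and $\varphi \in \kappa(W)^\times$, then $q^{-1}(W) \to W$ is flat of relative dimension $e$, and one has $q^*[\divi_W(\varphi)] = [\divi_{q^{-1}(W)}(\varphi \circ q)]$, where $\varphi \circ q$ is the pullback of $\varphi$ viewed as a rational function on $q^{-1}(W)$ (using that $\kappa(W) \hookrightarrow \kappa(q^{-1}(W))$ since flatness and dominance guarantee each component of $q^{-1}(W)$ dominates $W$). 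This identity is \cite[Proposition 1.7]{fulton}; its proof reduces to comparing orders of vanishing along codimension-one subvarieties, using flatness to control the local rings. Granting it, $q^*$ sends rationally trivial cycles to rationally trivial cycles, hence descends to $A_k(Y) \to A_{k+e}(X)$, and functoriality of flat pullback for composable flat morphisms again lifts directly.

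The main obstacle, and really the only substantive point, is the pushforward case when $q$ is generically finite on $W$: establishing $q_*[\divi_W(\varphi)] = [\divi_{q(W)}(N(\varphi))]$ requires the determinant characterization of the order function, namely $\ord_V(\varphi) = \ell(\mathcal{O}_{V,W}/(\varphi))$ extended multiplicatively, and then a compatibility of lengths under finite extensions of local domains together with the fact that the norm of $\varphi$ picks up exactly the sum of the local contributions weighted by residue degrees. Rather than reproving this, I would simply invoke the corresponding statements from \cite{fulton} (Chapters 1.4 and 1.7), since the present paper's aim is to build on intersection theory, not to redevelop it; the proof of this theorem is therefore essentially a citation assembled from \cite[\S1.4, \S1.7]{fulton}, with the observation that all the stated maps are $\mathbb{Z}$-linear so they extend to $Z_k(X)_\mathbb{Q}$ and $Z_k(X)_\mathbb{R}$ by tensoring.
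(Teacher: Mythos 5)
Your proposal is correct and takes the same route as the paper, which simply records this theorem with a citation to \cite[Theorem 1.4]{fulton} for assertion (1) and \cite[Theorem 1.7]{fulton} for assertion (2). You additionally sketch the content of Fulton's arguments (the dimension dichotomy and norm compatibility for pushforward, the preservation of principal divisors under flat pullback), which is accurate but beyond what the paper itself supplies.
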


Assertion $(i)$ is proved in \cite[Theorem 1.4]{fulton} and assertion $(ii)$ is given in \cite[Theorem 1.7]{fulton}.

\begin{rem} Let $q: X \to Y$ is a flat morphism of normal projective varieties. Suppose $\alpha \in A_{i}(Y)$ is represented by an effective cycle $\alpha \rat \sum n_j [V_j]$ where the  $n_j$ are positive integers. Then $q^*\alpha$ is also represented by an effective cycle. 
\end{rem}

Any cycle $\alpha \in Z_{0}(X)_\mathbb{Z}$ is of the form $\sum n_j [p_j]$ with $p_j \in X(\C)$ and $n_j \in \mathbb{Z}$. We define the degree of $\alpha$ to be $\deg(\alpha) := \sum n_j $ and we shall write:
\begin{equation*}
(\alpha ):= \deg(\alpha) = \sum n_j.
\end{equation*}

The morphism of abelian groups $\deg : Z_{0}(X)_\mathbb{Z} \to \mathbb{Z}$ induces a morphism of abelian groups $\deg: A_0(X) \to \mathbb{Z}$.

\subsection{Intersection with Cartier divisors}

Let $X$ be a normal projective variety and $D$ be a Cartier divisor on $X$. Let $V$ be a subvariety of of dimension $i$ in $X$ and denote by $j : V \hookrightarrow X$ the inclusion of $V$ in $X$. We define the intersection of $D$ with $[V]$ as the class:
\begin{equation*}
D \cdot [V] := j_* [D'] \in A_{i-1}(X),
\end{equation*}
where $D'$ is a Cartier divisor on $V$ such that the line bundles $j^* \mathcal{O}_X(D) $ and $ \mathcal{O}_V(D')$ are isomorphic.
Observe that $D'$ exists since the exact sequence $\xymatrix{ 0 \ar[r] & \mathcal{O}^*_V \ar[r] & \mathcal{M}_V^* \ar[r] & \mathcal{M}_V^* / \mathcal{O}_V^*\ar[r] & 0}$ induces a  surjective map from the divisor subgroups $H^0(V, \mathcal{M}_V^* / \mathcal{O}_V^*) $ of $V$ onto the Picard group $\Pic(V) = H^1(V, \mathcal{O}_V^*)$ where $\mathcal{M}_V^*$ is the sheaf of non-zero rational functions on $V$.

We extend this map by linearity into a morphism of abelian groups $D \cdot :Z_i(X) \to A_{i-1}(X)$.  

\begin{thm} \label{thm_prop_intersection_diviseurs} Let $X$ be a normal projective variety and $D$ be a Cartier divisor on $X$. 
The map $D \cdot : Z_i(X) \to A_{i-1}(X)$ induces a morphism of abelian groups $D \cdot : A_i(X) \to A_{i-1}(X)$. 
Moreover, the following properties are satisfied:
\begin{enumerate} 
\item For all Cartier divisors $D$ and $D'$ on $X$, for all class $\alpha \in A_i(X)$, we have: 
\begin{equation*}
(D' + D) \cdot \alpha = D' \cdot \alpha + D \cdot \alpha.
\end{equation*}
\item (Projection formula) Let $q: X \to Y$ be a morphism between normal projective varieties. Then for all class $\beta \in A_i(X)$ and all Cartier divisor $D$ on $Y$, we have in $A_{i-1}(Y)$:
\begin{equation*}
q_* (q^*D \cdot \beta) = D \cdot q_*(\beta). 
\end{equation*}
\end{enumerate}
\end{thm}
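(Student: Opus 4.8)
The plan is to establish Theorem \ref{thm_prop_intersection_diviseurs} by leveraging Fulton's intersection theory. The statement has two parts: first, that the operator $D \cdot$ descends from $Z_k(X)$ to $A_k(X)$ (i.e., it is well-defined on rational equivalence classes); second, bilinearity in $D$ and the projection formula.

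For the well-definedness and the bilinearity part (1), I would proceed as follows. The assignment $[V] \mapsto j_*[D']$ for $V$ a subvariety with inclusion $j: V \hookrightarrow X$ is, up to the normality hypothesis, exactly the construction of the intersection class $c_1(\mathcal{O}_X(D)) \cap [V]$ via a Cartier divisor on $V$ representing $j^*\mathcal{O}_X(D)$; Fulton defines this more generally via pseudo-divisors, but since $X$ is normal and projective, every line bundle on a subvariety $V$ is of the form $\mathcal{O}_V(D')$ for an honest Cartier divisor $D'$ (using that $V$ may be taken with its reduced structure, or working on its normalization and pushing forward). Then I would simply invoke \cite[Theorem 2.3 and Corollary 2.4]{fulton} (and \cite[Theorem 2.5]{fulton} for the projection formula): Fulton proves that $c_1(L) \cap -$ passes to rational equivalence, that $c_1(L + L') \cap - = c_1(L) \cap - + c_1(L') \cap -$ modulo rational equivalence, and that $q_*(c_1(q^*L) \cap \beta) = c_1(L) \cap q_*\beta$ for proper $q$. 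Since $X, Y$ projective forces $q$ proper, the projection formula part (2) follows immediately. The identification of the present ad hoc definition $D \cdot [V] := j_*[D']$ with Fulton's $c_1(\mathcal{O}_X(D)) \cap [V]$ is essentially a matter of unwinding definitions once one knows the line bundle $j^*\mathcal{O}_X(D)$ admits a representing Cartier divisor on $V$.

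I expect the main subtlety — not a deep obstacle, but the point requiring care — to be the existence of a Cartier divisor $D'$ on $V$ with $\mathcal{O}_V(D') \cong j^*\mathcal{O}_X(D)$, which is implicit in the very definition of $D \cdot [V]$ given in the excerpt. On a general (possibly non-normal) subvariety $V$ this requires $\mathrm{Pic}(V) = H^1(V, \mathcal{O}_V^*)$ to be generated by Cartier divisors, which holds on any variety admitting an ample line bundle, in particular on the projective $V$; concretely one writes $j^*\mathcal{O}_X(D)$ as a difference of two very ample line bundles and takes $D'$ to be the difference of general sections. One then checks the resulting class $j_*[D'] \in A_{k-1}(X)$ is independent of the choice of $D'$, which is precisely Fulton's \cite[Lemma 2.2]{fulton} (any two such $D'$ differ by a principal divisor on $V$, whose pushforward is rationally trivial on $X$).

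In summary, the proof is: (i) recall that $D \cdot [V]$ coincides with Fulton's construction $c_1(\mathcal{O}_X(D)) \cap [V]$, the choice of representing Cartier divisor $D'$ on $V$ being irrelevant by \cite[Lemma 2.2]{fulton}; (ii) invoke \cite[Theorem 2.3]{fulton} for the passage to $A_k(X)$; (iii) invoke \cite[Corollary 2.4]{fulton} for additivity in $D$; (iv) invoke \cite[Theorem 2.5]{fulton} together with the properness of $q$ (automatic since $X, Y$ are projective) for the projection formula. No genuinely new argument beyond citing \cite{fulton} is needed, since the normality hypothesis on $X$ and $Y$ only simplifies, and never complicates, the situation Fulton treats.
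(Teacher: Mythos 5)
Your proposal is correct and takes essentially the same approach as the paper: the paper states this theorem without proof, implicitly relying on Fulton's intersection theory (Chapter~2 of \cite{fulton}, Theorems~2.3--2.5), exactly as the preceding theorem on $q_*$ and $q^*$ is handled by direct citation of \cite[Theorems 1.4, 1.7]{fulton}. Your observation that projectivity of $V$ guarantees the existence of a representing Cartier divisor $D'$ (writing $j^*\mathcal{O}_X(D)$ as a difference of very ample line bundles) and that \cite[Lemma 2.2]{fulton} gives independence of choices is the only non-automatic point, and you handle it correctly.
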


\subsection{Characteristic classes} \label{section_characteristic}

\begin{defi} Let $X$ be a normal projective variety of dimension $n$ and $L$ be a line bundle on $X$. There exists a Cartier divisor $D$ on $X$ such that the line bundles $L$ and $\mathcal{O}_X(D)$ are isomorphic. We define the first Chern class of $L$ as:
\begin{equation*}
c_1(L) := [D] \in A_{n-1}(X).
\end{equation*}
\end{defi}

\begin{defi} For all normal projective varieties $X$, the group $\IC^i(X)$ is the free group generated by elements of the form $D_1 \cdot \ldots \cdot D_i$ where $D_1$, $\ldots$, $D_i$ are Cartier divisors on $X$.
\end{defi}

\begin{defi} Let $X$ be a normal projective variety and $E$ be a vector bundle of rank $e+1$ on $X$. 
Given any vector bundle $E$ on $X$, we shall denote by $\Pg(E)$ the projective bundle of hyperplanes in $E$ following the convention of Grothendieck.
Let $p $ be the projection from $\Pg(E^*)$ to $X$ and $\xi = c_1(\mathcal{O}_{\Pg(E^*)} (1))$. 
We define the $i$-th Segré class $s_i (E) $ as the morphism $s_i(E) \actd \cdot: A_\bullet(X) \to A_{\bullet-i}(X)$ given by:
\begin{equation} \label{def_segre_class}
s_i(E) \actd \alpha := p_* (\xi^{e+i} \cdot p^*\alpha). 
\end{equation}
\end{defi}

\begin{rem}\label{rem_segre_image_diviseur} When $X$ is smooth of dimension $n$, we can define an intersection product on the Chow groups $ A_{i}(X) \times A_{l}(X) \to A_{n-i-l}(X)$ (see \cite[Definition 8.1.1]{fulton}) which is compatible with the intersection with Cartier divisors and satisfies the projection formula (see \cite[Example 8.1.7]{fulton}).
Applying the projection formula to \eqref{def_segre_class}, we get 
\begin{equation*}
 \  s_i(E) \actd \alpha = p_* (\xi^{e+i})\cdot \alpha,
\end{equation*}
so that  $s_i(E)$ is represented by an element in $A_{n-i}(X)$. To simplify we shall also denote $s_i(E)$ this element.

\end{rem}

As Segré classes of vector bundles are operators on the Chow groups $A_\bullet(X)$, the composition of such operators defines a product. 

 \begin{thm} \label{thm_segre}(cf \cite[Proposition 3.1]{fulton}) Let $q: X \to Y$ be a morphism between normal projective varieties. 
 For any vector bundle $E$ and $F$ on $Y$, the following properties hold. 
\begin{enumerate}
\item[(i)] For all $\alpha \in A_i(Y)$ and all $j< 0$, we have $s_j(E) \actd \alpha = 0$. 
\item[(ii)] For all $\alpha \in A_i(Y)$, we have $s_0(E) \actd \alpha = \alpha$. 
\item[(iii)] For all integers $j,m$, we have $s_j(E) \actd( s_m(F) \actd \alpha) = s_m(F) \actd (s_j(E) \actd \alpha)$.
\item[(iv)] (Projection formula) For all $\beta \in A_i(X)$ and any integer $j$, we have $q_*(s_j(q^* E) \actd \beta) = s_j(E) \actd q_* \beta$. 
 \item[(v)] If the morphism $q: X \to Y$ is flat, then for all $\alpha \in A_i(Y)$ and any integer $j$, we have $s_j(q^*E) \actd q^* \alpha = q^* (s_j(E) \actd \alpha))$. 
\end{enumerate}
\end{thm}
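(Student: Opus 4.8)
I would organize the proof exactly as Fulton does in \cite[Proposition~3.1]{fulton}, the point being that only one ingredient is genuinely non-formal, everything else being a diagram chase with proper pushforward, flat pullback, the projection formula of Theorem~\ref{thm_prop_intersection_diviseurs}.(2), and flat base change (\cite[Proposition~1.7]{fulton}). All identities below are understood in $A_\bullet$, each constituent operation being already known to descend to rational equivalence. The non-formal ingredient is the \emph{projective bundle identity}: if $p\colon\Pg(E^*)\to X$ has relative dimension $e$ and $\xi=c_1(\mathcal{O}_{\Pg(E^*)}(1))$, then for every $\alpha\in A_k(X)$ one has $p_*(\xi^j\cdot p^*\alpha)=0$ for $j<e$ and $p_*(\xi^e\cdot p^*\alpha)=\alpha$. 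Assertion $(1)$ is then the case $j=e+i<e$ and assertion $(2)$ the case $j=e$. To prove the identity I would reduce by linearity to $\alpha=[V]$ with $V\subset X$ irreducible of dimension $k$; since $p$ is a Zariski-locally trivial $\Pg^e$-bundle, $p^{-1}(V)=\Pg(E^*|_V)$ is irreducible of dimension $k+e$ and maps onto $V$, and $p^*[V]$ is its fundamental class with multiplicity one. Every component of $\xi^j\cdot p^*[V]$ is supported on $p^{-1}(V)$; when $j<e$ each has dimension $k+e-j>k=\dim V\geq\dim p(\text{component})$, hence is killed by $p_*$. When $j=e$ the cycle is pure of dimension $k$, so $p_*$ of it is a multiple $m[V]$ (components not dominating $V$ are contracted to lower dimension and die); passing to the generic point $\eta$ of $V$, where $\Pg(E^*)\times_X\Spec\kappa(\eta)\cong\Pg^e_{\kappa(\eta)}$ and $\xi^e$ restricts to the class of a rational point, gives $m=\deg_{\Pg^e_{\kappa(\eta)}}\xi^e=1$.

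For $(3)$ I would introduce the fibre product $R=\Pg(E^*)\times_X\Pg(F^*)$, with projections $\tilde q\colon R\to\Pg(E^*)$ and $\tilde p\colon R\to\Pg(F^*)$ and structure map $r=p\circ\tilde q=q\circ\tilde p\colon R\to X$, all of them flat; write $f+1$ for the rank of $F$ and $\xi_E,\xi_F$ for the two tautological classes. Using flat base change in the cartesian square ($p^*q_*=\tilde q_*\tilde p^*$), the projection formula of Theorem~\ref{thm_prop_intersection_diviseurs}.(2) applied repeatedly to the Cartier divisor classes $\tilde q^*\xi_E$ and $\tilde p^*\xi_F$, and the identity $\tilde p^*q^*\alpha=\tilde q^*p^*\alpha=r^*\alpha$, one rewrites
\[
s_i(E)\actd\bigl(s_j(F)\actd\alpha\bigr)=r_*\Bigl((\tilde q^*\xi_E)^{e+i}\cdot(\tilde p^*\xi_F)^{f+j}\cdot r^*\alpha\Bigr),
\]
and the right-hand side is symmetric in the two bundles, which is $(3)$.

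Assertions $(4)$ and $(5)$ both rest on the base-change identity $\Pg((q^*E)^*)=X\times_Y\Pg(E^*)$, together with $\mathcal{O}_{\Pg((q^*E)^*)}(1)=\tilde q^*\mathcal{O}_{\Pg(E^*)}(1)$, where $\tilde q$ is the base change of $q$ and $p_X,p_Y$ are the two projective bundle maps. For $(4)$, apply $q_*$ to $s_i(q^*E)\actd\beta=p_{X*}\bigl((\tilde q^*\xi_E)^{e+i}\cdot p_X^*\beta\bigr)$ and push it through the projection formula for $\xi_E$ and the flat base change identity $\tilde q_*p_X^*=p_Y^*q_*$ to reach $s_i(E)\actd q_*\beta$. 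For $(5)$, with $q$ flat one instead uses flat base change in the form $q^*p_{Y*}=p_{X*}\tilde q^*$, and then the compatibility of flat pullback with intersection by Cartier divisors and with the tautological class ($\tilde q^*\xi_E=\xi_{q^*E}$) lets one move $q^*$ inside, yielding $s_i(q^*E)\actd q^*\alpha=q^*(s_i(E)\actd\alpha)$.

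The only step that is not a formal manipulation of pushforward, flat pullback, the projection formula and flat base change is the equality $p_*(\xi^e\cdot p^*[V])=[V]$, i.e. the absence of any spurious multiplicity; this is the characteristic-free content of \cite[Proposition~3.1]{fulton} and, once granted, none of the remaining assertions needs any hypothesis on the characteristic. In practice one may simply cite \cite[Proposition~3.1]{fulton}, whose statement is for arbitrary schemes; the reductions above are recorded only to keep the exposition self-contained.
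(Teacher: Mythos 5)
Your proposal is correct and follows exactly the route the paper takes, namely Fulton's \cite[Proposition~3.1]{fulton}: the paper itself offers no proof beyond the citation, and your reconstruction (the projective-bundle identity $p_*(\xi^j\cdot p^*\alpha)=0$ for $j<e$ and $=\alpha$ for $j=e$, followed by formal manipulations with proper pushforward, flat pullback, projection formula, and flat base change) is precisely Fulton's argument. The details you supply — the dimension count killing $p_*$ for $j<e$, the multiplicity-one computation at the generic point of $V$ for $j=e$, the fibre-product symmetrization for commutativity, and the base-change squares for (4) and (5) — are all sound, so this is simply a self-contained unpacking of the cited result.
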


The $j$-th Chern class $c_j(E)$ of a vector bundle $E$ on $X$ is an operator $c_j(E) : A_{\bullet}(X) \to A_{\bullet -j}$ defined formally as the coefficients in the inverse power series:
\[ (1+  s_1(E) t + s_2(E)t^2 + \ldots   )^{-1} = 1 + c_1(E) t + \ldots + c_{r+1}(E)t^{r+1}. \]
A direct computation yields for example
 $c_1(E) = - s_1(E)$, $c_2(E)  =  (s_1(E)^2 - s_2(E))$.

\begin{defi} \label{defi_operational_chow}Let $X$ be a normal projective variety. 
The abelian group $A^i(X)$ is the subgroup of $\Hom(A_{\bullet}(X), A_{\bullet-i}(X))$ generated by product of Chern classes $c_{i_1} (E_1) \cdot \ldots \cdot c_{i_p}(E_p)$ where $i_1$, $\ldots$, $i_p$ are integers satisfying $i_1 + \ldots + i_p = i$ and where $E_1, \ldots, E_p$ are vector bundles over $X$. We denote by $A^\bullet(X)$ the group $\oplus A^i(X)$.
\end{defi}
Observe that by definition, $A^i(X)$ contains the image of $\IC^i(X)$.

\bigskip

Recall that the Grothendieck group $K^0(X)$ is the free group generated by vector bundles on $X$ quotiented by the subgroup generated by relations of the form $[E_1] + [E_3] - [E_2]$ where there is an exact sequence of vector bundles:
\begin{equation*}
\xymatrix{0 \ar[r]& E_1 \ar[r] & E_2 \ar[r] & E_3 \ar[r] & 0  }.
\end{equation*}
Moreover, the group $K^0(X)$ has a structure of rings given by the tensor product of vector bundles.

\medskip

Recall also that the Chern character is the unique morphism of rings $\ch : (K^0(X), +, \otimes) \to (A^\bullet(X), + , \cdot )$ satisfying the following properties (see \cite[Example 3.2.3]{fulton}).
\begin{enumerate}
\item If $L$ is a line bundle on $X$, then one has:
$$\ch(L)= \sum_{i \geqslant 0} \dfrac{c_1(L)^i}{i!}.$$
\item For any morphism $q: X' \to X$ and any vector bundle $E$ on $X$, we have $q^* \ch(E) = \ch(q^*E)$.
\end{enumerate} 

\medskip
For any vector bundle $E$ on $X$, we will denote by $\ch_i(E)$ the term in $A^i(X)$ of $\ch(E)$.  

\medskip

We recall Grothendieck-Riemann-Roch's theorem for smooth varieties.
\begin{thm} \label{thm_grr} (see \cite[Corollary 18.3.2]{fulton}) Let $X$ be a smooth variety. Then the Chern character induces an isomorphism:
\begin{equation*}
\ch \actd [X] : E \in K^0(X)\otimes \mathbb{Q} \rightarrow \ch(E) \actd [X] \in A_\bullet(X)\otimes \mathbb{Q}.
\end{equation*}
\end{thm}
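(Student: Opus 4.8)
The statement is \cite[Corollary 18.3.2]{fulton}, so in practice I would simply quote it, but let me sketch how the argument goes; similar ideas reappear when one compares our numerical groups with Fulton's in the appendix.

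The plan is to reduce everything to the singular Riemann--Roch transformation $\tau_X\colon K^0(X)\otimes\Q\to A_\bullet(X)\otimes\Q$ of Baum--Fulton--MacPherson. First I would recall that for $X$ smooth, $\tau_X$ is natural with respect to proper pushforward and is given by $\tau_X(E)=\ch(E)\actd\big(\Td(T_X)\actd[X]\big)$, where $T_X$ is the tangent bundle. Since $\Td(T_X)=1+(\text{terms of positive codimension})$ in $A^\bullet(X)\otimes\Q$, capping with $\Td(T_X)$ is the identity modulo cycles of strictly smaller dimension, hence an automorphism of $A_\bullet(X)\otimes\Q$. Writing $\tau_X(E)=\Td(T_X)\actd\big(\ch(E)\actd[X]\big)$ then shows that $\ch\actd[X]$ is an isomorphism if and only if $\tau_X$ is, so it suffices to prove the latter.

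Next I would identify $K^0(X)$ with the Grothendieck group of coherent sheaves (legitimate for $X$ smooth) and equip both $K^0(X)\otimes\Q$ and $A_\bullet(X)\otimes\Q$ with the filtration by dimension of support: $F_iK^0(X)$ is generated by classes of sheaves supported in dimension $\leqslant i$, and $F_iA_\bullet(X)=\bigoplus_{j\leqslant i}A_j(X)$. The transformation $\tau_X$ respects these filtrations (it localizes to the support), and the induced map on the graded piece $\gr_i$ is the cycle-class map $[\mathcal{F}]\mapsto[\mathcal{F}]_i$, which sends $[\mathcal{O}_V]$ to $[V]$; here one uses that $\tau_X(\mathcal{F})$ has top-dimensional component equal to the fundamental cycle of $\mathcal{F}$. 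As the filtration is finite, a routine induction reduces us to showing that $\gr_iK^0(X)\to A_i(X)$ is an isomorphism. For this I would exhibit the inverse $[V]\mapsto[\mathcal{O}_V]$: the only point to check is that principal divisors go to $0$, which follows because for $\varphi=a/b$ on an $(i+1)$-dimensional subvariety $W$ the exact sequence $0\to\mathcal{O}_W\xrightarrow{a}\mathcal{O}_W\to\mathcal{O}_W/a\mathcal{O}_W\to0$ forces $[\mathcal{O}_W/a\mathcal{O}_W]=0$ in $K^0(X)$ while the fundamental cycle of $\mathcal{O}_W/a\mathcal{O}_W$ is $[\divi(a)]$; subtracting the analogous identity for $b$ gives $[\divi(\varphi)]=[\divi(a)]-[\divi(b)]\mapsto 0$. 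A short d\'evissage (every coherent sheaf has a finite filtration with quotients of the form $\mathcal{O}_{Z}$, and those $Z$ of dimension $i$ occur with total multiplicity the corresponding coefficient of the fundamental cycle) shows that both composites are the identity, so the cycle-class map is even an isomorphism over $\mathbb{Z}$; assembling the three steps yields the theorem.

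The genuine obstacle is the input from the singular Riemann--Roch machinery used in the second step: the existence of $\tau_X$, its naturality, and its evaluation on a smooth variety --- equivalently, Grothendieck--Riemann--Roch for closed immersions. Its proof rests on the deformation to the normal cone and on the computation of Chern and Segré classes on projective bundles, and carrying it out here would amount to reproducing a substantial part of \cite{fulton}; this is why I would ultimately just cite \cite[Corollary 18.3.2]{fulton}.
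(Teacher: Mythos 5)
The paper does not prove this statement; it simply cites \cite[Corollary 18.3.2]{fulton}, exactly as you propose to do. Your sketch of the underlying argument---passing to the Baum--Fulton--MacPherson transformation $\tau_X$, using that $\Td(T_X)\actd\cdot$ is an automorphism of $A_\bullet(X)\otimes\Q$ (since $\Td$ has unit constant term), and then deducing that $\tau_X\otimes\Q$ is an isomorphism by dévissage along the dimension-of-support filtration with the cycle-class map on graded pieces---is a faithful and correct account of Fulton's own proof of that corollary, so the two treatments coincide.
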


\medskip

We also recall the definition of Schur polynomials.

\begin{defi} \label{defi_schur} Consider a vector bundle $E$ of rank e on $X$.
Fix two integers $e,i$ and a decreasing partition $\lambda =(\lambda_1, \ldots, \lambda_i)$ of $i$ with terms lower or equal than $e$.
The Schur class $s_\lambda(E)$ is the class given by:
\begin{equation*}
s_\lambda(E) = \left | \begin{array}{llll}
c_{\lambda_1}(E) & c_{\lambda_1 + 1}(E) & \ldots & c_{\lambda_1 + i -1}(E) \\
c_{\lambda_2 -1}(E) & c_{\lambda_2} (E)&\ldots  &  c_{\lambda_2 +i - 2}(E)\\
 & & \ldots \\
c_{\lambda_i -i +1}(E) & c_{\lambda_i - i +2}(E) & \ldots & c_{\lambda_i}(E)
\end{array}  \right | .
\end{equation*}
If $E$ is a vector bundle of rank $e$ on $X$, then the Schur class $s_\lambda(E) \in A^i(X)$ is the Schur polynomial in the variables given by the Chern classes $c_1(E), \ldots , c_e(E)$. 
\end{defi}

When the vector bundle $E$ is globally generated, then the Schur classes can be interpreted as degeneracy loci (see \cite[Example 8.3.6]{lazarsfeld_positivity_1}).


\section{Space of numerical cycles} \label{section_num}

\subsection{Definitions} \label{def_num_classes}

In all this section, $X, Y,X_1,X_2,X_3 $ and $X'$ are  normal projective varieties and $X$ is of dimension $n$.
 Two cycles $\alpha$ and $\beta$ in $Z_i(X)$ are said to be numerically equivalent and we will denote by $\alpha \equiv \beta$ if for all flat morphisms $p_1 : X_1 \to X$ of relative dimension $e$ and all Cartier divisors $D_1 , \ldots , D_{e+i}$ in $X_1$, we have:
\begin{equation*}
 ( D_1 \cdot \ldots \cdot D_{e+i} \cdot q^*\alpha ) = ( D_1 \cdot \ldots \cdot D_{e+i} \cdot q^*\beta) .
\end{equation*}
\black

\begin{defi}
The group of numerical classes of dimension $i$ is the quotient $\num_i(X) = Z_i(X) / \equiv$. 
\end{defi}

By construction, the group $\num_i(X)$ is torsion free and there is a canonical surjective morphism $A_i(X) \to \num_i(X)$ for any integer $i$. 

\begin{rem}
Observe also that for $i = 0$, two cycles are numerically equivalent if and only if they have the same degree. Since smooth points are dense in $X$ (see \cite[Theorem 5.3]{hartshorne}) and are of degree $1$, this proves that the degree realizes the isomorphism $\num_0(X) \simeq \mathbb{Z}$.
\end{rem}

We set $\num_i(X)_\mathbb{Q}$ and $\num_i(X)_\mathbb{R}$ the two vector spaces obtained by tensoring by $\mathbb{Q}$ and $\mathbb{R}$ respectively.

\begin{rem}  This definition allows us to pullback numerical classes by any flat morphism $q : X \to Y$ of relative dimension $e$. 
Our presentation is slightly different from the classical one given in \cite[Section 19.1]{fulton}. We refer to Appendix \ref{appendix} for a proof of the equivalence of these two approaches.
\end{rem}

\begin{prop} Let $q: X \to Y$ a morphism. Then the morphism of groups $q_* : Z_{i}(X) \to Z_i(Y)$ induces a morphism of abelian groups $q_*:\num_{i}(X) \to \num_{i}(Y) $. 
\end{prop}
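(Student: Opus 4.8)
The statement to prove is that proper pushforward $q_* : Z_k(X) \to Z_k(Y)$ descends to a well-defined map $q_* : \num_k(X) \to \num_k(Y)$. Since $q_*$ is already additive on cycles, the only thing to check is that it preserves numerical triviality: if $\alpha \in Z_k(X)$ satisfies $\alpha \equiv 0$, then $q_*\alpha \equiv 0$ in $Z_k(Y)$. By the definition of numerical equivalence on $Y$, this means showing that for every flat surjective morphism $p : Y_1 \to Y$ of relative dimension $e$ and every collection of Cartier divisors $D_1,\dots,D_{e+k}$ on $Y_1$, one has $(D_1 \cdots D_{e+k} \cdot p^* q_* \alpha) = 0$.

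The plan is to reduce an intersection number computed against $p^* q_* \alpha$ on $Y_1$ to an intersection number against a flat pullback of $\alpha$ on some variety over $X$, so that the hypothesis $\alpha \equiv 0$ can be applied directly. First I would form the fiber product $X_1 := X \times_Y Y_1$, with projections $p' : X_1 \to X$ and $q' : X_1 \to Y_1$. Since $p$ is flat of relative dimension $e$, the base change $p'$ is flat of relative dimension $e$ as well, and flat base change for proper pushforward (Fulton, Prop.~1.7) gives $p^* q_* \alpha = q'_* (p')^* \alpha$ in $Z_k(Y_1)$ — here one should be slightly careful that $X_1$ may fail to be normal or irreducible, so one either works with the induced reduced structure and notes that $Z_k$ only sees top-dimensional components, or more cleanly replaces $X_1$ by its normalization $\tilde X_1$, which still carries a flat map to $X$ of relative dimension $e$ by a local argument (flatness can be checked after normalizing, using that $X$ is normal and the generic fibers are unchanged). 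Then, using the projection formula for intersection with Cartier divisors (Theorem~\ref{thm_prop_intersection_diviseurs}.(2)) applied repeatedly to $q'$, one gets
\[
(D_1 \cdots D_{e+k} \cdot q'_* (p')^* \alpha) = ((q')^* D_1 \cdots (q')^* D_{e+k} \cdot (p')^* \alpha),
\]
where the left side is a degree of a $0$-cycle on $Y_1$ and the right side a degree of a $0$-cycle on $X_1$ (pushforward preserves degrees of $0$-cycles). Now $(q')^* D_1, \dots, (q')^* D_{e+k}$ are Cartier divisors on $X_1$ (or on $\tilde X_1$), and $p' : X_1 \to X$ is flat of relative dimension $e$, so the right-hand side is exactly of the form appearing in the definition of $\alpha \equiv 0$; hence it vanishes. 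Therefore $(D_1 \cdots D_{e+k} \cdot p^* q_* \alpha) = 0$ for all such data, i.e.\ $q_*\alpha \equiv 0$, which is what we wanted.

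The main obstacle is the normality/flatness bookkeeping for the fiber product $X_1 = X \times_Y Y_1$: the definition of $\num_k$ in this paper quantifies only over flat morphisms from \emph{normal} projective varieties, so one cannot naively plug in $X_1$. The fix is to pass to the normalization $\nu : \tilde X_1 \to X_1$ and observe that $p' \circ \nu : \tilde X_1 \to X$ is still flat of the same relative dimension (flatness over the normal base $X$ can be verified on the generic points of fibers, which are unaffected by normalization, together with miracle flatness / the fact that a dominant map from a normal variety to a normal curve-slice situation is flat — more carefully, one uses that $\tilde X_1 \to X$ has Cohen–Macaulay fibers of constant dimension over the regular locus and handles the rest by a standard limiting argument), and that $\nu_* (q'\circ\nu)^* D_i$-type computations are compatible via the projection formula and $\nu_*[\tilde X_1] = [X_1]$ on top-dimensional cycles. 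Everything else — flat base change and the projection formula — is quoted verbatim from the results already recalled in Sections~\ref{section_chow}. Once this technical point is dispatched, the proof is a one-line diagram chase.
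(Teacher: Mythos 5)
Your core argument—forming the fiber product $X_1 = X \times_Y Y_1$, invoking flat base change ($p^* q_*\alpha = q'_*(p')^*\alpha$) from \cite[Proposition 1.7]{fulton}, and then using the projection formula to move the Cartier divisors across $q'$—is precisely the paper's proof, and that part is correct.

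The long paragraph you devote to the normality of $X_1$, however, is not. First, it is unclear the concern is even live: the definition of numerical equivalence quantifies over ``all flat morphisms $p_1 : X_1 \to X$'' and the proofs of Proposition~\ref{prop_representabilite_numd} and Proposition~\ref{prop_num_numd_module} already intersect on fiber products such as $X_1 \times_X X_2$ without normalizing, so the blanket normality convention at the start of Section~\ref{section_num} is evidently not meant to constrain the auxiliary flat test schemes. More seriously, your proposed fix does not work even if the issue were genuine. You replace $X_1$ by its normalization $\nu : \tilde X_1 \to X_1$ and assert that the composite $\tilde X_1 \to X$ remains flat of the same relative dimension, appealing to miracle flatness, Cohen--Macaulay fibers of constant dimension, and ``a standard limiting argument.'' None of that goes through: the normalization map $\nu$ is finite and birational but almost never flat, so the composite $p_1' \circ \nu$ of a non-flat map with a flat one has no reason to be flat; miracle flatness requires the base to be \emph{regular} (not merely normal) and the source Cohen--Macaulay, neither of which is available here; and the cycle $p_1'^*\alpha$ on $X_1$ is not the flat pullback of $\alpha$ along any map from $\tilde X_1$. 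If one truly wanted to restrict the test schemes to normal varieties, the right move would be to show once and for all that this does not change the numerical equivalence relation (for instance via the comparison with Fulton's Chern-class definition in Appendix~\ref{appendix})—not to claim flatness of $\tilde X_1 \to X$.
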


\begin{proof}  Let $n$ be the dimension of $X$ and $l$ be the dimension of $Y$, and
let $\alpha$ be a cycle in $Z_i(X)$ such that $\alpha$ is numerically trivial. 
We need to prove that $q_* \alpha$ is also numerically trivial.

Take $p_1 : Y_1 \to Y$ a flat morphism of relative dimension $e_1$.
Let $X_1$ be the fibred product $X \times_{Y} Y_1$ and let $p_1'$ and $q'$ be the natural projections from $X_1$ to $X$ and $Y_1$ respectively. 
\[\xymatrix{X_1 \ar[d]^{q'} \ar[r]^{p_1'} & X \ar[d]^q \\
Y_1 \ar[r]^{p_1}& Y} \]
Since flatness is preserved by base change (\cite[Proposition 9.2.(b)]{hartshorne}), the morphism $p_1'$ is flat and $q'$ is proper. 
Pick any cycle $\gamma$ whose class is in $\IC^{e_1 +i}(Y_1)$. 
We want to prove that $ ( \gamma  \cdot {p_1}^* q_*\alpha) = 0$.
By \cite[Proposition 1.7]{fulton}, we have that $p_1^*q_* \alpha = q'_*{p_1'}^* \alpha$ in $Z_{e_1+i}(Y_1)$. Applying the projection formula, we get: 
 \begin{equation*}
 \gamma \cdot p_1^* q_* \alpha = \gamma \cdot q'_* p_1'^* \alpha \rat q'_*(q'^* \gamma \cdot p_1'^* \alpha). 
\end{equation*} 
Because $p_1'$ is flat and $q'^*\gamma \in \IC^{e_1 + i }(X_1)$, we have $(q'^* \gamma  \cdot p_1'^* \alpha ) = 0$ so that $(\gamma \cdot p_1^* q_* \alpha )= 0$ as required. 
\end{proof}

The numerical classes defined above are hard to manipulate, we want to define a pullback of numerical classes by any proper morphism. 
We proceed and define dual classes.
\medskip

We denote by $Z^i(X) = \Hom_\mathbb{Z}( Z_i(X) , \mathbb{Z})$ the space of cocycles. If $p_1 : X_1 \to X$ is a flat morphism of relative dimension $e_1$, then any element $\gamma \in \IC^{e_1+i}(X_1)$ induces an element $[\gamma]$ in  $Z^i(X)$ by the following formula: 
\begin{equation} \label{eq_morphisme_classe_induite}
 [\gamma] : \alpha \in Z_i(X) \rightarrow (\gamma \cdot  p_1^* \alpha) \in \mathbb{Z}. 
\end{equation}

\begin{defi} \label{defi_numd}
The abelian group $\numd^i(X)$ is the subgroup of $Z^i(X)$ generated by elements of the form $[\gamma]$ where $ \gamma \in \IC^{e_1+i}(X_1)$ and $X_1$  is flat over $X$ of relative dimension $e_1 $.
\end{defi}

\begin{rem} \label{rem_deg} By definition, the map $\deg: Z_0(X) \to \mathbb{Z}$ is naturally an element of $Z^0(X)$.
 Moreover, one has using Theorem \ref{thm_segre}.(ii) that:
\begin{equation*}
z \in Z_{0}(X) \rightarrow (s_0(E) \actd z) = \deg(z) \in \mathbb{Z},
\end{equation*}
for any vector bundle $E$ on $X$.
Hence, $\deg $ defines an element of $\numd^0(X)$ by definition of Segré classes (Definition \ref{def_segre_class}).
\end{rem}

\bigskip

\begin{prop}\label{prop_numd_dual_num} By definition of the numerical equivalence relation, any element of $\numd^i(X)$ induces an element of the dual $\Hom_\mathbb{Z}(\num_i(X), \mathbb{Z})$. Hence, we can define a natural pairing between $\numd^i(X)$ and $\num_i(X)$.  For any normal projective variety, the pairing $\numd^i(X) \times \num_i(X) \to \mathbb{Z}$ is non degenerate (i.e the canonical morphism from $\numd^i(X)$ to $\Hom_\mathbb{Z}(\numd_i(X), \mathbb{Z})$ is injective). 
\end{prop}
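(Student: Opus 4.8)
The first assertion is essentially tautological: an element of $\numd^k(X)$ is by definition a finite sum of cocycles of the form $[\gamma]$ with $\gamma \in \IC^{e_1+k}(X_1)$ and $p_1 : X_1 \to X$ flat of relative dimension $e_1$; such $[\gamma]$ sends $\alpha \in Z_k(X)$ to $(\gamma \cdot p_1^*\alpha)$, and the very definition of numerical equivalence $\alpha \equiv \beta$ forces $(\gamma \cdot p_1^*\alpha) = (\gamma \cdot p_1^*\beta)$. Hence $[\gamma]$, and so any element of $\numd^k(X)$, descends to a well-defined functional on $\num_k(X) = Z_k(X)/\!\equiv$. This produces the pairing $\numd^k(X) \times \num_k(X) \to \mathbb{Z}$. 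So the content of the proposition is the non-degeneracy of this pairing.

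I would prove non-degeneracy in two steps, one for each side. \emph{Non-degeneracy on the $\num_k(X)$ side:} suppose $\alpha \in \num_k(X)$ pairs to $0$ with every element of $\numd^k(X)$. Lift $\alpha$ to a cycle in $Z_k(X)$; then for every flat $p_1 : X_1 \to X$ of relative dimension $e_1$ and every $\gamma \in \IC^{e_1+k}(X_1)$ we have $(\gamma \cdot p_1^*\alpha) = \langle [\gamma], \alpha\rangle = 0$. But this is exactly the condition defining $\alpha \equiv 0$, so $\alpha = 0$ in $\num_k(X)$. This direction is immediate from the definitions. \emph{Non-degeneracy on the $\numd^k(X)$ side:} suppose $\ell \in \numd^k(X)$ pairs to $0$ with every class in $\num_k(X)$; equivalently, viewing $\ell$ as an element of $Z^k(X) = \Hom_{\mathbb Z}(Z_k(X),\mathbb Z)$, it vanishes on every cycle of $Z_k(X)$ — indeed it already vanishes on all of $\num_k(X)$, and $\num_k(X)$ is the image of $Z_k(X)$, so $\ell$ kills $Z_k(X)$ entirely. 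Hence $\ell = 0$ as an element of $Z^k(X)$, and therefore $\ell = 0$ in the subgroup $\numd^k(X)$.

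The point to be careful about — and what I expect to be the only real subtlety — is the logical direction of the second step: $\numd^k(X)$ was defined as a subgroup of $Z^k(X)$, so an element of $\numd^k(X)$ is literally a homomorphism $Z_k(X) \to \mathbb{Z}$, and saying it pairs to zero with all of $\num_k(X)$ is the same as saying it vanishes on all cycles, hence is the zero homomorphism. One must make sure the pairing with $\num_k(X)$ is interpreted via the factorization through $Z_k(X) \twoheadrightarrow \num_k(X)$ that the first assertion provides, so that "pairs to $0$ with all of $\num_k(X)$" genuinely means "vanishes on all of $Z_k(X)$". Once that is spelled out, no further input (no finiteness, no positivity, no resolution of singularities) is needed: both directions are formal consequences of the definitions of $\num_k(X)$ and $\numd^k(X)$.
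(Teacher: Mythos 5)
Your proof is correct and takes exactly the approach the paper intends: the paper simply says the statement ``follows directly from the definition of $\numd^k(X)$ and $\num_k(X)$,'' and your write-up spells out precisely why -- non-degeneracy on the $\num_k(X)$ side is the very definition of $\equiv$, and on the $\numd^k(X)$ side it is the fact that $\numd^k(X)$ is a subgroup of $Z^k(X)=\Hom_{\mathbb Z}(Z_k(X),\mathbb Z)$.
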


\begin{proof} It follows directly from the definition of $\numd^i(X)$ and $\num_i(X)$. 

\end{proof}

\medskip

A priori, an element of $\numd^i(X)$ is a combination of elements $ [\gamma_1] + [\gamma_2] + \ldots + [\gamma_j]$. 
The following proposition proves one can always take $j=1$ at least if we tensor all spaces by $\mathbb{Q}$.

\begin{prop} \label{prop_representabilite_numd}
 Any element of $\numd^i(X)$ is induced by $\gamma \in \IC^{e_1+i}(X_1)_\mathbb{Q}$ where $p_1: X_1 \to X$ is a flat morphism of relative dimension $e_1$.
\end{prop}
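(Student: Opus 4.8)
The goal is to show that any element $[\gamma_1] + \dots + [\gamma_i] \in \numd^k(X)$, where each $\gamma_j \in \IC^{e_j+k}(X_j)_\mathbb{Q}$ for some flat map $p_j : X_j \to X$ of relative dimension $e_j$, can be realized by a single $\gamma \in \IC^{e+k}(X')_\mathbb{Q}$ for one flat map $p : X' \to X$. The natural strategy is to reduce to a common flat cover: given two summands $[\gamma_1]$ and $[\gamma_2]$ coming from $p_1 : X_1 \to X$ and $p_2 : X_2 \to X$, I would pass to a variety $X'$ that dominates (a normalization of a component of) the fibre product $X_1 \times_X X_2$ and is flat over $X$, pull both $\gamma_1$ and $\gamma_2$ back to $X'$ via flat pullback, add them there, and check that the resulting induced cocycle is the sum $[\gamma_1] + [\gamma_2]$. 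Iterating over the finitely many summands then gives the claim. The passage from $\mathbb{Z}$ to $\mathbb{Q}$ coefficients in the statement is precisely what gives room to do this.

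First I would set up the reduction step for two summands. The key compatibility I need is that if $p : X' \to X$ factors as $X' \xrightarrow{g} X_1 \xrightarrow{p_1} X$ with $g$ flat (so $p$ is flat) and $\gamma_1 \in \IC^{e_1+k}(X_1)$, then $[g^* \gamma_1]$ computed on $X'$ equals $[\gamma_1]$ computed on $X_1$, as elements of $Z^k(X)$. This is a direct consequence of the compatibility of flat pullback with composition together with the projection formula: for $\alpha \in Z_k(X)$, one has $(g^*\gamma_1 \cdot p^*\alpha) = (g^*\gamma_1 \cdot g^* p_1^* \alpha) = (g^*(\gamma_1 \cdot p_1^*\alpha))$, and since $\gamma_1 \cdot p_1^* \alpha$ is a zero-cycle and $g$ is flat surjective of some relative dimension $d$, its flat pullback multiplies the degree by... no — rather, $g^*$ of a zero-cycle has the same degree times $[\kappa(X'):\kappa(X_1)]$ when $d = 0$; in general I want $g$ to be generically finite, or more simply I should just define the induced cocycle using the normalization of a component of the fibre product with the two projections, both of which may fail to be flat. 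This tells me the honest route: use \emph{alterations or normalized fibre products need not be flat}, so instead I should stay within the flat world by a different device.

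The cleaner approach, which I expect the author to take, is to use a single flat cover dominating all the $X_j$ simultaneously via a Noetherian/generic-flatness argument, or — most likely — to invoke the appendix's identification with Fulton's definition, under which $\numd^k(X)_\mathbb{Q}$ is the operational Chow group $A^k(X)_\mathbb{Q}$, whose elements are by definition bivariant classes and in particular can be represented without reference to an auxiliary cover at all; then re-expressing a bivariant class in the form $[\gamma]$ for a single flat $X_1$ is a matter of resolving the class by a blow-up or projective bundle. Concretely: I would take $X_1$ to be a product of projective bundles over $X$ on which the given operational class is a polynomial in Chern/Segré classes of tautological bundles, push the corresponding intersection of relative hyperplane classes down, and verify it induces the original element via the Segré-class projection formula (Theorem \ref{thm_segre}.(4),(5)) and the definition \eqref{def_segre_class}.

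The main obstacle is exactly this flatness bookkeeping: fibre products of flat morphisms are flat, but taking normalizations or irreducible components to stay within \emph{normal} projective varieties can destroy flatness, so the argument must either tolerate reducible/non-normal intermediate schemes when forming the common cover (and only normalize at the very end, checking the degree correction is absorbed into the $\mathbb{Q}$-coefficients), or avoid fibre products entirely in favour of projective-bundle covers where flatness is automatic. I would handle it by the second route: build $X_1$ as an iterated projective bundle, so flatness over $X$ is free, express each $\gamma_j$'s contribution as a Segré-class operator pulled back and recombined on $X_1$, and close the argument with the projection formula. Everything else — linearity, the passage through $\mathbb{Q}$, the identification of induced cocycles — is routine once this choice of common cover is made.
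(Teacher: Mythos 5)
You have the right starting point — reduce by induction to two summands and form the fibre product $X' = X_1 \times_X X_2$, which is flat over $X$ of relative dimension $e_1+e_2$ — but you abandon it on a misdiagnosis. The genuine obstacle you half-notice (pulling $\gamma_1$ back along $p_2':X'\to X_1$ produces a class in $\IC^{e_1+k}(X')$, whereas inducing an element of $\numd^k(X)$ from $X'$ requires codimension $e_1+e_2+k$) is not repaired by asking the connecting map to be generically finite; it is repaired by cutting with a codimension-$e_2$ correction pulled back from the \emph{other} factor. The paper picks an ample $\omega_{X_2}$ on $X_2$, takes $\lambda_2$ with $(p_2)_*\omega_{X_2}^{e_2} \rat \lambda_2[X]$, and sets $\gamma_1' = \lambda_2^{-1}\,p_1'^*\omega_{X_2}^{e_2}\cdot p_2'^*\gamma_1 \in \IC^{e_1+e_2+k}(X')_\mathbb{Q}$; the flat base-change identity $(p_2')_*p_1'^* = p_1^*(p_2)_*$ together with the projection formula then give $[\gamma_1']=[\gamma_1]$ in a few lines. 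This correction is the idea you are missing, and it is also where the $\mathbb{Q}$-coefficients in the statement are actually needed.

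The normality of $X'$ is not what blocks the argument: the fibre product of two flat projective morphisms is again flat projective, the paper simply uses it as is, and if one really insists on a normal source a finite normalization only rescales zero-cycle degrees by a fixed rational factor, again absorbed into $\mathbb{Q}$. Your fallback route through the appendix and iterated projective bundles is a legitimate alternative (projective bundles are smooth and flat over $X$, so the bookkeeping concerns you raise do vanish), but it rests on Grothendieck--Riemann--Roch for singular varieties and Grauert's theorem — precisely the heavy machinery the appendix is built on — whereas the paper proves this proposition by elementary base-change and projection-formula arguments so that the appendix remains an independent comparison rather than a logical prerequisite.
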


\begin{proof} 
By an immediate induction argument, we are reduced to prove the assertion for the sum of two elements $[\gamma_1] +[\gamma_2]$ where  $\gamma_j \in \IC^{e_j + i}(X_i)_\mathbb{Q}$ and $p_j : X_j \to X$ are flat morphisms of relative dimension $e_1$ and $e_2$ respectively.  
\smallskip

Let us consider $X'$ the fibre product $X_1 \times X_2$ over $X$ and $p_j'$ the flat projections from $X'$ to $X_j$ for $j=1,2$. 
By linearity , we only need to show that there exists an element $\gamma_1' \in \IC^{e_1+e_2 + i} (X')$ such that $[\gamma_1'] = [\gamma_1]$ in $\numd^i(X)$. 
\[\xymatrix{ & X_1 \times X_2  \ar[ld]^{p_2'} \ar[rd]^{p_1'}& \\
 X_1  \ar[rd]^{p_1}& & X_2  \ar[ld]^{p_2}\\
& X & }\] 
Take an ample Cartier divisor $H_{X_2}$ on $X_2$ and $\lambda_2$ an integer such that ${p_2}_* H_{X_2}^{e_2} \rat \lambda_2 [X]$. 
Setting $ \gamma_1' = \dfrac{1}{\lambda_2} p_1'^* H_{X_2}^{e_2} \cdot p_2'^* \gamma_1 $, we need to prove that for any $\alpha \in Z_i(X)$, one has $(\gamma_1 \cdot p_1^* \alpha )= (\gamma_1' \cdot p_2'^* p_1^* \alpha)$.
By \cite[Proposition 1.7]{fulton}, we have the equality $ {p_2'}_*p_1'^* H_{X_2}^{e_2} = p_1^*{p_2}_* H_{X_2}^{e_2}$ in $Z^{e_2}(X_2)$, hence:
\begin{equation*}
 {p_2'}_*p_1'^* H_{X_2}^{e_2} = \lambda_2 p_1^* [X].  
\end{equation*}
Since $X_1$ is reduced and $p_1^* [X]$ is a cycle of codimension $0$ in $X_1$, we have $p_1^* [X] = [X_1]$. 
Hence by the projection formula, we have:
\begin{equation*}
\begin{array}{lll}
\dfrac{1}{\lambda_2} {p_2'}_* ( p_2'^* (\gamma_1 \cdot p_1^* \alpha) \cdot p_1'^* H_{X_2}^{e_2})& = & \dfrac{1}{\lambda_2} (p_1^* \alpha \cdot \gamma_1) \cdot {p_2}_* p_1'^* H_{X_2}^{e_2}\\
 & =  & \dfrac{1}{\lambda_2} (p_1^* \alpha \cdot \gamma_1) \cdot \lambda_2[X_1]\\
 & = & p_1^* \alpha \cdot \gamma_1. 
\end{array}
\end{equation*}
In particular, the degrees are equal and $[\gamma_1]= [\gamma_1'] \in \numd^i(X)$ as required. 
\smallskip 

By the same argument, there exists a class $\gamma_2' \in \IC^{e_1 + e_2 + i} (X_1 \times X_2)$ such that $[\gamma_2] = [\gamma_2'] \in \numd^i(X)$, hence $[\gamma_1] + [\gamma_2] = [\gamma_1'] + [\gamma_2'] = [\gamma_1' + \gamma_2'] \in \numd^i(X)$ as required.
\end{proof}

\begin{defi}
We define $\num_\bullet(X)$ (resp. $\numd^\bullet(X)$) by $\oplus_i \num_i(X)$ (resp. $ \oplus_i \numd^i(X)$). 
\end{defi}
\subsection{Algebra structure on the space of numerical cycles}

We now define a structure of algebra on $\numd^\bullet(X)$, and prove that $\num_\bullet(X)$ has a structure of $\numd^\bullet(X)$ module.

\medskip

Pick $\gamma\in \IC^{e_1+i}(X_1)_\mathbb{Q}$ where $p_1: X_1 \to X$ is a flat morphism of relative dimension $e_1$. The element $\gamma$ induces a morphism in the Chow group:
\begin{equation} \label{eq_ind_mor}
\gamma \actd \cdot :\alpha \in A_l(X) \rightarrow {p_1}_* (\gamma \cdot p_1^* \alpha) \in A_{l-i}(X). 
\end{equation}
The morphism $ \gamma \actd \cdot :A_l(X) \to A_{l-i}(X)$ induces a morphism of abelian groups from $\num_l(X)$ to $ \num_{l-i}(X)$.

\begin{prop} 
Any element $\alpha\in \numd^i(X)$ 
induces a morphism $\alpha \actd \cdot : N_\bullet(X) \to \num_{\bullet-i}(X)$ such that the following conditions are satisfied.
\begin{enumerate}
\item[(i)] If $\alpha $ is induced by $\gamma \in \IC^{e_1 + i}(X_1)_\mathbb{Q}$ where $p_1: X_1 \to X$ is a flat morphism of relative dimension $e_1$, 
then for any integer $l$ and any $z \in \num_l(X)$, one has in $\num_{l-i}(X)$:
\begin{equation*}
\alpha \actd z = \gamma \actd z .
\end{equation*}

\item[(ii)]  For any $\alpha, \beta \in \numd^i(X)$ and any $z \in \num_{l}(X)$, we have:
\begin{equation*}
(\alpha + \beta) \actd z = \alpha \actd z + \beta \actd z.
\end{equation*}
\end{enumerate}

\end{prop}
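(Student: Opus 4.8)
The plan is to establish the proposition in two stages: first show that a single generator $\gamma \in \IC^{e_1+k}(X_1)$ gives a well-defined operator on $\num_\bullet(X)$, and then handle arbitrary elements of $\numd^k(X)$ by additivity together with Proposition \ref{prop_representabilite_numd}. For the first stage, fix $\gamma \in \IC^{e_1+k}(X_1)_\mathbb{Q}$ with $p_1 : X_1 \to X$ flat of relative dimension $e_1$. We already have the operator $\gamma \actd \cdot : A_l(X) \to A_{l-k}(X)$ of \eqref{eq_ind_mor}, so the content is that it descends to numerical equivalence: if $\alpha \equiv 0$ in $Z_l(X)$ then $\gamma \actd \alpha \equiv 0$ in $Z_{l-k}(X)$. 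To check this, take a flat morphism $p_2 : X_2 \to X$ of relative dimension $e_2$ and a cycle $\delta \in \IC^{e_2 + (l-k)}(X_2)$; we must show $(\delta \cdot p_2^*(\gamma \actd \alpha)) = 0$. Form the fibre product $X_1 \times_X X_2$ with its flat projections $p_1', p_2'$; by \cite[Proposition 1.7]{fulton} flat pullback commutes with proper pushforward on this square, so $p_2^* {p_1}_*(\gamma \cdot p_1^* \alpha) = {p_1'}_*{p_2'}^*(\gamma \cdot p_1^*\alpha) = {p_1'}_*({p_2'}^*\gamma \cdot {p_1'}^* p_2^*\alpha)$ — using that $p_2^* p_1^* \alpha = {p_1'}^* p_2^* \alpha$ on the fibre square up to the obvious reindexing. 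Then the projection formula gives
\begin{equation*}
(\delta \cdot p_2^*(\gamma \actd \alpha)) = ({p_1'}^*\delta \cdot {p_2'}^*\gamma \cdot {p_1'}^* p_2^*\alpha) = \bigl( ({p_1'}^*\delta \cdot {p_2'}^*\gamma) \cdot q^* \alpha\bigr),
\end{equation*}
where $q = p_1 \circ p_1' : X_1\times_X X_2 \to X$ is flat of relative dimension $e_1 + e_2$ and ${p_1'}^*\delta \cdot {p_2'}^*\gamma \in \IC^{e_1+e_2+(l-k)}(X_1\times_X X_2)$. Since $\alpha \equiv 0$, this intersection number vanishes by the very definition of numerical equivalence. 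This proves $\gamma \actd z$ is well-defined for $z \in \num_l(X)$, giving assertion (1).

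For assertion (2), additivity over $\mathbb{Q}$ is immediate from the definition: if $\alpha$ and $\beta$ are induced by $\gamma_1$ and $\gamma_2$ (after passing to a common flat cover $X_1 \times_X X_2$, so that both are represented by cocycles on the same variety, using the construction in the proof of Proposition \ref{prop_representabilite_numd}), then $\gamma_1 + \gamma_2 \in \IC^{e+k}(X_1\times_X X_2)_\mathbb{Q}$ represents $\alpha + \beta$, and $(\gamma_1+\gamma_2) \actd z = \gamma_1 \actd z + \gamma_2 \actd z$ follows from linearity of intersection with Cartier divisors and of flat pullback and proper pushforward on cycles (Theorems \ref{thm_prop_intersection_diviseurs} and \ref{thm_segre}). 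Finally, for a general element of $\numd^k(X)$ — an integral combination $[\gamma_1] + \dots + [\gamma_i]$ — one defines $\actd$ by the sum of the operators attached to the generators; assertion (1), applied on a common flat cover, shows this is independent of the chosen representation, because any two representations differ by relations that evaluate to zero already in the pairing with $\num_k$, and the operator $\actd z$ evaluated against a test class $\delta \in \numd^{l-k}(X)$ reproduces such a pairing. The $\numd^\bullet(X)$-module structure on $\num_\bullet(X)$ then follows: associativity $(\alpha \cdot \beta) \actd z = \alpha \actd (\beta \actd z)$ reduces, after passing to flat covers, to the compatibility of composition of operators $\gamma_1 \actd (\gamma_2 \actd \cdot)$ with the product $\gamma_1 \cdot \gamma_2$ in $\IC^\bullet$, which is a standard consequence of the projection formula and Theorem \ref{thm_segre}.(3)--(5).

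The main obstacle is the well-definedness in the first stage: one must be careful that the fibre-product diagram is set up so that \cite[Proposition 1.7]{fulton} (flat base change for pushforward) applies, and that the resulting class ${p_1'}^*\delta \cdot {p_2'}^*\gamma$ genuinely lies in $\IC^{\bullet}$ of the fibre product with the correct codimension so that the definition of numerical triviality can be invoked. Everything else is bookkeeping with the projection formula and linearity. I would also remark that working over $\mathbb{Q}$ (as in Proposition \ref{prop_representabilite_numd}) is harmless here because the module structure over $\mathbb{Z}$ is recovered by clearing denominators, $\num_\bullet$ being torsion free.
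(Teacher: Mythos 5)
Your first stage proves something real but not the thing the proposition actually needs: you verify that for a fixed cocycle $\gamma \in \IC^{e_1+k}(X_1)$, the operator $\gamma \actd\cdot$ descends from $Z_\bullet(X)$ to $\num_\bullet(X)$. That computation (combine $\delta$ and $\gamma$ into a single cocycle on the fibre product and act on $q^*\alpha$) is correct and is what justifies the sentence, stated without proof just before the proposition, that $\gamma\actd\cdot: A_l(X)\to A_{l-k}(X)$ passes to $\num_l(X)\to\num_{l-k}(X)$. But the proposition's main assertion, and the entire content of the paper's proof, is the \emph{other} well-definedness: if $\gamma_1 \in \IC^{e_1+k}(X_1)$ and $\gamma_2\in\IC^{e_2+k}(X_2)$ induce the same class in $\numd^k(X)$, then $\gamma_1\actd z = \gamma_2\actd z$ in $\num_{l-k}(X)$ for every $z\in Z_l(X)$ and every $l$, not merely $l=k$. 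You dispatch this in a single clause (``the operator $\actd z$ evaluated against a test class $\delta$ reproduces such a pairing'') without verification, and that verification is a different rearrangement from the one you performed.

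Concretely, the hypothesis $[\gamma_1]=[\gamma_2]$ in $\numd^k(X)$ tells you only that $(\gamma_1\cdot p_1^*w)=(\gamma_2\cdot p_2^*w)$ for every $w\in Z_k(X)$. To apply it you must take the test number $(\delta\cdot p_3^*(\gamma_1\actd z))$, use flat base change and the projection formula to push $\delta\cdot p_3^*z$ forward to $X$, and exhibit the number as $(\gamma_1\cdot p_1^*w)$ with $w={p_3}_*(\delta\cdot p_3^*z)\in Z_k(X)$. That is exactly the chain of equalities the paper's proof writes out, and it is the opposite grouping from your first-stage computation, which instead isolates $q^*\alpha$. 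Your second-stage appeal to Proposition \ref{prop_representabilite_numd} does not fill the hole: that proposition only shows the two representatives induce the same element of $\numd^k(X)$, not the same operator on $\num_l(X)$ for $l\neq k$. Until the rearrangement above is carried out, the independence of the representative — and hence the well-definedness of $\alpha\actd\cdot$ for general $\alpha\in\numd^k(X)$ — remains unproved.
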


\begin{proof} 
Let us consider $\alpha \in \numd^i(X)$ and suppose it is induced by $\gamma_1\in \IC^{e_1 +i}(X_1)_\mathbb{Q}$ where $p_1 : X_1 \to X$ is a flat morphism of relative dimension $e_1$. 
We define the map $\alpha \actd \cdot $ as :
\begin{equation*}
\alpha \actd z = \gamma_1 \actd z, 
\end{equation*} 
for any $z \in \num_l(X)$. 
\medskip
We show that the morphism does not depend on the choice of the class $\gamma_1$ and $(i)$ is follows from Proposition \ref{prop_representabilite_numd}. Assertion $(ii)$ follows from the linearity of the intersection product whose proof follows closely the proof of  Proposition \ref{prop_representabilite_numd}.
\bigskip

 Suppose that $[\gamma_1] = [\gamma_2] \in \numd^i(X)$ where $\gamma_2 \in \IC^{e_2 + i}(X_2)_\mathbb{Q}$ and $p_2: X_2 \to X$ is a flat morphism of relative dimension  $e_2$,  then we need to prove that:
\begin{equation*}
{p_1}_* ( \gamma_1 \cdot  p_1^* z) \equiv {p_2}_* (\gamma_2 \cdot p_2^*z),
\end{equation*}
for any fixed $z \in Z_{l}(X)$. 
Take $\beta \in \IC^{e_3 + l-i}(X_3)$ where $p_3: X_3 \to X$ is flat morphism of relative dimension $e_3$, we only need to show that:
\begin{equation*}
(\beta \cdot p_3^* {p_1}_* (\gamma_1 \cdot p_1^* z) ) = (\beta \cdot p_3^* {p_2}_* (\gamma_2 \cdot p_2^* z)).
\end{equation*} 
Let $X_1'$ and $X_2'$ the fibre products $X_1 \times X_3$ and $X_2 \times X_3$, and $p_1': X_1' \to X_3$, $p_3': X_1' \to X_1$, $q_2 : X_2' \to X_3$, $q_3: X_2' \to X_2$ be the corresponding flat projection morphisms such that we obtain the following commutative diagrams:
\begin{equation*} \label{diag_geant}
\xymatrix { & X_1' \ar[rd]^{p_1'} \ar[ld]^{p_3'} &  & & X_2' \ar[rd]^{q_2} \ar[ld]^{q_3} & \\
X_1 \ar[rd]^{p_1} & & X_3 \ar[ld]^{p_3} & X_2 \ar[rd]^{p_2} & & X_3  \ar[ld]^{p_3}\\
& X & && X .&}
\end{equation*} 
As above, we have $p_3^*{p_1}_* = {p_1'}_* p_3'^*$, hence:

\begin{equation*}
\begin{array}{lll}
(\beta \cdot p_3^* {p_1}_* (\gamma_1 \cdot p_1^* z) )& =& (\beta \cdot {p_1'}_* p_3'^* (\gamma_1 \cdot p_1^* z)) \\
 & =  & (p_1'^* \beta \cdot p_3'^* (\gamma_1 \cdot p_1^* z))\\
 & = & (p_3'^* \gamma_1 \cdot p_1'^* p_3^* z \cdot p_1'^* \beta ) \\
& = & ( \gamma_1 \cdot {p_3'}_* p_1'^* ( p_3^* z \cdot \beta ) ) \\
& = & (\gamma_1 \cdot p_1^*{p_3}_* (p_3^* z \cdot \beta ))\\
& =& (\gamma_2 \cdot p_2^* {p_3}_* (p_3^* z \cdot \beta )).
\end{array}
\end{equation*}
By a similar argument, we show that $(\beta \cdot p_3^* {p_2}_* (\gamma_2 \cdot p_2^* z)) = (\gamma_2 \cdot p_2^* {p_3}_* (p_3^* z \cdot \beta))$ which implies the desired equality:
\begin{equation*}
(\beta  \cdot p_3^* {p_1}_* (\gamma_1 \cdot p_1^* z) ) = (\beta \cdot p_3^* {p_2}_* (\gamma_2 \cdot p_2^* z)).
\end{equation*}

\end{proof}

\begin{prop} \label{prop_num_numd_module}
 There exists a unique structure of commutative graded ring with unit ($\deg$) on $\numd^\bullet(X)$ given by $ (\alpha,\beta) \in  \numd^\bullet(X) \times \numd^\bullet(X) \mapsto \alpha \cdot \beta \in  \numd^\bullet(X)$  which satisfies the following properties:
 \begin{enumerate}
 \item[(i)] For any $\alpha,\beta \in \numd^\bullet(X)$ and any $z \in \numd_\bullet(X)$, one has:
 \begin{equation*}
( \alpha \cdot \beta) \actd z = ( \alpha \actd ( \beta \actd z) ) = ( \beta \actd ( \alpha \actd z) ).
 \end{equation*}
 \item[(ii)] For any $z\in \numd_\bullet(X)$, we have $(\deg) \actd z = z$.
 \item[(iii)] The morphism of abelian groups given by $$ (\alpha, z) \in \numd^\bullet(X) \times \numd_\bullet(X) \mapsto \alpha \actd z \in  \numd_\bullet(X)$$ is bilinear.
 \end{enumerate}
  Hence, the abelian group $\num_\bullet(X)$ has the structure of a graded $\numd^\bullet(X)$-module.
\end{prop}
\begin{proof} 
Take $\alpha_1 \in \numd^i(X)$ and $\alpha_2 \in \numd^l(X)$ and define $\varphi \in Z^{i+l}(X)$ by the formula:
\begin{equation*}
\varphi : z \in Z_{i+l}(X) \rightarrow  (\alpha_1 \actd ( \alpha_2\actd z)). 
\end{equation*}
We prove that $\varphi$ is an element of $\numd^{i+l}(X)$. 
\smallskip

By linearity, we can suppose that $\alpha_i$ is induced by $\gamma_i \in \IC^{i+e_j}(X_j)$ where $p_j :X_j \to X$ is a flat morphism of relative dimension $e_j$ for $j=1,2$.
 Let $X' = X_1 \times_X X_2$ be the fibre product, let $p_1'$ and $p_2'$ be the projections from $X'$ to $X_1$ and $X_2$ respectively such that we have the commutative diagram:
 \[\xymatrix{ & X' \ar[rd]^{p_1'} \ar[ld]^{p_2'}& \\
 X_1 \ar[rd]^{p_1} & & X_2  \ar[ld]^{p_2}\\
& X.& }\]
By the projection formula, we obtain for all $z \in Z_{i+l}(X)$:
\begin{equation} \label{eq_prod_numd}
\varphi(z) = ( p_1'^* \gamma_2 \cdot  p_2'^* \gamma_1 \cdot  p_2'^* p_1^*z). 
\end{equation} 
In particular, we have shown that $\varphi$ is induced by $p_1'^* \gamma_2 \cdot p_2'^* \gamma_1\in \IC^{e_1 + e_2 +i+l}(X')$, hence $\varphi$ is an element of $\numd^{i+l}(X)$. 
Moreover, the commutativity of the intersection product in \eqref{eq_prod_numd} proves that $(\alpha_2 \actd (\alpha_1 \actd z))= (\alpha_1 \actd (\alpha_2 \actd z))$ for any $z \in \num_{i+l}(X)$, hence $\alpha_1 \cdot \alpha_2 = \alpha_2 \cdot \alpha_1$.

\bigskip

Pick a vector bundle $E$ on $X$. 
As the element $\deg \in \numd^0(X)$ is equal to $ z \rightarrow (s_0(E) \actd z)$ in $\numd^0(X)$ (see Remark \ref{rem_deg}), we get using Theorem \ref{thm_segre}.(ii) that:
\begin{equation*}
 (\alpha \actd z) =(\alpha \actd (s_0(E) \actd z)) = (s_0(E) \actd (\alpha \actd z)) = ((\alpha \cdot \deg) \actd z) = (\deg \cdot \alpha) \actd z,
\end{equation*}
for any $z \in \num_l(X)$ and any $\alpha \in \numd^l(X)$.
Hence, $\deg$ is a unit of $\numd^\bullet(X)$.

\end{proof}

 \red 

\black 
\subsection{Pullback on dual numerical classes}

\bigskip 
Let us consider $q: X \to Y$ a proper morphism. 
We define for any integer $i$ the pullback $q^* : \numd^i(Y) \to \Hom_\mathbb{Z}(\numd_i(X) , \mathbb{Z}) $ as the dual of the pushforward operation $q_* : \num_i(X ) \to \num_i(Y)$ with respect to the pairing $\numd^i(X) \times \num_i(X) \to \mathbb{Z}$ defined in Proposition \ref{prop_numd_dual_num}.

\begin{prop} Let $q: X \to Y$ be a proper morphism.
The morphism $q^*$ induces a morphism of graded rings $q^* :\numd^\bullet(Y) \to \numd^\bullet (X)$ which satisfies the projection formula:
\begin{equation*}
\forall \alpha \in \numd^i(Y), \forall z \in \num_l(X), q_* (q^* \alpha \actd z ) = \alpha \actd q_* z.
 \end{equation*}
\end{prop}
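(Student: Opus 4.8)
The statement has two parts: that $q^*$ respects the multiplicative structure, and that it satisfies the projection formula. The projection formula is essentially a formal consequence of how $q^*$ was defined, so I would dispatch it first. By definition $q^* : \numd^k(Y) \to \numd^k(X)$ is the transpose of $q_* : \num_k(X) \to \num_k(Y)$ with respect to the non-degenerate pairings of Proposition \ref{prop_numd_dual_num}; writing the pairing as $\alpha \actd z \in \mathbb{Z}$ for $z$ a $0$-cycle after the top-degree intersection, the defining relation is exactly $\deg(q^* \alpha \actd z) = \deg(\alpha \actd q_* z)$ for $z \in \num_k(X)$. To upgrade this to the claimed identity in $\num_{l-k}(Y)$ for $z \in \num_l(X)$, I would pair both sides with an arbitrary dual class $\beta \in \numd^{l-k}(Y)$ and use that $\numd^{l-k}(Y) \times \num_{l-k}(Y) \to \mathbb{Z}$ is non-degenerate: one computes $\beta \actd q_*(q^*\alpha \actd z) = q^*\beta \actd (q^* \alpha \actd z) = (q^*\beta \cdot q^*\alpha) \actd z$ (using the module structure and the definition of $q^*$ on $\num_\bullet$) and compares with $\beta \actd (\alpha \actd q_* z) = (\beta \cdot \alpha) \actd q_* z = q^*(\beta \cdot \alpha) \actd z$. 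So this step reduces to the ring-homomorphism claim.

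\textbf{The ring homomorphism property.} I would prove $q^*(\alpha_1 \cdot \alpha_2) = q^*\alpha_1 \cdot q^*\alpha_2$ and $q^* \deg_Y = \deg_X$ by testing against cycles. The unit statement is immediate: for $z \in \num_0(X)$, $q^*\deg_Y \actd z = \deg_Y \actd q_* z = \deg(q_* z) = \deg(z) = \deg_X \actd z$, since proper pushforward preserves degree of a $0$-cycle. For the multiplicativity, fix representatives: by linearity and Proposition \ref{prop_representabilite_numd} (after tensoring with $\Q$) write $\alpha_i$ as induced by $\gamma_i \in \IC^{k_i + e_i}(Y_i)_\Q$ with $p_i : Y_i \to Y$ flat of relative dimension $e_i$. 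Form the fibre products $X_i := X \times_Y Y_i$ with flat projections $\tilde p_i : X_i \to X$ and proper maps $q_i : X_i \to Y_i$; then $q^*\alpha_i$ is induced by $q_i^* \gamma_i \in \IC^{k_i + e_i}(X_i)$ (this is the content of how pullback of dual classes interacts with the flat-pullback presentation, and should be checked via Proposition \ref{prop_num_numd_module}'s formula \eqref{eq_prod_numd} and base change $p_i^* q_* = (q_i)_* \tilde p_i^*$). Now both $q^*(\alpha_1 \cdot \alpha_2)$ and $q^*\alpha_1 \cdot q^*\alpha_2$ can be written, via \eqref{eq_prod_numd} applied upstairs on the triple fibre product $X_1 \times_X X_2$, as a single intersection class pulled up from $X$; the two expressions coincide because flat base change and the projection formula (Theorem \ref{thm_prop_intersection_diviseurs} and \cite[Proposition 1.7]{fulton}) commute the operations past each other. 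Concretely, for any $z \in \num_{k_1+k_2}(X)$,
\begin{equation*}
q^*(\alpha_1 \cdot \alpha_2) \actd z = (\alpha_1 \cdot \alpha_2) \actd q_* z = \alpha_1 \actd (\alpha_2 \actd q_* z),
\end{equation*}
and one checks $\alpha_2 \actd q_* z = q_*(q^*\alpha_2 \actd z)$ — this is precisely the already-established projection formula for the action on $\num_\bullet$ under $q^*$ — so iterating gives $\alpha_1 \actd q_*(q^*\alpha_2 \actd z) = q_*(q^*\alpha_1 \actd (q^*\alpha_2 \actd z)) = (q^*\alpha_1 \cdot q^*\alpha_2) \actd z$. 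Since the pairing $\numd^{\bullet} \times \num_\bullet \to \mathbb{Z}$ separates points, equality of dual classes follows.

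\textbf{Main obstacle.} The delicate point is not any single identity but making sure the bootstrap is not circular: the ring-homomorphism property and the projection formula for the $\num_\bullet$-action are being used to prove each other. The clean way is to first establish, directly from the fibre-product/base-change diagram, the "module-level" projection formula $\alpha \actd q_* z = q_*(q^* \alpha \actd z)$ for $\alpha$ induced by a single flat class $\gamma$ — this uses only \cite[Proposition 1.7]{fulton} (flat base change $p^* q_* = q'_* p'^*$) together with the ordinary projection formula for Cartier divisors, with no reference to the multiplication on $\numd^\bullet$. Once that is in hand, the ring-homomorphism statement follows by the computation above, and then the top-level projection formula stated in the proposition drops out by the non-degeneracy argument in the first paragraph. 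I expect the only real bookkeeping cost to be verifying that $q^*[\gamma] = [q^*_{\text{fib}}\gamma]$ under the chosen presentation of dual classes — i.e. that the abstract transpose agrees with the concrete flat-pullback of the inducing intersection class — which is a direct unwinding of definitions using base change but needs to be written carefully because it is what glues the two halves together.
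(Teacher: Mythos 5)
Your final plan — first establish the module-level projection formula $\alpha \actd q_* z = q_*(q^*\alpha \actd z)$ directly from the fibre-product diagram and flat base change $q'_* p_1'^* = p_1^* q_*$ (by observing that $q^*\alpha$ is induced by $q'^*\gamma$ on $X_1 = Y_1 \times_Y X$), and then derive the ring-homomorphism property by evaluating both $q^*(\alpha\cdot\beta)$ and $q^*\alpha \cdot q^*\beta$ against an arbitrary $z$ — is exactly the paper's argument. The circularity you correctly flag in your middle paragraph is avoided in the paper precisely by proving the projection formula first and deducing multiplicativity afterwards, which is what your concluding paragraph also settles on.
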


\begin{proof}

We only need to prove that the image $q^* (\numd^i(Y)) $ is contained in $\numd^i(X)$ and that the projection formula is satisfied as it directly implies that $q^*: \numd^\bullet (Y) \to \numd^\bullet(X)$ is a morphism of rings since:
\begin{equation*}
 (\alpha \cdot \beta) \actd q_*z =  q^* (\alpha \cdot \beta) \actd z = \alpha \actd q_* (q^* \beta \actd z) = (q^* \alpha \cdot q^*\beta ) \actd z,
\end{equation*} 
for any $\alpha \in \numd^i(Y)$, $\beta\in \numd^l(Y)$ and any $z \in \num_{i+l}(X)$.

\bigskip

Consider a class $\alpha \in \numd^i(Y)$ which is induced by $\gamma \in \IC^{e_1 + i}(Y_1)$ where $p_1 : Y_1  \to Y$ is a flat proper morphism of relative dimension $e_1$. 
Setting $X_1$ to be the fibre product $Y_1 \times X$ and $p_1' $, $q'$ the projections from $X_1$ to $X$ and $X_1$ respectively, one remarks using the equality $q'_*p_1'^* = p_1^*q_*$ (\cite[Proposition 1.7]{fulton}) that $q^* \alpha$ is induced by $q'^* \gamma$, hence $q^* \alpha \in \numd^i(X)$ as required. 
And the projection formula follows easily from the projection formula on divisors (Theorem \ref{thm_prop_intersection_diviseurs}.(ii)).

\end{proof}

Let us sum up all the properties of numerical classes proven so far :
\begin{thm} \label{thm_num_sum_up} Let $q: X \to Y$ be a proper morphism. For any integer $0 \leqslant i\leqslant \dim X$ and $0 \leqslant l \leqslant \dim Y$: 
\begin{enumerate}
\item[(i)] The pushforward morphism $q_* : Z_i(X) \to Z_i(Y)$ induces a morphism of abelian groups $q_* : \num_{i}(X) \to \num_{i}(Y)$.
\item[(ii)] The dual morphism $q^* : Z^l(Y) \to Z^l(X) $ maps $\numd^l(Y) $ into $ \numd^l(X)$. 
\item[(iii)] The induced morphism $q^* : \numd^\bullet(Y) \to \numd^\bullet (X)$ preserves the structure of graded rings.
\item[(iv)] (Projection formula)For all $\alpha \in \numd^l(Y)$ and all $z \in \num_i(X)$, we have $q_*( q^*\alpha \actd z) \equiv \alpha \actd q_* z $ in $\num_{i-l}(Y)$. 
\end{enumerate}
\end{thm}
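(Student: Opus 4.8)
\textbf{Proof plan for Theorem \ref{thm_num_sum_up}.}

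The strategy is to observe that each of the four assertions has already been established individually in the preceding propositions, so the ``proof'' is essentially a matter of collecting these results and checking that they assemble consistently. First I would note that assertion (1) is precisely the content of the proposition stating that $q_* : Z_k(X) \to Z_k(Y)$ descends to $q_* : \num_k(X) \to \num_k(Y)$, whose proof used the fibre-product diagram together with flat base change $p_1^* q_* = q'_* p_1'^*$ (\cite[Proposition 1.7]{fulton}) and the projection formula for Cartier divisors. Assertion (2) is the definition of the pullback $q^* : \numd^k(Y) \to \numd^k(X)$ as the transpose of $q_*$ with respect to the non-degenerate pairing of Proposition \ref{prop_numd_dual_num}; one should however verify that this transpose genuinely lands in the subgroup $\numd^k(X)$ of $Z^k(X)$ and not merely in $Z^k(X)$, which is exactly what was checked in the proposition on pullback of dual classes by representing $\alpha$ by $\gamma \in \IC^{e_1+k}(Y_1)$ and showing $q^*\alpha$ is induced by $q'^*\gamma$.

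Next I would record that assertion (3), the compatibility of $q^*$ with the graded ring structure, follows from the ring structure built in Proposition \ref{prop_num_numd_module} together with the projection formula: the short computation $(\alpha\cdot\beta)\actd q_* z = \alpha\actd q_*(q^*\beta\actd z) = (q^*\alpha\cdot q^*\beta)\actd z$, valid for all $z$ by non-degeneracy of the pairing, forces $q^*(\alpha\cdot\beta) = q^*\alpha\cdot q^*\beta$; one also checks $q^*$ sends the unit $\deg_Y$ to the unit $\deg_X$, which holds because degree is preserved under proper pushforward of $0$-cycles. Finally, assertion (4) is just the projection formula proved in the same proposition, transported from the level of the Chow group to numerical classes using the canonical surjections $A_\bullet \to \num_\bullet$.

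The only genuine (minor) subtlety — and the step I would flag as the ``main obstacle,'' though it is really a bookkeeping point — is ensuring that all these operations are well-defined on the \emph{representatives} chosen via Proposition \ref{prop_representabilite_numd}, i.e.\ that the identities do not depend on which flat cover $X_1 \to X$ and which class $\gamma \in \IC^{e_1+k}(X_1)_\mathbb{Q}$ one picks; this independence was precisely the technical heart of the earlier propositions and is what licenses the clean statements here. Thus I would simply write: ``Assertion (1) is the proposition on pushforward of numerical classes; assertions (2), (3) and (4) follow from the proposition on pullback of dual numerical classes and from Proposition \ref{prop_num_numd_module}.'' and leave the proof at that, since no new argument is required.
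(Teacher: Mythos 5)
Your plan is exactly right: the paper introduces this theorem with the words ``Let us sum up all the properties on numerical classes proven so far,'' and gives no proof, precisely because (1) is the proposition on pushforward of numerical classes, (2)–(4) are the proposition on pullback of dual classes (together with Proposition \ref{prop_num_numd_module}), and nothing new needs to be argued. Your identification of which earlier result supplies each assertion, and your remark that the only subtlety (independence of the chosen representative via Proposition \ref{prop_representabilite_numd}) was already handled there, matches the paper's intent.
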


\subsection{Canonical morphism}

\begin{thm} \label{thm_identification} 
The morphism $\psi_X: \alpha \in  \numd^i(X) \mapsto \alpha \actd [X] \in  \num_{n-i}(X)$ is the unique morphism which satisfies the following properties. 
\begin{enumerate}
\item[(i)] The image of the morphism $\deg: Z_{0}(X) \to \mathbb{Z}$ seen as an element of $ Z^0(X)$ is given by $\psi_X(\deg) = [X]$.
\item[(ii)] The morphism $\psi_X$ is $\numd^i(X)$-equivariant, i.e for all $\alpha \in \numd^i(X)$ and all $\beta \in \numd^l(X)$, we have: 
\begin{equation*}
\psi_X( \alpha \cdot \beta) = \alpha \actd \psi_X(\beta).
\end{equation*} 
\item[(iii)] Suppose $q : X \to Y$ is a generically finite morphism where $Y$ is of dimension $n$, then we have the following identity:
\begin{equation*}
 q_* \circ \psi_X \circ  q^* = \deg(q) \times \psi_Y .
\end{equation*} 
\end{enumerate}
\end{thm}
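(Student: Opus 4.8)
The morphism $\psi_X$ should be defined by sending a dual class $\alpha \in \numd^k(X)$ to its action on the fundamental class: $\psi_X(\alpha) := \alpha \actd [X] \in \num_{n-k}(X)$. The fact that this lands in $\num_{n-k}(X)$ is automatic since $\actd$ maps $\num_n(X)$ to $\num_{n-k}(X)$, and the map is linear by the module structure established in the previous subsection. So the content of the theorem is really the verification of the three listed properties, and the only subtlety is to check that this $\psi_X$ is well-defined and canonical — but since $[X] \in \num_n(X)$ is canonical and the action $\actd$ is canonical, there is nothing to choose.

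\emph{Property (1).} This is immediate from Remark \ref{rem_deg} and Theorem \ref{thm_segre}.(2): for any vector bundle $E$, $\deg \actd [X] = s_0(E) \actd [X] = [X]$, so $\psi_X(\deg) = [X]$.

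\emph{Property (2).} This is exactly the associativity/compatibility of the $\numd^\bullet(X)$-module structure on $\num_\bullet(X)$ proved in Proposition \ref{prop_num_numd_module}: for $\alpha \in \numd^k(X)$ and $\beta \in \numd^l(X)$, one has $\psi_X(\alpha \cdot \beta) = (\alpha \cdot \beta) \actd [X] = \alpha \actd (\beta \actd [X]) = \alpha \actd \psi_X(\beta)$. The middle equality is precisely the statement that $\numd^\bullet(X)$ acts on $\num_\bullet(X)$, which was part of Proposition \ref{prop_num_numd_module}.

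\emph{Property (3).} Here one needs the identity $q_* (q^* \alpha \actd [X]) = \alpha \actd q_* [X]$ for $q: X \to Y$ generically finite with $\dim Y = n$, which is the projection formula from Theorem \ref{thm_num_sum_up}.(4) applied with $z = [X] \in \num_n(X)$. It then remains to identify $q_*[X] \in \num_n(Y)$. Since $q$ is generically finite, $q(X) = Y$ and $\dim q(X) = \dim X = n$, so by the definition of proper pushforward of cycles (from Section \ref{section_chow}), $q_*[X] = [\kappa(\eta_X):\kappa(\eta_Y)] \cdot [Y] = \deg(q) \cdot [Y]$, where $\eta_X, \eta_Y$ are the generic points. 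Hence $q_*(q^*\alpha \actd [X]) = \alpha \actd (\deg(q) [Y]) = \deg(q) \cdot (\alpha \actd [Y]) = \deg(q) \cdot \psi_Y(\alpha)$, i.e. $q_* \circ \psi_X \circ q^* = \deg(q) \times \psi_Y$ as maps $\numd^k(Y) \to \num_{n-k}(Y)$.

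\textbf{Main obstacle.} None of the three properties is hard once the definition $\psi_X(\alpha) = \alpha \actd [X]$ is in place — each is a one-line consequence of results already proven (Remark \ref{rem_deg}, Proposition \ref{prop_num_numd_module}, Theorem \ref{thm_num_sum_up}.(4)). The only point requiring a small argument is the computation $q_*[X] = \deg(q)[Y]$ in Property (3), which uses that a generically finite morphism between varieties of the same dimension is surjective onto its target (here $Y$ is irreducible of dimension $n = \dim X$, so $q(X)$ is a closed subvariety of dimension $n$, forcing $q(X) = Y$) and the definition of the field-degree multiplicity in the pushforward; this is routine. If one wanted $\psi_X$ to be characterized uniquely by properties (1) and (2) rather than merely exhibited, one would note that any $\numd^\bullet(X)$-equivariant map sending $\deg$ to $[X]$ must satisfy $\psi_X(\alpha) = \psi_X(\alpha \cdot \deg) = \alpha \actd \psi_X(\deg) = \alpha \actd [X]$, so the construction is forced — this is worth recording but is immediate.
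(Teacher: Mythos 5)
Your proposal is correct and follows essentially the same route as the paper: the same definition $\psi_X(\alpha) = \alpha \actd [X]$, the same appeal to Proposition \ref{prop_num_numd_module} for (2), and the same use of the projection formula together with $q_*[X] = \deg(q)[Y]$ for (3). The extra observations you make — spelling out the pushforward computation and noting that (1) and (2) force the definition — are harmless additions that the paper leaves implicit.
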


\begin{proof}

Recall that $\deg$ is the unit in $\numd^\bullet(X)$, hence $\psi_X(\deg) = [X]$ and $(ii)$ follows directly from the definition and Proposition \ref{prop_num_numd_module}. 
 
Assertion $(iii)$ is then a consequence of the projection formula (see Theorem \ref{thm_num_sum_up}.$(iv)$) and the fact that $q_*[X] = \deg(q) [Y]$.
\smallskip 

Let us prove that $\psi_X$ is unique. Suppose that $\varphi : \numd^i(X) \to \numd_{n-i}(X) $ satisfies the hypothesis of the theorem. 
Since $\varphi(\deg) = [X]$ and since $\deg$ is the unit element of the ring $\numd^\bullet(X)$, we have that for any $\alpha \in \numd^i(X)$, $\alpha = \alpha \cdot \deg$. By $(ii)$, 
\begin{equation*}
\varphi( \alpha ) = \varphi (\alpha \cdot \deg) = \alpha \actd \varphi(\deg) = \alpha \actd [X] = \psi_X(\alpha),
\end{equation*}
as required.

\end{proof}

Now we prove some properties of $\psi_X$ in some particular cases.
\begin{thm} \label{thm_cano_mor_prop} 
The following properties are satisfied.
\begin{enumerate}
\item[(i)] If $X $ is smooth, then for all integers $0 \leqslant i \leqslant n$,  the induced morphism $\psi_X : \numd^i(X)_\mathbb{Q} \to \num_{n-i}(X)_\mathbb{Q}$ is an isomorphism.

\item[(ii)] If $X$ is smooth and $q: X \to Y$ is a surjective generically finite morphism where $Y$ is a normal projective variety. Then we have for all integer $i$:
\begin{equation}\label{eq_im_psi_orth}
q^* ( \psi_Y(\numd^{n-i}(Y)_\mathbb{Q})^\bot) =  q^*( \numd^i(Y)_\mathbb{Q}) \cap \Ker( q_* \circ \psi_X: \numd^i(X)_\mathbb{Q} \to \num_{n - i}(Y)_\mathbb{Q} ). 
\end{equation}

\end{enumerate}
\end{thm}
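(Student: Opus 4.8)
The plan is to treat the two assertions separately, using that $\psi_X$ is given explicitly by $\psi_X(\alpha) = \alpha \actd [X]$ (the formula established in Theorem \ref{thm_identification}).

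\emph{Assertion (1).} Here $X = X'/G$ with $X'$ smooth and $\pi : X' \to X$ the quotient map, which is finite surjective of degree $|G|$. First I would observe that when $X'$ is smooth, Grothendieck--Riemann--Roch (Theorem \ref{thm_grr}) together with the intersection product on $A_\bullet(X')$ (Remark \ref{rem_segre_image_diviseur}) identifies $\numd^k(X')_\mathbb{Q}$ with $\num_{n-k}(X')_\mathbb{Q}$ via $\psi_{X'}$; indeed $A^\bullet(X')_\mathbb{Q}$ acts freely and transitively on $A_\bullet(X')_\mathbb{Q}$ by capping with $[X']$, and this descends to numerical classes because on a smooth variety the numerical pairing between $\numd^k$ and $\num_{n-k}$ is perfect (Proposition \ref{prop_numd_dual_num}) and compatible with Poincaré duality. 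So $\psi_{X'}$ is an isomorphism. Then I would push this down to $X$: by Theorem \ref{thm_identification}.(3), $\pi_* \circ \psi_{X'} \circ \pi^* = |G| \, \psi_X$, so $\psi_X = \tfrac{1}{|G|}\,\pi_* \circ \psi_{X'} \circ \pi^*$ over $\mathbb{Q}$. To conclude $\psi_X$ is an isomorphism it suffices to check $\pi^* : \numd^k(X)_\mathbb{Q} \to \numd^k(X')_\mathbb{Q}$ is injective (which follows from $\pi_* \pi^* = |G| \cdot \mathrm{id}$ via the projection formula, Theorem \ref{thm_num_sum_up}.(4)), that $\pi_* : \num_{n-k}(X')_\mathbb{Q} \to \num_{n-k}(X)_\mathbb{Q}$ is surjective, and that the images match up dimension-wise; a dimension count using $\pi_*\pi^* = |G|\cdot\mathrm{id}$ on both $\numd^k$ and $\num_{n-k}$ forces $\pi^*$ and $\pi_*$ to be isomorphisms onto the $G$-invariant parts, and these invariant parts correspond under $\psi_{X'}$, giving the claim.

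\emph{Assertion (2).} Now $X$ is smooth and $q : X \to Y$ is surjective generically finite. Since $X$ is smooth, $\psi_X$ is an isomorphism (by the GRR argument above, with $G$ trivial), so I may and will use $\psi_X$ to transport everything to $\num_{n-k}(X)_\mathbb{Q}$ and translate the claimed identity into a statement purely about the intersection pairing. Write $\psi_X(\numd^k(X)_\mathbb{Q}) = \num_{n-k}(X)_\mathbb{Q}$. Using $\psi_X$-equivariance (Theorem \ref{thm_identification}.(2)) and the projection formula, $q_* \circ \psi_X \circ q^* = \psi_Y(\,\cdot\,)$ composed with $q_*$ — more precisely, for $\alpha \in \numd^k(Y)_\mathbb{Q}$ one has $q_*(\psi_X(q^*\alpha)) = q_*(q^*\alpha \actd [X]) = \alpha \actd q_*[X] = \deg(q)\,\psi_Y(\alpha)$. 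Hence a class $q^*\alpha$ lies in $\Ker(q_*\circ\psi_X)$ iff $\psi_Y(\alpha) = 0$ in $\num_{n-k}(Y)_\mathbb{Q}$. Now the right-hand side of \eqref{eq_im_psi_orth} is exactly $q^*$ applied to those $\alpha \in \numd^k(Y)_\mathbb{Q}$ with $\psi_Y(\alpha)=0$. For the left-hand side, I would unwind the orthogonality condition: $\beta \in \psi_Y(\numd^{n-k}(Y)_\mathbb{Q})^\bot$ means $(\beta \actd \gamma \actd [Y]) = 0$ — i.e. $(\beta \cdot \gamma) = 0$ as a degree-zero number — for all $\gamma \in \numd^{n-k}(Y)_\mathbb{Q}$; this says precisely that $\beta$ is numerically trivial as an element testable against $\numd^{n-k}(Y)$, and since by Proposition \ref{prop_numd_dual_num} the pairing $\numd^k(Y)\times\num_k(Y)\to\mathbb{Z}$ is nondegenerate but the pairing $\numd^k(Y)\times\numd^{n-k}(Y)\to\mathbb{Z}$ need not be, this is equivalent to $\psi_Y(\beta)=0$. (This is where the singularity of $Y$ enters: $\psi_Y$ can have a nontrivial kernel, and $\psi_Y(\numd^{n-k}(Y))^\bot$ inside $\numd^k(Y)$ is exactly $\Ker\psi_Y$.) Thus both sides of \eqref{eq_im_psi_orth} equal $q^*(\Ker\psi_Y)$, and we are done.

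\emph{Main obstacle.} The delicate point is assertion (2): one must carefully distinguish the three pairings in play on the singular variety $Y$ — the perfect pairing $\numd^k(Y)\times\num_k(Y)$, the (possibly degenerate) pairing $\numd^k(Y)\times\numd^{n-k}(Y)$ obtained by composing with $\psi_Y$, and the pairing on $X$ pulled back via $q^*$ — and show that the orthogonal complement $\psi_Y(\numd^{n-k}(Y)_\mathbb{Q})^\bot$ computed with respect to the second pairing coincides with $\Ker\psi_Y$. The inclusion $\Ker\psi_Y \subseteq \psi_Y(\numd^{n-k}(Y))^\bot$ is immediate; the reverse inclusion requires knowing that $\psi_Y$ applied to products exhausts enough of $\num_\bullet(Y)_\mathbb{Q}$ to detect all of $\num_{n-k}(Y)_\mathbb{Q}$ against which a nonzero $\psi_Y(\beta)$ could pair nontrivially — equivalently that $\mathrm{Im}\,\psi_Y$ is a $\numd^\bullet(Y)$-submodule whose orthogonal in $\numd^\bullet(Y)$ is $\Ker\psi_Y$, which follows from the nondegeneracy in Proposition \ref{prop_numd_dual_num}. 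I would also need to be slightly careful that $q^*$ does not enlarge or collapse kernels unexpectedly, i.e. that $q^*$ restricted to $\Ker\psi_Y$ is compatible with $\Ker(q_*\circ\psi_X)\cap q^*\numd^k(Y)_\mathbb{Q}$ in both directions; the $\subseteq$ direction is the computation above, and $\supseteq$ follows since $q^*$ is injective on $\numd^k(Y)_\mathbb{Q}$ (again via $q_*q^* = \deg(q)\cdot\mathrm{id}$).
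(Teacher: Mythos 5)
Your treatment of part (2) is correct and follows essentially the same route as the paper: both arguments rest on the self-adjointness of $\psi_Y$ (so that $(\beta \actd \psi_Y(\alpha)) = (\alpha \actd \psi_Y(\beta))$), the identity $q_*\psi_X q^* = \deg(q)\,\psi_Y$, and the injectivity of $q^*$; the paper chains these into a single sequence of equivalences, while you factor both sides through $q^*(\Ker\psi_Y)$, which is a cosmetic difference.

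Part (1) has a genuine gap. You propose to deduce that $\psi_X$ is an isomorphism from the identity $\psi_X = \tfrac{1}{|G|}\,\pi_*\circ\psi_{X'}\circ\pi^*$ together with a dimension count built on ``$\pi_*\pi^* = |G|\cdot\mathrm{id}$ on both $\numd^k$ and $\num_{n-k}$.'' This does not typecheck in the paper's framework: there is no pushforward on $\numd^k(X')$, and the flat pullback $\pi^*$ on $\num_{n-k}(X)$ is unavailable because the quotient map $\pi : X' \to X'/G$ is finite but generally \emph{not} flat (already for $\mathbb{A}^2 \to \mathbb{A}^2/\{\pm 1\}$ the fiber length jumps at the origin, so the map is not flat at the singular point of the quotient). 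More importantly, even after repairing these, your argument only establishes that $\pi^*$ is \emph{injective} into $\numd^k(X')^G$ and that $\pi_*$ is \emph{surjective} from $\num_{n-k}(X')^G$; these two facts together with a dimension count do not force $\pi^*$ to be surjective onto the $G$-invariants, which is precisely what is needed to conclude. The paper closes exactly this gap with a constructive step that your sketch lacks: given $\beta\in\num_{n-k}(X)_\mathbb{Q}$, lift it via the finite map to a class on $X'$, apply GRR to realize the lift as $\ch_k(E')$ for a vector bundle $E'$ on $X'$, \emph{average} to $E = \bigoplus_{g\in G} g^*E'$ to produce a $G$-equivariant bundle, and then \emph{descend} $E$ to a vector bundle on $X$, so that its Chern character capped with $[X]$ recovers $\beta$ (up to the factor $|G|$). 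This averaging-and-descent of vector bundles is the missing ingredient; once surjectivity of $\psi_X$ is in hand, injectivity follows by the duality argument $\Ker(\psi_X|_{\numd^k}) \subset \mathrm{Im}(\psi_X|_{\numd^{n-k}})^\perp = \{0\}$, which you do not need to reinvent.
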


\begin{proof} $(i)$ Let us show that $\psi_X$ is surjective.
 By the Grothendieck-Riemann-Roch's theorem (Theorem \ref{thm_grr}), the Chern character induces an isomorphism:
\begin{equation*}
  \ch  \actd [X] : E \in K^0(X) \otimes \mathbb{Q} \rightarrow \ch(E) \actd [X] \in A_\bullet(X) \otimes \mathbb{Q}.
\end{equation*}
This implies that the morphism $\psi_X: \numd^i(X)_\mathbb{Q} \to \num_{n-i}(X)_\mathbb{Q} $ is surjective because any Chern class is the image of a product of Cartier divisors by a flat map (see Remark \ref{rem_segre_image_diviseur}). 
\smallskip

We now prove that $\psi_X : \numd^i(X)_\mathbb{Q} \to \num_{n-i}(X)_\mathbb{Q}$ is injective. 
  Take $\alpha_1\in \numd^i(X)_\mathbb{Q}$ such that $\psi_X(\alpha_1) = 0$. By Proposition \ref{prop_representabilite_numd}, the class $\alpha_1$ is induced by $\gamma_1 \in \IC^{e_1 + i}(X_1)_\mathbb{Q} $ where $p_1 : X_1 \to X$ is a flat morphism of relative dimension $e_1$.
 The condition $\psi_X(\alpha_1) =0$ is equivalent to the equality ${p_1}_* \gamma_1 = 0 \in \numd_{n-i}(X)$. 
 We need to show that $(\gamma_1 \cdot p_1^*z)= 0$ for any cycle $z \in Z_i(X)$.
As $X$ is smooth, we may compute intersection products inside the Chow group $A_\bullet(X)$ directly by Remark \ref{rem_segre_image_diviseur} and we get:
\begin{equation*}
(\gamma_1 \cdot p_1^* z) = ( {p_1}_* (\gamma_1 \cdot p_1^* z) ) = ({p_1}_* \gamma_1 \cdot z ) = 0
\end{equation*}
as the class $z \in \num_i(X)$ is the image of an element of $ \numd^{n-i}(X)_\mathbb{Q}$ by surjectivity of $\psi_X$.
\bigskip

\bigskip

$(ii)$ We have the following series of equivalence:
\begin{equation*}
\begin{array}{llll}\label{equiv_1}
 \beta \in \psi_Y ( \numd^{n- i }(Y)_\mathbb{Q})^\bot &\Leftrightarrow & \forall \alpha \in \numd^{n-i}(Y)_\mathbb{Q}, (\beta \actd \psi_Y(\alpha) ) = 0 \\
 & \Leftrightarrow & \forall \alpha \in \numd^{n-i}(Y)_\mathbb{Q} ,( \beta \actd (q_* \psi_X q^* \alpha)) = 0  \\
 & \Leftrightarrow & \forall \alpha \in \numd^{n-i}(Y)_\mathbb{Q} , (q^* \beta \cdot q^* \alpha) = 0 \\
& \Leftrightarrow & \forall \alpha \in \numd^{n-i}(Y)_\mathbb{Q} , (\alpha \actd q_* \psi_X q^* \beta ) = 0 \\
& \Leftrightarrow &  q^* \beta \in \Ker(q_* \circ \psi_X :  \numd^i(X)_\mathbb{Q} \to \num_{n - i}(Y)_\mathbb{Q}, 
\end{array}
\end{equation*}
where the second equivalence follows from Theorem \ref{thm_cano_mor_prop}.(iii), the third and the fourth equivalence from the projection formula, and the last equivalence is a consequence of the fact that $\psi_X$ is self-adjoint :
\begin{equation*}
( \beta \actd \psi_Y(\alpha)  ) = (\beta\actd (\alpha \actd [Y]))=(\alpha \actd( \beta \actd [Y ])) = (\alpha \actd \psi_Y( \beta)),
\end{equation*}
where $\alpha \in \numd^{i}(Y)$ and $\beta \in \numd^{n-i}(Y) $.

\end{proof}

\begin{rem} \label{rem_numd_smooth} The proof of Theorem \ref{thm_cano_mor_prop}.$(i)$ shows that when $X$ is smooth, $\num_i(X)_\mathbb{Q}$ is the quotient of $Z_i(X)_\mathbb{Q}$ by cycles $z \in Z_i(X)_\mathbb{Q} $ such that for any cycle $z' \in Z_{n-i}(X)_\mathbb{Q}$, one has $(z \cdot z') = 0$.
\end{rem}

\begin{rem} When $X$ is smooth and when $\C = \mathbb{C}$, denote by $\Alg^i(X)$ the subgroup of the de Rham cohomology $H^{2i}(X, \mathbb{C})$ generated by algebraic cycles of dimension $i$ in $X$. Then there is a surjective morphism $\Alg^i(X) \to \numd^i(X)_\mathbb{Q}$ 
\end{rem}

\subsection{Numerical spaces are finite dimensional}

\begin{thm} 
Both $\mathbb{Q}$-vector spaces $\num_i(X)_\mathbb{Q}$ and $\numd^i(X)_\mathbb{Q}$ are finite dimensional.
\end{thm}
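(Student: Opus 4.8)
The plan is to bound $\numd^k(X)_\mathbb{Q}$ first, and then deduce the bound for $\num_k(X)_\mathbb{Q}$ by duality via the non-degenerate pairing of Proposition \ref{prop_numd_dual_num}. To control $\numd^k(X)_\mathbb{Q}$, I would reduce to the smooth case using an alteration (or, over $\mathbb{C}$, a resolution; in general de Jong's theorem) $\pi : X' \to X$ with $X'$ smooth projective and $\pi$ generically finite of degree $d$. By Theorem \ref{thm_num_sum_up}.(2)-(3) the pullback $\pi^* : \numd^k(X)_\mathbb{Q} \to \numd^k(X')_\mathbb{Q}$ is a ring morphism, and it is injective: if $\pi^*\alpha = 0$ then for every flat $p_1 : X_1 \to X$ and every $\gamma \in \IC^{e_1+k}(X_1)$ one pulls $\gamma$ back to the fibre product $X_1' = X_1 \times_X X'$, uses $\pi'_* p_1'^* = p_1^* \pi_*$ (\cite[Proposition 1.7]{fulton}) together with the projection formula to write the pairing $(\gamma \cdot p_1^* z)$ up to the factor $d$ as a pairing on $X'$ that vanishes. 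Hence it suffices to show $\numd^k(X')_\mathbb{Q}$ is finite dimensional for $X'$ smooth.

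For $X'$ smooth, Theorem \ref{thm_cano_mor_prop}.(1) gives an isomorphism $\psi_{X'} : \numd^k(X')_\mathbb{Q} \xrightarrow{\ \sim\ } \num_{n-k}(X')_\mathbb{Q}$, so it is equivalent to bound $\num_{n-k}(X')_\mathbb{Q}$, i.e. the group of cycles modulo numerical equivalence. Here I would invoke the classical finiteness: numerical equivalence on a smooth projective variety over a field is of finite rank. This can be seen by the argument of \cite[\S 19.1]{fulton}: embedding $X'$ in projective space and applying the theory of Chow varieties / Hilbert schemes, the group of cycles modulo algebraic equivalence is finitely generated (being built from the finitely many components of the relevant Chow schemes together with a Picard-type group in each), and numerical equivalence is coarser; alternatively one uses that $\num_{n-k}(X')_\mathbb{Q}$ injects, after choosing an ample $\omega$, into a finite-dimensional space by pairing against monomials in $\omega$ and classes coming from a fixed generating set. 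Either route shows $\num_{n-k}(X')_\mathbb{Q}$ is a finite-dimensional $\mathbb{Q}$-vector space.

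Finally, combining the three steps: $\dim_\mathbb{Q} \numd^k(X)_\mathbb{Q} \leqslant \dim_\mathbb{Q} \numd^k(X')_\mathbb{Q} = \dim_\mathbb{Q} \num_{n-k}(X')_\mathbb{Q} < \infty$. For $\num_k(X)_\mathbb{Q}$, the pairing $\numd^k(X)_\mathbb{Q} \times \num_k(X)_\mathbb{Q} \to \mathbb{Q}$ is non-degenerate by Proposition \ref{prop_numd_dual_num}, which forces $\num_k(X)_\mathbb{Q}$ to embed into the dual of the finite-dimensional space $\numd^k(X)_\mathbb{Q}$, hence it too is finite dimensional. The main obstacle is the smooth-variety finiteness input (that numerical equivalence has finite rank): in this presentation one must either quote it from \cite{fulton} in the guise adapted to the slightly nonstandard definition used here, or reprove it via Chow varieties; the reduction steps around it are formal manipulations with the projection formula and flat base change that present no real difficulty.
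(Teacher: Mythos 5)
Your overall strategy—reduce to the smooth case via a de Jong alteration $\pi : X' \to X$, use the classical finite-rank theorem there, and propagate across the non-degenerate pairing—is broadly the same as the paper's, just run in the opposite order: you bound $\numd^k$ first and deduce $\num_k$ by duality, whereas the paper bounds $\num_k$ first (via surjectivity of $\pi_*$) and deduces $\numd^k$. These are genuinely dual routes, and either would work. The smooth-case input and the final duality step are both fine.

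However, there is a real gap at the claim that injectivity of $\pi^* : \numd^k(X)_\mathbb{Q} \to \numd^k(X')_\mathbb{Q}$ follows from ``formal manipulations with the projection formula and flat base change that present no real difficulty.'' Unwinding the projection formula as you describe shows precisely that if $\pi^*\alpha = 0$ then $(\gamma \cdot p_1^* \pi_* z') = 0$ for every $z' \in Z_k(X')$, i.e.\ that $\alpha$ vanishes on the image of $\pi_*$ in $\num_k(X)_\mathbb{Q}$. To conclude $\alpha = 0$ you still need this image to be all of $\num_k(X)_\mathbb{Q}$, which is exactly the surjectivity of $\pi_* : \num_k(X')_\mathbb{Q} \to \num_k(X)_\mathbb{Q}$. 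Injectivity of $\pi^*$ and surjectivity of $\pi_*$ are logically equivalent under the non-degenerate pairing, so a ``formal'' proof of the former would be a formal proof of the latter—but the latter is a genuinely geometric statement. This is the crux of the paper's argument: given an irreducible $V \subset X$, pick a component $W$ of $\pi^{-1}(V)$ dominating $V$; if $\dim W > \dim V$, slice $W$ by powers of an ample divisor on $X'$ to cut down to dimension $\dim V$ while preserving dominance over $V$ (a Bertini-type argument), then push forward to get a nonzero multiple of $[V]$. Note that one cannot replace this by a naive ``$\pi_*\pi^* = \deg(\pi)$'' argument: $\pi$ is not flat (alterations of singular varieties are typically non-finite), and using $\psi_X$ together with Theorem~\ref{thm_identification}.(3) only gives $\psi_X(\alpha) = 0$, which does not force $\alpha = 0$ when $X$ is singular since $\psi_X$ need not be injective. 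So the alteration step is the nontrivial geometric content, not the smooth-case finiteness, and your proposal omits it.
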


\begin{proof} 
If $X$ is smooth, then using Remark \ref{rem_numd_smooth}, $\numd_i(X)_\mathbb{Q}$ is the quotient of $Z_i(X)_\mathbb{Q}$ by the equivalence relation which identifies cycles $\alpha $ and $\beta$ in $Z_i(X)_\mathbb{Q}$ if for any cycle $z \in Z_{n-i}(X)_\mathbb{Q}$, $(z \cdot \alpha) = ( z \cdot \beta)$. 
In particular, the vector-space $\num_i(X)_\mathbb{Q}$ is finitely generated (see \cite[Theorem 23.6]{milneLEC} for a reference), and so is $\numd^i(X)_\mathbb{Q}$ using Theorem \ref{thm_cano_mor_prop}.$(i)$. 

\medskip 

If $X$ is not smooth, by DeJong's alteration theorem (cf \cite[Theorem 4.1]{de_jong}), there exists a smooth projective variety $X'$ and a generically finite surjective morphism $q: X' \to X$. We only need to show that the pushforward $q_* : \num_i(X')_\mathbb{Q} \to \num_i(X)_\mathbb{Q}$ is surjective. 
Indeed this first implies that $\num_i(X)_\mathbb{Q}$ is finite dimensional. Since the natural pairing $\numd^i(X)_\mathbb{Q} \times \num_{i}(X)_\mathbb{Q} \to \mathbb{Q}$ is non degenerate we get an injection of $\numd^i(X)_\mathbb{Q}$ onto $\Hom_{\mathbb{Q}}(\num_i(X)_\mathbb{Q}, \mathbb{Q})$ which is also finite dimensional.
\smallskip

We take $V$ an irreducible subvariety of codimension $i $ in $X$. If $\dim q^{-1}(V) = \dim V $, then the class $q_* [q^{-1}(V)]$ in $\num_{\dim V}(X)_\mathbb{Q}$ is represented by a cycle of dimension $\dim V$ which is included in $V$. As $V$ is irreducible, we have $q_*[q^{-1}(V)] \equiv \lambda [V]$ for some $\lambda \in \mathbb{N}^*$. 

 If the dimension of $q^{-1}(V)$ is strictly greater than $V$, we take $W$ an irreducible component of $q^{-1}(V)$ such that its image by $q_{|W} : W \to V$ is dominant. 
 We write the dimension of $W$ as $\dim V + r$ where $r>0$ is an integer. 
Fix an ample divisor $H_X$ on $X$. 
 The class $H_X^r \actd [W] \in \num_{\dim V}(X')_\mathbb{Q}$ is represented by a cycle of dimension $\dim V$ in $W$. So the image of the class $q_*(H_X^r \actd [W]) \in \num_{\dim V}(X)_\mathbb{Q}$ is a multiple of $[V]$ which implies the surjectivity of $q_*$.

\end{proof}

\begin{cor} 
For any integer $0 \leqslant i \leqslant n$, the pairing $\numd^i(X)_\mathbb{R} \times \num_i(X)_\mathbb{R} \to \mathbb{R}$ is perfect (i.e the canonical morphism from $\numd^i(X)_\mathbb{R} $ to $\Hom_\mathbb{R}( \num_i(X)_\mathbb{R}, \mathbb{R})$ is an isomorphism).

\end{cor}

\begin{cor} Suppose that the dimension of $X$ is $2n$, then the morphism $\psi_X: \numd^n(X)_\mathbb{Q} \to \num_n(X)_\mathbb{Q}$ is an isomorphism.
\end{cor}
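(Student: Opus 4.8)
The plan is to reduce the statement to the case of a smooth variety by means of a de Jong alteration, using the identity $q_*\circ\psi_{X'}\circ q^*=\deg(q)\,\psi_X$ of Theorem \ref{thm_identification}.(3).

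First, $\numd^n(X)_\mathbb{Q}$ and $\num_n(X)_\mathbb{Q}$ are finite dimensional by the previous theorem, and the pairing between them is non-degenerate (Proposition \ref{prop_numd_dual_num}), so they have the same dimension; it therefore suffices to prove that $\psi_X$ is surjective. In fact surjectivity already yields injectivity: if $\psi_X$ is onto and $\psi_X(\alpha)=\alpha\actd[X]=0$, then for any $z\in\num_n(X)_\mathbb{Q}$, writing $z=\beta\actd[X]$ and using that $\numd^\bullet(X)$ is a commutative ring acting on $\num_\bullet(X)$ (Proposition \ref{prop_num_numd_module}), one gets $\alpha\actd z=(\alpha\cdot\beta)\actd[X]=(\beta\cdot\alpha)\actd[X]=\beta\actd(\alpha\actd[X])=0$, hence $\alpha=0$ by non-degeneracy of the pairing. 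So everything comes down to surjectivity.

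By de Jong's theorem there is a smooth projective variety $X'$ of dimension $2n$ and a surjective generically finite morphism $q:X'\to X$. Since $X'$ is smooth, $\psi_{X'}$ is an isomorphism by Theorem \ref{thm_cano_mor_prop}.(1), and $q_*:\num_n(X')_\mathbb{Q}\to\num_n(X)_\mathbb{Q}$ is surjective by the argument used in the proof that the numerical spaces are finite dimensional. Then $q^*:\numd^n(X)_\mathbb{Q}\to\numd^n(X')_\mathbb{Q}$ is injective (it is dual to the surjection $q_*$), and $\operatorname{im}\psi_X=q_*\big(\psi_{X'}(q^*\numd^n(X)_\mathbb{Q})\big)$. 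Since $q_*\circ\psi_{X'}$ is surjective with kernel $\psi_{X'}^{-1}\big(\ker(q_*\colon\num_n(X')_\mathbb{Q}\to\num_n(X)_\mathbb{Q})\big)$, and $\dim q^*\numd^n(X)_\mathbb{Q}=\dim\num_n(X)_\mathbb{Q}$, surjectivity of $\psi_X$ is equivalent to $q^*\numd^n(X)_\mathbb{Q}\cap\psi_{X'}^{-1}(\ker q_*)=\{0\}$. Transporting the intersection form of the smooth variety $X'$ through the isomorphism $\psi_{X'}$, and noting that $\psi_{X'}\big(q^*\numd^n(X)_\mathbb{Q}\big)$ is exactly the orthogonal complement of $\ker q_*$ for that form, this in turn is equivalent to: the restriction of the intersection form of $X'$ to $\ker q_*\subset\num_n(X')_\mathbb{Q}$ is non-degenerate.

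This non-degeneracy is the heart of the matter and the main obstacle, because $q$ is not flat and so the projection formula is not directly available on cycles of $X'$. The route I would follow is generic flatness: $q$ is flat over a dense open $U\subseteq X$, and over $U$ the projection formula applies and shows that any class in $\ker q_*$ is orthogonal (for the intersection form of $X'$) to every $n$-cycle whose support meets $q^{-1}(U)$ densely; the remaining $n$-cycles, those supported on $q^{-1}(X\setminus U)$, would then be handled by a Noetherian induction on the closed set $X\setminus U$, whose components have dimension $<2n$, after choosing further alterations. Granting this, a class in $\ker q_*$ which is orthogonal to all of $\ker q_*$ is orthogonal to the whole of $\num_n(X')_\mathbb{Q}$, hence zero, which gives the non-degeneracy and therefore the surjectivity — hence the bijectivity — of $\psi_X$. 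The delicate point, on which I expect the argument to hinge, is precisely this treatment of the non-flat locus of $q$.
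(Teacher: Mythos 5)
Your reduction is rigorous up to (and including) the reformulation of surjectivity of $\psi_X$ as the non-degeneracy of the intersection form of $X'$ restricted to $\ker q_*$ (equivalently, to its orthogonal complement $\psi_{X'}\big(q^*\numd^n(X)_\mathbb{Q}\big)$). This is, in substance, what the paper's own equation \eqref{eq_im_psi_orth} encodes: unwinding it for $k=n$ via the identity $q_*\circ\psi_{X'}\circ q^*=\deg(q)\,\psi_X$ gives $q^*\big((\operatorname{im}\psi_X)^\bot\big)=q^*(\ker\psi_X)$, i.e.\ $(\operatorname{im}\psi_X)^\bot=\ker\psi_X$, which is only the self-adjointness of $\psi_X$ and does not by itself yield surjectivity. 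So you have correctly isolated the actual content of the claim; the paper asserts surjectivity directly from \eqref{eq_im_psi_orth} and is silent on exactly the point you flag as ``the heart of the matter''.

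The sketch you offer to close that gap, however, does not work. The projection formula gives, for $z\in\ker q_*$, only the vanishing of $(q^*\alpha\actd z)$ for $\alpha\in\numd^n(X)_\mathbb{Q}$, i.e.\ orthogonality of $\ker q_*$ to $\psi_{X'}(q^*\numd^n(X)_\mathbb{Q})$ — which is precisely the orthogonal complement you already identified and is no new information. The broader claim that a class in $\ker q_*$ is orthogonal to every $n$-cycle whose support meets $q^{-1}(U)$ densely is false: take $X'\to X$ a resolution of a normal surface with exceptional curve $E$; then $E\in\ker q_*$ but $E$ meets, with nonzero intersection number, plenty of curves that dominate $X$. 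The Noetherian induction step therefore has nothing to graft onto. The genuine missing input is an index-type statement for the intersection form of the smooth variety $X'$: for $n=1$ this is the (algebraic) Hodge index theorem on the surface $X'$, which implies that the restriction of the form to the subspace $q^*\numd^1(X)_\mathbb{Q}$ — which contains a class of positive self-intersection, namely $q^*$ of an ample divisor — is non-degenerate. Note the paper's proof explicitly takes $X'$ to be a ``smooth projective surface'', which suggests the intended case is $\dim X=2$; for $n>1$ the form on $\numd^n(X')_\mathbb{R}$ is no longer of index $(1,*)$ and the needed non-degeneracy of the restriction is not automatic, so both the paper's argument and yours would require a further input of Hodge--Riemann type in middle degree.
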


\begin{proof}
We apply \eqref{eq_im_psi_orth} to an alteration $X'$ of $X$ where $q : X' \to X$ is a proper surjective morphism and where $X'$ is a smooth projective surface. This proves that $\psi_X : \numd^{n}(X)_\mathbb{Q} \to \num_n(X)_\mathbb{Q}$ is surjective. By duality, this gives that $\psi_X : \numd^n(X)_\mathbb{Q} \to \num_{n}(X)_\mathbb{Q}$ is injective. As a consequence, we have that $\psi_X:\numd^n(X)_\mathbb{Q} \to \num_n(X)_\mathbb{Q} $ is an isomorphism.

\end{proof}

\begin{cor}  Let $X$ be a complex normal projective variety with at most rational singularities. We suppose that $X$ is numerically $\mathbb{Q}$-factorial in the sense of \cite{boucksom_fernex_favre_urbinati}. Then the morphisms $\psi_X : \numd^1(X)_\mathbb{Q} \to \num_{n-1}(X)_\mathbb{Q}$ and $\psi_X:\numd^{n-1}(X)_\mathbb{Q} \to \num_1(X)_\mathbb{Q}$ are isomorphisms.
\end{cor}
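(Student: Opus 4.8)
The plan is to deduce both statements from the single case $k=1$ and to settle that case by a descent argument along a resolution of singularities. First, $\psi_X$ is self-adjoint: the identity $(\beta\actd\psi_X(\alpha))=(\alpha\actd\psi_X(\beta))$ for $\alpha\in\numd^1(X)_\mathbb{Q}$ and $\beta\in\numd^{n-1}(X)_\mathbb{Q}$, already used in the proof of Theorem~\ref{thm_cano_mor_prop}, means precisely that $\psi_X\colon\numd^{n-1}(X)_\mathbb{Q}\to\num_1(X)_\mathbb{Q}$ is the transpose of $\psi_X\colon\numd^1(X)_\mathbb{Q}\to\num_{n-1}(X)_\mathbb{Q}$ relative to the $\mathbb{Q}$-bilinear pairings $\numd^j(X)_\mathbb{Q}\times\num_j(X)_\mathbb{Q}\to\mathbb{Q}$, which are perfect by Proposition~\ref{prop_numd_dual_num} together with the finiteness of these spaces. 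Since a linear map between finite-dimensional vector spaces is an isomorphism exactly when its transpose is, it is enough to prove that $\psi_X\colon\numd^1(X)_\mathbb{Q}\to\num_{n-1}(X)_\mathbb{Q}$ is an isomorphism.

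Write $N^1(X)_\mathbb{Q}$ for the group of Cartier $\mathbb{Q}$-divisors modulo numerical equivalence, and let $j\colon N^1(X)_\mathbb{Q}\to\numd^1(X)_\mathbb{Q}$ be the map sending $D$ to the class it induces with $p_1=\mathrm{id}$. Since $\psi_X([D])=D\cdot[X]=[D]$ in $\num_{n-1}(X)_\mathbb{Q}$, the composite $\psi_X\circ j$ is the tautological map $N^1(X)_\mathbb{Q}\to\num_{n-1}(X)_\mathbb{Q}$ sending a Cartier divisor to its class as a Weil divisor; and by the characterisation of numerical $\mathbb{Q}$-factoriality recalled in \cite{boucksom_fernex_favre_urbinati}, this map is an isomorphism. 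In particular $j$ is injective and $\psi_X\colon\numd^1(X)_\mathbb{Q}\to\num_{n-1}(X)_\mathbb{Q}$ is surjective, and it remains only to prove that $j$ is surjective, i.e.\ that every class in $\numd^1(X)_\mathbb{Q}$ is a Cartier $\mathbb{Q}$-divisor class on $X$; granting this, $j$ is an isomorphism and $\psi_X=(\psi_X\circ j)\circ j^{-1}$ is one as well.

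Fix a resolution $\pi\colon X'\to X$ (it exists since $\kappa=\mathbb{C}$); it is birational, so $\deg\pi=1$ and $\pi_*[X']=[X]$. Since $X'$ is smooth, $\psi_{X'}$ identifies $\numd^1(X')_\mathbb{Q}$ with $\num_{n-1}(X')_\mathbb{Q}=N^1(X')_\mathbb{Q}$ (Theorem~\ref{thm_cano_mor_prop}.(1) and Remark~\ref{rem_numd_smooth}). The pull-back $\pi^*\colon\numd^1(X)_\mathbb{Q}\to\numd^1(X')_\mathbb{Q}$ is injective: as in the proof that the numerical spaces are finite dimensional, every irreducible curve $C\subset X$ is, up to a positive multiple, the proper pushforward of a $1$-cycle on $X'$ (take a component of $\pi^{-1}(C)$ dominating $C$ and cut it down by ample divisors), so by the projection formula (Theorem~\ref{thm_num_sum_up}.(4)) a class annihilated by $\pi^*$ pairs to zero with all of $\num_1(X)_\mathbb{Q}$ and therefore vanishes. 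Given $\alpha\in\numd^1(X)_\mathbb{Q}$, use $\psi_{X'}$ to regard $\pi^*\alpha$ as a $\mathbb{Q}$-Cartier class $c'$ on $X'$; then for every curve $C'$ contracted by $\pi$ we get $(c'\cdot C')=(\pi^*\alpha\actd C')=(\alpha\actd\pi_*C')=0$, so $c'$ is numerically trivial on all $\pi$-exceptional curves.

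The decisive step, and the only place where the hypotheses are used in an essential way, is the following descent statement: because $X$ has rational singularities one has $R^1\pi_*\mathcal{O}_{X'}=0$, and then the analysis of numerical pull-backs in \cite{boucksom_fernex_favre_urbinati} shows that $\pi^*N^1(X)_\mathbb{Q}$ equals the orthogonal complement, inside $N^1(X')_\mathbb{Q}$, of the subspace of $\num_1(X')_\mathbb{Q}$ spanned by the curves contracted by $\pi$. Applied to $c'$ this yields $c'=\pi^*c$ for some $c\in N^1(X)_\mathbb{Q}$, hence $\pi^*(\alpha-j(c))=0$ (both sides equal $c'-\pi^*c=0$ under $\psi_{X'}$), hence $\alpha=j(c)$ by injectivity of $\pi^*$. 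This proves that $j$ is surjective and finishes the argument. I expect this descent statement to be the main difficulty: it fails for general normal singularities, and establishing it requires the vanishing $R^1\pi_*\mathcal{O}_{X'}=0$ together with the structural results on the relative numerical Picard group of \cite{boucksom_fernex_favre_urbinati}; all the remaining steps are routine bookkeeping with the pairing and the projection formula developed above.
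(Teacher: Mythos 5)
The route you take is genuinely different from the paper's. The paper obtains surjectivity of $\psi_X\colon\numd^1(X)_\mathbb{Q}\to\num_{n-1}(X)_\mathbb{Q}$ from \cite[Theorem 5.11]{boucksom_fernex_favre_urbinati}, then obtains injectivity by applying the abstract identity \eqref{eq_im_psi_orth} of Theorem~\ref{thm_cano_mor_prop}.(2) to an alteration, and finally dualizes. You instead introduce the intermediate group $N^1(X)_\mathbb{Q}$ of Cartier classes and the comparison map $j$, and run a concrete descent along a resolution. Your reduction by self-adjointness is correct and matches the paper's last step, and the overall shape of the descent is the right idea.

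However, there is a gap in the step where you assert that $\psi_X\circ j\colon N^1(X)_\mathbb{Q}\to\num_{n-1}(X)_\mathbb{Q}$ is an isomorphism ``by the characterisation of numerical $\mathbb{Q}$-factoriality recalled in \cite{boucksom_fernex_favre_urbinati}.'' Surjectivity of this map does follow from \cite[Theorem 5.11]{boucksom_fernex_favre_urbinati} exactly as in the paper (every Weil class is $\mathbb{Q}$-Cartier). Injectivity is another matter. The relation $\equiv$ defining $\num_{n-1}(X)$ in this paper is cut out by pairing against $\numd^{n-1}(X)$ (i.e.\ against intersections of Cartier divisors on flat covers), not by pairing against curves; so ``$[D]_{\mathrm{Weil}}=0$'' does not, on its face, say anything about $(D\cdot C)$ for an arbitrary curve $C$. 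In fact, using self-adjointness of $\psi_X$ and perfectness of the pairings, one checks that $\Ker(\psi_X\circ j)=0$ is \emph{equivalent} to surjectivity of $\psi_X\colon\numd^{n-1}(X)_\mathbb{Q}\to\num_1(X)_\mathbb{Q}$, which (again by adjunction) is equivalent to the injectivity of $\psi_X\colon\numd^1(X)_\mathbb{Q}\to\num_{n-1}(X)_\mathbb{Q}$ that you are trying to prove. As written, the appeal to \cite{boucksom_fernex_favre_urbinati} for this point is circular; it needs an independent argument of the same nature as your descent step.

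Relatedly, the decisive descent claim — that $\pi^*N^1(X)_\mathbb{Q}$ equals the $\pi$-trivial part of $N^1(X')_\mathbb{Q}$ — is attributed to $R^1\pi_*\mathcal{O}_{X'}=0$ and unspecified structural results of \cite{boucksom_fernex_favre_urbinati}. The cleaner route, which the paper implicitly follows, is: first use \cite[Theorem 5.11]{boucksom_fernex_favre_urbinati} (which is where rational singularities enter) to deduce that $X$ is honestly $\mathbb{Q}$-factorial, so $\pi_*D'$ is $\mathbb{Q}$-Cartier; then $D'-\pi^*(\pi_*D')$ is a $\pi$-exceptional, $\pi$-trivial $\mathbb{Q}$-divisor, which vanishes by the Negativity Lemma. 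Without this the descent claim is not justified. Once this step and the injectivity of $\psi_X\circ j$ are supplied together (a single pass through the resolution can give both), the rest of your argument goes through; but as written there are two places where the hypotheses are invoked by name rather than through an identified statement, and one of them is outright circular.
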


\begin{proof} Using \cite[Theorem 5.11]{boucksom_fernex_favre_urbinati}, then any Weil divisor which is numerically $\mathbb{Q}$-Cartier is $\mathbb{Q}$-Cartier. In particular, $\psi_X : \numd^1(X)_\mathbb{Q } \to \num_{n-1}(X)_\mathbb{Q}$ is surjective. Using \eqref{eq_im_psi_orth} to an alteration of $X'$ applied to $i=1$, we have that $\psi_X: \numd^1(X)_\mathbb{Q} \to \num_{n-1}(X)_\mathbb{Q}$ is injective. Hence $\numd^1(X)_\mathbb{Q}$ and $\num_{n-1}(X)_\mathbb{Q}$ are isomorphic and by duality $\numd^{n-1}(X)_\mathbb{Q}$ and $\num_1(X)_\mathbb{Q}$ are also isomorphic.
\end{proof}

\begin{ex} When $X = X(\Delta)$ be a toric variety associated to a complete fan $\Delta$. 
The map $\psi_X: \numd^1(X)_\mathbb{Q} \to \num_{n-1}(X)_\mathbb{Q}$ is an isomorphism if and only if $\Delta$ is a simplicial fan. Indeed, denote by $N$ the lattice containing $\Delta$ and $M = \Hom(N, \mathbb{Z})$ its dual. 
For any cone $\sigma \in N$, we denote by $M(\sigma)$ the vector space defined by $ M(\sigma) = \{ l \in M \ | \langle l , v \rangle = 0, \  \forall  v \in \sigma \}$. 
The proposition in \cite[\S 5.1]{fulton_toric} implies that any class in $\num_{n-1}(X)_\mathbb{Q}$ is represented by a torus-invariant Weil $\mathbb{Q}$-divisor $D = \sum a_i [V_i]$ in $X(\Delta)$. Since every maximal cone $\sigma$ in the fan $\Delta \subset N$ is full-dimensional, one has $M(\sigma) = \{ 0\}$ and there exists an element $u(\sigma) \in M / M(\sigma) = M $ such that for any $1$-dimensional ray $v_i \in \sigma$, one has:
\begin{equation*}
\langle u(\sigma) , v_i \rangle = -a_i.
\end{equation*}
The element $u(\sigma)$ is uniquely determined if and only if the family of rays $ v_i \in \sigma$ are linearly independent (i.e $\Delta$ is simplicial).
\end{ex}

\section{Positivity} \label{section_positivity}

The notion of positivity is relatively well understood for cycles of codimension $1$ and of dimension $1$. 
For cycles of intermediate dimension this situation is however more subtle and was only recently seriously considered (see \cite{debarre_ein_lazarsfeld_voisin}, \cite{chen_coskun}, \cite{coskun_lesieutre_ottem} and the recent series of papers by Fulger and Lehmann (\cite{fulger_lehmann_kernel}, \cite{fulger_lehmann}).

For our purpose, we will first review the notions of pseudo-effectivity and numerically effective classes. Then we generalize the construction of the basepoint free cone introduced by \cite{fulger_lehmann} to normal projective varieties. This cone is suitable for stating generalized Siu's inequalities (see Section \ref{sec_siu}).

\subsection{Pseudo-effective and numerically effective cones}

As in the previous section, $X$ is a normal projective variety of dimension $n$. 
To ease notation we shall also write $\numd^i(X)$ and $\num_i(X)$ for the real vector spaces $\numd^i(X)_\mathbb{R}$ and $\num_i(X)_\mathbb{R}$.

\begin{defi} 
A class $\alpha \in \num_i(X)$ is pseudo-effective if it is in the closure of the cone generated by effective classes. This cone is denoted $\psef_i(X)$.  
\end{defi}

When $i=1$, $\psef_1(X)$ is the Mori cone (see e.g \cite[Definition 1.17]{kollar_mori}), and when $i = n-1$, $\psef_{n-1}(X)$ is the classical cone of pseudo-effective divisors, its interior being the big cone.

\begin{defi} A class $\beta \in \numd^i(X)$ is numerically effective (or nef) if for any class $\alpha \in \psef_{n-i}(X)$, $ (\beta \actd \alpha)  \geqslant 0$. We denote this cone by $\nefd^i(X)$. 
\end{defi}

When $i= 1$, the cone $\nefd^1(X)$ is the cone of numerically effective divisors, its interior is the ample cone.
\smallskip

We can define a notion of effectivity in the dual $\numd^i(X)$. 
\begin{defi} 
A class $\alpha \in \numd^i(X)$ is pseudo-effective if $\psi_X(\alpha) \in \psef_{n-i}(X)$. We will write this cone as $\psefd^i(X)$. 
\end{defi}

\begin{defi} 
A class $z \in \num_i(X)$ is numerically effective if for any class $\alpha \in \psefd^i(X)$, one has $( \alpha \actd z) \geqslant 0$. This cone is denoted $\nef_i(X)$.
\end{defi}

By convention, we will write $\alpha \leqslant \beta$ (resp. $\alpha \leqslant \beta $) for any $\alpha, \beta \in \num_i(X)$ (resp. $\alpha, \beta \in \numd^i(X)$ ) if $\beta  - \alpha  \in \psef_i(X)$ (resp. $\beta -\alpha \in \psefd^i(X)$). 

When $X$ is smooth, the morphism $\psi_X$ induces an isomorphism between $\numd^i(X)$ and $\num_{n-i}(X)$, and we can identify these cones:
\begin{equation*}
\begin{array}{l}
 \nefd^i(X) = \nef_{n-i}(X) ,\\
 \psefd^i(X) = \psef_{n-i}(X).
\end{array}
\end{equation*}

\subsection{Pliant classes}

We recall the definition of pliant classes introduced in \cite[Definition 3.1]{fulger_lehmann} and their main properties. Their definition involve Schur classes which were introduced in Section \ref{section_characteristic}.

\begin{defi}
The pliant cone $\PL^\bullet(X)$ is defined as the convex cone generated by product of Schur classes of globally generated vector bundle.
\end{defi}
We denote by $\PL^i(X)$ the set of pliant classes of codimension $i$ in $X$.

\begin{thm} \label{thm_fulger_lehmann_pliant}(see \cite[Theorem 1.3]{fulger_lehmann}) 
The pliant cone $\PL^i(X)$ satisfies the following properties.
\begin{enumerate}
\item[(i)] The cone $\PL^i(X)$ is a closed convex salient cone with non-empty interior in $\numd^i(X)_\mathbb{R}$.
\item[(ii)] The cone $\PL^i(X)$ contains product of ample Cartier divisors in its interior.
\item[(iii)] For all integer $i, l$, we have $\PL^i(X) \cdot \PL^l(X) \subset \PL^{i+l}(X)$.
\item[(iv)] For any (proper) morphism $q: X \to Y$, one has that $q^* \PL^i(Y) \subset \PL^i(X)$.
\end{enumerate}
\end{thm}

We recall another proposition which we will reuse in our proofs.
\begin{prop}(cf \cite[Example 3.13]{fulger_lehmann}) Let $\mathbb{G}$ be a Grassmannian variety. Then $\PL^i( \mathbb{G}) = \psefd^i(\mathbb{G})$.
\end{prop}

\subsection{Basepoint free cone on normal projective varieties}

In this section, we define a cone $\Cc^i(X)$ and prove in Corollary \ref{cor_bpf_smooth}  that this cone is equal to the basepoint free cone defined by Fulger-Lehmann when $X$ is smooth. 
This generalizes \cite[Theorem 1.7]{fulger_lehmann} to normal projective varieties and our proof follows closely Fulger-Lehmann's approach.

\medskip
  
Recall that a complete intersection $\gamma \in \IC^{i+e}(X')$ on $X'$ where $p:X' \to X$  is a flat morphism of relative dimension $e$ and where $X' $ is an equidimensional projective scheme induces naturally (see Definition \ref{defi_numd}) an element  $[\gamma] \in \numd^i(X)_\mathbb{R} = \Hom_\mathbb{R}(\num_i(X)_\mathbb{R}, \mathbb{R})$ by intersecting the class $\gamma$ with the pullback by $p$ of a $i$-dimensional cycle in $X$. We also refer to Proposition \ref{prop_num_numd_module} for the definition of the product $\numd^i(X)_\mathbb{R} \times \numd^l(X)_\mathbb{R} \to \numd^{i+l}(X)_\mathbb{R}$.  
\begin{defi}
 The cone $\Cc^i(X)$ is the closure of the convex cone in $\numd^i(X)_\mathbb{R}$ generated by products of the form $[\gamma_1] \cdot \ldots \cdot [\gamma_l]$ where each $\gamma_j $ is a product of $e_j + i_j$ ample Cartier divisors on an equidimensional projective scheme $X_j$ which is flat over $X$ of relative  dimension $e_j$ and where $i_j$ are integers satisfying $i_1 + \ldots + i_l = i$. 
\end{defi}

\begin{rem} By definition, the cone $\Cc^i(X)$ contains the products of ample Cartier divisors and Segré classes of anti-ample vector bundles.
\end{rem}

Recall also that if $q : X \to Y$ is a flat morphism of relative dimension $e$ between projective schemes, then the pushforward is well-defined on numerical cycles 
$q_* : \numd^i(X)_\mathbb{R} \to \numd^{i-e}(Y)_\mathbb{R}$ (see Corollary \ref{cor_push_num}).

\begin{thm} \label{thm_classes_pliantes} The cone $\Cc^i(X)$ is  satisfies the following properties.
\begin{enumerate}
\item[(i)] The cone $\Cc^i(X)$ is a salient, closed, convex cone with non-empty interior in $\numd^i(X)_\mathbb{R}$.
\item[(ii)] The cone $\Cc^i(X)$ contains products of ample Cartier divisors in its interior.
\item[(iii)] For all integer $i$ and $l$, we have  $\Cc^i(X) \cdot \Cc^l(X) \subset \Cc^{i+l}(X)$. 
\item[(iv)] For any (proper) morphism $q: X \to Y$, we have $q^* \Cc^i(Y) \subset \Cc^i(X)$. 
\item[(v)] For any integer $i$, we have $\Cc^i(X) \subset \nefd^i(X) \cap \psefd^i(X)$. 
\item[(vi)] In codimension $1$, one has $\Cc^1(X) = \nefd^1(X)$.
\item[(vii)] For any flat morphism $q: X \to Y$ between equidimensional projective schemes of relative dimension $e$ and any integer $i \geqslant e$, we have $q_* \Cc^{i}(X) \subset \Cc^{i - e}(Y)$.
\end{enumerate}

Moreover, $\BPF(X)$ is the smallest cone satisfying properties $(iii),(vi)$ and $ (vii)$.
\end{thm}

\begin{proof}
We prove successively the items $(iii)$, $(vii)$, $(v)$, $(vi)$, $(iv)$, $(ii)$ and $(i)$. 

$(iii)$, $(vii)$ This follows from the definition of $\Cc^i(X)$.

\bigskip

$(v)$ It is sufficient to prove that for any effective cycle $z \in Z_{n-l}(X)$ and any basepoint free class $\alpha \in \Cc^i(X)$, then $\alpha \actd z \in \psef_{n-i-l}(X)$. Indeed, apply this successively to $z =[X]$ and $z \in \psef_i(X)$ give the inclusions $\Cc^i(X) \subset \psefd^{i}(X)$ and $\Cc^i(X)\subset  \nefd^i(X)$. 
 By definition of basepoint free classes and by linearity, we can suppose that $\alpha $ is equal to a product $[\gamma_1] \cdot \ldots \cdot [\gamma_p]$ where $\gamma_i \in \IC^{e_j + i_j}(X_j)_\mathbb{R}$ are products of ample Cartier divisors on $X_j$ where $ p_j : X_i \to X$ is a flat proper morphism of relative dimension $e_j$ and where $i_j$ are integers such that $i_1 + \ldots + i_p = i$.
 By definition, one has 
$ [\gamma_1 ] \actd z = {p_1}_* (\gamma_1 \cdot  p_1^* z)$. 
Because the cycle $z$ is pseudo-effective, the cycle $p_1^* z $ remains pseudo-effective as $p_1$ is a flat morphism. As $\gamma_1$ is a positive combination of products of ample Cartier divisors, we deduce that the cycle $\gamma_1 \cdot p_1^* z$ is pseudo-effective. Hence, $[\gamma_1] \actd z \in \psef_{n-l_1-l}(X)$. Iterating the same argument, we get that $\alpha \actd z \in \psef_{n-i-l}(X)$ as required.
\bigskip

$(vi)$ The interior of $\nefd^1(X)$ is equal to the ample cone of $X$ so by definition: 
\begin{equation*}
\Int ({\nefd^1(X)})  \subset \Cc^1(X).
\end{equation*} 
As the closure of the ample cone is the nef cone by \cite[Theorem 1.4.21.(i)]{lazarsfeld_positivity_1}, one gets $\nefd^1(X)  \subset \Cc^1(X)$. Conversely, the cone $\Cc^1(X) $ is included in the cone $\nefd^1(X)$, so we get $\Cc^1(X) = \nefd^1(X)$. 

\bigskip

 $(iv)$ By linearity and stability by products, we are reduced to treat the case of a class $[D] $ induced by an ample Cartier divisor on $Y_1$ where  $p_1 : Y_1 \to Y$ is a flat proper morphism, and prove that $q^*[D]$ is a limit of ample Cartier divisors on a flat variety over $X$. 
Let $X_1$ be the fibre product of $Y_1$ and $X$ and let $q'$ be the natural projection from $X_1$ to $Y_1$, observe that $q^*[D]$ is induced by $q'^* D$ which remains nef on $X_1$ as $q'$ is proper.
In particular, it is the limit of ample divisors on $\numd^1(X_1)$.

\bigskip

$(i)$ 
Take $\alpha \in \Cc^i(X)$ such that $-\alpha \in \Cc^i(X)$. Then for all $z \in \psef_i(X)$, one has that $(\alpha \actd z)= 0$ as $\alpha$ is nef by (v). 
Since effective classes of dimension $i$ generate $Z_i(X)$, it follows that  $(\alpha \actd z) = 0$ for any $z \in \num_i(X)_\mathbb{R}$ which implies by definition that $\alpha  = 0$.  
This shows $\Cc^i(X)$ is salient.

\bigskip 

$(ii)$ We show now that $\Cc^i(X)$ contains product of ample divisors in its interior.
To do so we prove that $\PL^i(X) \subset \Cc^i(X)$ for any integer $i \geqslant 1$.

\smallskip
For $i = 1$, $\Cc^1(X) = \nefd^1(X)$, and by definition, the divisor $h$ is ample so it is in the interior of the nef cone and we are done.
Take a globally generated vector bundle $E$ of rank $r$ on $X$ and consider the induced morphism $\phi$ given by:
\begin{equation*}
\phi : X \to \mathbb{G}= G(r, \mathbb{P}(H^0(X, E)^*)).
\end{equation*}
 Since $\PL^i(X) \supset \phi^* \PL^i( \mathbb{G})$ and since these cones are preserved by pullbacks, we are then reduced to proving that
 $\PL^i(\mathbb{G}) \subset \Cc^i(\mathbb{G})$.
 Denote by $G = \PGL(H^0(X,E)^*)$ the projective special orthogonal group of the vector space $H^0(X,E)^*$ and consider a class $\alpha \in \numd^i(\mathbb{G})_\mathbb{R}$. 
Since $\mathbb{G}$ is smooth, $\psi_{\mathbb{G}} :\numd^i(\mathbb{G})_\mathbb{R} \to \numd_{n-i}(\mathbb{G})_\mathbb{R}$ is an isomorphism by Theorem \ref{thm_cano_mor_prop} and $\alpha$ is represented by an effective cycle $z \in Z_{n-i}(\mathbb{G})_\mathbb{R}$.
 
Consider $W$ the Zariski closure in $ G \times \mathbb{G}$ given by:
\begin{equation*}
W =  \overline{\{ (g, g \cdot x)\}}_{g\in G, x\in z} \subset G\times \mathbb{G}.
\end{equation*}
By construction, $W$ is a quasi-projective scheme and the projection $p : W \to \mathbb{G}$ onto $\mathbb{G}$ is a flat morphism. 
Denote by $q: W \to G$ the projection onto $G$.  
Fix $H$ a very ample divisor on $G$ and denote by $M$ the dimension of the group $\PGL(H^0(X,E)^*)$. Then there exists an open embedding $j : W \to \Pg^M_{\mathbb{G}}$ such that one has the following diagram:
\begin{equation*}
\xymatrix{ G \ar[r] & \Pg^M  & \mathbb{P}^M_\mathbb{G}  \ar[l]_{h} \ar[rd]^{\pi} & \\
&W \ar[ru]^{j} \ar[lu]^{q} \ar[rr]^p & & \mathbb{G} }
\end{equation*}
where $\pi : \mathbb{P}^M_\mathbb{G} \to \mathbb{G}$ is the projection onto $\mathbb{G}$ and $h: \mathbb{P}^M_\mathbb{G} \to \mathbb{P}^M$ is the projection onto $\Pg^M$
  
By construction the general fiber of $q$ over an element $g \in G $ is numerically equivalent to $\alpha$ and since we can choose $H$ to be a hyperplane of $\Pg^M$, we have:
\begin{equation*}
\dfrac{1}{(H^M)} p_* q^* H^{M} =  \alpha \in \numd_{n-i}(\mathbb{G})_\mathbb{R}.
\end{equation*}
Moreover \cite[Proposition 1.7]{fulton} implies that $ p_* j^*  = \pi_*$ in $Z_{n-i}(\Pg^M_\mathbb{G})$, hence:
\begin{equation*}
p_* q^* H^M = \pi_* h^* H^M = (H^M) \alpha \in \num_{n-i}(\mathbb{G})_\mathbb{R}.
\end{equation*}
Since $H$ is ample, $h^* H $ is nef and the class $h^* H^M$ belongs to $\BPF_{n-i}(\Pg^M_\mathbb{G})$. 
Assertion $(vii)$ thus implies that the class $\pi_* h^* H^M / (H^M) = \alpha $ belongs to $\BPF_{n-i}(\mathbb{G})_\mathbb{R}$, as required. 

Since $\PL^i(X)$ has non-empty interior in $\numd^i(X)_\mathbb{R}$ by Theorem \ref{thm_fulger_lehmann_pliant}.(ii), we have proved $(ii)$.
\bigskip

Let us prove that the cone $\Cc^i(X)$ is the smallest cone satisfying properties $(iii),(vi)$ and $ (vii)$.
Denote by $\Cc'$ the minimal cone satisfying these conditions. We have that $\Cc'^i(X) \subset \Cc^i(X)$ by minimality. 
Take $q: X_1 \to X$ a flat morphism of relative dimension $e$ where $X_1$ is an equidimensional projective scheme and consider $\alpha \in \IC^{i+e}(X_1)$ a product of ample Cartier divisors on $X_1$. 
Since $q_* : \numd^i(X_1)_\mathbb{R} \to \numd^{i-e}(X)_\mathbb{R}$ and since $\alpha \in \Cc'^{i+e}(X_1)$, we have that $q_* \alpha \in \Cc'^i(X)$ by $(vii)$, hence $\Cc^i(X) \subset \Cc'^i(X)$ as required.

\end{proof}

We recall Fulger-Lehmann's construction of the basepoint free cone. 
A class $\alpha \in \numd_{n-i}(X)_\mathbb{R}$ is strongly basepoint free if there is: 
\begin{enumerate}
\item[$\bullet$] an equidimensional quasi-projective scheme $U$ of finite type over $\C$,
\item[$\bullet$] a flat proper morphism $s: U \to X$ 
\item[$\bullet$] and a proper morphism $p : U \to W$ of relative dimension $n-i$ to a quasi-projective scheme $W $ such that each component of $U$ surjects onto $W$
\end{enumerate}
such that 
$${s_{| F_p}}_* ([F_p]) = \alpha,$$ 
where $[F_p]$ is the fundamental class of a general fiber of $p$. 
We denote by $\BPF'^i(X)$ the closure of the convex cone generated by strongly basepoint free classes in this sense. 
The cone $\BPF'(X)$ as above was defined by Fulger-Lehmann and they proved that this cone satisfies Theorem \ref{thm_classes_pliantes} when $X$ is smooth (\cite[Theorem 1.7]{fulger_lehmann}). The following result proves that the cones $\BPF'(X)$ and $\BPF(X)$ are equal in this case. 

\begin{cor} \label{cor_bpf_smooth} Suppose $X$ is smooth, then the cone $\Cc^i(X)$ is equal to the basepoint free cone $\BPF'^i(X)$. 
\end{cor}

\begin{rem} Our construction of the cone $\BPF(X)$ allows us to generalize Fulger-Lehmann's result for normal varieties. This improvement is due to the fact that we are able to pushforward dual numerical classes by flat morphism. 
\end{rem}

\begin{proof}
By \cite[Theorem 1.7]{fulger_lehmann}, the cone $\BPF'(X)$ satisfies the conditions of Theorem \ref{thm_classes_pliantes}, hence $\Cc^i(X) \subset \BPF'^i(X)$. 
Let us prove the reverse inclusion $\BPF'^i(X) \subset \Cc^i(X) $. 
Take $p : U \to W$ a projective morphism onto an equidimensional quasi-projective variety $W$ where $U$ is a quasi-projective scheme and a flat map $s: U \to X $ such that $s_* [F_p] = \alpha$ where $F_p$ is a general fiber of $p$. 
Take $H_W$ an ample divisor on $W$, then the class $\alpha$ satisfies:
\begin{equation*}
\alpha = s_* p^* H_W^{i+e} \in \numd_{n-i}(X)_\mathbb{R}.
\end{equation*}
Choose an ample divisor $H$ on  $U$, since the class $p^* H_W$ is nef, for any $\epsilon >0$, the divisor $p^* H_W + \epsilon H$ is ample.

Since the morphism $s: U \to X$ is also quasi-projective and there exists an integer $l$ (which depends on $\epsilon$) such that the following diagram is commutative
\begin{equation*}
\xymatrix{ & \mathbb{P}^l_X \ar[rd]^{\pi} & \\
U \ar[ru]^{f_\epsilon} \ar[rr]^s & & X  }
\end{equation*}
where $f_\epsilon : U \to \mathbb{P}^l_X$ is an immersion induced by $p^* H_W + \epsilon H$ and $\pi: \mathbb{P}^l_X \to X$ is the flat projection onto $X$. 

Let $\xi$ be the relative class $c_1(\mathcal{O}_{\mathbb{P}^l_X}(1))$ on $\mathbb{P}^l_X$, then one has that for any cycle $z \in Z_{i}(X)_\mathbb{R}$:
\begin{equation*}
((p^* H_W + \epsilon H)^{i+e} \cdot s^*z) = ( \xi^{i+e} \cdot \pi^* z), 
\end{equation*}
since $f_\epsilon^* \xi = p^* H_W + \epsilon H$. 
Hence, we obtain:
\begin{equation*}
( s_* (p^* H_W + \epsilon H)^{i+e} \cdot z) = (\pi_* \xi^{i+e} \cdot z).
\end{equation*}
Since the class $\xi^{i+e}$ is nef and since these cones are stable by flat pushforward, we have $\pi_*(\xi^{i+e}) \subset  \Cc^i(X)$. 
Taking the limit as $\epsilon \rightarrow 0$, we have that $s_* (p^* H_W + \epsilon H)^{i+e} \rightarrow \alpha = s_* p^* H_W^{i+e}$, hence $\alpha \in \Cc^i(X)$ since each class $s_* (p^* H_W + \epsilon H)^{i+e}) \in \numd^i(X)_\mathbb{R}$ belongs to $\Cc^i(X)$.

\end{proof}

\medskip
We give here a detailed proof of the fact that the pseudo-effective cone is salient (see also \cite[Corollary 3.17]{fulger_lehmann}). The proof uses a useful proposition that we will use later on.  

\begin{prop}
 
\label{prop_nullite_psef}
Let $\alpha \in \psef_{n-i}(X)$ be a pseudo-effective class on $X$ and $\gamma \in \Cc^{n-i}(X)$ be class lying in the interior of the basepoint free cone. 
Then we have $( \gamma \actd \alpha) = 0$ if and only if $\alpha = 0$.  
\end{prop}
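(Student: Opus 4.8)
The plan is to prove both implications. The direction $\alpha = 0 \implies (\gamma \actd \alpha) = 0$ is trivial, so the content is the converse: if $\alpha \in \psef_{n-k}(X)$ and $\gamma$ lies in the interior of $\Cc^{n-k}(X)$ and $(\gamma \actd \alpha) = 0$, then $\alpha = 0$. Since $\gamma$ is in the interior of the strongly pliant cone, for every $\beta \in \numd^{n-k}(X)_\mathbb{R}$ there is some $\epsilon > 0$ such that $\gamma - \epsilon \beta \in \Cc^{n-k}(X)$; in particular, writing an arbitrary strongly pliant class $\beta \in \Cc^{n-k}(X)$, the class $\gamma - \epsilon \beta$ is still strongly pliant, hence by Theorem~\ref{thm_classes_pliantes}.(5) it is pseudo-effective as an element of $\numd^{n-k}(X)$, i.e. $\psi_X(\gamma - \epsilon\beta) \in \psef_k(X)$, and it is also nef. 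The key point I would extract is that $\gamma - \epsilon\beta$ is nef, so pairing it with the pseudo-effective class $\alpha$ gives $(\gamma \actd \alpha) - \epsilon(\beta \actd \alpha) \geqslant 0$, whence $(\beta \actd \alpha) \leqslant 0$ for all $\beta \in \Cc^{n-k}(X)$.

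Next I would show the reverse inequality $(\beta \actd \alpha) \geqslant 0$ for all $\beta \in \Cc^{n-k}(X)$. Indeed $\Cc^{n-k}(X) \subset \nefd^{n-k}(X)$ by Theorem~\ref{thm_classes_pliantes}.(5), and $\alpha \in \psef_{n-k}(X)$, so by the very definition of nef classes in $\numd^{n-k}(X)$ we get $(\beta \actd \alpha) \geqslant 0$. Combining the two inequalities, $(\beta \actd \alpha) = 0$ for every $\beta \in \Cc^{n-k}(X)$. Since $\Cc^{n-k}(X)$ has non-empty interior in $\numd^{n-k}(X)_\mathbb{R}$ (Theorem~\ref{thm_classes_pliantes}.(1)), its linear span is all of $\numd^{n-k}(X)_\mathbb{R}$, so $(\beta \actd \alpha) = 0$ for every $\beta \in \numd^{n-k}(X)_\mathbb{R}$. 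By the perfectness of the pairing $\numd^{n-k}(X)_\mathbb{R} \times \num_{n-k}(X)_\mathbb{R} \to \mathbb{R}$ (the corollary following the finite-dimensionality theorem), this forces $\alpha = 0$.

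The main obstacle, such as it is, lies in the first paragraph: one must correctly exploit that $\gamma$ is interior to $\Cc^{n-k}(X)$ to produce, for each direction $\beta$, a nef class of the form $\gamma - \epsilon\beta$ and then use nefness against the pseudo-effective $\alpha$. The subtlety is purely one of bookkeeping between the two dual positivity notions — positivity of classes in $\numd^{n-k}(X)$ (via $\psi_X$ and via the nef/pseudo-effective duality) versus positivity of cycles in $\num_{n-k}(X)$ — and making sure the pairing $(\gamma \actd \alpha)$ is interpreted consistently with $\alpha \in \num_{n-k}(X)$, $\gamma \in \numd^{n-k}(X)$. Once that is set up, everything reduces to the elementary fact that a nonzero element of a salient closed cone with nonempty interior pairs strictly positively against some interior element of the dual cone, together with the finite-dimensionality and perfectness of the pairing already established in the excerpt.
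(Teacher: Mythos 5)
Your proof is correct and follows essentially the same route as the paper: exploit interiority of $\gamma$ to dominate any strongly pliant $\beta$, use that strongly pliant classes are nef against the pseudo-effective $\alpha$ to squeeze $(\beta \actd \alpha) = 0$, then pass to the span of $\Cc^{n-k}(X)$ and invoke the perfect pairing. The only cosmetic difference is that the paper packages the interiority via a uniform constant $C\|\beta\|\gamma - \beta \in \Cc^{n-k}(X)$ whereas you allow $\epsilon$ to depend on $\beta$, which is equally valid.
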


\begin{proof}

Let us fix two basepoint free classes $\beta$ and $\gamma$ in $ \numd^{n-i}(X)$, and a norm $|| \cdot ||$ on $\numd^{n-i}(X)_\mathbb{R}$. As $\gamma$ is in the interior of $\Cc^{n-i}(X)$ by Theorem \ref{thm_classes_pliantes}.(ii), there exists a positive constant $C > 0$ such that for any $\beta \in \Cc^{n-i}(X)$, one has:
\begin{equation*}
C || \beta ||_{\numd^{n-i}(X)_\mathbb{R}} \gamma  -\beta \in \Cc^{n-i}(X). 
\end{equation*}  
Intersecting with $\alpha \in \psef_{n-i}(X)$ and using Theorem \ref{thm_classes_pliantes}.(v), we have that $(\beta \cdot \alpha )= 0$.
 Since the basepoint free cone $\Cc^{n-i}(X)$ generates all $\numd^{n-i}(X)_\mathbb{R}$ by Theorem \ref{thm_classes_pliantes}.(i), we have proved that $(\beta' \actd\alpha) = 0$ for any $\beta' \in \numd^{n-i}(X)$, hence $\alpha= 0$ as required.
\end{proof}

\begin{cor}  
The pseudo-effective cone $\psef_{n-i}(X)$ is a closed, convex, full dimensional salient cone in $\num_{n-i}(X)_\mathbb{R}$.
\end{cor}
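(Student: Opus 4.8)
The plan is to deduce the corollary directly from Proposition~\ref{prop_nullite_psef}, which is the substantive input. Recall we must show that $\psef_{n-k}(X)$ is closed, convex, full dimensional, and salient in $\num_{n-k}(X)_\mathbb{R}$. Closedness and convexity are immediate from the definition: $\psef_{n-k}(X)$ is, by construction, the closure of a convex cone, and the closure of a convex set is convex. So the only two points that require an argument are full dimensionality and saliency, and both follow by pairing against strongly pliant classes.

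For full dimensionality, I would argue by duality. If $\psef_{n-k}(X)$ were contained in a proper linear subspace of $\num_{n-k}(X)_\mathbb{R}$, then there would be a nonzero linear functional on $\num_{n-k}(X)_\mathbb{R}$ vanishing on it; since the pairing $\numd^{n-k}(X)_\mathbb{R} \times \num_{n-k}(X)_\mathbb{R} \to \mathbb{R}$ is perfect (the corollary on perfectness of the pairing proved just above), this functional is given by some nonzero $\beta \in \numd^{n-k}(X)_\mathbb{R}$ with $(\beta \actd z) = 0$ for all $z \in \psef_{n-k}(X)$. But effective classes of dimension $n-k$ generate $\num_{n-k}(X)_\mathbb{R}$, so $(\beta \actd z) = 0$ for all $z \in \num_{n-k}(X)_\mathbb{R}$, forcing $\beta = 0$ by non-degeneracy, a contradiction. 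Hence $\psef_{n-k}(X)$ spans $\num_{n-k}(X)_\mathbb{R}$, i.e.\ it has non-empty interior.

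For saliency, suppose $\alpha \in \psef_{n-k}(X)$ and $-\alpha \in \psef_{n-k}(X)$. Fix any class $\gamma \in \numd^{n-k}(X)_\mathbb{R}$ in the interior of the strongly pliant cone $\Cc^{n-k}(X)$, which is non-empty by Theorem~\ref{thm_classes_pliantes}.(2). Since $\gamma$ is nef (Theorem~\ref{thm_classes_pliantes}.(5)), we have $(\gamma \actd \alpha) \geqslant 0$ and $(\gamma \actd (-\alpha)) \geqslant 0$, hence $(\gamma \actd \alpha) = 0$. By Proposition~\ref{prop_nullite_psef} this gives $\alpha = 0$, which is exactly saliency. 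Combining the four properties completes the proof. The only mild subtlety — and the step I would expect to need the most care — is making sure the generation statement "effective classes of dimension $n-k$ generate $\num_{n-k}(X)_\mathbb{R}$" is invoked correctly; but this is immediate from the definition of $\num_k(X)$ as a quotient of $Z_k(X)$, whose generators are the classes of irreducible (hence effective) subvarieties, so there is no real obstacle here. The corollary is essentially a formal consequence of Proposition~\ref{prop_nullite_psef} together with the perfectness of the numerical pairing.
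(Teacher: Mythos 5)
Your proof is correct and takes essentially the same route as the paper: the saliency argument reduces to Proposition~\ref{prop_nullite_psef} by pairing with a class in the interior of $\Cc^{n-k}(X)$ (the paper uses $\omega_X^{n-k}$ for an ample $\omega_X$, which is the same thing by Theorem~\ref{thm_classes_pliantes}.(2)). The paper's proof only records the saliency step; your additional observations on closedness, convexity, and full dimensionality are routine and correct.
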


\begin{proof}
We take $u \in \psef_{n-i}(X)$ such that $-u \in \psef_{n-i}(X)$, then for any ample Cartier divisor $H_X$ on $X$, the products $(H_X^{n-i} \cdot u) $ and $(-u \cdot H_X^{n-i})$ are non-negative hence $(u \cdot H_X^{n-i}) = 0$. This implies that $u = 0$ by Proposition \ref{prop_nullite_psef}.

\end{proof}

\subsection{Siu's inequality in arbitrary codimension} \label{sec_siu}

We recall Siu's inequality:

\begin{prop} \label{ineg_fonda} (\cite[Theorem 2.2.13]{lazarsfeld_positivity_1}) Let $V$ be a closed subscheme of dimension $r$ in $X$ and
let $A,B$ be two $\Q$-divisors nef on $X$ such that $A_{|V}$ is big, then we have in $\num_{i-1}(X)$,
\begin{equation*}
B \actd [V] \leqslant \dfrac{r ( (A^{r-1}\cdot B )\actd [V] )}{(A^r \actd [V])} \  A \actd [V].
\end{equation*}
\end{prop}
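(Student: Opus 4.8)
The statement is a version of Siu's inequality localized on a subscheme $V$; the cleanest strategy is to reduce it to the known global Siu inequality on $X$ (Proposition~\ref{ineg_fonda} read without $V$, i.e. the classical \cite[Theorem 2.2.13]{lazarsfeld_positivity_1}) by working on the subscheme itself. First I would reduce to the case where $V$ is an irreducible reduced projective variety: since $[V]=\sum_i l_i [V_i]$ over the irreducible components and both sides of the desired inequality are additive in $[V]$ (with the coefficient $r\,( (A^{r-1}\cdot B)\actd[V])/(A^r\actd[V])$ being a weighted average of the corresponding ratios on the components of top dimension, the lower-dimensional components contributing $0$ to all the relevant intersection numbers against $A^r$ and forcing us to restrict to the equidimensional part), a standard convexity bookkeeping shows it suffices to treat each top-dimensional component, and then pass to its normalization (which does not change degrees of Cartier divisor classes). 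So assume $V$ is a normal projective variety of dimension $r$.

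Next, let $j:V\hookrightarrow X$ denote the inclusion and write $A_V=j^*A$, $B_V=j^*B$, which are nef $\Q$-divisor classes on $V$ with $A_V$ big by hypothesis. Apply the classical Siu inequality on $V$ (the case ``$X=V$, $V$ itself'' of the statement, which is \cite[Theorem 2.2.13]{lazarsfeld_positivity_1} applied to the big and nef class $A_V$ and the nef class $B_V$): one gets in $\num_{r-1}(V)$
\begin{equation*}
B_V \leqslant \dfrac{r\,(A_V^{r-1}\cdot B_V)}{(A_V^r)}\, A_V.
\end{equation*}
The class $A_V^r$ is a positive $0$-cycle since $A_V$ is big and nef on the $r$-dimensional $V$, so the denominator is strictly positive and the ratio makes sense. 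Now push forward by $j_*:\num_{r-1}(V)\to\num_{r-1}(X)$; pushforward is a morphism of abelian groups (Theorem~\ref{thm_num_sum_up}.(1)) and it preserves effectivity (it sends effective cycles to effective cycles, hence pseudo-effective to pseudo-effective), so it preserves the relation $\leqslant$. Using the projection formula (Theorem~\ref{thm_prop_intersection_diviseurs}.(2), or rather its iterated form) we have $j_*(B_V)=B\actd[V]$, $j_*(A_V)=A\actd[V]$, $(A_V^{r-1}\cdot B_V)=(A^{r-1}\cdot B)\actd[V]$ and $(A_V^r)=(A^r\actd[V])$, and the displayed inequality becomes exactly the assertion.

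\textbf{Main obstacle.} The genuinely delicate point is the reduction step: one must carefully handle cycles $[V]$ that are not equidimensional or not reduced, checking that the lower-dimensional components really drop out of every intersection number that appears (they do, because $A$ has only $r$ factors and $A^{r-1}\cdot B$ only $r$ factors, which annihilate any component of dimension $<r$), and that restricting the weighted-average coefficient to the top-dimensional part does not break the inequality---this is a convexity argument: $r\,(A^{r-1}\cdot B)\actd[V]/(A^r\actd[V])$ is a convex combination of the component-wise ratios $r\,(A_{V_i}^{r-1}\cdot B_{V_i})/(A_{V_i}^r)$, so $B\actd[V]-r\,\frac{(A^{r-1}\cdot B)\actd[V]}{(A^r\actd[V])}A\actd[V]$ is a nonnegative combination of the $j_{i*}$ of the component-wise Siu defects, each of which is anti-pseudo-effective by the previous paragraph. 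A secondary subtlety is that $A_V$ big on $V$ requires the hypothesis ``$A_{|V}$ big'' to be interpreted as big on each top-dimensional component, which should be part of the statement's intended meaning. Everything else is a routine application of the projection formula for intersection with Cartier divisors developed in Section~\ref{section_chow}.
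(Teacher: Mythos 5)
Your approach differs from the paper's in its decomposition: you want to prove Siu's inequality intrinsically on (the normalization of) $V$ and then transport it to $X$ by $j_*$, whereas the paper directly re-runs Lazarsfeld's section-counting argument for $H^0\bigl(V,\mathcal{O}_V(m(A-B))\bigr)$ and reads off the inequality on $X$ from the positivity of the leading term. For $V$ an irreducible variety the two routes are equivalent, and yours is cleaner in that it isolates the case $V=X$ as the only place where asymptotic Riemann--Roch is invoked; the pushforward and projection-formula bookkeeping you describe are correct in that case.

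However, your reduction to irreducible components is wrong, and this is a genuine gap. Writing $[V]=\sum_i n_i[V_i]$, $C_i=r(A^{r-1}B\actd[V_i])/(A^r\actd[V_i])$ and $C=r(A^{r-1}B\actd[V])/(A^r\actd[V])$, one has
\begin{equation*}
B\actd[V]-C\,A\actd[V]=\sum_i n_i\bigl(B\actd[V_i]-C_i\,A\actd[V_i]\bigr)+\sum_i n_i\,(C_i-C)\,A\actd[V_i].
\end{equation*}
The first sum is anti-pseudo-effective by component-wise Siu, but the second is not ``a nonnegative combination of Siu defects'': $C$ is a weighted average of the $C_i$, so the coefficients $C_i-C$ have mixed signs, and the classes $A\actd[V_i]$ point in different directions of $\num_{r-1}(X)$, so there is no cancellation. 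In fact the proposition is false as stated for reducible $V$. Take $X=\Pg^2\times\Pg^2$, $V=\bigl(\Pg^2\times\{p\}\bigr)\cup\bigl(\{q\}\times\Pg^2\bigr)$, $A=h_1+2h_2$ and $B=h_1$, with $h_i$ the pullbacks of the hyperplane classes and $\ell_i$ the classes of lines in the factors. Then $(A^2\actd[V])=5$, $(AB\actd[V])=1$, so $C=2/5$; yet $B\actd[V]=\ell_1$, $A\actd[V]=\ell_1+2\ell_2$, and $C\,A\actd[V]-B\actd[V]=-\tfrac{3}{5}\ell_1+\tfrac{4}{5}\ell_2$ is not pseudo-effective. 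Note that the paper's own proof has the same defect for reducible $V$ (a section of $\mathcal{O}_V(m(A-B))$ may vanish identically on a component $V_i$ on which $(A-B)|_{V_i}$ is negative, and then its divisor of zeros does not represent $m(A-B)\actd[V]$); the statement should require $V$ irreducible, which is the only case used downstream since the subschemes appearing in Theorem~\ref{thm_siu_gen} and Corollary~\ref{cor_siu_gen} can be taken irreducible. Once restricted to irreducible $V$, your argument is correct.
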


\begin{rem} The case $V= X$ is a consequence of the bigness criterion given in \cite[Theorem 2.2.13]{lazarsfeld_positivity_1}, however we will need the result for possibly non-reduced subschemes of $X$. 
\end{rem}

\begin{rem} The proof of the previous proposition implies that $B_{|V} \leqslant r (A^{r-1} \cdot B \actd [V])/ (A^r \actd [V]) \times A_{|V}$ in the Chow group $A^1(V)$. However, since we want to work in the numerical group, we compare these classes in $X$ (we look at their pushforward by the inclusion of $V$ in $X$).
\end{rem}

\begin{proof} The proof is the same as in \cite[Theorem 2.2.13]{lazarsfeld_positivity_1}, that is to find a section of the line bundle $\mathcal{O}_V(m(A -B))$. 
Up to some small pertubations of $A$ and $B$ of the form  $A + \epsilon H$ and $B + \epsilon H$ of $A$ and $B$ where $\epsilon \rightarrow 0$, we can suppose that $A$ and $B$ are ample. 
Moreover, by taking a high multiple of $A$ and $B$, we can suppose that they are also both very ample.  
Since $B$ is very ample, we choose $m$ general elements $E_j$ of the linear system $|B|$ and consider the exact sequence:
\begin{equation*}
\xymatrix{0 \ar[r] & \mathcal{O}_V(mA - mB) \ar[r]& \mathcal{O}_V(mA) \ar[r]&  \mathcal{O}_{\cup E_j}(mA)\ar[r] & 0 . }
\end{equation*}
Taking long exact sequence associated, one obtains the minoration:
\begin{equation*}
h^0(V, \mathcal{O}_V(mA-mB) \geqslant h^0(V , \mathcal{O}_V(mA)) -  h^0( \cup_{j=1}^m E_j, \mathcal{O}_{\cup_{j=1}^m E_j}(mA)). 
\end{equation*}
Observe that $[\cup E_j] = \sum_{j=1}^m [E_j] = m B \actd [V]$.
 Applying \cite[Corollary 3.6.3]{gil_gubler_kunneman_lazarsfeld_2016} to the nef divisor $A$, we get $h^0(V, \mathcal{O}_V(mA))= m^{r}/(r!) (A^r \actd [V]) + o(m^{r})$ and 
 \begin{equation*}
 h^0(\cup E_j , \mathcal{O}_{\cup_{j=1}^m E_j}(mA))=  \sum_{j=1}^m \dfrac{m^{r-1}}{(r-1)!} A^{r-1} \cdot B \actd [V] + o (m^{r}).
\end{equation*}  
 Hence,
 \begin{equation*}
 h^0(V, \mathcal{O}_V(mA-mB))\geqslant \dfrac{m^r}{r!} (A^r - r A^{r-1}\cdot B) \actd [V] + o(m^r).
 \end{equation*}
 In particular, this implies the required inequality.
\end{proof}

The next result is a key for our approach to controlling degrees of dominant rational maps.

\begin{thm} \label{thm_siu_gen}  
Let $i$ be an integer and $V$ be a closed subscheme of dimension $r$ in $X$. For any Cartier divisors $\alpha_1, \ldots, \alpha_i$ and $\beta$ which are big and nef on $V$, then there exists a constant $C>0$ depending only on $r$ and $i$ such that:
\begin{equation*}
 (\alpha_1 \cdot \ldots \cdot \alpha_i) \actd [V] \leqslant (r-i+1)^i\dfrac{ (\alpha_1 \cdot \ldots \cdot \alpha_i \cdot \beta^{r-i} \actd [V])}{(\beta^r \actd [V])}  \times   \beta^i \actd [V] \in \num_{r-i}(X).
\end{equation*}
\end{thm}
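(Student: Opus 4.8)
The statement generalizes Siu's inequality (Proposition~\ref{ineg_fonda}) from one divisor to a complete intersection of $k$ big and nef divisors, and the strategy is a straightforward induction on $k$, peeling off one divisor at a time and re-applying the classical $k=1$ case on successive hyperplane sections. The base case $k=1$ is precisely Proposition~\ref{ineg_fonda} (with the constant $r = (r-k+1)^k$ when $k=1$). For the inductive step, suppose the result holds for $k-1$ on subschemes of any dimension. Given big nef Cartier divisors $\alpha_1, \ldots, \alpha_k, b$ on $V$ of dimension $r$, the idea is to first slice by a general member of $|m\alpha_k|$ to reduce the dimension, then apply the $k=1$ Siu inequality to compare $\alpha_k$ with $b$, and finally apply the $(k-1)$-case to the remaining divisors $\alpha_1, \ldots, \alpha_{k-1}$ on the slice or, better, keep everything on $V$ and use the inductive bound for $\alpha_1 \cdots \alpha_{k-1}$ together with the projection formula for intersection with Cartier divisors.

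\textbf{Carrying it out.} Concretely, I would first apply Proposition~\ref{ineg_fonda} with $A = b$ and $B = \alpha_k$ restricted to a general complete intersection $W = V \cap H_1 \cap \cdots$; but it is cleaner to organize the induction as follows. Write $\gamma = \alpha_1 \cdots \alpha_{k-1}$. By the inductive hypothesis applied to $V$ with the $k-1$ divisors $\alpha_1,\ldots,\alpha_{k-1}$, one has
\begin{equation*}
\gamma \actd [V] \leqslant (r-k+2)^{k-1}\,\frac{(\gamma \cdot b^{r-k+1} \actd [V])}{(b^r \actd [V])}\; b^{k-1} \actd [V] \in \num_{r-k+1}(X).
\end{equation*}
Since $\alpha_k$ is nef, intersecting both sides with $\alpha_k$ preserves the inequality (a nef Cartier divisor maps $\psef$ to $\psef$), giving
\begin{equation*}
(\gamma \cdot \alpha_k) \actd [V] \leqslant (r-k+2)^{k-1}\,\frac{(\gamma \cdot b^{r-k+1} \actd [V])}{(b^r \actd [V])}\;(\alpha_k \cdot b^{k-1}) \actd [V].
\end{equation*}
Now I need to control $(\alpha_k \cdot b^{k-1})\actd[V]$ and $(\gamma\cdot b^{r-k+1}\actd[V])$. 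The first is handled by applying Proposition~\ref{ineg_fonda} on a general complete intersection surface-or-higher slice $V' = V \cap |b|^{\cap(k-1)}$ of dimension $r-k+1$ to get $\alpha_k \actd [V'] \leqslant (r-k+1)\frac{(\alpha_k \cdot b^{r-k}\actd[V'])}{(b^{r-k+1}\actd[V'])} b\actd[V']$, i.e.\ $(\alpha_k\cdot b^{k-1})\actd[V] \leqslant (r-k+1)\frac{(\alpha_1\cdots\alpha_k\cdot b^{r-k}\actd[V])}{(\gamma\cdot b^{r-k+1}\actd[V])}\,b^k\actd[V]$ after re-decorating with $\gamma$; substituting into the previous display makes the middle factors cancel and yields the bound with constant $(r-k+2)^{k-1}(r-k+1)$. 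Since $(r-k+2)^{k-1}(r-k+1) \leqslant (r-k+1)^k$ is \emph{false} in general, I would instead be slightly more careful: the correct bookkeeping is to slice first (reducing $r$) before invoking the $k-1$ hypothesis, so that the dimension parameter appearing in the inductive constant is $r-1$ rather than $r$; then one gets $(r-k+1)^{k-1}\cdot(r-k+1) = (r-k+1)^k$ exactly. So the precise order is: slice $V$ by one general very ample divisor linearly equivalent to a large multiple of $\alpha_1$ (or of $b$), apply the $k-1$ case on the $(r-1)$-dimensional slice, and track the restriction of the intersection numbers back to $V$ via the projection formula of Theorem~\ref{thm_prop_intersection_diviseurs}.(2).

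\textbf{Main obstacle.} The genuine technical points are two. First, all of this must be done at the level of \emph{numerical} classes in $X$ rather than Chow classes on $V$, so each time a divisor fails to be big on the relevant slice one must perturb by $\epsilon\omega$ and pass to the limit; bigness of $\alpha_i|_{V}$ and $b|_V$ is used to guarantee the required sections exist, and one must check that general slicing preserves bigness of the restrictions (which follows from $\alpha_i, b$ being big and nef, so their restriction to a general member of a sufficiently positive linear system stays big). Second, and this is the crux, the constant has to come out as exactly $(r-k+1)^k$ and not something larger, which forces the induction to be set up by \emph{reducing the dimension before reducing the codimension count}: one slices $r \to r-1$, applies Siu once in codimension $1$ on the ambient $V$ to handle the divisor being peeled, and feeds the $(r-1, k-1)$ statement into the remainder. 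Getting the slicing/projection-formula bookkeeping to close cleanly — in particular verifying that intersecting the inductive inequality with a nef divisor is legitimate in $\num_\bullet(X)$ and that no spurious dimension-mismatch term appears — is the step I expect to require the most care.
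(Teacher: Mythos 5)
Your overall strategy — peeling off one $\alpha_i$ per step by iterated application of Proposition~\ref{ineg_fonda} on slices of dimension $r-k+1$ — is the paper's strategy, and you correctly identify the crux: each codimension-$1$ Siu application must take place on a subscheme of dimension exactly $r-k+1$, so the $k$ constants multiply to $(r-k+1)^k$ rather than $r(r-1)\cdots(r-k+1)$ or $(r-k+2)^{k-1}(r-k+1)$.

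However, the step you call ``re-decorating with $\gamma$'' is a genuine error, not a bookkeeping shortcut. Applying Proposition~\ref{ineg_fonda} on a slice $V'$ of dimension $r-k+1$ representing $b^{k-1}\actd[V]$, with $A=b$ and $B=\alpha_k$, gives
\[
(\alpha_k \cdot b^{k-1})\actd[V] \ \leqslant\ (r-k+1)\,\frac{(\alpha_k \cdot b^{r-1} \actd[V])}{(b^r\actd[V])}\; b^k \actd[V],
\]
and there is no legitimate way to replace the ratio $(\alpha_k\cdot b^{r-1})/(b^r)$ by $(\alpha_1\cdots\alpha_k\cdot b^{r-k})/(\gamma\cdot b^{r-k+1})$; these are genuinely different intersection numbers and the cancellation you want does not occur. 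To obtain the second ratio you would have to take the slice to be a representative of $\gamma\actd[V]$ (also of dimension $r-k+1$) and apply Siu there with $A=b$, $B=\alpha_k$. That is precisely the first link in the paper's telescoping chain: the $i$-th slice is a representative of $b^{i-1}\cdot\alpha_{i+1}\cdots\alpha_k\actd[V]$ (of dimension $r-k+1$, carrying the still-untouched $\alpha_j$'s), and the $i$-th application of Siu swaps $\alpha_i$ for $b$, producing
\[
\alpha_1\cdots\alpha_k\actd[V] \ \to\ b\cdot\alpha_2\cdots\alpha_k\actd[V]\ \to\ \cdots\ \to\ b^k\actd[V],
\]
with all intermediate ratios telescoping to $(\alpha_1\cdots\alpha_k\cdot b^{r-k}\actd[V])/(b^r\actd[V])$ and constant $(r-k+1)^k$. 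Your alternative organization (slice by $\alpha_k$ to dimension $r-1$, invoke the $(r-1,k-1)$ case, then one more $k=1$ Siu) does close with the correct constant, but only if the final Siu application takes place on a slice of dimension $r-k+1$, i.e.\ a representative of $b^{k-1}\actd[V]$ — not ``on the ambient $V$,'' which would contribute a factor of $r$ and spoil the count. In short: right strategy, correct diagnosis of where the constant comes from, but the inequality you actually write down in the first attempt is false, and the fix needs the untouched $\alpha_j$'s (or the needed powers of $b$) kept inside the slice at every stage.
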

\begin{rem}Observe that $(\beta^n) >0$ since $\beta$ is big.
\end{rem}

\begin{proof}
By continuity, we can suppose that $\alpha_i$ and $\beta$ are ample Cartier divisors. 
We apply successively Siu's inequality by restriction to subschemes representing the classes $\alpha_2 \cdot \ldots \cdot \alpha_i \actd [V]$ , $\beta \cdot \alpha_3 \cdot \ldots \cdot \alpha_i \actd [V]$, $\ldots$, $\beta^{i-1} \cdot \alpha_i \actd [V]$:  

\begin{equation*}
\begin{array}{llll}
 \alpha_1 \cdot \alpha_2 \cdot \ldots \cdot \alpha_i \actd [V]&\leqslant& (r-i+1) \dfrac{(\alpha_1 \cdot \ldots \cdot \alpha_i \cdot \beta^{r-i} \actd [V])}{ (\beta^{r-i+1} \cdot \alpha_2 \cdot \ldots \cdot \alpha_i \actd [V]) } \times \beta \cdot \alpha_2 \cdot \ldots \cdot \alpha_i \actd [V], \\
\beta \cdot \alpha_2 \cdot \ldots \cdot \alpha_i \actd [V]&\leqslant & (r-i+1) \dfrac{(\beta^{r-i+1}\cdot \alpha_2 \cdot \ldots \cdot \alpha_i\actd [V]) }{(\beta^{r-i+2} \cdot \alpha_3 \cdot \ldots \cdot \alpha_i\actd [V]) } \times \beta^2 \cdot\alpha_3 \cdot \ldots  \cdot \alpha_i\actd [V] , \\
\ldots & & \ldots \\
\beta^{i-1} \cdot \alpha_i\actd [V] &\leqslant & (r-i+1) \dfrac{(\beta^{r-1} \cdot \alpha_i\actd [V])}{(\beta^r\actd [V])} \times \beta^i\actd [V].
\end{array}
\end{equation*}
This gives the required inequality:
\begin{equation*}
\alpha_1 \cdot \ldots \cdot \alpha_i\actd [V] \leqslant (n-i+1)^i \dfrac{(\alpha_1 \cdot \ldots \cdot \alpha_i \cdot \beta^{r-i}\actd [V])}{ (\beta^r\actd [V])} \times \beta^i\actd [V].
\end{equation*}
\end{proof}

\begin{cor} \label{cor_siu_gen} Let $i$ be an integer, then for any $a \in \Cc^i(X)$ and any big nef Cartier divisor $\beta$ on $X$, one has:
\[ a \leqslant (n-i+1)^i \dfrac{( a \cdot \beta^{n-i})}{(\beta^n)} \times \beta^i.\]
  
\end{cor}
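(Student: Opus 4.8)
The plan is to bootstrap from Theorem~\ref{thm_siu_gen}, which already handles genuine complete intersections, to an arbitrary strongly pliant class, without losing the constant $(n-k+1)^k$. First I would reduce to a convenient generator: since $\psefd^k(X)$ is closed and convex (being the $\psi_X$-preimage of the closed convex cone $\psef_{n-k}(X)$) and the desired inequality is additive and positively homogeneous in $a$, it suffices to treat $a=[\gamma_1]\cdot\ldots\cdot[\gamma_l]$, where each $\gamma_i$ is a product of $e_i+k_i$ ample Cartier divisors on a variety $X_i$ flat over $X$ of relative dimension $e_i$, with $k_1+\cdots+k_l=k$. Passing to the fiber product $X'=X_1\times_X\cdots\times_X X_l$ (normalized if necessary), flat over $X$ of relative dimension $e=e_1+\cdots+e_l$, and iterating the multiplication formula~\eqref{eq_prod_numd}, I see that $a$ is induced by the product of the pullbacks to $X'$ of the $\gamma_i$, i.e. by a product of $e+k$ Cartier classes that are a priori only nef on $X'$. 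Adding to each factor a small multiple of a fixed ample divisor on $X'$ and letting that multiple tend to $0$ at the end (again using that $\psefd^k(X)$ is closed and that the intersection pairing is continuous), I may assume $a$ is induced by $\gamma=\alpha_1\cdot\ldots\cdot\alpha_{e+k}$ with every $\alpha_i$ ample on $X'$. Writing $p\colon X'\to X$ for the structure morphism, the definition of $\psi_X$ and the projection formula give $\psi_X(a)=a\actd[X]=p_*\bigl(\alpha_1\cdot\ldots\cdot\alpha_{e+k}\actd[X']\bigr)$.

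The key step is then to cut $X'$ down to dimension $n$ by $e$ of the ample divisors. After replacing the $\alpha_i$ by large multiples (harmless by homogeneity), let $V\subset X'$ be a complete intersection of general members of $|\alpha_{k+1}|,\dots,|\alpha_{e+k}|$, so $[V]=\alpha_{k+1}\cdot\ldots\cdot\alpha_{e+k}\actd[X']$ in $\num_n(X')$ and $\dim V=n$. Since the $\alpha_i$ are ample, none is trivial on a positive-dimensional fiber of $p$, so a general such complete intersection meets every $e$-dimensional fiber of $p$ in finitely many points; hence every component of $V$ dominates $X$ and $p|_V\colon V\to X$ is surjective and generically finite. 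Consequently $b':=(p^*b)|_V$ is big and nef on $V$ and $p_*[V]=\lambda\,[X]$ for some $\lambda\in\mathbb{N}^*$. Applying Theorem~\ref{thm_siu_gen} on $X'$ to the subscheme $V$, which has dimension $r=n$, the $k$ divisors $\alpha_1,\dots,\alpha_k$ and the divisor $b'$ — all big and nef on $V$ — yields, in $\num_{n-k}(X')$,
\[
\alpha_1\cdot\ldots\cdot\alpha_k\actd[V]\;\leqslant\;(n-k+1)^k\,\frac{(\alpha_1\cdot\ldots\cdot\alpha_k\cdot b'^{\,n-k}\actd[V])}{(b'^{\,n}\actd[V])}\;b'^{\,k}\actd[V],
\]
the constant $(r-k+1)^k$ being exactly $(n-k+1)^k$ because $\dim V=n$; this is precisely where the relative dimension $e$ gets "spent" without inflating the constant.

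To conclude I would push this forward by $p_*$, which maps $\psef_{n-k}(X')$ into $\psef_{n-k}(X)$, and evaluate the three terms via the projection formula $p_*\!\left((p^*b)^j\cdot\zeta\right)=b^j\cdot p_*\zeta$, together with $\alpha_1\cdots\alpha_k\actd[V]=\alpha_1\cdots\alpha_{e+k}\actd[X']$ and $b'^{\,j}\actd[V]=(p^*b)^j\cdot[V]$. The left-hand side becomes $p_*(\alpha_1\cdots\alpha_{e+k}\actd[X'])=\psi_X(a)$; the class $p_*(b'^{\,k}\actd[V])=b^k\cdot p_*[V]=\lambda\,\psi_X(b^k)$; the numerator $(\alpha_1\cdots\alpha_k\cdot b'^{\,n-k}\actd[V])=(b^{n-k}\cdot p_*(\alpha_1\cdots\alpha_{e+k}\actd[X']))=(a\cdot b^{n-k})$; and the denominator $(b'^{\,n}\actd[V])=(b^n\cdot p_*[V])=\lambda\,(b^n)$, which is nonzero since $b$ is big. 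The factors $\lambda$ cancel, leaving $\psi_X(a)\leqslant(n-k+1)^k\,\frac{(a\cdot b^{n-k})}{(b^n)}\,\psi_X(b^k)$ in $\num_{n-k}(X)$, which is exactly the asserted inequality $a\leqslant(n-k+1)^k\,\frac{(a\cdot b^{n-k})}{(b^n)}\,b^k$ in $\numd^k(X)$. Undoing the perturbation and summing over the generators of $\Cc^k(X)$ then completes the proof.

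The step I expect to be the main obstacle is the geometric input of the second paragraph: verifying that a general complete intersection of $e$ ample divisors on $X'$ is generically finite and surjective onto $X$, so that one genuinely lands in the dimension-$n$ case of Theorem~\ref{thm_siu_gen} rather than in dimension $n+e$ (which would produce the useless constant $(n-k+1)^{e+k}$). The remaining delicate points are checking that $b'=(p^*b)|_V$ is big and nef on every component of $V$, and tracking the integer $\lambda=\deg(p|_V)$ through the projection-formula computations so that it cancels between numerator and denominator; the initial reduction (fiber products, normalization, perturbing nef classes to ample ones) is routine but must be handled with some care.
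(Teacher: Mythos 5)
Your proof is correct and takes essentially the same route as the paper's: the paper likewise reduces to $a$ induced by a product of ample divisors $D_1\cdot\ldots\cdot D_{e_1+k}$ on a single flat $X_1\to X$, cuts by $e_1$ of the ample factors to produce a subscheme $Z$ of dimension $n$, applies Theorem~\ref{thm_siu_gen} with $r=n$ to the remaining $k$ divisors and $b|_Z$, and pushes forward via the projection formula using that $p_1|_Z$ is generically finite. The only differences are that you spell out the reduction step in more detail (fiber product of the $X_i$, perturbation of nef classes to ample ones and passage to the limit), and you flag explicitly the point the paper leaves tacit, namely that a general complete intersection of the $e$ ample divisors on $X'$ is surjective and generically finite over $X$ and that the resulting multiplicity $\lambda=\deg(p|_V)$ cancels between numerator and denominator.
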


\begin{proof} By linearity and stability by product, we just need to prove the inequality for $ a = D_1 \cdot \ldots \cdot D_{e_1 + i}  \in \IC^{e_1 + i}(X_1)$, where $D_i$ are ample Cartier divisors $X_1$, where $p_1: X_1 \to X$ is a flat proper morphism of relative dimension $e_1$. We apply Theorem \ref{thm_siu_gen} to $ a'= D_{e_1 + 1} \cdot \ldots \cdot D_{e_1 + i} \cdot Z$ and $\beta' = p_1^* \beta_{|Z}$ where $Z = D_1 \cdot \ldots \cdot D_{e_1}$. We obtain:
\begin{equation*}
 a \leqslant  (n-i+1)^i \dfrac{ (a \cdot p_1^*\beta^{n-i} )}{( p_1^* \beta^n \cdot Z)} \times  p_1^* b^{i} \cdot Z. 
\end{equation*}
As the restriction of $p_1$ on $Z$ is generically finite, by the projection formula, we get:
\begin{equation*}
a \leqslant (n-i + 1)^i \dfrac{(a\cdot  \beta^{n-i})}{(\beta^n)} \times \beta^i.
\end{equation*} 
\end{proof}

The previous inequality can be applied when we have positivity hypothesis on a birational model as follows. 

\begin{cor} \label{cor_comp_ccb}Let $X,Y$ be two normal projective varieties of dimension $n$. Let  $\beta $ be a class in $\Cc^i(Y)$, we suppose there exists a birational morphism $q: X \to Y$ and an ample Cartier divisor $A$ on $X$ such that $A^i  \leqslant q^* \beta$. Then there exists a class $\beta^* \in \num_i(X)_\mathbb{R} \cap \psef_{i}(X)$ such that for any class $\alpha \in \Cc^i(X)$, we have:
\[\alpha \leqslant ( \alpha \actd \beta^* ) \times \beta.\]   
\end{cor}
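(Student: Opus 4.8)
The plan is to run the given hypothesis $A^k\leqslant q^*\beta$ through the generalized Siu inequality for strongly pliant classes. First I would apply Corollary~\ref{cor_siu_gen} to the class $\alpha\in\Cc^k(X)$ and to the ample (hence big and nef) Cartier divisor $b=A$ on $X$, which gives, in $\numd^k(X)_\mathbb{R}$,
\[
\alpha\ \leqslant\ (n-k+1)^k\,\frac{(\alpha\cdot A^{n-k})}{(A^n)}\,A^k,
\]
with $(A^n)>0$. Write $\lambda(\alpha):=(n-k+1)^k\,(\alpha\cdot A^{n-k})/(A^n)$. The key observation is that $\lambda(\alpha)\geqslant 0$: since $\alpha\in\Cc^k(X)$ and $A^{n-k}\in\Cc^{n-k}(X)$, the product $\alpha\cdot A^{n-k}$ lies in $\Cc^n(X)$ by Theorem~\ref{thm_classes_pliantes}.(3), hence in $\psefd^n(X)$ by Theorem~\ref{thm_classes_pliantes}.(5), so $(\alpha\cdot A^{n-k})=\deg\psi_X(\alpha\cdot A^{n-k})\in\psef_0(X)=\mathbb{R}_{\geqslant 0}$.

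Next I would use the hypothesis in the form $q^*\beta-A^k\in\psefd^k(X)$: because $\lambda(\alpha)\geqslant 0$, the class $\lambda(\alpha)\,(q^*\beta-A^k)$ is again pseudo-effective, i.e.\ $\lambda(\alpha)\,A^k\leqslant\lambda(\alpha)\,q^*\beta$; combining this with the displayed inequality above (and transitivity of $\leqslant$, valid since $\psefd^k(X)$ is a convex cone) yields
\[
\alpha\ \leqslant\ \lambda(\alpha)\,q^*\beta .
\]

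It only remains to exhibit $\lambda(\alpha)$ as a pairing with a fixed pseudo-effective cycle. I would set
\[
\beta^*\ :=\ \frac{(n-k+1)^k}{(A^n)}\,A^{n-k}\actd[X]\ \in\ \num_k(X)_\mathbb{R},
\]
which is manifestly independent of $\alpha$. It lies in $\psef_k(X)$: indeed $A^{n-k}\in\Cc^{n-k}(X)\subset\psefd^{n-k}(X)$ by Theorem~\ref{thm_classes_pliantes}.(5), so $A^{n-k}\actd[X]=\psi_X(A^{n-k})\in\psef_k(X)$, and a non-negative multiple of a pseudo-effective class stays pseudo-effective. Using the $\numd^\bullet(X)$-module structure on $\num_\bullet(X)$ one has $(\alpha\actd\beta^*)=\frac{(n-k+1)^k}{(A^n)}\deg\!\big(\alpha\actd(A^{n-k}\actd[X])\big)=\lambda(\alpha)$, so the previous display reads $\alpha\leqslant(\alpha\actd\beta^*)\,q^*\beta$, which is the asserted inequality (the class $\beta$ on the right being understood via the pullback $q^*\colon\numd^k(Y)_\mathbb{R}\hookrightarrow\numd^k(X)_\mathbb{R}$, injective because $q$ is birational). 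I do not expect a real obstacle here; the only point requiring care is the sign of $\lambda(\alpha)$, which is exactly where the positivity packaged into Theorem~\ref{thm_classes_pliantes} enters, together with the routine identification of the intersection number $(\alpha\cdot A^{n-k})$ with the canonical pairing $\numd^k(X)_\mathbb{R}\times\num_k(X)_\mathbb{R}\to\mathbb{R}$.
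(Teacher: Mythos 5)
Your proof is correct and takes exactly the paper's route: apply Corollary~\ref{cor_siu_gen} to $\alpha$ with $b=A$, use $A^k\leqslant q^*\beta$ together with $\lambda(\alpha)\geqslant 0$ to pass from $A^k$ to $q^*\beta$, and package $\lambda(\alpha)$ as the pairing with a fixed pseudo-effective $\beta^*$. The paper's one-line proof sets $\beta^* = \frac{(n-k+1)^k}{(A^n)}\,q_*\psi_X(A^{n-k})$, which lies in $\num_k(Y)_\mathbb{R}$; your choice $\beta^* = \frac{(n-k+1)^k}{(A^n)}\,\psi_X(A^{n-k})$ drops the $q_*$ and is the one that actually matches the statement's requirement $\beta^*\in\num_k(X)_\mathbb{R}$ and makes the pairing $(\alpha\actd\beta^*)$ well defined, so your version is the correct reading.
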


\begin{proof} We just have to set $\beta^* = \dfrac{(n-i+1)^i }{ (A^n)} q_* \psi_X( A^{n-i}) $. 

\end{proof}

\begin{rem} We conjecture that for any basepoint free class $a \in \Cc^i(X)$ and any big nef divisor $b$, one has 
\begin{equation}
a \leqslant \left ( \begin{array}{ll}
n \\
i
\end{array} \right )  \dfrac{  (a \cdot b^{n-i})}{(b^n)}b^i.
\end{equation}
One can show that this inequality (if true) is optimal since equality can happen when $X$ is an abelian variety.
\end{rem}

\subsection{Norms on numerical classes}

In this section, the positivity properties combined with Siu's inequality allows us to define some norms on $\num_i(X)_\mathbb{R}$ and on $\numd^i(X)_\mathbb{R}$.

\subsubsection{Norms on $\num_i(X)_\mathbb{R}$}

Let $i \leqslant n$ be an integer and let $\gamma \in \Cc^{i}(X)$ be a basepoint free class on $X$. Any cycle $z \in \num_{i}(X)_\mathbb{R}$ can be written $z =z^+ - z^-$ where $z^+ $ and $z^-$ are pseudo-effective. We define :

\begin{equation} \label{formule_norme}
F_\gamma(z) := \inf_{ \substack{ z = z^+ - z^- \\
z^+, z^- \in \psef_i(X)
}} \{ (\gamma \actd z^+)  + (\gamma \actd z^-)  \} .
\end{equation}

\begin{prop} \label{prop_norm_pliant} 
For any class $\gamma \in \Cc^{i}(X)$ lying in the interior of the basepoint free cone, the function $F_\gamma$ defines a norm on $\num_i(X)_\mathbb{R}$. In particular, if we fix a norm $||\cdot ||_{\num_i(X)_\mathbb{R}}$ on $\num_i(X)_\mathbb{R}$, there exists a constant $C> 0$ such that for any pseudo-effective class $z \in \psef_i(X)$, one has:
\begin{equation}
 \dfrac{1}{C} || z||_{\num_i(X)_\mathbb{R}} \leqslant (\gamma \actd z) \leqslant C || z ||_{\num_i(X)_\mathbb{R}}.
\end{equation}
\end{prop}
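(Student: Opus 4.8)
The plan is to establish first the quantitative two-sided bound on $\psef_k(X)$ (the ``In particular'' assertion), and then to deduce from it that $F_\gamma$ is a norm. Write $\|\cdot\|$ for the fixed reference norm on $\num_k(X)_\mathbb{R}$. I first note that $F_\gamma$ is finite and nonnegative: effective $k$-cycles generate $Z_k(X)$, so their classes span $\num_k(X)_\mathbb{R}$, and since $\psef_k(X)$ is a convex cone this gives $\num_k(X)_\mathbb{R} = \psef_k(X) - \psef_k(X)$; hence every $z$ admits a decomposition $z = z^+ - z^-$ with $z^\pm \in \psef_k(X)$, and each $(\gamma \actd z^\pm)$ is $\geq 0$ because $\gamma \in \Cc^k(X) \subset \nefd^k(X)$ by Theorem~\ref{thm_classes_pliantes}.(5). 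So $0 \le F_\gamma(z) < \infty$.

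For the two-sided estimate, the bound $(\gamma \actd z) \le C\|z\|$ holds for \emph{all} $z$ since $z \mapsto (\gamma \actd z)$ is a linear functional on the finite-dimensional space $\num_k(X)_\mathbb{R}$. For the reverse bound, set $S := \{ z \in \psef_k(X) : \|z\| = 1\}$; this is compact because $\psef_k(X)$ is closed. The function $z \mapsto (\gamma \actd z)$ is continuous on $S$, and it is strictly positive there: it is $\ge 0$ by nefness of $\gamma$, while if $(\gamma \actd z) = 0$ for some $z \in S$, then Proposition~\ref{prop_nullite_psef} (which holds verbatim with $k$ in place of $n-k$, and which applies since $\gamma$ lies in the interior of $\Cc^k(X)$) forces $z = 0$, contradicting $\|z\| = 1$. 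Hence $z \mapsto (\gamma \actd z)$ attains a positive minimum $1/C$ on $S$, and by homogeneity $(\gamma \actd z) \ge \frac{1}{C}\|z\|$ for all $z \in \psef_k(X)$, which is the claimed inequality.

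Finally, I check the norm axioms for $F_\gamma$. Homogeneity and the triangle inequality are formal consequences of $\psef_k(X)$ being a convex cone: for $\lambda > 0$, the decompositions of $\lambda z$ are precisely the $\lambda$-multiples of those of $z$, so $F_\gamma(\lambda z) = \lambda F_\gamma(z)$; swapping $z^+$ and $z^-$ shows $F_\gamma(-z) = F_\gamma(z)$; and adding $\varepsilon$-almost optimal decompositions of $z_1$ and $z_2$ yields a decomposition of $z_1 + z_2$ witnessing $F_\gamma(z_1 + z_2) \le F_\gamma(z_1) + F_\gamma(z_2) + 2\varepsilon$, whence subadditivity on letting $\varepsilon \to 0$. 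Definiteness then follows from the bound just proved: for any decomposition $z = z^+ - z^-$,
\[
(\gamma \actd z^+) + (\gamma \actd z^-) \ \ge\ \frac{1}{C}\bigl(\|z^+\| + \|z^-\|\bigr) \ \ge\ \frac{1}{C}\|z^+ - z^-\| \ =\ \frac{1}{C}\|z\|,
\]
so $F_\gamma(z) \ge \frac{1}{C}\|z\| > 0$ for $z \neq 0$. The only genuinely nontrivial input is the strict positivity of $\gamma$ against nonzero pseudo-effective classes, i.e.\ Proposition~\ref{prop_nullite_psef} together with the hypothesis that $\gamma$ is interior to $\Cc^k(X)$; everything else is either a compactness argument or a formal manipulation of the defining infimum.
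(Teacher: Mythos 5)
Your proof is correct, and it reaches the conclusion by a slightly different route than the paper. You establish the quantitative two-sided estimate first, via a compactness argument: the slice $S = \{z \in \psef_k(X) : \|z\| = 1\}$ is compact because $\psef_k(X)$ is closed, and the linear functional $(\gamma \actd \cdot)$ is strictly positive on $S$ by Proposition~\ref{prop_nullite_psef} (correctly noted to transfer with $k$ in place of $n-k$). You then read off definiteness of $F_\gamma$ from the lower bound. The paper's own proof instead treats only definiteness as the nontrivial point and argues directly: from $F_\gamma(z)=0$ it extracts sequences $z_p^\pm \in \psef_k(X)$ with $(\gamma\actd z_p^\pm)\to 0$, then uses the interiority of $\gamma$ to dominate any strongly pliant $\beta$ by $C\gamma$, obtaining $(\beta\actd z_p^\pm)\to 0$, hence $(\beta\actd z)=0$ for all $\beta \in \Cc^k(X)$, and concludes $z=0$ since $\Cc^k(X)$ spans $\numd^k(X)$. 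In effect the paper re-derives the content of Proposition~\ref{prop_nullite_psef} inline against almost-optimal decompositions rather than quoting it; your version is more modular (it invokes that proposition as a black box) and produces the uniform constant $C$ directly, while the paper's version avoids any compactness argument and then would obtain the ``in particular'' bound from equivalence of norms in finite dimension together with the observation that $F_\gamma(z)=(\gamma\actd z)$ for pseudo-effective $z$. Both rest on the same underlying inputs (interiority of $\gamma$, Theorem~\ref{thm_classes_pliantes}, non-degeneracy of the pairing), so neither is substantively stronger, but your ordering makes the quantitative statement primary.
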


\begin{proof}

\smallskip
The only point to clarify is that $F_\gamma(z) = 0$ implies $z = 0$. 
Observe that Proposition \ref{prop_nullite_psef} implies the result for $z \in \psef_i(X)$.
In general, pick any two sequences
$ (z_p^+)_{p\in \mathbb{N}}$ and $(z_p^-)_{p \in \mathbb{N}}$ in $\psef_i(X)$ such that $z = z_p^+ - z_p^-$ and such that 
$ \gamma \cdot z_p^+  + \gamma \cdot z_p^-  {\longrightarrow} 0$.
Since $z_p^+$ and $z_p^-$ are pseudo-effective and $\gamma$ is basepoint free, it follows from Theorem \ref{thm_classes_pliantes}.(v) that 
\[\lim_{p\rightarrow + \infty} (\gamma \cdot z_p^+)  = \lim_{p \rightarrow + \infty} (\gamma \cdot  z_p^-) = 0 .\]
As $\gamma$ lies in the interior of $\Cc^i(X)$, given any $\beta$ in $\Cc^i(X)$, one has that $C \gamma - \beta$ is still in $\Cc^i(X)$ for some sufficently large constant $C>0$. 
Intersecting with the pseudo-effective classes $z_p^+$ and $z_p^-$ and using Theorem \ref{thm_classes_pliantes}.(v), we have $ \lim_{p \rightarrow \infty}(\beta \actd z_p^+)=\lim_{p \rightarrow \infty}(\beta \actd z_p^-)= 0 $, thus $(\beta \actd z)= 0$. Since the basepoint free cone $\Cc^i(X)$ generates all $\numd^i(X)$ by Theorem \ref{thm_classes_pliantes}.(i), we conclude that $z = 0$ as required.
\end{proof}

\subsubsection{Norms on $\numd^i(X)_\mathbb{R}$}

\begin{defi} 
We define the subcone $\Cc_0^i(X)$ of $\Cc^i(X)$ as the classes $\alpha \in \Cc^i(X)$ such that for any birational map $q : X' \to X$, there exists an ample Cartier divisor $A$ on $X'$ such that $q^* \alpha \geqslant A^i$.   
\end{defi}

\begin{prop} \label{prop_ccb}When $X$ is smooth, the cone $\Cc_0^1(X)$ is equal to the big nef cone. In particular $\Cc_0^i$ is neither closed nor open in general.
\end{prop}

\begin{proof}
Take $\alpha \in \numd^1(X)_\mathbb{R}$ a big nef divisor. Then for any birational map $q : X' \to X$ and any ample Cartier divisor $A$, one has by Theorem \ref{thm_siu_gen} applied to $A$ and $q^*\alpha$:
\begin{equation*}
A \leqslant n \dfrac{(A \cdot q^*\alpha^{n-1})}{(\alpha^n)} q^* \alpha.
\end{equation*}
Hence, $\alpha \in \Cc^1_0(X)$. 
Conversely, take a class $\alpha \in \Cc^1_0(X)$, then there exists an ample divisor $A$ on $X$ such that $\alpha \geqslant A$. 
Since ample divisors are big, we have that $\alpha$ is big. 
Moreover, since $\Cc^1(X) = \nefd^1(X) \cap \psefd^1(X)$, we have that $\alpha$ is big and nef as required.
\end{proof}

\begin{prop} \label{prop_bpf0_interior}
The cone $\Cc_0^i(X)$ is a convex open subset of $\Cc^i(X)$ that contains the classes induced by products of big nef divisors.
\end{prop}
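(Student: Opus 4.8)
The plan is to first reformulate the cone. By applying Proposition~\ref{prop_ccb}.(1) to the identity morphism $\mathrm{id}\colon X\to X$ and unwinding the definition of $\Cc_0^k(X)$, one gets
\[
\Cc_0^k(X)=\bigl\{\,\alpha\in\Cc^k(X)\ :\ \alpha\geqslant A^k\text{ in }\numd^k(X)\text{ for some ample Cartier divisor }A\text{ on }X\,\bigr\}.
\]
Granting this, I would first dispatch convexity. The right-hand side is a cone, since $\alpha-A^k\in\psefd^k(X)$ implies $t\alpha-(t^{1/k}A)^k\in\psefd^k(X)$ for $t>0$; and it is stable under sums, because if $\alpha_i-A_i^k\in\psefd^k(X)$ for $i=1,2$, then taking $A=\tfrac12 A_1$ one writes
\[
\alpha_1+\alpha_2-A^k=(\alpha_1-A_1^k)+(1-2^{-k})A_1^k+(\alpha_2-A_2^k)+A_2^k,
\]
a sum of four classes in $\psefd^k(X)$ --- the middle term because a positive multiple of a product of ample divisors lies in $\Cc^k(X)\subseteq\psefd^k(X)$ by Theorem~\ref{thm_classes_pliantes}. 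Also, a product of big and nef divisors lies in $\Cc^k(X)$, since each factor is a limit of ample divisors and $\Cc^k(X)$ is closed; so the real content of the last assertion is producing a lower bound by an ample power.

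For openness in $\Cc^k(X)$, I would take $\alpha\in\Cc_0^k(X)$, fix an ample $A$ with $\alpha-A^k\in\psefd^k(X)$, and set $A'=\tfrac12 A$. Then $A^k-(A')^k=(1-2^{-k})A^k$ lies in the interior of $\Cc^k(X)$ by Theorem~\ref{thm_classes_pliantes}.(2), hence in the interior of $\psefd^k(X)$, so there is $\delta>0$ with $A^k-(A')^k+\eta\in\psefd^k(X)$ whenever $\|\eta\|<\delta$. For $\beta\in\Cc^k(X)$ with $\|\beta-\alpha\|<\delta$ this gives $\beta-(A')^k=(\alpha-A^k)+\bigl(A^k-(A')^k+(\beta-\alpha)\bigr)\in\psefd^k(X)$, hence $\beta\in\Cc_0^k(X)$ by the reformulation.

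The heart of the matter is showing that $\beta=b_1\cdots b_k$, with each $b_i$ big and nef, dominates $A^k$ for some ample $A$. My plan is to fix ample divisors $A_0$ and $H$, set $b_i^{(m)}=b_i+\tfrac1m H$ (ample), and iterate Siu's inequality (Proposition~\ref{ineg_fonda}) as in the proof of Theorem~\ref{thm_siu_gen}, except that, rather than downgrading every factor to one divisor $b$, I downgrade the $k$ copies of $A_0$ successively to $b_k^{(m)},b_{k-1}^{(m)},\dots,b_1^{(m)}$; at each step one restricts to a general complete intersection of very ample divisors of dimension $n-k+1$, on which the divisor just introduced is ample, hence big. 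This should produce $A_0^{\,k}\leqslant C_m\,(b_1^{(m)}\cdots b_k^{(m)})$ in $\numd^k(X)$, where $C_m$ equals $(n-k+1)^k$ times a product of ratios of intersection numbers whose denominators are mixed products of $n$ of the divisors $b_i^{(m)}$ and $A_0$, with one $b_j^{(m)}$ occurring to the power $n-k+1$. As $m\to\infty$ these denominators converge to mixed intersection numbers of $n$ big and nef divisors, which by the Khovanskii--Teissier inequalities (valid in arbitrary characteristic) are at least the geometric mean of the corresponding $n$-fold self-intersection numbers, and these are positive since the $b_i$ are big and nef and $A_0$ is ample. Hence $(C_m)_m$ is bounded, say by $C$; closedness of $\psefd^k(X)$ then lets me pass to the limit, yielding $A_0^{\,k}\leqslant C\beta$, i.e.\ $\beta\geqslant(C^{-1/k}A_0)^k$, so $\beta\in\Cc_0^k(X)$.

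The delicate point is precisely this last step: Theorem~\ref{thm_siu_gen} bounds a class above only by a multiple of a \emph{power} $b^k$, whereas one needs here the reverse bound of the fixed class $A_0^{\,k}$ by a multiple of the \emph{mixed} product $b_1\cdots b_k$. Running the iteration in that direction is harmless for ample divisors, but for big and nef ones the constant so produced is uniform only because all the mixed intersection numbers entering its denominators remain positive in the limit --- which is exactly what Khovanskii--Teissier provides. One should also take care to approximate the $b_i$ by ample divisors lying \emph{above} them, so that the estimate is preserved when $m\to\infty$.
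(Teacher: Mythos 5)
Your argument is correct, but it follows a genuinely different (and in several places cleaner) route than the paper's. The key move you make that the paper does not is to first invoke Proposition~\ref{prop_ccb}.(1) with $q=\mathrm{id}$ to collapse the ``for every birational model'' quantifier in the definition, reducing $\Cc_0^k(X)$ to
\[
\{\alpha\in\Cc^k(X)\ :\ \alpha\geqslant A^k\ \text{for some ample Cartier}\ A\ \text{on}\ X\}.
\]
With that done, convexity and openness become soft convex-geometry facts: convexity is immediate since $\alpha_2\geqslant 0$ already gives $\alpha_1+\alpha_2\geqslant A_1^k$ (your four-term split works but is more than needed), and openness uses only that $A^k$ lies in $\mathrm{Int}\,\Cc^k(X)\subseteq\mathrm{Int}\,\psefd^k(X)$, so a small perturbation of $\alpha$ still dominates $(\tfrac12 A)^k$. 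The paper instead re-runs the Siu estimate on an arbitrary birational model and tracks explicit constants; both routes succeed, but yours is more conceptual and avoids re-deriving what Proposition~\ref{prop_ccb}.(1) already packages.

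For the final assertion (products of big nef divisors lie in $\Cc_0^k(X)$), the paper states it without proof; you supply one. Your reverse iteration of Siu on the ample approximants $b_i^{(m)}=b_i+\tfrac1m H$, with Khovanskii--Teissier guaranteeing that the denominator intersection numbers stay bounded away from $0$ as $m\to\infty$, is valid: $C_m$ stays bounded, $\psefd^k(X)$ is closed, and passing to a subsequential limit gives $A_0^{\,k}\leqslant C\,b_1\cdots b_k$. Two small remarks. First, the closing caution about approximating ``from above'' is not actually what saves the limit --- closedness of $\psefd^k(X)$ alone does --- approximating from above only matters because $b_i-\tfrac1m H$ would fail to be nef, so you cannot approximate from below in the first place. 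Second, a shorter route avoiding Khovanskii--Teissier altogether is available: by Kodaira's lemma write $b_i=a_i+e_i$ with $a_i$ ample $\mathbb{Q}$-Cartier and $e_i$ effective; then the telescoping identity $b_1\cdots b_k-a_1\cdots a_k=\sum_j a_1\cdots a_{j-1}\cdot e_j\cdot b_{j+1}\cdots b_k$ expresses the difference as a sum of (nef)$\cdot$(effective)$\cdot$(ample) terms, each pseudo-effective, and one then compares $a_1\cdots a_k$ with $A^k$ by choosing $A$ ample with each $a_i-A$ ample. Finally, as with the paper's own argument, the ample divisor you produce is only $\mathbb{Q}$- or $\mathbb{R}$-Cartier rather than integral Cartier; this is a harmless rescaling issue implicit in the paper's usage as well.
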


\begin{proof}
The cone $\Cc_0^i(X)$ contains the products of big and nef Cartier divisors. 
The fact that $\Cc_0^i(X)$ is convex is a consequence of Siu's inequality. We take two elements $\alpha$ and $\beta $ in $\Cc_0^i(X)$ and any birational map $q: X' \to X$. 
By definition, there exists some ample Cartier divisors $A$ and $B$ on $X'$ such that $q^*\alpha \geqslant A^i$ and $q^*\beta \geqslant b^i$. 
As $A$ and $B$ are ample, there is a constant $C>0$ such that $A^i \geqslant C b^i$ using the generalization of Siu's inequality (Theorem \ref{thm_siu_gen}). This proves that $q^* (t \times \alpha + (1-t)\times  \beta) \geqslant ( t C + (1-t) )\times b^i$ for any $t \in [ 0, 1]$. Hence $t\times \alpha + (1-t) \times \beta \in \Cc_0^i(X)$ and the cone $\Cc_0^i(X)$ is convex.

\black 
We prove that $\Cc_0^i(X)$ is an open subset of $\Cc^i(X)$. We take $\alpha \in \Cc_0^i(X)$. We take any ample Cartier divisor $H_X$ on $X$ such that $\alpha - t H_X^i$ is in $\Cc^i(X)$ for small $t> 0$. We just need to show that $\alpha - tH_X^i$ stays in $\Cc_0^i(X)$ when $t$ is small enough. 
Let $q: X' \to X$ be a birational map where $X'$ is projective and normal. By definition of $\alpha$, there exists an ample Cartier divisor $A$ on $X'$ such that $q^*\alpha  \geqslant  A^i$. By Siu's inequality, there exists a constant $C$ such that:
\begin{equation*}
q^* H_X^i \leqslant C \dfrac{(A^i \cdot q^* H_X^{n-i})}{(H_X^n)}\times A^i. 
\end{equation*} 
This implies the inequality:
\begin{equation} \label{eq_minoration_1}
q^* \beta - t q^* H_X^i \geqslant \left  ( 1 - t C \dfrac{A^i \cdot H_X^{n-i}}{H_X^n} \right ) \times A^i .
\end{equation}
As $A^i \leqslant q^* \alpha$, we have the following upper bound:
\begin{equation*}
(A^i \cdot H_X^{n-i} )\leqslant (q^* \alpha \cdot q^* H_X^{n-i}).
\end{equation*}
We get the following minoration which depends only on $\alpha$ and $H_X$:
\begin{equation} \label{eq_minoration_2}
1- t \dfrac{C (\alpha \cdot H_X^{n-i}) }{(H_X^n)} \leqslant  1 - t\dfrac{C (A^i \cdot H_X^{n-i})}{(H_X^n)}.
\end{equation} 
Using \eqref{eq_minoration_1} and \eqref{eq_minoration_2}, one gets that for $t <  \dfrac{(H_X^n)}{ C (\alpha \cdot H_X^{n-i})}$, the class $\alpha - t H_X^i $ is in $ \Cc_0^i(X)$.

\end{proof}

\begin{rem} The cone $\Cc_0^i(X)$ is not always equal to the cone generated by complete intersections. Following \cite[Example 9.6]{lehmann_xiao}, there exists a smooth toric threefold such that the cone generated by complete intersections in $\num_1(X)_\mathbb{R}$ is not convex, so it cannot be equal to $\Cc_0^2(X)$ using the following proposition. 
\end{rem}

Let $X$ be a normal projective variety of dimension $n$. Any class $\alpha \in \numd^i(X)_\mathbb{R}$ can be decomposed as $\alpha^+ - \alpha^-$ where $\alpha^+$ and $\alpha^-$ are basepoint free classes. For any $\gamma \in \Cc_0^{n-i}(X)$, we define the function:
\begin{equation}
G_\gamma (\alpha) :=  \inf_{ \substack{ \alpha = \alpha^+ - \alpha^- \\
\alpha^+, \alpha^- \in \Cc^i(X)
}} \{ (\gamma \cdot \alpha^+)  + (\gamma \cdot \alpha^-)  \} .
\end{equation}

\begin{prop} 
For any $\gamma \in \Cc_0^{n-i}(X)$, the function $G_\gamma$ defines a norm on $ \numd^i(X)_\mathbb{R}$. In particular, for any norm $|| \cdot ||_{\numd^i(X)_\mathbb{R}}$ on $\numd^i(X)_\mathbb{R}$, there is a constant $C>0$ such that for any class $\alpha \in \Cc^i(X)$:
\begin{equation*}
\dfrac{1}{C} || \alpha ||_{\numd^i(X)_\mathbb{R}} \leqslant (\gamma \cdot \alpha) \leqslant C || \alpha ||_{\numd^i(X)_\mathbb{R}}.
\end{equation*} 
\end{prop}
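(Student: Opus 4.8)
The plan is to verify the three norm axioms for $G_\gamma$ — finiteness/non-negativity, absolute homogeneity, and positive-definiteness — the first two being routine and the last the real content; the ``in particular'' clause then follows formally. First, $\Cc^k(X)$ is full-dimensional (Theorem~\ref{thm_classes_pliantes}.(1)), so it spans $\numd^k(X)_\mathbb{R}$ and every $\alpha$ has a decomposition $\alpha=\alpha^+-\alpha^-$ with $\alpha^\pm\in\Cc^k(X)$, whence $G_\gamma(\alpha)<\infty$; and each term $(\gamma\cdot\alpha^\pm)$ is $\geqslant 0$ since $\Cc^{n-k}(X)\cdot\Cc^k(X)\subset\Cc^n(X)\subset\psefd^n(X)$ by Theorem~\ref{thm_classes_pliantes}.(3),(5), i.e. the product has non-negative degree. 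Absolute homogeneity $G_\gamma(t\alpha)=|t|\,G_\gamma(\alpha)$ and the triangle inequality follow by manipulating decompositions, using only that $\Cc^k(X)$ is a convex cone (one swaps $\alpha^+$ and $\alpha^-$ when $t<0$). Observe moreover that for $\alpha\in\Cc^k(X)$ the trivial decomposition $\alpha=\alpha-0$ gives $G_\gamma(\alpha)\leqslant(\gamma\cdot\alpha)$, while any decomposition satisfies $(\gamma\cdot\alpha)=(\gamma\cdot\alpha^+)-(\gamma\cdot\alpha^-)\leqslant(\gamma\cdot\alpha^+)+(\gamma\cdot\alpha^-)$; hence $G_\gamma(\alpha)=(\gamma\cdot\alpha)$ on the cone $\Cc^k(X)$. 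Once $G_\gamma$ is known to be a norm, finite-dimensionality of $\numd^k(X)_\mathbb{R}$ gives $G_\gamma\asymp\|\cdot\|_{\numd^k(X)_\mathbb{R}}$, and the displayed two-sided bound is the restriction of this equivalence to $\Cc^k(X)$.

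\textbf{Definiteness.} Everything thus reduces to $G_\gamma(\alpha)=0\Rightarrow\alpha=0$. Fix sequences $\alpha_p^\pm\in\Cc^k(X)$ with $\alpha=\alpha_p^+-\alpha_p^-$ and $(\gamma\cdot\alpha_p^+)+(\gamma\cdot\alpha_p^-)\to 0$; as both summands are $\geqslant 0$, each tends to $0$. One cannot conclude directly on $X$ because $\psi_X$ need not be injective, so I would pass to a smooth model. Let $q:X''\to X$ be a de Jong alteration, $X''$ smooth projective and $q$ generically finite surjective. Then $q^*\alpha_p^\pm\in\Cc^k(X'')$ (Theorem~\ref{thm_classes_pliantes}.(4)) and $q^*\gamma\in\Cc_0^{n-k}(X'')$ (Proposition~\ref{prop_ccb}.(2)); and writing $d=\deg(q)$ and using $q_*\circ\psi_{X''}\circ q^*=d\,\psi_X$ (Theorem~\ref{thm_identification}.(3)) together with the invariance of degrees of $0$-cycles under proper pushforward, one gets $(q^*\gamma\cdot q^*\alpha_p^\pm)=d\,(\gamma\cdot\alpha_p^\pm)\to 0$.

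\textbf{Conclusion on the smooth model.} Since $X''$ is smooth, $\psi_{X''}$ is an isomorphism (Theorem~\ref{thm_cano_mor_prop}.(1)), identifying $\numd^k(X'')_\mathbb{R}$ with $\num_{n-k}(X'')_\mathbb{R}$ and $\psefd^k(X'')$ with $\psef_{n-k}(X'')$. Applying the definition of $\Cc_0^{n-k}(X'')$ to $\mathrm{id}_{X''}$ gives an ample Cartier divisor $A$ on $X''$ with $A^{n-k}\leqslant q^*\gamma$, and $A^{n-k}$ lies in the interior of $\Cc^{n-k}(X'')$ (Theorem~\ref{thm_classes_pliantes}.(2)). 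By the positivity of strongly pliant classes (Theorem~\ref{thm_classes_pliantes}.(5)) one has $0\leqslant(A^{n-k}\cdot q^*\alpha_p^\pm)\leqslant(q^*\gamma\cdot q^*\alpha_p^\pm)\to 0$, and transporting Proposition~\ref{prop_norm_pliant} (applied on $X''$ to the interior class $A^{n-k}\in\Cc^{n-k}(X'')$) through $\psi_{X''}$ shows that $(A^{n-k}\cdot\,\cdot\,)$ dominates a fixed norm on the pseudo-effective classes $q^*\alpha_p^\pm$; hence $q^*\alpha_p^\pm\to 0$ in $\numd^k(X'')_\mathbb{R}$, so $q^*\alpha=\lim_p(q^*\alpha_p^+-q^*\alpha_p^-)=0$. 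Finally $q^*:\numd^k(X)_\mathbb{R}\to\numd^k(X'')_\mathbb{R}$ is injective, being dual to the surjection $q_*:\num_k(X'')_\mathbb{R}\to\num_k(X)_\mathbb{R}$, so $\alpha=0$. The delicate point — and the one I expect to cost the most care — is precisely this reduction to a smooth alteration: checking that pullback along $q$ is compatible with the cones $\Cc_0$, with the operators $\psi$, and with top intersection degrees, and that the norm comparison valid on $X''$ can be pushed back to $X$; the rest is bookkeeping with the structures already in place.
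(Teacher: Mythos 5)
Your argument is correct and follows essentially the same route as the paper's proof: reduce the positive-definiteness of $G_\gamma$ to a smooth alteration $q:X''\to X$ using Proposition \ref{prop_ccb}.(2) and Theorem \ref{thm_classes_pliantes}.(4), extract an ample $A$ with $A^{n-k}\leqslant q^*\gamma$, and invoke the norm comparison for strongly pliant classes. You merely make explicit two points the paper's terser write-up leaves implicit — the sequences $\alpha_p^\pm$ (for which the quantitative Proposition \ref{prop_norm_pliant} rather than the qualitative Proposition \ref{prop_nullite_psef} is the right tool) and the injectivity of $q^*$ via duality with the surjection $q_*$ — so no objections.
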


\begin{proof} 
 The only fact which is not immediate is the fact that $G_\gamma(\alpha) = 0$ implies $\alpha = 0$. We are reduced to treat the case where $\alpha \in \Cc^i(X)$.

\smallskip

Suppose first that $X$ is smooth.
Since $\gamma $ belongs to the interior of the basepoint free cone by Proposition \ref{prop_bpf0_interior}, one has that for any basepoint free class $\beta \in \BPF^{n-i}(X)$, there exists a constant $C>0$ such that:
\begin{equation*}
C || \beta || \gamma - \beta \in \BPF^{n-i}(X).
\end{equation*}
In particular, since $\alpha$ is nef, one has:
\begin{equation*}
0= G_\gamma(\alpha) = C || \beta|| (\gamma \cdot \alpha) \geqslant ( \beta \cdot \alpha ) \geqslant 0.
\end{equation*}
Hence $(\beta \cdot \alpha) = 0$ for any basepoint free class $\beta \in \BPF^{n-i}(X)$ and $\alpha = 0 \in \numd^i(X)_\mathbb{R}$ since the basepoint free cone generates all $\numd^{n-i}(X)_\mathbb{R}$ by Theorem \ref{thm_classes_pliantes}.$(i)$. 

\medskip

Suppose that $X$ is not smooth. Fix an ample Cartier divisor $H_{X}$ on $X$.
Take an alteration $\pi : X' \to X$ of $X$. Since the morphism $\pi^* : \numd^i(X)_\mathbb{R} \to \numd^i(X')_\mathbb{R}$ is injective, we are reduced to prove that $\pi^* \alpha = 0$. 
Consider $\beta \in \Cc^{n-i}(X)$, we have by the projection formula that:
\begin{equation*}
(\pi^* \gamma \cdot \pi^* \alpha) = (\alpha \cdot \gamma).
\end{equation*}
Since $\gamma$ belongs to the interior of the basepoint free cone, there exists a constant $C>0$ such that:
\begin{equation*}
H_X^{n-i} \leqslant C \gamma.
\end{equation*} 
In particular, this implies that:
\begin{equation*}
(\pi^*H_X^{n-i} \cdot \pi^*\alpha ) = ( H_X^{n-i} \cdot \alpha)=0.
\end{equation*}
Since $\pi^*H_X$ is a big nef Cartier divisor, the class $\pi^* H_X^{n-i}$ belongs to $\Cc^{n-i}_0(X')$ by Proposition \ref{prop_bpf0_interior}, hence $\pi^* \alpha = 0$ by the previous argument. 

\end{proof}

\begin{rem} In fact, the above proof gives a stronger statement:
for any generically finite morphism $q : X' \to X$ and any $\gamma \in \Cc_0^{n-i}(X)$, the function $G_{q^* \gamma}$ defines a norm on $\numd^i(X')_\mathbb{R}$. 
\end{rem}

\section{Relative numerical classes} \label{section_relative}

\subsection{Relative classes}

In this section, we fix $q:X \to Y$ a surjective proper morphism between normal projective varieties where $\dim X=n$, $\dim Y = l$ and we denote by $e = \dim X - \dim Y$ the relative dimension of $q$.

\begin{defi} 
The abelian group $\num_i(X/Y)$ is the subgroup of $\num_i(X)$ generated by classes of subvarieties $V $ of $X$ such that the image $q(V)$ is a point in $Y$. 
\end{defi} 
Observe that by definition, there is a natural injection from $\num_i(X/Y)$ into $\num_i(X)$:
\begin{equation*}
\xymatrix{ 0 \ar[r] & \num_i(X/Y)  \ar[r]& \num_i(X) .}
\end{equation*}
 
\begin{defi} 
The abelian group $\numd^i(X/Y)$ is the quotient of $Z^i(X)$ by the equivalence relation $\equiv_Y$ where $\alpha \equiv_Y 0$ if for any cycle $z \in Z_{i}(X)$ whose image by $q$ is a collection of points in $Y$, we have $(\alpha \actd z)= 0$.
\end{defi} 

 Therefore, one has the following exact sequence:
\begin{equation*}
\xymatrix{ \numd^i(X)  \ar[r]& \numd^i(X/Y) \ar[r] & 0 .}
\end{equation*}

As before, we write $\num_i(X/Y)_\mathbb{R} = \num_i(X/Y) \otimes_\mathbb{Z} \mathbb{R}$, $\numd^i(X/Y)_\mathbb{R} = \numd^i(X/Y) \otimes \mathbb{R} $, $\num_\bullet(X/Y) = \oplus \num_i(X/Y)$ and $\numd^\bullet(X/Y) = \oplus \numd^i(X/Y)$.    

\begin{prop} \label{prop_pairing_rel}
The abelian groups $\num_i(X/Y)$ and $\numd^i(X/Y)$ are torsion free and of finite type. Moreover, the pairing $\num_i(X/Y)_\mathbb{Q} \times \numd^i(X/Y)_\mathbb{Q} \to \mathbb{Q}$ induced by the pairing $\num_i(X)_\mathbb{Q} \times \numd^i(X)_\mathbb{Q} \to \mathbb{Q}$ is perfect.
\end{prop}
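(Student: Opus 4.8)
The plan is to deduce all assertions from the analogous facts about the absolute groups $\num_k(X)$, $\numd^k(X)$ together with the perfect pairing $\num_k(X)_\mathbb{Q}\times\numd^k(X)_\mathbb{Q}\to\mathbb{Q}$, using only elementary duality.

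\textbf{Finiteness and torsion-freeness.} First I would settle these. Since $\num_k(X/Y)$ is by construction a subgroup of $\num_k(X)$, it is torsion free, and it is of finite type once one knows $\num_k(X)$ is: the latter is torsion free, and pairing against a finite family of classes spanning $\numd^k(X)_\mathbb{Q}$ yields a homomorphism $\num_k(X)\to\mathbb{Z}^N$ whose kernel is torsion — hence trivial — by the non-degeneracy of Proposition \ref{prop_numd_dual_num}, so $\num_k(X)\hookrightarrow\mathbb{Z}^N$; the analogous argument applies to $\numd^k(X)$. For $\numd^k(X/Y)$, the natural surjection $\numd^k(X)\twoheadrightarrow\numd^k(X/Y)$ noted above shows it is finitely generated, and it is torsion free because, by the definition of $\equiv_Y$ and the fact that a class in $\numd^k(X)$ acts on cycles only through their numerical classes, the pairing descends to a well-defined injection $\numd^k(X/Y)\hookrightarrow\Hom_\mathbb{Z}(\num_k(X/Y),\mathbb{Z})$, $\bar\alpha\mapsto\big(z\mapsto(\alpha\actd z)\big)$, whose target is torsion free.

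\textbf{Perfectness over $\mathbb{Q}$.} Next I would put $V=\num_k(X)_\mathbb{Q}$, identify $\numd^k(X)_\mathbb{Q}$ with $V^\vee$ via the absolute pairing, and let $W=\num_k(X/Y)_\mathbb{Q}\subseteq V$ be the $\mathbb{Q}$-span of the classes of subvarieties contracted by $q$. Unwinding the definitions, a class $\alpha\in\numd^k(X)_\mathbb{Q}$ is $\equiv_Y$-trivial exactly when $(\alpha\actd z)=0$ for every cycle $z$ contracted by $q$; since $(\alpha\actd z)$ depends only on the numerical class of $z$, this holds precisely when $\alpha$ annihilates $W$. Hence $\numd^k(X/Y)_\mathbb{Q}=V^\vee/\mathrm{Ann}(W)$, and the pairing in the statement becomes the tautological pairing $\big(V^\vee/\mathrm{Ann}(W)\big)\times W\to\mathbb{Q}$. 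Restriction of functionals $V^\vee\to W^\vee$ is surjective with kernel exactly $\mathrm{Ann}(W)$, so it induces an isomorphism $V^\vee/\mathrm{Ann}(W)\cong W^\vee$ carrying this pairing to the evaluation pairing $W^\vee\times W\to\mathbb{Q}$, which is perfect, completing the argument.

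\textbf{Main obstacle.} The argument is short, and I expect the only real difficulties to be bookkeeping ones. The delicate point is to verify that $\equiv_Y$-triviality of a class pulled back from $\numd^k(X)$ is genuinely equivalent to the annihilation of the finite-dimensional space $W=\num_k(X/Y)_\mathbb{Q}$ and not of some larger, a priori infinite-rank, subgroup of $Z_k(X)$; this is exactly where one uses that elements of $\numd^k(X)$ factor through numerical equivalence and that the contracted cycles span $\num_k(X/Y)$. The second thing to be careful about is that one needs the finite generation over $\mathbb{Z}$ of $\num_k(X)$ and $\numd^k(X)$ — stronger than the finite dimensionality of $\num_k(X)_\mathbb{Q}$ and $\numd^k(X)_\mathbb{Q}$ established earlier — which is supplied by torsion-freeness plus the non-degeneracy of the integral pairing as in the first step.
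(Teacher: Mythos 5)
Your argument is correct and follows the same route as the paper: both proofs reduce the relative pairing to the absolute perfect pairing on $\numd^k(X)_\mathbb{Q}\times\num_k(X)_\mathbb{Q}$, using that $\num_k(X/Y)$ injects into $\num_k(X)$ and $\numd^k(X)$ surjects onto $\numd^k(X/Y)$. The paper simply checks non-degeneracy in each direction by hand, whereas you repackage this as the identification $\numd^k(X/Y)_\mathbb{Q}\cong V^\vee/\mathrm{Ann}(W)\cong W^\vee$ and the perfectness of the evaluation pairing — equivalent content, somewhat cleaner bookkeeping. One genuine improvement in your write-up: the paper asserts that $\num_k(X/Y)$ is of finite type on the grounds that it is a subgroup of $\num_k(X)$, but the text earlier only establishes finite dimensionality of $\num_k(X)_\mathbb{Q}$, which does not by itself give finite generation of the torsion-free $\mathbb{Z}$-module $\num_k(X)$; your embedding $\num_k(X)\hookrightarrow\mathbb{Z}^N$ via a finite family spanning $\numd^k(X)_\mathbb{Q}$ supplies exactly the missing step, and the analogous point for $\numd^k(X)$ is needed as well.
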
 

\begin{proof}
Since $\num_i(X/Y)$ is a subgroup of $\num_i(X)$, it is torsion free and of finite type. The group $\numd^i(X/Y)$ is also torsion free.
Indeed pick $\alpha \in Z^i(X)$ such that $ p \alpha \equiv_Y 0$ for some integer $p$, then for any cycle $z$ whose image by $q$ is a union of points, we have $(p \alpha \actd z) = p (\alpha \actd z) = 0$ hence $\alpha \equiv_Y 0$.  
Finally, since there is a surjection from $\numd^i(X)$ to $\numd^i(X/Y)$, the group $\numd^i(X/Y)$ is also of finite type.

\smallskip
Let us show that the pairing is well defined and non degenerate.
Take a cycle $z \in Z_i(X)_\mathbb{Q}$ such that $q(z)$ is a finite number of points in $Y$, then if $\alpha \in \numd^i(X)$ such that its image is $0$ in $\numd^i(X/Y)$, then $(\alpha \actd z)= 0$ and the pairing $\numd^i(X/Y) \times \num_i(X/Y) \to \mathbb{Z}$ is well-defined.
Let us suppose that for any $\alpha \in \numd^i(X/Y)_\mathbb{Q}$, $(\alpha \actd z)=0$. This implies that for any $\beta \in \numd^i(X)$, the intersection product $(\beta \actd z) = 0$, thus $z \equiv 0$. 
Conversely, suppose that $(\alpha \actd z)=0$ for any $z \in \num_i(X/Y)$, then by definition $\alpha \equiv_Y 0$.  

\end{proof}

\begin{ex} When $Y$ is a point, we have $\num_i(X/Y) = \num_i(X)$ and $\numd^i(X/Y) = \numd^i(X)$.
  \end{ex}

\begin{ex} If the morphism $q : X \to Y$ is finite, then we have $\numd^0(X/Y)_\mathbb{Q} = \num_0(X/Y)_\mathbb{Q} = \mathbb{Q}$ and $\numd^i(X/Y) = \num_i(X/Y) = \{0 \}$ for $i\geqslant 1$ since $X$ is irreducible.
\end{ex}

\begin{ex}When $i=1$, the group $\num_1(X/Y)$ is generated by curves contracted by $q$ so that $\numd^1(X/Y) $ is the relative Neron-Severi group and its dimension is the relative Picard number (see \cite[Section 2.2, Example 2.16]{kollar_mori}). 
\end{ex}  
 
\begin{rem} When $i$ is greater than the relative dimension, the relative classes might not be trivial. For example if $q : X \to Y$ is a birational map, then $e=0$ but the space $\numd^1(X/Y)_\mathbb{R}$ is generated by classes of exceptional divisors of $q$.  
\end{rem}

\begin{prop} 
The intersection product on $\numd^\bullet(X)$ induces a structure of algebra on $\numd^\bullet(X/Y) $. Moreover, the action from $\numd^\bullet(X)$ on $\num_\bullet(X)$ induces an action from $\numd^\bullet(X/Y)$ on $\num_\bullet(X/Y)$, so that the vector space $\num_\bullet(X/Y)_\mathbb{R}$ becomes  a $\numd^\bullet(X/Y)_\mathbb{R}$-module. 
\end{prop}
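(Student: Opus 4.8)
The plan is to realize $\numd^\bullet(X/Y)$ as the quotient of the ring $\numd^\bullet(X)$ by a graded ideal, and $\num_\bullet(X/Y)$ as a submodule of the $\numd^\bullet(X)$-module $\num_\bullet(X)$ on which that ideal acts trivially. The whole argument rests on one geometric observation, a \emph{support lemma}: if $\beta\in\numd^m(X)$ and $z\in Z_d(X)$ is a cycle such that $q(\operatorname{Supp}z)$ is finite, then $\beta\actd z\in\num_{d-m}(X)$ is represented by a cycle supported inside $\operatorname{Supp}z$; in particular it lies in the image of the injection $\num_{d-m}(X/Y)\hookrightarrow\num_{d-m}(X)$.

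To prove the support lemma I would argue by linearity and reduce to $\beta=[\gamma]$ with $\gamma\in\IC^{e_1+m}(X_1)$ and $p_1\colon X_1\to X$ flat of relative dimension $e_1$, so that $\beta\actd z={p_1}_*(\gamma\cdot p_1^*z)$. The cycle $p_1^*z$ is supported on $p_1^{-1}(\operatorname{Supp}z)$; intersecting with a Cartier divisor can only shrink the support, since by definition $D\cdot[V]=j_*[D']$ is supported on $V$, so $\gamma\cdot p_1^*z$ is still supported on $p_1^{-1}(\operatorname{Supp}z)$; and proper pushforward along $p_1$ then produces a cycle supported on $p_1\bigl(p_1^{-1}(\operatorname{Supp}z)\bigr)\subseteq\operatorname{Supp}z$, which is a $q$-contracted locus.

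Next I would check that the graded subgroup $\mathfrak a^\bullet:=\ker\bigl(\numd^\bullet(X)\twoheadrightarrow\numd^\bullet(X/Y)\bigr)=\{\alpha\in\numd^\bullet(X):\alpha\equiv_Y0\}$ is an ideal of $\numd^\bullet(X)$. Given $\alpha\equiv_Y0$, $\beta\in\numd^m(X)$ and a cycle $z$ with $q(\operatorname{Supp}z)$ finite, the support lemma shows $\beta\actd z$ is again supported on a $q$-contracted locus, hence $\bigl((\alpha\cdot\beta)\actd z\bigr)=\bigl(\alpha\actd(\beta\actd z)\bigr)=0$ by definition of $\equiv_Y$; thus $\alpha\cdot\beta\equiv_Y0$. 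Therefore $\numd^\bullet(X/Y)=\numd^\bullet(X)/\mathfrak a^\bullet$ inherits from Proposition~\ref{prop_num_numd_module} a structure of commutative graded ring, whose unit is the class of $\deg$ — nonzero in $\numd^0(X/Y)$ since $(\deg\actd[x])=1$ for a closed point $x$ and $[x]\in\num_0(X/Y)$.

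Finally, for the module structure I would set $\bar\alpha\actd z:=\alpha\actd z$, where $\bar\alpha\in\numd^k(X/Y)$ is lifted to $\alpha\in\numd^k(X)$ and $z\in\num_d(X/Y)\subset\num_d(X)$: the support lemma places the result in $\num_{d-k}(X/Y)$, and it is independent of the lift because if $\alpha\equiv_Y0$ then $\bigl(\beta\actd(\alpha\actd z)\bigr)=\bigl((\beta\cdot\alpha)\actd z\bigr)=0$ for every $\beta\in\numd^{d-k}(X)$ (as $\beta\cdot\alpha\in\mathfrak a^\bullet$ and $z$ is $q$-contracted), so $\alpha\actd z=0$ in $\num_{d-k}(X)$ by non-degeneracy of the pairing of Proposition~\ref{prop_numd_dual_num}. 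The module axioms — additivity in each variable, $\overline{\deg}\actd z=z$, and $(\bar\alpha_1\cdot\bar\alpha_2)\actd z=\bar\alpha_1\actd(\bar\alpha_2\actd z)$ — then all hold by lifting to $\numd^\bullet(X)$ and invoking Proposition~\ref{prop_num_numd_module}. The only step requiring genuine content is the support lemma, i.e. that flat pullback, intersection with Cartier divisors and proper pushforward each keep cycles inside the locus contracted by $q$; everything else is formal bookkeeping with the quotient and sub-object structures together with the non-degeneracy of the pairings (Propositions~\ref{prop_numd_dual_num} and~\ref{prop_pairing_rel}).
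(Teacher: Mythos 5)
Your proof is correct and follows essentially the same route as the paper: the decisive geometric fact is identical (the observation that $\alpha \actd z$ can be represented by a cycle supported inside $\operatorname{Supp} z$, hence by a $q$-contracted cycle), and the verification that $\ker(\numd^\bullet(X)\to\numd^\bullet(X/Y))$ is an ideal proceeds by the same computation $((\alpha\cdot\beta)\actd z)=(\alpha\actd(\beta\actd z))=0$. You go a little further than the published argument by explicitly checking (via the non-degenerate pairings of Propositions~\ref{prop_numd_dual_num} and~\ref{prop_pairing_rel}) that the action of $\numd^\bullet(X)$ on $\num_\bullet(X/Y)$ factors through the quotient $\numd^\bullet(X/Y)$ — the paper leaves that verification implicit — but this is a filling-in of detail, not a different method.
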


\begin{proof} 
Observe that if $z \in Z_i(X)$ such that $q(z)$ is a union of points in $Y$ and $\alpha \in \numd^l(X)$, then $\alpha \actd z$ lies in $\num_{i-l}(X/Y)$. Indeed, by definition, the class $\alpha \actd z$ is represented by a cycle supported in $z$, so its image by $q$ is a collection of points in $Y$.

Let us now prove that the product is well-defined in $\numd^\bullet(X/Y)$. Take $\alpha \in \numd^i(X)$ such that $\alpha = 0$ in $\numd^i(X/Y)$ and $\beta \in \numd^l(X)$, we must prove that $\alpha \cdot \beta =0$ in $\numd^i(X/Y)$. Pick a cycle $z \in Z_{i+l}(X)$ whose image by $q$ is a collection of points, by the properties of the intersection product, $((\alpha \cdot \beta )\actd z)= ( \alpha \actd (\beta \actd z))$. As $\beta \actd z$ is in $\num_i(X/Y)$, we get that $( (\alpha \cdot \beta) \actd z )= 0$ as required.

\end{proof}

As an illustration, we give an explicit description of these groups in a particular example. 
 \begin{prop} \label{prop_calcul_rel_group}
 Suppose $q : X= \Pg(E) \to Y$ where $E$  is a vector bundle of rank $e+1$ on $Y$. Then for any integer $0  \leqslant i \leqslant e$, one has:
 \begin{equation*}
 \num_i(X/Y)_\mathbb{Q} = \mathbb{Q} \  \xi^{e-i} \actd q^* [pt] ,
\end{equation*} 
\begin{equation*}
\numd^i(X/Y)_\mathbb{Q} = \mathbb{Q} \ \xi^{i},
\end{equation*} 
where $\xi = c_1(\mathcal{O}_{\Pg(E)}(1))$.
 \end{prop}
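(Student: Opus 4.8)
The plan is to compute the Chow groups of the projective bundle $X=\Pg(E)$ explicitly, read off which classes lie in the relative subgroups, and then pass to numerical equivalence. Recall the classical projective bundle formula for Chow groups (see \cite[Theorem 3.3]{fulton}): with $p:\Pg(E)\to Y$ the projection and $\xi=c_1(\mathcal{O}_{\Pg(E)}(1))$, every element of $A_\bullet(\Pg(E))$ is uniquely of the form $\sum_{i=0}^{e}\xi^{i}\actd p^*\beta_i$ with $\beta_i\in A_\bullet(Y)$; in particular $A_{k}(\Pg(E))=\bigoplus_{i} \xi^{i}\actd p^* A_{k-e+i}(Y)$. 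The same decomposition passes to numerical equivalence after tensoring with $\mathbb{Q}$ using Theorem~\ref{thm_grr} on a smooth alteration, or one can argue directly: a class $\sum_i \xi^i\actd p^*\beta_i$ is numerically trivial on $X$ iff each $\beta_i$ is numerically trivial on $Y$, which follows from the projection formula and $p_*(\xi^j\actd p^*\beta)=s_{j-e}(E^*)\actd\beta$ together with the fact that the Segre classes $s_0(E^*),\dots,s_e(E^*)$ act invertibly in the appropriate range (compare Remark~\ref{rem_segre_image_diviseur} and Theorem~\ref{thm_segre}).

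\emph{Computation of $\num_k(X/Y)$.} First I would identify the subvarieties $V\subset X$ with $q(V)$ a finite set of points: these are exactly the cycles supported on finitely many fibers $\Pg(E_y)\cong\Pg^e$. A cycle of dimension $k\leqslant e$ supported on a single fiber is a linear combination of $k$-planes inside that $\Pg^e$, and the class of a $k$-plane in $\Pg^e$ is precisely $\xi^{e-k}\actd q^*[pt]$ (by definition of $\xi$ as the hyperplane class on each fiber, $\xi^{e-k}$ restricted to the fiber is the class of a $k$-plane, and $q^*[pt]=[\Pg(E_y)]$). Numerically all such fiber classes agree for varying $y$ since $Y$ is irreducible, so $\num_k(X/Y)$ is generated by the single class $\xi^{e-k}\actd q^*[pt]$. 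It remains to check this generator is nonzero in $\num_k(X)_\mathbb{R}$ (hence $\num_k(X/Y)=\mathbb{Z}\,\xi^{e-k}\actd q^*[pt]$): intersect it with $\xi^{k}$, which gives $(\xi^{e}\actd q^*[pt])=1$ by the projective bundle relations ($s_0(E^*)\actd[pt]=[pt]$ via Theorem~\ref{thm_segre}.(2) and Remark~\ref{rem_deg}).

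\emph{Computation of $\numd^k(X/Y)$.} By definition $\numd^k(X/Y)$ is the quotient of $Z^k(X)$ by the classes vanishing against all of $\num_k(X/Y)$; since $\num_k(X/Y)_\mathbb{Q}$ is one–dimensional, spanned by $z_0:=\xi^{e-k}\actd q^*[pt]$, the pairing (perfect by Proposition~\ref{prop_pairing_rel}) shows $\numd^k(X/Y)_\mathbb{Q}$ is also one–dimensional. The class $\xi^k\in\numd^k(X)$ maps to a nonzero element there because $(\xi^k\actd z_0)=1\neq 0$. To get the integral statement $\numd^k(X/Y)=\mathbb{Z}\,\xi^k$, I would observe that $z_0$ is a primitive element of $\num_k(X/Y)$ (it generates), so its dual under the perfect integral pairing of Proposition~\ref{prop_pairing_rel} is generated by any element pairing to $1$ with it, and $\xi^k$ is such an element; since $\numd^k(X/Y)$ is torsion free of rank $1$, this forces $\numd^k(X/Y)=\mathbb{Z}\,\xi^k$.

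\emph{Main obstacle.} The one genuinely delicate point is controlling \emph{integrally} (not just rationally) the structure of $\num_k(X/Y)$ and $\numd^k(X/Y)$: the projective bundle formula for Chow groups is integral, but numerical equivalence is defined via intersection numbers and a priori the surjection $A_k\to\num_k$ could create or kill torsion. The cleanest route is to avoid this by working $\mathbb{Q}$-linearly to pin down the ranks and the spanning classes, then invoke torsion-freeness (Proposition~\ref{prop_pairing_rel}) plus primitivity of the generator $\xi^{e-k}\actd q^*[pt]$ — which follows from the normalization $(\xi^k\cdot\xi^{e-k}\actd q^*[pt])=1$ — to upgrade to the stated integral equalities. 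Everything else is a direct unwinding of the definitions of $\xi$, of $q^*[pt]=[\Pg(E_y)]$, and of the relative groups.
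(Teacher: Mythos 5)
Your proof is correct but takes a genuinely different route for the first equality. The paper starts from Fulton's projective bundle decomposition: any $\alpha$ defining a class in $\num_k(X/Y)$ is rationally equivalent to $\sum_{e-k\leqslant i\leqslant e}\xi^i\actd q^*\alpha_i$ with $\alpha_i\in A_{k-e+i}(Y)$, and then uses $q_*\alpha=0$ together with the fact that each $\xi^j\actd\alpha$ is again relative to kill the terms with $i>e-k$ one by one, leaving $\alpha=\xi^{e-k}\actd q^*\alpha_{e-k}$ with $\alpha_{e-k}\in A_0(Y)$ and $\num_0(Y)=\mathbb{Z}[pt]$. You instead work directly with the generators of $\num_k(X/Y)$: each generating $[V]$ sits in one fiber $\Pg(E_y)\cong\Pg^e$, so in $A_k(\Pg^e)\cong\mathbb{Z}$ it is a multiple of a $k$-plane, whose pushforward is $\xi^{e-k}\actd q^*[y]\equiv\xi^{e-k}\actd q^*[pt]$. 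Your route avoids the paper's inductive vanishing argument on the $\alpha_i$; the paper's handles an arbitrary relative cycle uniformly without reducing to generators. The deduction of $\numd^k(X/Y)=\mathbb{Z}\,\xi^k$ from the normalization $(\xi^k\actd(\xi^{e-k}\actd q^*[pt]))=1$ and torsion-freeness is the same in both. Two small points worth tightening: Proposition~\ref{prop_pairing_rel} asserts perfectness of the $\mathbb{Q}$-pairing, not unimodularity of the integral one, so the integral identification is really forced by the pairing value being exactly $1$ and $\numd^k(X/Y)$ being torsion-free of rank one (which your argument does use, but your phrase ``perfect integral pairing'' overstates what is quoted); and the opening discussion of the projective bundle formula for $\num_\bullet(X)$ and the smooth-alteration remark is not actually invoked in what follows and can be dropped.
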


\begin{proof} Since the pairing $\numd^i(X/Y)_\mathbb{Q} \times \numd_i(X/Y)_\mathbb{Q} \to \mathbb{Q}$ is non degenerate and since $(\xi^i \actd( \xi^{e-i} \actd q^*[pt])) = 1 $, the second equality is an immediate consequence of the first one.
We suppose first that $i > 0$. 
Pick $\alpha \in Z_i(X)$ which defines a class in $\numd_i(X/Y)_\mathbb{Q}$. 
Using \cite[Theorem 3.3.(b)]{fulton}, $\alpha $ is rationally equivalent to $\sum_{ e-i \leqslant j \leqslant e} \xi^j \actd  q^* \alpha_j $ where $\alpha_j$ is an element of the Chow group $ A_{i-e+j}(Y)_\mathbb{Q}$. 
Since the image of $\alpha $ by $q$ is a union of points in $Y$, we have that $q_* \alpha = 0$ in $A_i(Y)_\mathbb{Q}$. 
Observe that 
\begin{equation*}
q_* (\xi^e \actd q^* \alpha_e) = \alpha_e,
\end{equation*}
and that for any $j < e$, one has that
\begin{equation*}
q_* (\xi^j \actd q^* \alpha_j ) = 0 
\end{equation*}
since the support of the cycle $\alpha_i$ is of dimension $i-e+j < i$ and $q_*(\xi^e \actd q^* \alpha_j) $ belongs to $A_{i}(Y)$. 
 Hence the conditions $q_* \alpha = 0$ implies that $\alpha_{e} = 0$ in $A_{i}(Y)_\mathbb{Q}$. 
 Since $\xi^j \actd \alpha $ defines also a class in $\numd_{i-j}(X/Y)_\mathbb{Q}$, this implies also that $\alpha_{e-j} = 0$ in $A_{i-e+j}(Y)_\mathbb{Q}$ for any $j < i$. We have finally that in $\num_i(X/Y)_\mathbb{Q}$:
\begin{equation*}
\alpha = \xi^{e-i} \actd q^* \alpha_{e-i}.  
\end{equation*}
Since $\alpha_{e-i}$ belongs to $A_0(Y)_\mathbb{Q}$ and $\num_0(Y) = \mathbb{Q} [pt]$, the $\mathbb{Q}$-module $\num_i(X/Y)$ is generated by $\xi^{e-i} \actd q^*[pt]$ for $i> 0$.

\medskip

For $i = 0$, the groups $\num_0(X)_\mathbb{Q}$ and $\num_0(X/Y)_\mathbb{Q}$ are isomorphic to $\mathbb{Q}$, so we get the desired conclusion.

\end{proof}

\subsection{Pullback and pushforward}
\label{section_pullback}
In this section, we fix any two (proper) surjective morphisms $q_1 : X_1 \to Y_1$, $q_2 : X_2 \to Y_2$ between normal projective varieties.
To simplify the notation, we write $X_1/_{q_1} Y_1 \relmor{f}{g} X_2/_{q_2} Y_2$ when we have two regular maps $f : X_1 \to X_2$ and $g : Y_1 \to Y_2$ such that $q_2 \circ f = g \circ q_1$ and we shall say that $X_1/_{q_1} Y_1  \relmor{f}{g} X_2/_{q_2}Y_2$ is a morphism. 
When $f : X_1 \dashrightarrow X_2$ and $g : Y_1 \dashrightarrow Y_2$ are merely rational maps, then we write $X_1/_{q_1} Y_1 \relrat{f}{g} X_2/_{q_2} Y_2$ and we shall call it a rational map.

 \begin{prop} Let $ X_1/_{q_1} Y_1  \relmor{f}{g} X_2/_{q_2}Y_2$ be a morphism. Then the morphism of abelian groups $f_* : \num_i(X_1) \to \num_i(X_2)$ induces a morphism of abelian groups $f_* : \num_i(X_1/Y_1) \to \num_i(X_2/Y_2)$.  
 \end{prop}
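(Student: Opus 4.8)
The plan is to check that the pushforward $f_* : \num_k(X_1) \to \num_k(X_2)$, which is already a well-defined group homomorphism by the preceding proposition, carries the subgroup $\num_k(X_1/Y_1)$ into the subgroup $\num_k(X_2/Y_2)$. Once this is established, the induced map $f_* : \num_k(X_1/Y_1) \to \num_k(X_2/Y_2)$ is automatically a homomorphism, being the restriction of a homomorphism. Since $\num_k(X_1/Y_1)$ is by definition generated by the classes $[V]$ of subvarieties $V \subseteq X_1$ of dimension $k$ with $q_1(V)$ a finite set of points, and $\num_k(X_2/Y_2)$ is a subgroup of $\num_k(X_2)$, it suffices to verify that $f_*[V] \in \num_k(X_2/Y_2)$ for each such generator $V$.

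Fix such a $V$. By the definition of proper pushforward of cycles, either $\dim f(V) < \dim V$, in which case $f_*[V] = 0 \in \num_k(X_2/Y_2)$ and there is nothing to prove; or $\dim f(V) = \dim V = k$ and $f_*[V] = [\kappa(\eta):\kappa(f(\eta))]\,[f(V)]$ is a positive integer multiple of the numerical class of the $k$-dimensional subvariety $f(V) \subseteq X_2$. In the latter case it remains to show that $f(V)$ is contracted by $q_2$, i.e. that $q_2(f(V))$ is a finite union of points. Here the semi-conjugacy is used: from $q_2 \circ f = g \circ q_1$ one gets $q_2(f(V)) = g(q_1(V))$, and since $q_1(V)$ is a finite set of points by hypothesis, its image under the morphism $g$ is again a finite set of points. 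Hence $[f(V)] \in \num_k(X_2/Y_2)$, and therefore so is the multiple $f_*[V]$. Extending linearly over the generators gives the claim.

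I expect no real obstacle: the only points requiring a modicum of care are the case distinction in the definition of $f_*$ on cycles (the vanishing when $f$ drops the dimension of $V$) and the elementary observation that a morphism sends a finite set to a finite set. If one prefers a more invariant formulation, one may instead note that $f$ maps fibres of $q_1$ into fibres of $q_2$, so any cycle supported on a union of fibres of $q_1$ pushes forward to a cycle supported on a union of fibres of $q_2$; this yields the statement at the level of cycles, and it then descends to numerical equivalence.
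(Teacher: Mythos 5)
Your proof is correct and follows the same approach as the paper: show that the semi-conjugacy $q_2 \circ f = g \circ q_1$ forces the pushforward of a cycle supported over finitely many points of $Y_1$ to be supported over finitely many points of $Y_2$. The paper's proof is more terse but identical in substance; your extra care with the case $\dim f(V) < \dim V$ is fine but not strictly necessary, as the paper simply notes that the image of $z$ under $q_2 \circ f = g \circ q_1$ is a union of points.
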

 
\begin{proof} Take a cycle $z \in Z_i(X_1)$ such that $q_1(z)$ is a union of points of $Y_1$.
Then the image of the cycle $z$ by $q_2 \circ f$ is also a union of points of $Y_2$ due to the fact that $ q_2 \circ f = g \circ q_1$. Hence $f_*$ maps $\num_i(X_1/Y_1)$ to $\num_i(X_2/Y_2)$.  
\end{proof}
 
 \begin{prop} Let $ X_1/_{q_1} Y_1  \relmor{f}{g} X_2/_{q_2}Y_2$ be a morphism. Then the morphism of graded rings $f^* : \numd^\bullet(X_1) \to \numd^\bullet(X_2)$ induces a morphism of graded rings $f^* : \numd^\bullet(X_1/Y_1)_\mathbb{Q} \to \numd^\bullet(X_2/Y_2)_\mathbb{Q}$. 
 \end{prop}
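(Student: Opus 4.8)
The plan is to deduce everything from the absolute case already recorded in Theorem~\ref{thm_num_sum_up}. Since a morphism between projective varieties is proper, that theorem gives a morphism of graded rings $f^{*}\colon\numd^{\bullet}(X_{2})\to\numd^{\bullet}(X_{1})$, compatible with the $\actd$-action on $\num_{\bullet}$ and satisfying the projection formula. By definition $\numd^{\bullet}(X_{i}/Y_{i})$ is the quotient of $\numd^{\bullet}(X_{i})$ by the classes $\alpha$ with $\alpha\equiv_{Y_{i}}0$, and this is a quotient of graded rings compatible with the induced action on $\num_{\bullet}(X_{i}/Y_{i})$. Hence it suffices to prove that $f^{*}$ maps $\{\alpha\in\numd^{k}(X_{2}):\alpha\equiv_{Y_{2}}0\}$ into $\{\beta\in\numd^{k}(X_{1}):\beta\equiv_{Y_{1}}0\}$ for every $k$; the facts that the induced map is a graded ring homomorphism and is compatible with the $\actd$-action will then be inherited automatically from the absolute pullback.

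So I would fix $\alpha\in\numd^{k}(X_{2})$ with $\alpha\equiv_{Y_{2}}0$ and a cycle $z\in Z_{k}(X_{1})$ whose image $q_{1}(z)$ is a finite set of points of $Y_{1}$, and show $(f^{*}\alpha\actd z)=0$. First I apply the projection formula (Theorem~\ref{thm_num_sum_up}.(4)) to $f$, which yields $f_{*}(f^{*}\alpha\actd z)=\alpha\actd f_{*}z$ in $\num_{0}(X_{2})$. Since the ground field is algebraically closed, $f_{*}$ preserves the degree of a $0$-cycle (the residue field of a closed point is $\kappa$, so every multiplicity appearing in the pushforward of a point is $1$), and $\num_{0}\simeq\mathbb{Z}$ via the degree; therefore $(f^{*}\alpha\actd z)=(\alpha\actd f_{*}z)$.

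The key step is that $f_{*}z$ defines a class in $\num_{k}(X_{2}/Y_{2})$. Indeed $f_{*}z$ is supported on $f(\operatorname{Supp} z)$, and from $q_{2}\circ f=g\circ q_{1}$ we get $q_{2}\bigl(f(\operatorname{Supp} z)\bigr)=g\bigl(q_{1}(\operatorname{Supp} z)\bigr)$, which is finite since $q_{1}(\operatorname{Supp} z)$ is finite and $g$ is a morphism. Thus $f_{*}z$ is a cycle contracted by $q_{2}$, so the hypothesis $\alpha\equiv_{Y_{2}}0$ forces $(\alpha\actd f_{*}z)=0$, whence $(f^{*}\alpha\actd z)=0$. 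As $z$ ranges over all cycles contracted by $q_{1}$, this says precisely $f^{*}\alpha\equiv_{Y_{1}}0$, as required, and the proposition follows.

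I do not expect a genuine obstacle here: beyond Theorem~\ref{thm_num_sum_up} this is a bookkeeping argument. The only two points deserving a line of justification are the degree-preservation of $f_{*}$ on zero-cycles (which is what lets us turn the projection-formula identity in $\num_{0}(X_{2})$ into an equality of integers) and the identity $q_{2}(f(\operatorname{Supp} z))=g(q_{1}(\operatorname{Supp} z))$, both of which are immediate from the standing hypothesis $q_{2}\circ f=g\circ q_{1}$.
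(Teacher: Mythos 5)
Your argument is correct and is essentially the same as the paper's, only spelled out explicitly: the paper disposes of this in one line by invoking duality from the preceding pushforward proposition (the pairings $\numd^k(X_i/Y_i)\times\num_k(X_i/Y_i)\to\mathbb{Z}$ being perfect), and you have simply unwound that duality — re-deriving $(f^*\alpha\actd z)=(\alpha\actd f_*z)$, which is in fact the very definition of $f^*$ as the transpose of $f_*$, and re-proving along the way that $f_*z$ stays contracted, which is the content of the previous proposition. You also correctly read through the typographical slip in the statement (the arrows should run $\numd^\bullet(X_2)\to\numd^\bullet(X_1)$, as pullback is contravariant and $f\colon X_1\to X_2$).
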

 \begin{proof} This results follows immediately by duality from the previous proposition since the pairing $\numd^i(X_i/Y_i)_\mathbb{Q} \times \num_i(X_i/Y_i)_\mathbb{Q} \to \mathbb{Q}$ is non degenerate. 
 \end{proof}
\subsection{Restriction to a general fiber and relative canonical morphism}

Recall that $\dim X = n$, $\dim Y = l$ and that the relative dimension of $q: X\to Y$ is $e$. 
\begin{prop}\label{prop_class_restriction}
There exists a unique class $\alpha_{X/Y} \in \numd^l(X)_\mathbb{Q}$ satisfying the following conditions.
\begin{enumerate}
\item The image $\psi_X(\alpha_{X/Y}) $ belongs to the subspace $\num_{e}(X/Y)_\mathbb{Q}$ of $\numd_e(X)_\mathbb{Q}$.
\item For any class $\beta \in \num_l(X)_\mathbb{Q}$, $q_* \beta = (\alpha_{X/Y} \actd \beta) \  [Y] $.
\end{enumerate}
Moreover, for any open subset $V$ of $Y$ such that the restriction $q$ to $U=q^{-1}(V)$ is flat, and for all $y \in V$ and any irreducible component $F$ of the scheme-theoretic fiber $X_y$, we have:
\begin{equation*}
\psi_X(\alpha_{X/Y}) = [X_y] = r [F],
\end{equation*}
where $r$ is a rational number which only depends on $F$ and 
where $[X_y]$ (resp. $[F]$) denotes the fundamental class of $X_y$ (resp. $F$) viewed as an element of $\numd_e(X/Y)$.

More explicitly, the class $\alpha_{X/Y}$ is given by
$$  \alpha_{X/Y} = \dfrac{1}{(H_Y^l)} q^* H_Y^l \in \numd^l(X/Y)_\mathbb{Q}, $$ 
 where $H_Y$ is an ample divisor on $Y$.
\end{prop}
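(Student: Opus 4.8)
The plan is to produce $\alpha_{X/Y}$ by the obvious formula, namely a normalised top power of the pull-back of an ample class on $Y$, and then read off the three conditions. Fix a very ample Cartier divisor $\omega_Y$ on $Y$ and set $\lambda=(\omega_Y^l)>0$, and define $\alpha_{X/Y}:=\tfrac1\lambda\,(q^*\omega_Y)^l\in\numd^l(X)_\mathbb{Q}$. Uniqueness is formal and already forced by condition (2): since $Y$ is irreducible of dimension $l$ one has $\num_l(Y)_\mathbb{Q}=\mathbb{Q}[Y]$ with $[Y]\neq 0$ (for instance $\omega_Y^l\actd[Y]=\lambda[pt]\neq 0$), so if $\alpha,\alpha'$ both satisfy (2) then $(\alpha\actd\beta)=(\alpha'\actd\beta)$ for all $\beta\in\num_l(X)_\mathbb{Q}$, whence $\alpha=\alpha'$ by non-degeneracy of the pairing $\numd^l(X)_\mathbb{Q}\times\num_l(X)_\mathbb{Q}\to\mathbb{Q}$. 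For condition (2) itself: given $\beta\in\num_l(X)_\mathbb{Q}$ write $q_*\beta=c[Y]$; applying the projection formula of Theorem~\ref{thm_num_sum_up}.(4) repeatedly (and taking degrees, which are preserved by $q_*$) gives $((q^*\omega_Y)^l\actd\beta)=(\omega_Y^l\actd q_*\beta)=c\lambda$, hence $(\alpha_{X/Y}\actd\beta)=c$ and $q_*\beta=(\alpha_{X/Y}\actd\beta)[Y]$. In particular $\alpha_{X/Y}$ does not depend on $\omega_Y$.

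For condition (1) and the fibre description I would compute $\psi_X(\alpha_{X/Y})=\alpha_{X/Y}\actd[X]=\tfrac1\lambda(q^*\omega_Y)^l\actd[X]$ geometrically, using the given open set $V\subseteq Y$ over which $q_U\colon U=q^{-1}(V)\to V$ is flat of relative dimension $e$ (note $V$ is connected since $Y$ is irreducible). Choose $l$ general members $H_1,\dots,H_l\in|\omega_Y|$: first pick $H_1,\dots,H_{l-1}$ general so that $C=H_1\cap\dots\cap H_{l-1}$ is a curve meeting $V$, then $H_l$ general enough that $Z:=H_1\cap\dots\cap H_l$ is a $0$-dimensional scheme of length $\lambda$ contained in $V$ (a general very ample section avoids the finite set $C\setminus V$). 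Since a general $H_{i+1}$ does not contain the $q$-image of any of the finitely many irreducible components of $q^{-1}(H_1\cap\dots\cap H_i)$ — these images are proper closed subsets of $Y$ because they have dimension $\leq l-i<l$ — an inductive application of the intersection-with-Cartier-divisors rule shows that the iterated intersection $(q^*H_1)\cdots(q^*H_l)\cdot[X]$ is represented by a cycle of dimension $e$ supported on $q^{-1}(H_1\cap\dots\cap H_l)=q^{-1}(Z)\subseteq U$; this representative is contracted by $q$, so $\psi_X(\alpha_{X/Y})\in\num_e(X/Y)_\mathbb{Q}$, which is (1). Restricting this representative to the open set $U$ and using that flat pull-back commutes with intersection by Cartier divisors, it equals $q_U^*\big(H_1\cdots H_l\cdot[V]\big)=q_U^*\big(\sum_j m_j[z_j]\big)=\sum_j m_j[X_{z_j}]$, where $H_1\cdots H_l\cdot[V]=\sum_j m_j[z_j]$ with $z_j\in V$ and $\sum_j m_j=\lambda$, and $[X_{z_j}]=q_U^*[z_j]$ is the fundamental class of the scheme-theoretic fibre. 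Finally, as $q_U$ is flat over the connected base $V$, all the fibres $X_z$ ($z\in V$) are algebraically, hence numerically, equivalent in $\num_e(X)_\mathbb{Q}$; therefore $\sum_j m_j[X_{z_j}]=\lambda[X_y]$ for every $y\in V$, giving $\psi_X(\alpha_{X/Y})=[X_y]$.

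It then remains to write $[X_y]=k[F]$: over a dense open of $V$ the fibres have a constant number of irreducible components, the universal family of these components is irreducible (its projection to $X$ is dominant and generically injective while all its components have dimension $n$), so the components of a general fibre are numerically proportional in $\num_e(X)_\mathbb{Q}$, and hence $[X_y]=k[F]$ with $F$ an irreducible component of the (general) fibre and $k$ the sum of the component multiplicities of the general fibre of $q$, which depends only on $F$. The main obstacle is the interface in the previous paragraph between intersection theory on $X$ and on $Y$: one must arrange the hyperplane sections $H_i$ to be simultaneously general enough that their preimages cut down dimension properly on $X$ (so that the iterated intersection admits a representative supported on $q^{-1}(Z)$) \emph{and} that the finite scheme $Z$ falls inside the flatness locus $V$, since only then can the computation be transported to $U$ through the flat pull-back $q_U^*$ and compared with the fibre class. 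Everything else — uniqueness, property (2), and the identification of the limiting class with $[X_y]$ — reduces to the projection formula and the elementary fact that a flat family over a connected base has numerically equivalent fibres.
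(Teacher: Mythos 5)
Your proof is correct and follows essentially the same path as the paper's: construct $\alpha_{X/Y}=q^*\omega_Y^l/(\omega_Y^l)$, deduce uniqueness from non-degeneracy of the pairing, obtain (2) via the projection formula, and get (1) and the fibre identity by cutting by $l$ general very ample hyperplane sections whose intersection lies inside the flatness locus $V$, restricting to $U=q^{-1}(V)$, and using flat pull-back plus numerical equivalence of fibres in a flat family (the paper cites \cite[Proposition 2.3.(d)]{fulton} and \cite[Theorem 10.2]{fulton} exactly where you supply the inductive dimension-count and the connected-base argument). The only visible difference is cosmetic: you fill in the moving-lemma/support details inline where the paper invokes Fulton, and you sketch the component-counting step for $[X_y]=k[F]$ a bit more explicitly than the paper, which settles for a one-line allusion to Stein factorization.
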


\begin{rem} Recall that by generic flatness (see \cite[Theorem 5.12]{FGA}), one can always find an open subset $V$ of $Y$ such that the restriction of $q$ to $q^{-1}(V)$ is flat over $V$.
\end{rem}
\begin{proof}
Fix an ample Cartier divisor $H_Y$ on $Y$, we set
\begin{equation*}
\alpha_{X/Y} := \dfrac{1}{(H_Y^l)} q^* H_Y^l \in \numd^l(X)_\mathbb{Q}.
\end{equation*}

Write the class $H_Y^l$ in $A_0(Y)$ as:
\begin{equation} \label{eq_gen_point}
H_Y^l = \sum a_j [p_j]
\end{equation}
where $p_j \in V(\C)$ are points in $V$ and $a_j$ are positive integers satisfying $\sum a_j = (H_Y^l)$.
By the projection formula (Theorem \ref{thm_num_sum_up}.(iv)), the class $\alpha_{X/Y}$ satisfies (i) and (ii)  . 
Let us show that any class satisfying (i) and (ii) is unique. 
Suppose there is another one $\alpha' \in \numd^l(X)_\mathbb{Q}$. 
Then for any class $\beta \in \num_l(X)_\mathbb{Q}$, $((\alpha_{X/Y} - \alpha') \actd \beta ) =0$ so that $\alpha = \alpha'$ since the pairing $\numd^l(X)_\mathbb{Q} \times \num_l(X)_\mathbb{Q} \to \mathbb{Q}$ is non degenerate.
\smallskip

Let us prove the last assertion. 
 By generic flatness \cite[Theorem 5.12]{FGA}, 
 Let $V$ be an open subset of $Y$ such that the restriction $q_{|q^{-1}(V)} : q^{-1}(V) \to V $ is flat and such that the dimension of every fiber is $e$. 
Since $H_Y$ is ample, we can find some hyperplanes of $H_i \subset Y$ such that $H_1 \cap  \ldots \cap H_l$ represents the class $H_Y^l$ and such that $H_1 \cap \ldots \cap H_l \subset V$. 
In particular, by \cite[Proposition 2.3.(d)]{fulton}, the pullback $q^* H_Y^l$ is represented by a cycle in the fiber of $H_1 \cap \ldots \cap H_l$. Denote by $ u : V \to Y$ and $g: U \to X$ the inclusion maps of $V$ and $U$ into $Y$ and $X$ respectively. 
The morphisms $u$ and $g$ are open embedding hence are flat. Moreover we have the following commutative diagram. 
\begin{equation*}
\xymatrix{ U \ar[d]^{q_{|U}}\ar[r]^{u} & X \ar[d]^{q} \\
 V \ar[r]^{g} & Y}
\end{equation*}
 
Using \cite[Example 2.4.2]{fulton}, one has that for any $\beta \in A_{l}(X)$: 
\begin{equation*}
(q^*  H_Y^l \actd \beta)  =   ( q_{|U}^* g^*(H_Y^l) \actd u^*\beta).
\end{equation*} 
Using \eqref{eq_gen_point}, one obtains in $A_e(X)$:
\begin{equation*}
 q_{|U}^* g^* H_Y^l =  q_{|U}^* g^*(\sum a_j [p_j]) = \sum a_j [q^{-1}(p_j)],
\end{equation*}
which is well-defined since the restriction of $q$ on $U$ is flat. 
By \cite[Theorem 10.2]{fulton}, we have that $[X_{p_j}] = [X_y] \in \num_e(X)$ for any $p_j, y \in V$. In particular, we have:
\begin{equation*}
\psi_{X}(q^* H_Y^l) = (\sum a_j)  [X_y] = (H_Y^l) \ [X_y] \in \num_e(X),
\end{equation*}
where $y $ is a point in $V$, which proves that $\psi_X(\alpha_{X/Y}) = [X_y]$ in $\num_e(X)_\mathbb{Q}$ for any point $y$ in $V$.
By the Stein factorization theorem, there exists a morphism $q' : X \to Y'$ with connected fibres and a finite morphism $f : Y' \to Y$ such that $q' = q \circ f$. 
Since $(H_Y^l)[X_y] = q^* H_Y^l = q'^*f^* H_Y^l$ and since $f^* H_Y^l \in \numd^l(Y')_\mathbb{R}$ which is canonically isomorphism to $\mathbb{R}$, we have that $f^* H_Y^l = p \cdot [y'] \in \numd^l(Y')_\mathbb{R}$ where $p$ is an integer and where $[y']$ is a general point in $f^{-1}(y)$.
 We have thus proven that:
 \begin{equation*}
 [X_y] = \dfrac{p}{(H_Y^l)} [q'^{-1}(y')] \in \numd_e(X),
 \end{equation*}
 and $q'^{-1}(y')$ is an irreducible component of $X_y$ as required.
\end{proof}

The class previously constructed allows us to define a restriction morphism.
\begin{defi} Suppose that $\dim Y = l$ and that $H_Y$ is an ample Cartier divisor on $Y$, then we define $\res_{X/Y}: \numd_\bullet(X)_\mathbb{Q} \to \num_{\bullet -l}(X/Y)_\mathbb{Q}$ by setting:
\begin{equation*}
\res_{X/Y} (\beta) := \dfrac{1}{(H_Y^l)} q^* H_Y^l \actd \beta =  \alpha_{X/Y} \actd \beta.
\end{equation*}
This morphism does not depend on the choice of $H_Y$.
\end{defi}
We shall denote by $\res_{X/Y}^* : \beta \in \numd^{\bullet}(X/Y)_\mathbb{Q} \rightarrow  \alpha_{X/Y} \cdot \beta \in \numd^{\bullet + l}(X)_\mathbb{Q}$ the dual morphism induced by $\res_{X/Y}$ with respect to the pairing $\numd^\bullet(X/Y)_\mathbb{Q} \times \numd_\bullet(X/Y)_\mathbb{Q} \to \mathbb{Q}$.

\begin{prop}\label{prop_restriction} Recall that $\dim Y=l$. The following properties are satisfied.
\begin{enumerate}
\item For any class $\alpha \in \numd^\bullet(X)_\mathbb{Q}$, one has:
\begin{equation*} 
\psi_X \circ \res_{X/Y}^* (\alpha) = \res_{X/Y} \circ  \psi_X (\alpha).
\end{equation*}
\item For any morphism $X'/_{q'}Y' \relmor{f}{g} X/_q Y$ where $\dim X' = \dim X=n$ and $\dim Y' = \dim Y = l$ such that the topological degree of $g$ is $d$, we have for any $\alpha \in \numd^{i-l}(X/Y)_\mathbb{Q}$:
\begin{equation*}
 d \times \res_{X'/Y'}^* \circ f^* \alpha = f^* \circ \res_{X/Y}^* \alpha.
\end{equation*} 
\end{enumerate}
\end{prop}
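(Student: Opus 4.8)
The plan is to derive both identities from the definition of the canonical class $\alpha_{X/Y}$ (Proposition \ref{prop_class_restriction}) together with the module and ring structures already established. For (1) I would simply unwind the definitions: by construction $\res_{X/Y}^*$ is the operation $\gamma\mapsto\alpha_{X/Y}\cdot\gamma$, and $\psi_X(\gamma)=\gamma\actd[X]$ for every $\gamma$, so the associativity of the $\numd^\bullet(X)_\mathbb{Q}$-module structure on $\num_\bullet(X)_\mathbb{Q}$ (Proposition \ref{prop_num_numd_module}) gives
\[\psi_X\bigl(\res_{X/Y}^*(\alpha)\bigr)=(\alpha_{X/Y}\cdot\alpha)\actd[X]=\alpha_{X/Y}\actd(\alpha\actd[X])=\alpha_{X/Y}\actd\psi_X(\alpha)=\res_{X/Y}\bigl(\psi_X(\alpha)\bigr),\]
the last equality being the definition of $\res_{X/Y}$. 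I would add one sentence observing that the right-hand side lies in $\num_{\bullet-l}(X/Y)_\mathbb{Q}$, so the equality in fact holds in the relative group. This part is immediate.

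For (2) the first step is to reduce everything to a single identity between canonical classes. The relative pullback $f^*$ is, by definition, dual to $f_*\colon\num_\bullet(X'/Y')_\mathbb{Q}\to\num_\bullet(X/Y)_\mathbb{Q}$, and since $f_*$ is compatible with the inclusions of the relative cycle groups into the absolute ones, dualizing shows that the absolute and relative pullbacks commute with the canonical surjections $\numd^\bullet(X)_\mathbb{Q}\to\numd^\bullet(X/Y)_\mathbb{Q}$ and $\numd^\bullet(X')_\mathbb{Q}\to\numd^\bullet(X'/Y')_\mathbb{Q}$. Hence if $\tilde\alpha$ lifts $\alpha$ to $\numd^{k-l}(X)_\mathbb{Q}$, then $f^*\tilde\alpha$ lifts $f^*\alpha$; using that $f^*$ is a morphism of graded rings, $f^*\circ\res_{X/Y}^*(\alpha)$ equals $(f^*\alpha_{X/Y})\cdot(f^*\tilde\alpha)$ while $d\cdot\res_{X'/Y'}^*(f^*\alpha)$ equals $d\,\alpha_{X'/Y'}\cdot(f^*\tilde\alpha)$, so it suffices to prove
\[f^*\alpha_{X/Y}=d\cdot\alpha_{X'/Y'}\quad\text{in }\numd^l(X')_\mathbb{Q}.\]
To establish this I would invoke the uniqueness part of Proposition \ref{prop_class_restriction}: by non-degeneracy of the pairing $\numd^l(X')_\mathbb{Q}\times\num_l(X')_\mathbb{Q}\to\mathbb{Q}$ it is enough to check $(f^*\alpha_{X/Y}\actd\beta')\,[Y']=d\cdot q'_*\beta'$ for every $\beta'\in\num_l(X')_\mathbb{Q}$. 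For this I would chain three facts: functoriality of pushforward, which gives $q_*f_*\beta'=g_*q'_*\beta'$; the defining relations $q_*(f_*\beta')=(\alpha_{X/Y}\actd f_*\beta')\,[Y]$ and $q'_*\beta'=(\alpha_{X'/Y'}\actd\beta')\,[Y']$ together with $g_*[Y']=d\,[Y]$, which yield $(\alpha_{X/Y}\actd f_*\beta')=d\,(\alpha_{X'/Y'}\actd\beta')$; and finally the projection formula (Theorem \ref{thm_num_sum_up}.(4)) combined with the fact that $f_*$ preserves the degree of a $0$-cycle, which yield $(f^*\alpha_{X/Y}\actd\beta')=(\alpha_{X/Y}\actd f_*\beta')$.

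The computations are all routine. The one step that deserves genuine care — and the closest thing to an obstacle — is the reduction in (2): checking that $\res_{X/Y}^*$ really is well defined independently of the chosen lift and that it commutes with $f^*$ through the relative/absolute surjections. This is essentially a bookkeeping exercise with the various numerical groups, and once it is in place the rest is a short diagram chase.
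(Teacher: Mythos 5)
The paper states this proposition without proof, so there is nothing to compare against; your argument is correct and is, I believe, the intended one. Part (1) is the direct unwinding you describe, using only the $\numd^\bullet$-module axioms of Proposition~\ref{prop_num_numd_module} and the definitions of $\psi_X$, $\res_{X/Y}$, $\res_{X/Y}^*$. For part (2) the reduction to the single identity $f^*\alpha_{X/Y}=d\,\alpha_{X'/Y'}$ in $\numd^l(X')_\mathbb{Q}$ is exactly the right move; the chain projection formula $\Rightarrow$ $q_*f_*=g_*q'_*$ $\Rightarrow$ defining property of $\alpha_{X'/Y'}$ closes it, and your appeal to ``it suffices to check condition (2)'' is justified because the uniqueness argument in Proposition~\ref{prop_class_restriction} uses only non-degeneracy of the pairing together with condition (2), so condition (1) plays no role there. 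The one genuine point of care you flag — well-definedness of $\res_{X/Y}^*$ on $\numd^\bullet(X/Y)$ independently of the lift — indeed needs the observation that $\alpha_{X/Y}\actd z$ lands in $\num_\bullet(X/Y)$ for every $z\in\num_\bullet(X)$, which follows from the description of $\psi_X(\alpha_{X/Y})$ as a fiber class in Proposition~\ref{prop_class_restriction} (or directly from $\alpha_{X/Y}=q^*\omega_Y^l/(\omega_Y^l)$). A marginally shorter route to the key identity, which avoids dualizing: since $\numd^l(Y')_\mathbb{Q}$ is one-dimensional and $(g^*\omega_Y^l\actd[Y'])=d(\omega_Y^l)$ by the projection formula, one has $g^*\omega_Y^l=\tfrac{d(\omega_Y^l)}{(\omega_{Y'}^l)}\,\omega_{Y'}^l$ in $\numd^l(Y')_\mathbb{Q}$, and pulling back by $q'^*$ gives $f^*\alpha_{X/Y}=\tfrac{1}{(\omega_Y^l)}q'^*g^*\omega_Y^l=d\,\alpha_{X'/Y'}$ directly from the explicit formula for $\alpha_{X/Y}$.
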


The definition of the restriction morphism gives a natural way to generalize the definition of the canonical morphism $\psi_X : \numd^i (X) \to \num_{n-i}(X)$ to the relative case.

\begin{defi} 
Recall that the relative dimension of the morphism $q: X \to Y$ is $e$. For any integer $i\geqslant 0$, we define the canonical morphism $\psi_{X/Y}$ by:
\begin{equation*}
\psi_{X/Y} := \psi_X \circ \res_{X/Y}^* :  \beta \in \numd^{i}(X/Y)_\mathbb{Q} \rightarrow   \psi_X(\alpha_{X/Y} \cdot \beta) \in\num_{e-i}(X/Y)_\mathbb{Q}. 
\end{equation*}
\end{defi}

\begin{rem} When $i> e$ by convention the map $\psi_{X/Y}$ is zero.
\end{rem}

We give here a situation where this map is an isomorphism.
\begin{prop} Suppose $q: X \to Y$ is a smooth morphism of relative dimension $e$, then for any integer $0 \leqslant i \leqslant e$, the map $\psi_{X/Y}: \numd^i(X/Y)_\mathbb{Q} \to \num_{e-i}(X/Y)_\mathbb{Q}$ is an isomorphism.
\end{prop}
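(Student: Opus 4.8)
The plan is to reduce the statement to the absolute smooth case --- where $\psi$ is an isomorphism by Theorem \ref{thm_cano_mor_prop}.(1) --- by restricting to a general fibre of $q$. First I would unwind the definition of $\psi_{X/Y}$. Fix a closed point $y\in Y$; by hypothesis $X_y$ is a smooth projective variety of dimension $e$, and I write $i_y\colon X_y\hookrightarrow X$ for its inclusion. Given $\beta\in\numd^k(X/Y)_\mathbb{Q}$, pick a lift $\tilde\beta\in\numd^k(X)_\mathbb{Q}$. By Proposition \ref{prop_class_restriction} one has $\psi_X(\alpha_{X/Y})=[X_y]$ in $\num_e(X)_\mathbb{Q}$, and since $q$ is flat this class is independent of $y$ and equals $(i_y)_*[X_y]$ (cf. \cite[Theorem 10.2]{fulton}); using the module structure of $\num_\bullet(X)$ and the projection formula (Theorem \ref{thm_num_sum_up}.(4)) one obtains
\begin{equation*}
\psi_{X/Y}(\beta)=\psi_X(\alpha_{X/Y}\cdot\tilde\beta)=\tilde\beta\actd\bigl(\alpha_{X/Y}\actd[X]\bigr)=\tilde\beta\actd[X_y]=(i_y)_*\bigl(i_y^*\tilde\beta\actd[X_y]\bigr)=(i_y)_*\psi_{X_y}\bigl(i_y^*\tilde\beta\bigr).
\end{equation*}
Moreover $i_y^*\tilde\beta\in\numd^k(X_y)_\mathbb{Q}$ does not depend on the chosen lift: two lifts differ by a class which is $\equiv_Y$-trivial, hence kills every cycle supported in $X_y$, hence restricts to $0$ on $X_y$ by the projection formula. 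Thus $i_y^*$ descends to $\overline{i_y^*}\colon\numd^k(X/Y)_\mathbb{Q}\to\numd^k(X_y)_\mathbb{Q}$ and $\psi_{X/Y}=(i_y)_*\circ\psi_{X_y}\circ\overline{i_y^*}$.

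Now $\psi_{X_y}$ is an isomorphism in every degree by Theorem \ref{thm_cano_mor_prop}.(1) applied with trivial group, and $(i_y)_*$ takes values in $\num_j(X/Y)_\mathbb{Q}$ because $q$ contracts $X_y$ to a point. The maps $\overline{i_y^*}$ and $(i_y)_*\colon\num_j(X_y)_\mathbb{Q}\to\num_j(X/Y)_\mathbb{Q}$ are adjoint for the perfect pairings of Proposition \ref{prop_pairing_rel}, since $(\overline{i_y^*}\beta\actd w)=(\tilde\beta\actd(i_y)_*w)$ for $w\in\num_j(X_y)_\mathbb{Q}$. So it suffices to show that, for $y$ in a suitable dense open subset of $Y$, the map $(i_y)_*\colon\num_j(X_y)_\mathbb{Q}\to\num_j(X/Y)_\mathbb{Q}$ is an isomorphism for $j=k$ and $j=e-k$: then $\psi_{X/Y}$ is a composition of three isomorphisms. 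Unwinding the pairing once more, $(i_y)_*$ is injective exactly when $i_y^*\colon\numd^j(X)_\mathbb{Q}\to\numd^j(X_y)_\mathbb{Q}$ is surjective, and $(i_y)_*$ is surjective exactly when $\overline{i_y^*}$ is injective, i.e. when any class in $\numd^j(X)_\mathbb{Q}$ that restricts to $0$ on a general fibre of $q$ already restricts to $0$ on every fibre. (If some fibre of $q$ is disconnected one first replaces $Y$ by the base of the Stein factorization of $q$, which changes neither $\num_\bullet(X/Y)$ nor $\numd^\bullet(X/Y)$ nor $e$ and makes the fibres smooth and connected, hence irreducible.)

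The hard part is precisely these two statements about the family $q$. For the surjectivity of $i_y^*$ one wants that every codimension-$j$ numerical class on a general fibre extends, with rational coefficients, to a class on $X$; I would prove this by a monodromy argument --- passing to a finite base change that trivialises the monodromy of the numerical classes of the fibres, extending the class over a neighbourhood of $y$, and pushing it back to $X$ by a trace/norm construction --- combined with a spreading-out argument to see that this holds outside a proper closed subset of $Y$ and for all $j$ at once. For the injectivity of $\overline{i_y^*}$ one wants that numerical vanishing propagates in the family; here I would use that $q$ is smooth, so that for general $y$ the normal bundle $N_{X_y/X}$ is the pull-back of the trivial normal bundle of the smooth point $y\in Y$, together with the constancy of numerical intersection numbers in flat families, to propagate the vanishing from the general fibre to an arbitrary one by specialisation. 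Finally the extreme values are immediate: for $k>e$ both sides vanish by the standing convention, while for $0\le k\le e$ they are covered by the reduction above --- in particular for $k=0$ one has $\numd^0(X/Y)_\mathbb{Q}=\mathbb{Q}\,\deg$ with $\psi_{X/Y}(\deg)=[X_y]\neq 0$, and $k=e$ is its transpose under the perfect pairings.
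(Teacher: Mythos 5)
Your factorization $\psi_{X/Y}=(i_y)_*\circ\psi_{X_y}\circ\overline{i_y^*}$ is correct, as is the adjointness of $(i_y)_*$ and $\overline{i_y^*}$ for the perfect pairings of Proposition \ref{prop_pairing_rel}. The gap is what comes next: the proposal reduces the proposition to the claim that $(i_y)_*\colon\num_j(X_y)_\mathbb{Q}\to\num_j(X/Y)_\mathbb{Q}$ is an isomorphism for $j\in\{k,e-k\}$, and then leaves the two halves of that claim as sketches. The injectivity of $(i_y)_*$ --- equivalently the surjectivity of $i_y^*\colon\numd^j(X)_\mathbb{Q}\to\numd^j(X_y)_\mathbb{Q}$ --- is the assertion that every $\mathbb{Q}$-numerical class on a general fibre is the restriction of a class on $X$. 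Already for $j=1$ this is the delicate question of the jump of the Picard number in a smooth projective family: the specialization map need not be surjective at a general closed point (surjectivity at a \emph{very} general point only makes sense over an uncountable field, and the situation is genuinely different over $\overline{\mathbb{F}_p}$), and in higher codimension the statement touches the variational Tate/Hodge conjectures. The monodromy-and-trace sketch would produce a class on a finite cover of $X$ rather than on $X$, and has to confront exactly the jump loci that make the problem hard; I do not see how to close it in the generality of the proposition (arbitrary algebraically closed $\kappa$, arbitrary normal $Y$). The surjectivity of $(i_y)_*$ is more tractable by spreading out and specialization of cycles, but it too is only sketched.

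The paper's proof takes a strictly lighter route that makes no such claim. Since the relative pairings are perfect and $\psi_{X/Y}$ is self-adjoint, it suffices to prove that $\psi_{X/Y}$ is \emph{injective} in every degree; surjectivity then follows by duality. For injectivity one takes $\alpha$ with $\psi_{X/Y}(\alpha)=0$, i.e.\ $\alpha\actd[X_y]=0$, fixes any $k$-dimensional $V$ in a fibre, writes $[V]=\psi_{X_y}(\beta)$ --- this uses only the \emph{surjectivity} of $\psi_{X_y}$, from Theorem \ref{thm_cano_mor_prop}.(1) --- and computes $(\alpha\actd[V])=(\beta\actd(\alpha\actd[X_y]))=0$. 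No statement about restriction to the fibre being an isomorphism on numerical groups is needed. In short, your proposal aims to prove strictly more than the proposition asserts, and the surplus is precisely the part that cannot be supplied.
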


\begin{proof}
Since the pairing $\numd^i(X/Y)_\mathbb{Q} \times \num_{i}(X/Y)_\mathbb{Q} \to \mathbb{Q}$ is perfect by Proposition \ref{prop_pairing_rel}, we have that the dual morphism $\psi_{X/Y}^* : \numd^{e-i}(X/Y)_\mathbb{Q} \to \num_i(X/Y)_\mathbb{Q}$ of $\psi_{X/Y}$ is surjective whenever $\psi_{X/Y}: \numd^i(X/Y)_\mathbb{Q} \to \num_{e-i}(X/Y)_\mathbb{Q}$ is injective. 
We are thus reduced to prove the injectivity of $\psi_{X/Y} : \numd^i(X/Y)_\mathbb{Q} \to \num_{e-i}(X/Y)_\mathbb{Q}$. 
Take $a \in \numd^i(X/Y)_\mathbb{Q}$ such that $\psi_{X/Y}(a) = 0$, and choose a class $\alpha \in \numd^i(X)_\mathbb{Q}$ representing $a$.  
 We fix a subvariety $V$ of dimension $i$ in a fiber $X_y$ of $q$ where $y$ is a point in $Y$. 
 We need to prove that $(\alpha \actd [V]) = 0$.
\smallskip 
 
By Proposition \ref{prop_class_restriction}, the condition $\psi_{X/Y}(\alpha) = 0$ implies that:
\begin{equation*}
\alpha \actd [X_y] = 0 \in \num_{e-i}(X)_\mathbb{Q}.
\end{equation*} 
As the morphism $q: X \to Y$ is smooth, the fiber $X_y$ over $y$ is smooth. 
By Theorem \ref{thm_cano_mor_prop}, there exists a class $\beta \in \numd^{e-i}(X_y)_\mathbb{Q}$ such that:
\begin{equation*}
\beta \actd [X_y] = [V].
\end{equation*}
In particular, we get:
\begin{equation*}
(\alpha \actd [V] )= (\alpha \actd (\beta \actd [X_y]))= (\beta \actd (\alpha \actd [X_y])) = 0
\end{equation*}
as required.
\end{proof}

\begin{ex} If $X = \Pg(E)$ where $E$ is a vector bundle on $Y$, then Proposition \ref{prop_calcul_rel_group} implies that $\psi_{X/Y} : \numd^i(X/Y)_\mathbb{Q} \to \num_{e-i}(X/Y)_\mathbb{Q}$ is an isomorphism for any integer $0 \leqslant i \leqslant e$.
\end{ex}

\begin{ex} 
If $X$ is the blow-up of $\Pg^1 \times \Pg^1$ at a point and $q  $ is the projection from $\Pg^1\times \Pg^1$ to the first component $Y=\Pg^1$ composed with the blow-down from $X$ to $\Pg^1 \times \Pg^1$. Then the morphism $\psi_{X/Y} : \numd^0(X/Y)_\mathbb{Q} \to \num_1(X/Y)_\mathbb{Q}$ is not surjective and $\psi_{X/Y}: \numd^1(X/Y)_\mathbb{Q} \to \num_0(X/Y)_\mathbb{Q}$ is not injective. 
\end{ex}

\section{Application to dynamics} \label{section_applications}

In this section, we shall consider various normal projective varieties $X_j$ and $Y_j$ respectively of dimension $n$ and $l$ and we write $e = n-l$
 Recall from Section \ref{section_pullback} that the notation $X_j /_{q_j} Y_j$ means that $q_j : X_j \to Y_j$ is a surjective morphism of relative dimension $e$ and that $X/_{q} Y  \relrat{f}{g} X'/_{q'} Y'$ means that $f: X \dashrightarrow X'$ and $g : Y \dashrightarrow Y'$ are dominant rational maps such that $q' \circ f = g \circ q $.
We shall also fix $H_{X_j}$ and $H_{Y_j}$ big and nef Cartier divisors on $X_j$ and $Y_j$ respectively.

 In this section we prove  Theorem \ref{thm_int_A} and Theorem \ref{thm_int_B}. 
 They will follow from Theorem \ref{thm_sub_multipicativity} and Theorem \ref{thm_eq_norm_deg} respectively.

\subsection{Degrees of rational maps}

\begin{defi} \label{defi_deg_rel} Let us consider a rational map 
$X_1/_{q_1} Y_1 \relrat{f}{g} X_2/_{q_2} Y_2$ and let $\Gamma_f$ (resp. $\Gamma_g$) be the  normalization of the graph of $f$ (resp. $g$) in $X_1 \times X_2$ (resp. $Y_1 \times Y_2$).
We denote by $\tilde{\Gamma_f}$ the normalization of the graph of the map induced by $ q \circ f $ from $\Gamma_f$ to $\Gamma_g$, we thus have the following diagram. 
\[\xymatrix{ & \tilde{\Gamma_f} \ar[ld]_{\pi_1} \ar[rd]^{\pi_2} \ar@/^1pc/[ddd]^{\varpi}&\\
X_1 \ar[d]^{q_1} \ar@{-->}[rr]^{f}&&X_2 \ar[d]^{q_2} \\
Y_1 \ar@{-->}[rr]^{g}& & Y_2\\
 & \Gamma_g \ar[ul]^{\pi_1'} \ar[ur]_{\pi_2'} & }\]
The  $i$-th relative degree of $f$ is defined by the formula:
\begin{equation*}
\reldeg_{i}(f) : = ( \pi_1^*(H_{X_1}^{e-i} \cdot (q_1^*H_{Y_1})^{l}) \cdot \pi_2^*(H_{X_2})^{i}).
\end{equation*}
When $Y_1 $ and $ Y_2 $ are reduced to a point, we simply write $\deg_i(f) = \reldeg_{i}(f)$.
\end{defi}
\begin{rem} If $e = 0$, then $\reldeg_{i}(f) = (q_{1}^*H_{Y_1}^l)$ if $i = 0$ and $\reldeg_{i}(f) = 0$ for $i >0$.
\end{rem}

\begin{rem} Observe that in the above diagram, the $\varpi : \tilde{\Gamma_f} \to \Gamma_g$ is a regular surjective morphism.
\end{rem}

Note that the degrees always depend on the choice of the big nef divisors, but to simplify the notations, we deliberately omit it. 

\medskip
We now explain how to associate to any rational map $X_1 /_{q_1} Y_1 \relrat{f}{g} X_2/_{q_2} Y_2 $ a pullback operator $(f,g)^{\bullet,i}$.
\begin{defi}
 Let $X_1/_{q_1} Y_1 \relrat{f}{g} X_2/_{q_2} Y_2 $ be a rational map and let $\pi_1$ and $\pi_2$ be the projections from the graph of $f$ in $X_1 \times X_2$ onto the first and the second factor respectively. 
  We define the linear morphisms $(f,g)^{\bullet,i}$ and $(f,g)_{\bullet,i}$ by the following formula:
\begin{equation*}
(f,g)^{\bullet,i} : \alpha \in \numd^{i}(X_2/Y_2)_\mathbb{R} \longrightarrow  ({\pi_1}_* \circ  \psi_{\tilde{\Gamma_f}/\Gamma_g} \circ \pi_2^*  )(\alpha) \in \num_{e-i}(X_1/Y_1)_\mathbb{R}.  
\end{equation*}

\begin{equation*}
(f,g)_{\bullet,i} : \beta \in \numd^{i}(X_1/Y_1)_\mathbb{R} \longrightarrow  ({\pi_2}_* \circ  \psi_{\tilde{\Gamma_f}/\Gamma_g} \circ \pi_1^*  )(\beta) \in \num_{e-i}(X_2/Y_2)_\mathbb{R}. 
\end{equation*}
\end{defi}
\begin{rem}
When $Y_1 $ and $ Y_2 $ are reduced to a point, then we simply write $f^{\bullet,i} (\alpha) := (f, \Id_{\{pt\}})^{\bullet,i} (\alpha)$ and $f_{\bullet,i} (\beta) := (f, \Id_{\{pt\}})_{\bullet,i} (\beta)$.
\end{rem}

\begin{rem}
Since $\numd^i(X/Y) = 0$ and $ \num_{e-i}(X) = 0$ when $i >e$, it implies that
 $(f,g)^{\bullet,i}$ and $(f,g)_{\bullet,i}$ are identically zero for any $i > e$.
\end{rem}

\subsection{Sub-multiplicativity}

\begin{thm} \label{thm_sub_multipicativity} Let us consider the composition $ X_1/_{q_1}Y_1 \relrat{f_1}{g_2} X_2/_{q_2} Y_2 \relrat{f_2}{g_2} X_3/_{q_3}Y_3$ of dominant rational maps.
Then for any integer $0 \leqslant i \leqslant e$, there exists a constant $C>0$ which depends only on the choice of $H_{X_2} $, $H_{Y_2}$, $i$, $l$ and $e$ such that:
\begin{equation*}
\reldeg_{i}(f_2 \circ f_1 ) \leqslant C \reldeg_{i}(f_1) \reldeg_{i} (f_2).
\end{equation*}  
More precisely, $C= (e-i+1)^i/ (H_{X_2}^e \cdot q_2^* H_{Y_2}^l)$.
\end{thm}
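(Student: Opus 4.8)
The plan is to reduce the submultiplicativity of relative degrees to the generalized Siu inequality (Corollary~\ref{cor_siu_gen}) applied on a suitable smooth birational model of the graph, exactly as in the codimension-$1$ argument sketched in the introduction but now carried out in the relative setting. First I would set up the geometry: let $\Gamma$ be a normal projective variety dominating the normalizations of the graphs of $f_1$, of $f_2$, and of $f_2\circ f_1$, equipped with morphisms $p_i:\Gamma\to X_i$ for $i=1,2,3$ and a morphism to $Y_2$ compatible with $q_2$; replacing $\Gamma$ by an alteration we may even assume $\Gamma$ smooth, which is harmless since the relative degrees are computed as intersection numbers that only depend on the pushforward of complete-intersection classes to $X_i$, and since Theorem~\ref{thm_siu_gen} and Corollary~\ref{cor_siu_gen} hold on any normal projective variety anyway. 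On $\Gamma$, the key divisor classes are $p_2^*\omega_{X_2}$ (which is big and nef because $\omega_{X_2}$ is big and nef and $p_2$ is dominant, using that pullback of a big nef divisor under a dominant morphism stays big and nef) together with the relative polarization coming from $q_2$.

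The heart of the argument is the inequality
\[
(p_2\circ \text{(second leg)})^* \text{-type class} \;\leqslant\; C\,\frac{(\cdots)}{(\omega_{X_2}^e\cdot q_2^*\omega_{Y_2}^l)}\; \times\; (\text{class built from }\omega_{X_2},q_2^*\omega_{Y_2}),
\]
which I would obtain by applying Corollary~\ref{cor_siu_gen}, or rather the subscheme version Theorem~\ref{thm_siu_gen}, on the cycle $Z := q_2^*\omega_{Y_2}^l \cdot (\text{remaining factors})$ inside $X_2$ — equivalently on its pullback to $\Gamma$ — taking $b = \omega_{X_2}$ restricted to the relevant subscheme of dimension $e$. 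Concretely: the class $p_3^*\omega_{X_3}$ pulled back along the second leg is big and nef, so after restricting everything to a complete-intersection subscheme cut out by $l$ copies of $q_2^*\omega_{Y_2}$ (so that the ambient dimension drops to the relative dimension $e$), Siu's inequality in the form of Theorem~\ref{thm_siu_gen} gives
\[
\alpha_1\cdots\alpha_k \actd [Z] \;\leqslant\; (e-k+1)^k \,\frac{(\alpha_1\cdots\alpha_k\cdot \omega_{X_2}^{e-k}\actd [Z])}{(\omega_{X_2}^e\actd [Z])}\; \times\; \omega_{X_2}^k\actd [Z],
\]
with $\alpha_i$ the pullback of $\omega_{X_3}$ along the appropriate projection. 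Here $(\omega_{X_2}^e\actd [Z]) = (\omega_{X_2}^e\cdot q_2^*\omega_{Y_2}^l)$ is exactly the normalizing constant appearing in the statement, and it is positive because $\omega_{X_2}$ is big and $q_2^*\omega_{Y_2}^l$ is an effective cycle of the complementary dimension whose image under $q_2$ is $l$-dimensional. Intersecting this inequality with the remaining class $\pi_1^*(\omega_{X_1}^{e-k}\cdot q_1^*\omega_{Y_1}^l)$ pulled up to $\Gamma$ — which is a pseudo-effective class, so the inequality is preserved by Theorem~\ref{thm_classes_pliantes}.(5) — and using the projection formula to push down to $\Gamma_{f_2\circ f_1}$, the left side becomes $\reldeg_k(f_2\circ f_1)$, the numerator on the right becomes $\reldeg_k(f_2)$ (after recognizing $(\alpha_1\cdots\alpha_k\cdot\omega_{X_2}^{e-k}\actd [Z])$ as the intersection number defining the $k$-th relative degree of $f_2$), and the trailing factor $\omega_{X_2}^k\actd[Z]$ intersected against $\pi_1^*(\omega_{X_1}^{e-k}\cdot q_1^*\omega_{Y_1}^l)$ becomes $\reldeg_k(f_1)$ (using $q_2\circ f_1 = g_1\circ q_1$, i.e.\ the semi-conjugacy, to identify $q_2^*\omega_{Y_2}$ pulled back through $f_1$ with $q_1^*\omega_{Y_1}$ up to the birational identifications, and absorbing any discrepancy into the nef/pseudo-effective comparison). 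This yields $\reldeg_k(f_2\circ f_1)\leqslant C\,\reldeg_k(f_1)\,\reldeg_k(f_2)$ with $C = (e-k+1)^k/(\omega_{X_2}^e\cdot q_2^*\omega_{Y_2}^l)$.

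The main obstacle I anticipate is bookkeeping on the common model $\Gamma$: one must check that the three graph-closures $\Gamma_{f_1}$, $\Gamma_{f_2}$, $\Gamma_{f_2\circ f_1}$ are simultaneously dominated by a single $\Gamma$ with all the projections $p_1,p_2,p_3$ and the map to $Y_2$ fitting into a commutative diagram, and — more delicately — that after pulling the defining intersection numbers of $\reldeg_k(f_1)$ and $\reldeg_k(f_2)$ up to $\Gamma$, the cycle-level identities needed to split the right-hand side into the product really hold in $\num_\bullet(\Gamma)$ and not merely up to an uncontrolled constant. The semi-conjugacy hypothesis $q_2\circ f_1 = g_1\circ q_1$ is exactly what makes $q_2^*\omega_{Y_2}$ behave correctly under $f_1$, and the relative dimension being constantly $e$ along the tower is what makes the dimension counts in Siu's inequality come out to $(e-k+1)^k$ rather than $(n-k+1)^k$; getting the restriction to the correct $e$-dimensional complete-intersection subscheme, rather than to all of $X_2$, is the one step that genuinely uses the relative (as opposed to absolute) structure and where I would be most careful.
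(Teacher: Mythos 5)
Your overall plan --- restrict Siu's generalized inequality (Theorem~\ref{thm_siu_gen}) to the $e$-dimensional general fiber $[Z]$ of $q_2$ pulled up to a common model $\Gamma$, with $b=\omega_{X_2}$, then intersect with the remaining classes --- is indeed the paper's, and you correctly identify the normalizing constant $(\omega_{X_2}^e\cdot q_2^*\omega_{Y_2}^l)$. But the closing step as you describe it has two genuine problems.

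First, a dimension count. After restricting to $[Z]$ (an $e$-dimensional cycle: $[Z]=\psi_\Gamma(u^*\pi_2^*q_2^*\omega_{Y_2}^l)$ by Proposition~\ref{prop_class_restriction}), the Siu inequality is a statement in $\num_{e-k}(\Gamma)$: both $\alpha_1\cdots\alpha_k\actd[Z]$ and $\omega_{X_2}^k\actd[Z]$ have dimension $e-k$. You then propose to intersect with $\pi_1^*(\omega_{X_1}^{e-k}\cdot q_1^*\omega_{Y_1}^l)$, a class of codimension $e-k+l$. Pairing a cycle of dimension $e-k$ with a class of codimension $e-k+l$ lands in dimension $-l<0$, i.e.\ gives identically zero. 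The paper intersects only with $u^*\pi_1^*\omega_{X_1}^{e-k}$; the factor $q_1^*\omega_{Y_1}^l$ is not added on top --- it must be \emph{produced} by rewriting the factor $q_2^*\omega_{Y_2}^l$ that $[Z]$ already carries.

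Second, that rewriting cannot be ``absorbed into the nef/pseudo-effective comparison.'' The paper uses the exact identity
\[
\pi_2^*q_2^*\omega_{Y_2}^l \;=\; \frac{\deg_l(g_1)}{(\omega_{Y_1}^l)}\;\pi_1^*q_1^*\omega_{Y_1}^l \quad\text{in } \numd^l(\Gamma_{f_1})_\mathbb{Q},
\]
which holds because both sides pull back from the $l$-dimensional variety $\Gamma_{g_1}$, whose $\numd^l$ is one-dimensional, so both are rational multiples of the same class. The factor $\deg_l(g_1)/(\omega_{Y_1}^l)$ is unbounded as $g_1$ varies, but it appears on \emph{both} sides of the inequality and cancels exactly, leaving a constant $C$ depending only on $\omega_{X_2}$, $\omega_{Y_2}$, $e$, $k$. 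A one-sided pseudo-effective or Siu-type comparison between $q_2^*\omega_{Y_2}^l$ and $q_1^*\omega_{Y_1}^l$ would instead leave an uncontrolled $\deg_l(g_1)$ inside $C$, destroying the uniformity that is the entire content of the theorem. So the step you flagged as delicate does indeed require this exact proportionality, not an estimate.
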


\begin{proof} We denote by $\tilde{\Gamma_{f_1}}$ (resp. $\tilde{\Gamma_{f_2}}$, $\Gamma_{g_1}, \Gamma_{g_2}$) the normalization of the graph of $q_2 \circ f_1$ (resp. $q_3\circ f_2,g_1,g_2$) and $\pi_1$, $\pi_2$ (resp.$\pi_3,\pi_4$, $\pi_1'$, $\pi_2'$ and $\pi_3'$, $\pi_4'$) the projections onto the first and the second factor respectively. We set $\Gamma$ as the graph of the rational map $\pi_3^{-1} \circ f_1 \circ  \pi_1 : \tilde{\Gamma_{f_1}} \dashrightarrow \tilde{\Gamma_{f_2}}$, $u$ and $v$ the projections from $\Gamma$ onto $\tilde{\Gamma_{f_1}}$ and $\tilde{\Gamma_{f_2}}$ and $\varpi_i$ the restriction on $\tilde{\Gamma_{f_i}}$ of the projection from $X_i \times X_{i+1}$ to $Y_i \times Y_{i+1}$ for each $i=1,2$.  We have thus the following diagram.

\begin{equation} \label{big_diagram_dyn}
\xymatrix{  && \Gamma \ar[ld]_u \ar[rd]^v  & & \\
 & \tilde{\Gamma_{f_1}} \ar@/^1pc/[ddd]^{\varpi_1} \ar[ld]_{\pi_1} \ar[rd]^{\pi_2}  & & \tilde{\Gamma_{f_2}}  \ar[ld]_{\pi_3} \ar[rd]^{\pi_4} \ar@/^1pc/[ddd]^{\varpi_2}& \\
 X_1 \ar[d]_{q_1} \ar@{-->}[rr]_{f_1} & &  X_2\ar[d]_{q_2} \ar@{-->}[rr]_{f_2} & & X_3 \ar[d]_{q_3}\\
 Y_1\ar@{-->}[rr]^{g_1} & & Y_2\ar@{-->}[rr]^{g_2} & & Y_3 \\
 &\Gamma_{g_1} \ar[ul]^{\pi'_1} \ar[ru]_{\pi'_2}& & \Gamma_{g_2}  \ar[ul]^{\pi'_3} \ar[ru]_{\pi'_4}& \\
}
\end{equation}
By Proposition \ref{prop_class_restriction} applied to $ q_2 \circ \pi_2 \circ u : \Gamma \to Y_2$, the class $\psi_\Gamma( u^* \pi_2^* q_2^* H_{Y_2}^l)$ is represented by the fundamental class $[V]$ where $V$ is a subscheme of dimension $e$ in $\Gamma$ which is a general fiber of $q_2 \circ \pi_2 \circ u$. 
We apply Theorem \ref{thm_siu_gen} by restriction to $V$ to the class $a = v^* \pi_4^* H_{X_3}^i  \actd  [V] $ and $b=  u^* \pi_2^* H_{X_2} \actd [V] $. We obtain: 

\begin{equation} \label{eq_submul_1}
  v^* \pi_4^* {H_{X_3}^i} \actd [V] \leqslant  (e-i+1)^i \dfrac{( {v^* \pi_4^* H_{X_3}^i} \cdot {u^* \pi_2^* H_{X_2}^{e-i}} \actd [V]  )}{( {u^* \pi_2^* H_{X_2}^{e}} \actd [V] )}  \  {u^* \pi_2^* H_{X_2}^{i}} \actd [V] \in \num_{e-i}(\Gamma) .
 \end{equation}
 Let us simplify the right hand side of inequality \eqref{eq_submul_1}.
Since $ \pi_2 \circ u = \pi_3 \circ v$, $\psi_\Gamma( u^* \pi_2^* q_2^* H_{Y_2}^l) = [V] \in \num_{e}(\Gamma)$ and since the morphism $v$ is generically finite, one has that:
\begin{equation} \label{eq_num_1}
( {v^* \pi_4^* H_{X_3}^i} \cdot {u^* \pi_2^* H_{X_2}^{e-i}} \actd [V]  ) = (v^* (\pi_4^* H_{X_3}^i \cdot  \pi_3^* H_{X_2}^{e-i} \cdot \pi_3^* q_2^* H_{Y_2}^l)  )= d \times \reldeg_{i}(f_2),
\end{equation} 
where $d$ is the topological degree of $v$. 
The same argument gives:
\begin{equation} \label{eq_den_1}
(u^* \pi_2^* H_{X_2}^e \actd [V]) = d \times (H_{X_2}^e \cdot q_2^* H_{Y_2}^l).
\end{equation}
Using \eqref{eq_num_1}, \eqref{eq_den_1}, inequality \eqref{eq_submul_1} can be rewritten as:
 \begin{equation*}
u^* \pi_2^* q_2^*H_{Y_2}^{l} \cdot v^* \pi_4^* H_{X_3}^i \leqslant C \reldeg_{i}( f_2) \  u^* \pi_2^* H_{X_2}^{i} \cdot u^* \pi_2^* q_2^*H_{Y_2}^{l}  \in \numd^{l+i}(\Gamma),
\end{equation*}
where $C = (e-i+1)^i   /  (H_{X_2}^e \cdot q_2^* H_{Y_2}^l)$. Since the class $u^*\pi_1^* H_{X_1}^{e-i} \in \numd^{e-i} (\Gamma)$ is nef, we can intersect this class in the previous inequality to obtain:
\begin{equation} \label{eq_submul_2}
(u^*(\pi_1^* H_{X_1}^{n-l-i} \cdot \pi_2^* q_2^*H_{Y_2}^{l}) \cdot v^* \pi_4^* H_{X_3}^{i}) \leqslant C' \reldeg_{i}(f_2)   (u^* \pi_2^* H_{X_2}^{i} \cdot u^* \pi_2^* q_2^*H_{Y_2}^{l}  \cdot u^*\pi_1^* H_{X_1}^{n-l-i}).
\end{equation}
Let us simplify the expressions in inequality \eqref{eq_submul_2}. 
Because $\pi_2^* q_2^* H_{Y_2}^l = \varpi_1^* \pi_2'^* H_{Y_2}^l$ and  
$\deg_l(g_1) = (\pi_2'^* H_{Y_2}^l)$, we deduce that: 
\begin{equation} \label{eq_simp_1}
 \pi_2^* q_2^* H_{Y_2}^l = \dfrac{\deg_l(g_1)}{(H_{Y_1}^l)}  \varpi_1^* {\pi'_1}^* H_{Y_1}^l =\dfrac{\deg_l(g_1)}{(H_{Y_1}^l)} \pi_1^* q_1^* H_{Y_1}^l .
\end{equation}
Applying \eqref{eq_simp_1}, the inequality \eqref{eq_submul_2} can be translated as:
\begin{equation*}
\dfrac{\deg_l(g_1)}{(H_{Y_1}^l)}  (u^*\pi_1^*( H_{X_1}^{n-l-i} \cdot  q_1^*H_{Y_1}^{l}) \cdot v^* \pi_4^* H_{X_3}^{i}) \leqslant C \dfrac{\deg_l(g_1)}{(H_{Y_1}^l)}  \reldeg_{i}( f_2)  ( u^*( \pi_2^* H_{X_2}^{i} \cdot  \pi_1^* q_1^*H_{Y_1}^{l}  \cdot \pi_1^* H_{X_1}^{n-l-i})) . 
\end{equation*}
We obtain thus:
\[\dfrac{\deg_l(g_1)}{(H_{Y_1}^l)}  \reldeg_{i}(f_2 \circ f_1) \leqslant C \dfrac{\deg_l(g_1)}{(H_{Y_1}^l)} \reldeg_{i}(f_1) \reldeg_{i}(f_2) . \]
This concludes the proof of the inequality after dividing by $\deg_l(g_1)/ (H_{Y_1}^l)$.
\end{proof}

\black

\black

\subsection{Norms of operators associated to rational maps}

The proof of Theorem \ref{thm_int_B} relies on an easy but crucial lemma which is as follows.
\begin{lem} \label{lem_decomp} Let us consider $(V,|| \cdot ||)$ a finite dimensional normed $\mathbb{R}$-vector space and let $\mathcal{C}$ be a closed convex cone with non-empty interior in $V$. Then there exists a constant $C> 0$ such that any vector $u \in V$ can be decomposed as $v = v^+ - v^-$ where $u^+$ and $u^-$ are in $\mathcal{C}$ such that:
\begin{equation*}
|| v^{+/-}|| \leqslant C ||v||.
\end{equation*}
\end{lem}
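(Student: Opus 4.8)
The statement is a standard fact about closed convex cones with non-empty interior in a finite-dimensional normed space, and the plan is to reduce it to a compactness argument on the unit sphere. First I would fix an interior point $\xi$ of $\Cc$ with $\|\xi\| = 1$, and choose $\delta > 0$ small enough that the closed ball $B(\xi, \delta)$ is contained in $\Cc$; such a $\delta$ exists precisely because $\xi$ lies in the interior of $\Cc$. Since $\Cc$ is a cone, it then contains $\xi + \delta w$ for every $w$ with $\|w\| \leqslant 1$, hence $t\xi + \delta u \in \Cc$ for every $t > 0$ and every $u$ with $\|u\| \leqslant t$. Equivalently, $\frac{\|u\|}{\delta}\,\xi + u \in \Cc$ for every $u \in V$, by taking $t = \|u\|/\delta$ (the case $u = 0$ being trivial).

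Given this, the decomposition is immediate: for an arbitrary $v \in V$, set
\begin{equation*}
v^+ := \frac{\|v\|}{\delta}\,\xi + v, \qquad v^- := \frac{\|v\|}{\delta}\,\xi.
\end{equation*}
Then $v = v^+ - v^-$, and both $v^+$ and $v^-$ lie in $\Cc$: the second because $\Cc$ is a cone containing $\xi$, and the first by the observation of the previous paragraph applied to $u = v$. Finally, the triangle inequality gives $\|v^-\| = \frac{\|v\|}{\delta}$ and $\|v^+\| \leqslant \frac{\|v\|}{\delta} + \|v\| = \bigl(\frac{1}{\delta} + 1\bigr)\|v\|$, so the conclusion holds with $C := \frac{1}{\delta} + 1$, a constant depending only on $V$, $\|\cdot\|$ and $\Cc$ (through the choices of $\xi$ and $\delta$).

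There is essentially no obstacle here; the only point requiring a word of justification is the existence of an interior point $\xi$ together with a ball around it inside $\Cc$, which is exactly the hypothesis that $\Cc$ has non-empty interior. One could alternatively phrase the whole argument via the norm $N(v) = \inf\{\lambda > 0 : v \in \lambda(\Cc - \xi)\}$ (a Minkowski-type gauge), but the direct construction above is shorter and suffices for the application in Theorem~\ref{thm_int_B}, where it is applied to $V = \numd^k(X)_\mathbb{R}$ with $\Cc = \Cc^k(X)$, whose interior is non-empty by Theorem~\ref{thm_classes_pliantes}.(1).
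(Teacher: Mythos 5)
Your proof is correct, but it takes a genuinely different route from the paper's. The paper defines the function
\[
f(v) = \inf\{\max(\|v'\|, \|v'-v\|)\ :\ v' \in \Cc,\ v'-v \in \Cc\},
\]
checks it is well-defined because $\Cc \cap (v+\Cc) \neq \varnothing$ (non-empty interior), and then invokes continuity of $f$ and compactness of the unit sphere to produce the constant $C$, finishing by homogeneity. Your proof, by contrast, is entirely explicit: you locate a unit-norm interior point $\xi$, a radius $\delta$ with $\overline{B}(\xi,\delta)\subset \Cc$, observe that the cone property forces $\frac{\|v\|}{\delta}\xi + v \in \Cc$ for every $v$, and write down the decomposition $v^+ = \frac{\|v\|}{\delta}\xi + v$, $v^- = \frac{\|v\|}{\delta}\xi$ with the explicit constant $C = 1/\delta + 1$. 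What your approach buys is a constructive bound and no appeal to continuity of an infimum (which the paper asserts without proof); what the paper's approach buys is that it visibly produces the \emph{optimal} decomposition for each $v$, at the price of an unquantified compactness argument.

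One small imprecision in your write-up: from the statement ``$t\xi + \delta u \in \Cc$ for every $t>0$ and every $u$ with $\|u\|\leqslant t$'', the substitution $t = \|u\|/\delta$ literally yields $\frac{\|u\|}{\delta}\xi + \delta u \in \Cc$, not $\frac{\|u\|}{\delta}\xi + u$. To land on the claim you actually use, you should also rescale the dummy variable (apply the statement with $u$ replaced by $u/\delta$, so that $t = \|u\|/\delta$ and the perturbation becomes $\delta\cdot(u/\delta)=u$), or simply argue directly from $s\xi + s\delta w \in \Cc$ with $s=\|u\|/\delta$ and $w=u/\|u\|$. The conclusion $\frac{\|v\|}{\delta}\xi + v \in \Cc$ is correct either way, and the rest of the argument is sound; also note that $0\in\Cc$ (as $\Cc$ is a closed cone), which handles the case $v=0$.
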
 

\begin{proof} 
 Let us define the map $f : V \to \mathbb{R}^+$ given by:
\begin{equation*}
f (v) = \inf \{  ||v'|| +|| v' - v|| \ | \ v' \in \mathcal{C}\  , \ v' - v \in \mathcal{C} \}.
\end{equation*}
We check easily that $f$ defines a norm on $V$ which is similar to the proof of Proposition \ref{prop_norm_pliant}. 
Since $V$ is finite dimensional, there exists a constant $C$ such that for any $v \in V$, one has:
\begin{equation*}
f(v) \leqslant C ||v||,
\end{equation*}  
Hence $||v^+|| \leqslant C ||v ||$ and $||v^-|| \leqslant C || v ||$.
\end{proof}

\begin{thm} \label{thm_eq_norm_deg}
 Let $ X/_q Y \relrat{f}{g} X/_qY$ be a rational map.
We fix an integer $i \leqslant e$, some norms on $\numd^i(X/Y)_\mathbb{R}$, on $\num_{e-i}(X/Y)_\mathbb{R}$.
Then there is a constant $C >0$ such that for any rational map $ X/_q Y \relrat{f}{g} X/_q Y$, we have:
\begin{equation*}
\dfrac{1}{C} \leqslant \dfrac{|| (f, g)^{\bullet,i} ||}{\reldeg_{i}(f)} \leqslant C.
\end{equation*}
 In particular, the $i$-th relative dynamical degree of $f$ satisfies the following equality:
\begin{equation*}
\lambda_i( f,X/Y) = \lim_{p\rightarrow + \infty} || (f^p, g^p)^{\bullet,i} ||^{1/p}.
\end{equation*}
Moreover, when $ Y$ is reduced to a point, we obtain:

\begin{equation*}
\lambda_i(f)=\lim_{p \rightarrow +\infty } ||(f^p)^{\bullet,i} ||^{1/p}.
\end{equation*}
\end{thm}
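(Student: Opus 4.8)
The plan is to prove the two–sided bound $C^{-1}\leq\|(f,g)^{\bullet,k}\|/\reldeg_k(f)\leq C$ with $C$ depending only on the fixed norms, on $\omega_X,\omega_Y$ and on $k,l,e$; the stated limit formulas then follow at once, since $p\mapsto\reldeg_k(f^p)$ is submultiplicative by Theorem \ref{thm_sub_multipicativity}, so $\lambda_k(f,X/Y)=\lim_p\reldeg_k(f^p)^{1/p}$ exists by Fekete's lemma, and taking $p$–th roots in the inequality for $f^p$ gives $\lim_p\|(f^p,g^p)^{\bullet,k}\|^{1/p}=\lambda_k(f,X/Y)$, which is $\lambda_k(f)$ when $Y$ is a point (where $\reldeg_k=\deg_k$). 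I keep the notation of the definition of $(f,g)^{\bullet,k}$: $\varpi\colon\Gamma_f\to\Gamma_g$, $\pi_1,\pi_2\colon\Gamma_f\to X$, $\pi_1',\pi_2'\colon\Gamma_g\to Y$, with $q\circ\pi_i=\pi_i'\circ\varpi$, and $(f,g)^{\bullet,k}=\pi_{1*}\circ\psi_{\Gamma_f/\Gamma_g}\circ\pi_2^{*}$ where $\psi_{\Gamma_f/\Gamma_g}(\beta)=\psi_{\Gamma_f}(\alpha_{\Gamma_f/\Gamma_g}\actd\beta)$. The first point is that, since the graph of $g$ projects birationally onto its source, $\pi_1'$ has degree $1$; checking conditions (1)--(2) of Proposition \ref{prop_class_restriction} for the morphism $\varpi$ (using the projection formula and $(\pi_1'^{*}\omega_Y^{l})=(\omega_Y^{l})$) then identifies $\alpha_{\Gamma_f/\Gamma_g}=\tfrac{1}{(\omega_Y^{l})}\,\pi_1^{*}q^{*}\omega_Y^{l}$ in $\numd^{l}(\Gamma_f)_{\Q}$. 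Fix also an auxiliary ample Cartier divisor $A$ on $X$.

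\textbf{A basic identity and the lower bound.} Unwinding the definitions with the projection formula (Theorem \ref{thm_num_sum_up}.(4)) and the module identity $\psi(\alpha\cdot\beta)=\alpha\actd\psi(\beta)$, for any $\beta\in\numd^{e-k}(X)$ and $\alpha\in\numd^{k}(X)$ (and hence for their images in the relative groups) one obtains
\begin{equation*}
\bigl([\beta]\actd(f,g)^{\bullet,k}(\alpha)\bigr)=\tfrac{1}{(\omega_Y^{l})}\bigl(\pi_1^{*}(\beta\cdot q^{*}\omega_Y^{l})\cdot\pi_2^{*}\alpha\bigr).
\end{equation*}
Taking $\beta=\omega_X^{e-k}$ and $\alpha=\omega_X^{k}$, the right–hand side is exactly $\tfrac{1}{(\omega_Y^{l})}\reldeg_k(f)$; since $\lvert(\delta\actd u)\rvert\le\|\delta\|\,\|u\|$ for the relative pairing and $\|(f,g)^{\bullet,k}\|\ge\|(f,g)^{\bullet,k}([\omega_X^{k}])\|/\|[\omega_X^{k}]\|$, this already gives the lower bound $\|(f,g)^{\bullet,k}\|\ge C^{-1}\reldeg_k(f)$ with $C$ depending only on the fixed norms and on $\omega_X,\omega_Y$.

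\textbf{The upper bound.} Given $\alpha\in\numd^{k}(X/Y)$, write $\alpha=\alpha^{+}-\alpha^{-}$ with $\alpha^{\pm}$ in the strongly pliant cone and $\|\alpha^{\pm}\|\le C\|\alpha\|$ (Lemma \ref{lem_decomp}; the relative strongly pliant cone, being the image of $\Cc^{k}(X)$, has non-empty interior), and lift $\alpha^{\pm}$ to $\Cc^{k}(X)$ with comparable norm. For pliant $\alpha^{+}$, the element $(f,g)^{\bullet,k}(\alpha^{+})$ is pseudo-effective: $\pi_2^{*}\alpha^{+}$ is strongly pliant on $\Gamma_f$ (Theorem \ref{thm_classes_pliantes}.(4)), $\alpha_{\Gamma_f/\Gamma_g}$ is (a positive multiple of) a power of $\pi_1^{*}q^{*}\omega_Y\in\nefd^{1}(\Gamma_f)=\Cc^{1}(\Gamma_f)$, hence their product is strongly pliant, $\psi_{\Gamma_f}$ sends it into $\psef_{e-k}(\Gamma_f)$ (Theorem \ref{thm_classes_pliantes}.(5) and the definition of $\psefd$), and $\pi_{1*}$ preserves pseudo-effectivity. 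By Proposition \ref{prop_norm_pliant} applied with the interior pliant class $[A^{e-k}]$, $\|(f,g)^{\bullet,k}(\alpha^{+})\|$ is therefore at most a constant times $([A^{e-k}]\actd(f,g)^{\bullet,k}(\alpha^{+}))=\tfrac{1}{(\omega_Y^{l})}\bigl(\pi_1^{*}(A^{e-k}\cdot q^{*}\omega_Y^{l})\cdot\pi_2^{*}\alpha^{+}\bigr)$. Now I would apply the generalized Siu inequality (Corollary \ref{cor_siu_gen}) \emph{on $\Gamma_f$} twice, to compare pulled–back classes with $f$–independent constants: once to the strongly pliant class $\pi_2^{*}\alpha^{+}$ with the big nef divisor $\pi_2^{*}\omega_X$ (big because $\pi_2$ is generically finite surjective), where $(\pi_2^{*}\alpha^{+}\cdot\pi_2^{*}\omega_X^{n-k})=\deg(\pi_2)(\alpha^{+}\cdot\omega_X^{n-k})$ and $(\pi_2^{*}\omega_X^{n})=\deg(\pi_2)(\omega_X^{n})$ so the factor $\deg(\pi_2)$ cancels and $\pi_2^{*}\alpha^{+}\le C\|\alpha^{+}\|\,\pi_2^{*}\omega_X^{k}$ in $\numd^{k}(\Gamma_f)$ (using $(\alpha^{+}\cdot\omega_X^{n-k})\le c\|\alpha^{+}\|$, a fixed linear bound); and once to $\pi_1^{*}A^{e-k}$ with $b=\pi_1^{*}\omega_X$, giving $\pi_1^{*}A^{e-k}\le C\,\pi_1^{*}\omega_X^{e-k}$. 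Intersecting each of these $\psefd$–inequalities against the strongly pliant — hence nef — classes $\pi_1^{*}(A^{e-k}\cdot q^{*}\omega_Y^{l})$ and $\pi_1^{*}q^{*}\omega_Y^{l}\cdot\pi_2^{*}\omega_X^{k}$ respectively preserves them (a $\psefd$–class paired with a pliant class is $\ge0$, since $\psefd\cdot\Cc\subseteq\psefd$ and $\psi(\psefd^{n})\subseteq\R_{\ge0}$, both from Theorem \ref{thm_classes_pliantes}), and chaining the two comparisons yields $([A^{e-k}]\actd(f,g)^{\bullet,k}(\alpha^{+}))\le C\|\alpha^{+}\|\,\tfrac{1}{(\omega_Y^{l})}\reldeg_k(f)$. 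Summing the contributions of $\alpha^{\pm}$ gives $\|(f,g)^{\bullet,k}(\alpha)\|\le C\|\alpha\|\reldeg_k(f)$, i.e. $\|(f,g)^{\bullet,k}\|\le C\reldeg_k(f)$.

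\textbf{Main obstacle.} The identity of the second step (computing $\alpha_{\Gamma_f/\Gamma_g}$ explicitly and unwinding $\psi_{\Gamma_f/\Gamma_g}$) is formal, and the lower bound is essentially free from it. The real work is the upper bound, and the two things one must get right there are: (i) that applying the generalized Siu inequality of Corollary \ref{cor_siu_gen} on the graph $\Gamma_f$ rather than on $X$ produces constants independent of $f$ — which is exactly what the generically finite pull–backs $\pi_1,\pi_2$ guarantee, through the cancellation of the factors $\deg(\pi_i)$ — and (ii) that every comparison of numerical classes made on $\Gamma_f$ survives intersection against the relevant strongly pliant classes, so that one never has to assert that pull-back preserves pseudo-effectivity of arbitrary dual classes. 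This is, in essence, the single–graph specialisation of the mechanism used in the proof of Theorem \ref{thm_sub_multipicativity}.
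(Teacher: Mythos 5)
Your proof follows essentially the same route as the paper's: identify $\alpha_{\Gamma_f/\Gamma_g}$ and derive a pairing identity (the paper invokes Proposition \ref{prop_restriction}.(2) rather than computing the class directly, but the resulting identity is the same), observe that $(f,g)^{\bullet,k}$ sends a strongly pliant class to a pseudo-effective one to obtain the lower bound, and for the upper bound decompose into strongly pliant pieces, apply Corollary \ref{cor_siu_gen} on $\Gamma_f$ where the factors $\deg(\pi_2)$ cancel, and intersect against fixed pliant classes. Using an auxiliary ample $A$ for the norm instead of $\omega_X^{e-k}$ is a harmless, arguably safer, variation, since $\omega_X$ is only assumed big and nef.

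The one step that is not justified as written is the phrase ``lift $\alpha^{\pm}$ to $\Cc^k(X)$ with comparable norm'' \emph{after} decomposing $\alpha$ in $\numd^k(X/Y)_\mathbb{R}$. The infimum of $\|\beta\|$ over lifts $\beta\in\Cc^k(X)$ with $\varphi(\beta)=\alpha^{+}$ is a convex, positively homogeneous function of $\alpha^{+}$, but it need not be bounded on the unit sphere of $\varphi(\Cc^k(X))$: think of a round quadratic cone in $\mathbb{R}^3$ projected onto a half-plane, where the minimal-norm lift blows up near the boundary ray of the image. Since the Siu constant in your argument involves the \emph{non-relative} pairing $(\alpha^{+}\cdot\omega_X^{n-k})$, which genuinely depends on the chosen lift, this is not cosmetic. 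The paper avoids the issue by reversing the order: fix a linear section $s:\numd^k(X/Y)_\mathbb{R}\to\numd^k(X)_\mathbb{R}$, set $u=s(\alpha)$ so that $\|u\|\le\|s\|\,\|\alpha\|$, and only then apply Lemma \ref{lem_decomp} inside $\numd^k(X)_\mathbb{R}$ to write $u=u_1-u_2$ with $u_i\in\Cc^k(X)$ and $\|u_i\|$ controlled by $\|\alpha\|$. With this swap of ``lift'' and ``decompose'' your argument goes through unchanged, with $\alpha^{\pm}$ replaced by $u_i$.
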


\begin{rem} The proof of Theorem \ref{thm_int_B} follows directly from Theorem \ref{thm_eq_norm_deg} since $\numd^i(X/Y) = \numd^i(X)$ and $\numd_{e-i}(X/Y) = \num_{e-i}(X)$ when $Y$ is reduced to a point.
\end{rem}

\begin{proof} We denote by $\pi_1$ and $\pi_2$ the projections from the normalization of the graph $\tilde{\Gamma_f}$ of $q \circ f$  onto the first and the second component respectively as in Definition \ref{defi_deg_rel}. 
Since we want to control the norm of $f^{\bullet,i}$ by the $i$-th relative degree of $f$, we first find an appropriate norm to relate the norm on $\num_{e-i}(X)_\mathbb{R}$ with an intersection product.
 As $\num_{e-i}(X/Y)_\mathbb{R}$ is a subspace of $\num_{e-i}(X)_\mathbb{R}$, we can extend the norm $|| \cdot ||_{\num_{e-i}(X/Y)_\mathbb{R}}$ into a norm on $\num_{e-i}(X)_\mathbb{R}$.
As $\num_{e-i}(X)_\mathbb{R}$ is a finite dimensional vector space and since $H_X^{e-i}$ is a class in the interior of the basepoint free cone $\Cc^{e-i}(X)$, we can suppose by equivalence of norms that the norm on $\num_{e-i}(X)_\mathbb{R}$  given by 
\begin{equation*}
|| z|| = \inf_{ \substack{ z = z^+ - z^- \\
z^+, z^- \in \psef_{e-i}(X)
}} \{ (H_X^{e-i} \actd z^+)  + (H_X^{e-i} \actd z^-)  \} 
\end{equation*}
 as in Proposition \ref{prop_norm_pliant}. 
\smallskip 
 
 Let us prove that the lower bound of $|| (f,g)^{\bullet,i}|| / \reldeg_{i}(f)$ is $1$. 
  We denote by $\varphi : \numd^i(X) \to \numd^i(X/Y)$ the canonical surjection. Since $H_X^i$ is basepoint free, it implies that the class $(f,g)^\bullet( \varphi(H_X^i)) \in \num_{e-i}(X/Y)_\mathbb{R} \subset \num_{e-i}(X)_\mathbb{R}$ is pseudo-effective. In particular, this implies that its norm is exactly $\reldeg_{i}(f)$. We have thus by definition: 
\begin{equation*}
\dfrac{ || (f,g )^{\bullet,i} ||}{ \reldeg_{i}(f)} =  \left (
\dfrac{ || (f,g )^{\bullet,i} ||}{ || (f,g)^{\bullet,i}  \varphi(H_X^i)||}\right  )  \geqslant 1 , 
\end{equation*}
as required. 

\medskip
Let us find an upper bound for $|| (f,g )^{\bullet,i} ||/|| (f,g)^{\bullet,i} \varphi(H_X^i)||$. 
First we fix a morphism $s: \numd^i(X/Y)_\mathbb{R} \to \numd^i(X)_\mathbb{R}$ such that $\varphi \circ s = \Id$. 
Take $\alpha \in \numd^i(X/Y)_\mathbb{R}$ of norm $1$, then the class $u = s(\alpha) \in \numd^i(X)_\mathbb{R}$ is a representant of $\alpha$. By construction, the norm of $u$ is bounded by $|| u ||_{\numd^i(X)_\mathbb{R}} \leqslant C_1 || \alpha||_{\numd^i(X/Y)_\mathbb{R}} = C_1$ where $C_1$ is the norm of the operator $s$. Since by Proposition \ref{prop_restriction}.(ii), $\res_{\Gamma_f/\Gamma_g}^* \circ \pi_2^* = (1/\deg_l(g)) \times  \pi_2^* \circ \res_{X/Y}^*$, we have therefore:
\begin{equation*}
(f,g)^{\bullet,i} \alpha = \dfrac{1}{\deg_l(g)} \times  {\pi_1}_* \circ \psi_{\Gamma_f} \circ \pi_2^* \circ  \res_{X/Y}^* (\alpha) = \res_{X/Y} f^{\bullet,i} u. 
\end{equation*} 
By Theorem \ref{thm_classes_pliantes}, the pliant cone $\Cc^i(X)$ has a non-empty interior in $\numd^i(X)_\mathbb{R}$ and we can apply Lemma \ref{lem_decomp}.
There exists a constant $C_2>0$ which depends only on $\Cc^i(X)$ and the choice of the norm on $\numd^i(X)_\mathbb{R}$ such that the class $u$ can be decomposed as $u = u_1 - u_2$ where $u_i \in \Cc^i(X)$ such that $||u_i||_{\numd^i(X)_\mathbb{R}} \leqslant C_2 || u||_{\numd^i(X)_\mathbb{R}}$ for $i=1,2$. We set $\alpha_i = \varphi(u_i)$ for all $i \in \{ 1,2\}$.
By the triangular inequality, we have:
\begin{equation*}
\dfrac{||(f,g)^{\bullet,i} \alpha ||_{\num_{e-i}(X/Y)}}{ || (f,g)^{\bullet,i} \varphi(H_X^i)||}  \leqslant 
\dfrac{|| (f,g)^{\bullet,i} \alpha_1 ||_{\num_{e-i}(X)_\mathbb{R}}}{|| (f,g)^{\bullet,i}\varphi(H_X^i)||} + 
\dfrac{|| (f,g)^{\bullet,i} \alpha_2 ||_{\num_{e-i}(X)_\mathbb{R}}}{|| (f,g)^{\bullet,i}\varphi(H_X^i)||} . 
\end{equation*}
We have to find an upper bound of $|| (f,g)^{\bullet,i} \alpha_i||_{\num_{e-i}(X/Y)_\mathbb{R}}$ for each $i=1,2$.
Applying Siu's inequality (Corollary \ref{cor_siu_gen}) to $a = \pi_2^* u_i$ and $b = \pi_2^* H_X$ and then composing with $\res_{X/Y}\circ {\pi_1}_* \circ \psi_{\Gamma_f}$ gives
\[ \res_{X/Y} (f^{\bullet,i}(u_i)) \leqslant C_3 \dfrac{|| u_i||_{\numd^i(X)_\mathbb{R}}}{(H_X^n)} \times \res_{X/Y}(f^{\bullet,i}(H_X^i)), \]
 where $C_3$ is a positive constant which depends only on the choice of big nef divisors.
This implies by intersecting with $H_X^{e-i}$ the inequality:
\begin{equation*} \label{ineg_secondaire}
 || ( (f,g)^{\bullet,i} (\alpha_i)||_{\num_{e-i}(X/Y)_\mathbb{R}} \leqslant C_3 \dfrac{ || u_i||_{\numd^i(X)_\mathbb{R}}}{(H_X^n)} || (f,g)^{\bullet,i}( \varphi(H_X^i))||_{\num_{e-i}(X/Y)_\mathbb{R}} . 
\end{equation*}
In particular we have shown that:
\begin{equation*}
1 \leqslant \dfrac{ ||(f,g)^{\bullet,i} \alpha ||_{\num_{e-i}(X/Y)_\mathbb{R}}}{|| (f,g)^{\bullet,i} \varphi(H_X^i)||_{\num_{e-i}(X/Y)_\mathbb{R}}} \leqslant \dfrac{ 2 C_1 C_2 C_3}{(H_X^n)},
\end{equation*}
which concludes the proof.

\end{proof}

\section{Semi-conjugation by dominant rational maps} \label{section_semi_conj_rational}

In this section, we consider a more general situation than in the previous section. We still suppose that the varieties $X_i$ and $Y_i$ are of dimension $n$ and $l$ respectively such that the relative dimension is $e= n-l$, but we suppose the maps $q_i : X_i \dashrightarrow Y_i$ merely \emph{rational} and dominant: they may exhibit indeterminacy points. 
Recall also that $H_{X_i}$ and $H_{Y_i}$ are again big and nef Cartier divisors on $X_i$ and $Y_i$ respectively.

\begin{defi} Let $f : X_1 \dashrightarrow X_2$, $g : Y_1 \dashrightarrow Y_2$, $q_1: X_1 \dashrightarrow Y_1$ and $q_2 : X_2 \dashrightarrow Y_2$ be four dominant rational maps such that $ q_2\circ f = g \circ q_1$. 
We define the $i$-th relative dynamical degree of $f$ (still denoted $\reldeg_i(f)$) as the relative degree $\reldeg_i(\tilde{f})$ with respect to the rational map $ \Gamma_{q_1}/ Y_1 \relrat{\tilde{f}}{g} \Gamma_{q_2} / Y_2$ where $\Gamma_{q_i}$ are the  normalization of the graphs of $q_i$ in $X_i \times Y_i$ for each integer $i\in \{ 1,2\}$ respectively and $\tilde{f}: \Gamma_{q_1} \dashrightarrow \Gamma_{q_2}$ is the rational map induced by $f$.  
\end{defi}

\begin{thm} \label{thm_general_relatif}
\begin{enumerate}
\item[(i)] Consider now the following commutative diagram:
\begin{equation*}
\xymatrix{ X_1 \ar@{-->}[r]^{f_1} \ar@{-->}[d]^{q_1} & X_2 \ar@{-->}[r]^{f_2} \ar@{-->}[d]^{q_2} & X_3  \ar@{-->}[d]^{q_3}\\
Y_1 \ar@{-->}[r]^{g_1} & Y_2 \ar@{-->}[r]^{g_2}& Y_3 }
\end{equation*}
 where $f_i : X_i \dashrightarrow X_{i+1}$, $g_i : Y_i \dashrightarrow Y_{i+1}$, $q_1 : X_1 \dashrightarrow Y_1 $, $q_2 : X_2 \dashrightarrow Y_2$, $q_3 : X_3 \dashrightarrow Y_3$ are dominant rational maps for any integer $j\in \{1,2,3\}$ such that $q_{j+1} \circ f_j = g_j \circ q_j$ for any integer $j\in \{1,2 \}$. Then there exists a constant $C >0$ which depends only $e,i$ and the choice of big nef Cartier divisors such that:
\begin{equation*}
\reldeg_i(f_2 \circ f_1) \leqslant C \reldeg_i(f_2) \reldeg_i(f_1).
\end{equation*} 
\item[(ii)] Consider now the following commutative diagram:
\begin{equation*}
\xymatrix{ && X_1' \ar@{-->}[lld]_{\varphi_1} \ar@{-->}[rrr]^{\tilde{f}}\ar@{-->}[dd]& && X_2' \ar@{-->}[lld]_{\varphi_2} \ar@{-->}[dd]\\
X_1 \ar@{-->}[rrr]^{f} \ar@{-->}[dd]^{q_1} &&  & X_2  \ar@{-->}[dd]_<<<<{q_2} &&\\
&& Y_1' \ar@{-->}[rrr]^{\tilde{g}} \ar@{-->}[lld]_{\phi_1}&&& Y_2' \ar@{-->}[lld]^{\phi_2}\\
 Y_1 \ar@{-->}[rrr]^{g}&& & Y_2&&},
\end{equation*}
where $ f:X_1 \dashrightarrow X_2$, $g : Y_1 \dashrightarrow Y_2$, $q_1: X_1 \dashrightarrow Y_1$, $q_2 : X_2 \dashrightarrow Y_2$ are four dominant rational maps such that $ q_2\circ f= g \circ q_1$. 
We consider some birational maps $\varphi_i : X_i' \dashrightarrow X_i$ and $\phi_i : Y_i' \dashrightarrow Y_i$ for $i = 1,2$ such that  $\tilde{f}  = \varphi_2^{-1} \circ f \circ \varphi_1$ and $\tilde{g}  = \phi_2^{-1}\circ g \circ \phi_1 $. 
Then for any integer $0 \leqslant i \leqslant e$, there exists a constant $C> 0$ which depends on $e, i$, on the choice of big nef Cartier divisors and on the rational maps $\varphi_1$ and $\varphi_2$ such that:
\begin{equation}
\dfrac{1}{C}\reldeg_i(f) \leqslant \reldeg_i(\tilde{f}) \leqslant C \reldeg_i(f).
\end{equation} 
\end{enumerate}

\end{thm}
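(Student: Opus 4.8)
The plan is to reduce both assertions to the submultiplicativity already proved for semi-conjugations by \emph{regular} morphisms, namely Theorem~\ref{thm_sub_multipicativity}. For part~(1), I would pass to the graphs of the $q_i$: let $\Gamma_{q_i}$ be the normalization of the graph of $q_i$ in $X_i\times Y_i$, let $\tilde q_i:\Gamma_{q_i}\to Y_i$ be the regular surjective morphism of relative dimension $e$ induced by the second projection, and let $\omega_{\Gamma_{q_i}}$ denote the big nef divisor on $\Gamma_{q_i}$ entering the definition of $\reldeg_k$, i.e.\ the pullback of $\omega_{X_i}$ under the birational first projection $\Gamma_{q_i}\to X_i$. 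Each $f_i$ lifts to a dominant rational map $\tilde f_i:\Gamma_{q_i}\dashrightarrow\Gamma_{q_{i+1}}$ with $\tilde q_{i+1}\circ\tilde f_i=g_i\circ\tilde q_i$, and by definition $\reldeg_k(f_i)=\reldeg_k(\tilde f_i)$. The key elementary check is that, as rational maps $\Gamma_{q_1}\dashrightarrow\Gamma_{q_3}$, one has $\widetilde{f_2\circ f_1}=\tilde f_2\circ\tilde f_1$: both send a general point $(x,q_1(x))$ of $\Gamma_{q_1}$ to $(f_2f_1(x),\,q_3f_2f_1(x))$, hence they agree on a dense open subset. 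Consequently $\reldeg_k(f_2\circ f_1)=\reldeg_k(\tilde f_2\circ\tilde f_1)$, and Theorem~\ref{thm_sub_multipicativity} applied to the composition $\Gamma_{q_1}/_{\tilde q_1}Y_1\dashrightarrow\Gamma_{q_2}/_{\tilde q_2}Y_2\dashrightarrow\Gamma_{q_3}/_{\tilde q_3}Y_3$ gives $\reldeg_k(\tilde f_2\circ\tilde f_1)\leqslant C\,\reldeg_k(\tilde f_1)\reldeg_k(\tilde f_2)$ with $C=(e-k+1)^k/(\omega_{\Gamma_{q_2}}^e\cdot\tilde q_2^*\omega_{Y_2}^l)$, which depends only on $e$, $k$ and the fixed divisors.

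For part~(2) the plan is the standard argument turning submultiplicativity into a two-sided comparison, where the only new input is that $\reldeg_k(\Id_X)>0$ whenever $X/_qY$ is defined. To see this, compute on $\Gamma_q$: by Proposition~\ref{prop_class_restriction} the class $(\tilde q^*\omega_Y)^l$ is a positive multiple of the class of a general fibre $F$ of $\tilde q$, and $\omega_X$ restricts to a big divisor on such an $F$ (write $\omega_X=A+E$ with $A$ ample and $E$ effective, and choose $F$ not contained in $\operatorname{Supp}E$), so $(\omega_{\Gamma_q}^e\cdot\tilde q^*\omega_Y^l)>0$. Setting $q_i':=\phi_i^{-1}\circ q_i\circ\varphi_i$ (so that $q_i\circ\varphi_i=\phi_i\circ q_i'$), part~(1) applied to $\Id_{X_i'}=\varphi_i^{-1}\circ\varphi_i$ yields $0<\reldeg_k(\Id_{X_i'})\leqslant C\,\reldeg_k(\varphi_i^{-1})\reldeg_k(\varphi_i)$, so the four numbers $\reldeg_k(\varphi_i^{\pm1})$, $i=1,2$, are finite, strictly positive, and independent of $f$. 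Using the commutativity relations in the diagram (for instance $q_2'\circ\varphi_2^{-1}=\phi_2^{-1}\circ q_2$, coming from $q_2\circ\varphi_2=\phi_2\circ q_2'$) to see that the intermediate squares commute, I would then apply~(1) twice to $\tilde f=\varphi_2^{-1}\circ(f\circ\varphi_1)$ and twice to $f=\varphi_2\circ(\tilde f\circ\varphi_1^{-1})$ to get
\[
\reldeg_k(\tilde f)\leqslant C^2\,\reldeg_k(\varphi_2^{-1})\reldeg_k(\varphi_1)\,\reldeg_k(f),\qquad \reldeg_k(f)\leqslant C^2\,\reldeg_k(\varphi_2)\reldeg_k(\varphi_1^{-1})\,\reldeg_k(\tilde f).
\]
Combining these two inequalities and dividing (licit by the positivity above) yields $\tfrac1{C'}\reldeg_k(f)\leqslant\reldeg_k(\tilde f)\leqslant C'\reldeg_k(f)$ with $C'=C^2\max\!\big(\reldeg_k(\varphi_2^{-1})\reldeg_k(\varphi_1),\,\reldeg_k(\varphi_2)\reldeg_k(\varphi_1^{-1})\big)$, a constant depending only on $e$, $k$ and the fixed big nef divisors.

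I expect the difficulties here to be bookkeeping rather than conceptual: carefully verifying that the normalizations of graphs are compatible with composition so that $\widetilde{f_2\circ f_1}=\tilde f_2\circ\tilde f_1$, checking that every square occurring in the two compositions of part~(2) genuinely commutes so that part~(1) is applicable, and invoking the standard fact that a big divisor restricts to a big divisor on a general member of a covering family of subvarieties — here the fibres of $\tilde q_i$ — which is precisely what makes $\reldeg_k(\Id)$ positive. Granting these, both statements are formal consequences of Theorem~\ref{thm_sub_multipicativity}.
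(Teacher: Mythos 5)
Your proposal is correct and follows essentially the same route as the paper: part (1) is the observation that $\widetilde{f_2\circ f_1}=\tilde f_2\circ\tilde f_1$ on the normalized graphs of the $q_i$, reducing to Theorem~\ref{thm_sub_multipicativity}, and part (2) is the sandwich obtained by applying (1) twice in each direction, with $\varphi_1,\varphi_2^{-1}$ on one side and $\varphi_1^{-1},\varphi_2$ on the other. The only substantive addition in your write-up is the explicit verification that $\reldeg_k(\varphi_i^{\pm1})>0$ via $\reldeg_k(\mathrm{Id})>0$ and restriction of a big nef divisor to a general fibre; the paper leaves this implicit (it only spells out the upper bound and invokes symmetry), so your check is a welcome tightening rather than a departure.
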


\begin{proof}
(i) 
Recall that the normalization of the graph of $q_j$ in $X_j \times Y_j$ is birational to $X_j$ for $j\in \{ 1,2\}$, hence one can define $\tilde{f}_j : \Gamma_{q_j} \dashrightarrow \Gamma_{q_{j+1}} $  the rational maps induced by $f_j$ on the graph $\Gamma_{q_j}$ of $q_j$ for $j \in \{ 1,2\}$ respectively. Then $(i)$ results directly from Theorem \ref{thm_sub_multipicativity} applied to the composition $\Gamma_{q_1}/Y_1 \relrat{\tilde{f_1}}{g_1} \Gamma_{q_2}/Y_2 \relrat{\tilde{f_2}}{g_2} \Gamma_{q_3}/Y_3$.

\smallskip
 (ii) Let us suppose first that the maps $q_j : X_j \to Y_j$ and $q_j': X_j' \to Y_j'$ are all regular for $j=1,2$. 
Let us apply successively Theorem \ref{thm_sub_multipicativity} to the composition $ X_1'/_{q_1'} Y_1' \relrat{\varphi_1}{\phi_1} X_1/_{q_1} Y_1 \relrat{f}{g} X_2/_{q_2} Y_2 \relrat{\varphi_2^{-1}}{\phi_2^{-1}} X_2' /_{q_2'} Y_2'$.
We obtain : 
\begin{equation}
\reldeg_{i} ( \varphi_2^{-1}\circ f \circ \varphi_1) \leqslant C_2 \reldeg_i(f \circ \varphi_1) \reldeg_i(\varphi_2^{-1}) \leqslant C_1 C_2 \reldeg_i(f) \reldeg_i(\varphi_1) \reldeg_i(\varphi_2^{-1}),
\end{equation}
where $C_1 = (e-i+1)^i / (H_{X_1}^e \cdot q_1^*H_{Y_1}^l)$ and $C_2 = (e-i+1)^i / (H_{X_2}^e \cdot q_2^* H_{Y_2}^l)$.
This proves that:
\begin{equation*}
\reldeg_i(\varphi_2^{-1} \circ f \circ \varphi_1) \leqslant C \reldeg_i(f),
\end{equation*} 
where 
$$C = \dfrac{(e-i+1)^{2i} \reldeg_i(\varphi_1) \reldeg_i(\varphi_2^{-1}) } {(H_{X_1}^e\cdot q_1^* H_{Y_1}^l) (H_{X_2}^e \cdot q_2^* H_{Y_2}^l)}.$$
The proof follows easily from the regular case since the maps $\Gamma_{q_1'} \dashrightarrow \Gamma_{q_1}$ and $\Gamma_{q_2'} \dashrightarrow \Gamma_{q_2}$ are birational where $\Gamma_{q_i'}$ are the graphs of $q_i'$ in $X_i' \times Y_i'$ for $i=1,2$. 
\end{proof}

\textit{Proof of Theorem \ref{thm_int_A}}: 
$(i)$ We apply Theorem \ref{thm_sub_multipicativity} to $Y_1 = Y_2 = Y_3 = \Spec( \C)$, $X_1 = X_2 = X_3 = X$ and $H_{X_1}= H_{X_2} = H_{X_3} = H_X$, we get thus the desired conclusion:
\begin{equation*}
\deg_{i} (g \circ f) \leqslant \dfrac{(n-i+1)^i}{(H_X^n)} \deg_i(f) \deg_i(g).\qedhere
\end{equation*}
(ii) Applying Theorem \ref{thm_general_relatif}.(ii) to the varieties $X_1'= X_2'=X_1 = X_2 = X$,  $Y_1'= Y_2' =Y_1 = Y_2 = \Spec(\C)$, to the choice of big nef divisors $H_{X_1'} = H_{X_2'} = H_X'$, $H_{Y_1'} = H_{Y_2'} = H_Y'$, $H_{X_1} = H_{X_2} = H_X$ and to the rational maps $\varphi_1 = \varphi_2 = \Id_X$, $\phi_1= \phi_2 = g = \Id_{\Spec(\C)}$, $f : X\dashrightarrow X$ yields the desired result. 

\section{Mixed degree formula}

Let us consider three dominant rational maps $f : X \dashrightarrow X$, $q : X \dashrightarrow Y$, $g : Y \dashrightarrow Y$ such that $q \circ f = g \circ q$. Theorem \ref{thm_general_relatif}.(i) implies that for any integer $i\leqslant e$ the sequence $\reldeg_i(f^n)$ is submultiplicative. Define $i$-th relative dynamical degree as follows. 
\begin{equation*}
\lambda_i(f, X/Y) := \lim_{p \rightarrow + \infty} (\reldeg_i(f^p))^{1/p}.
\end{equation*}
When $Y$ is reduced to a point, then we simply write $\lambda_i(f) := \lambda_i( f , X/ \{pt\})$.

\begin{rem} Since $\reldeg_i(f^p) \in \mathbb{N}$ is an integer, one has that $\lambda_i(f, X/Y) \geqslant 1$.
\end{rem}

\begin{rem} Theorem \ref{thm_general_relatif}.(ii) implies that $\lambda_i(f, X/Y)$ is invariant by birational conjugacy, i.e $\lambda_i(f,X/Y)$ does not depend on the choice of big nef Cartier divisors and on any choice of varieties $X'$ and $Y'$ which are birational to $X$ and $Y$ respectively.
\end{rem}
Our aim in this section is to prove Theorem \ref{thm_int_C}.
To that end, we follow the approach from \cite{dinh_nguyen_truong}. The main ingredient (Corollary \ref{cor_ku_pullback}) is an inequality relating basepoint free classes which generalizes to arbitrary fields (see \cite[Proposition 2.3]{dinh_nguyen_2011} and \cite[Proposition 2.5]{dinh_nguyen_truong}).
This inequality is a direct consequence of Theorem \ref{thm_ku_pullback} which estimates the positivity of the diagonal in a quite general setting.
After this, we prove in Theorem \ref{thm_fonda} the submultiplicativity formula for the mixed degrees.
Once the submultiplicativity of these mixed degrees holds, the proof follows from a linear algebra argument.

\subsection{Positivity estimate of the diagonal}

In this section, we prove the following theorem.

\begin{thm} \label{thm_ku_pullback}Let $q: X \to Y$ be a surjective morphism such that $\dim Y = l$ and such that $q$ is of relative dimension $e$. There exists a constant $C>0$ such that for any surjective generically finite morphism $\pi : X' \to X$ and any class $\gamma \in \Cc^{l+e}(X'\times X')$:
 \begin{equation} \label{eq_diag_weak}
 (\gamma \actd [\Delta_{X'}]) \leqslant C \times ( \gamma \cdot (\pi\times \pi)^* (H_X^e \cdot H_Y^l)) ,
 \end{equation}
 where $p_1$ and $p_2$ are the projections from $X\times X$ to the first and the second factor respectively, $H_X = p_1^* H_X + p_2^* H_X$ and $H_Y = p_1^* q^* H_Y + p_2^* q^* H_Y$, and where $\Delta_{X'}$ (resp. $\Delta_X$) is the diagonal of $X'$ (resp. of $X$) in $X'\times X'$ (resp. in $X\times X$).
\end{thm}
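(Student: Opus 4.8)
The strategy is to reduce the inequality on $X' \times X'$ to a statement about the diagonal $\Delta_X$ inside $X \times X$, and then to prove that statement by exhibiting an explicit strongly pliant class on $X \times X$ which dominates $[\Delta_X]$. First I would use the projection formula to push the problem down. Writing $\Pi = \pi \times \pi : X' \times X' \to X \times X$, one has $\Pi_* [\Delta_{X'}] = \deg(\pi)\, [\Delta_X]$ (the restriction of $\Pi$ to $\Delta_{X'}$ is generically finite of degree $\deg(\pi)$ onto $\Delta_X$), so for $\gamma = \Pi^* \delta$ with $\delta \in \Cc^{l+e}(X\times X)$ one gets $(\gamma \actd [\Delta_{X'}]) = \deg(\pi)(\delta \actd [\Delta_X])$ and $(\gamma \cdot \Pi^*(\Omega_X^e \cdot \Omega_Y^l)) = \deg(\pi)(\delta \cdot \omega_X^{\otimes 2,e}\cdot \ldots)$ — but since $\gamma$ need not be a pullback, the first reduction I actually need is: it suffices to prove \eqref{eq_diag_weak} for $X' = X$, i.e. to find $C>0$ with $(\delta \actd [\Delta_X]) \leqslant C (\delta \cdot (\Omega_X^e \cdot \Omega_Y^l))$ for all $\delta \in \Cc^{l+e}(X\times X)$, and then pull back. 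Indeed, if $N := C\,(\Omega_X^e\cdot\Omega_Y^l) - [\Delta_X]$ is a \emph{pseudo-effective} class in $\num_\bullet(X\times X)$ (equivalently its $\psi$-image is), then $\Pi^* N$ is pseudo-effective on $X'\times X'$ because $\Pi$ is flat (generically finite morphisms of normal varieties need not be flat in general, so here I would instead argue directly: $[\Delta_{X'}] \leqslant \deg(\pi)^{-1}\,\Pi^*[\Delta_X]$ is false in general too, so the cleanest route is to intersect with the nef-in-both-factors class $\gamma$ and use functoriality of $\Pi_*$). So the precise reduction is: pair \eqref{eq_diag_weak} against $\gamma \in \Cc^{l+e}(X'\times X')$, apply $\Pi_*$ and the projection formula $\Pi_*(\gamma \actd [\Delta_{X'}]) = (\Pi_*\gamma) \actd [\Delta_X]$ in $\num_0$, note $\Pi_* \gamma \in \Cc^{l+e}(X\times X)$ is again strongly pliant (pushforward of pliant is \emph{not} listed as preserving pliancy — caution: only pullback is, per Theorem \ref{thm_classes_pliantes}.(4)), hence this route may fail and instead I would keep $X'$ general throughout.

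The heart of the matter is therefore the positivity of the class
\[
C\,\Pi^*(\Omega_X^e \cdot \Omega_Y^l) - [\Delta_{X'}] \in \num_{n+l}(X'\times X'),
\]
or rather the dual statement that its pairing against every $\gamma \in \Cc^{l+e}(X'\times X')$ is $\geqslant 0$. The key geometric input is that the diagonal $\Delta_X \subset X\times X$ sits inside $X \times X$ with $\pi_1|_{\Delta_X}$ and $\pi_2|_{\Delta_X}$ isomorphisms, and that $q\times q$ maps $\Delta_X$ into $\Delta_Y$. Following the Dinh--Nguyen philosophy invoked in the introduction, I would: (i) restrict to a subscheme $V$ of $X'\times X'$ cut out by an appropriate power of $\Omega_Y$ (using Proposition \ref{prop_class_restriction} for the morphism $(q\times q)\circ\Pi \circ(\text{diagonal embedding})$, so that $\Omega_Y^l$ pulled back represents a general fiber), which has dimension $e + (\text{something})$, and then (ii) apply the generalized Siu inequality, Theorem \ref{thm_siu_gen} / Corollary \ref{cor_siu_gen}, by restriction to $V$ with $b = \Pi^*\Omega_X$ to control $[\Delta_{X'}]\actd$ against a power of $\Pi^*\Omega_X$ times $[V]$. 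The point is that $\Delta_{X'}$ is \emph{big and nef when restricted to the right complete intersection of pullbacks of ample divisors} — concretely, $\Omega_X|_{\Delta_X} = 2\,p_1^*\omega_X|_{\Delta_X}$ is big and nef, so $\Omega_X^n \actd [\Delta_X] > 0$ while $\Omega_X^{n+1}\actd[\Delta_X] = 0$ forces the diagonal to be controlled by $\Omega_X^n$ up to a constant once one also accounts for the $\Omega_Y$ directions via the fibration $q$.

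The main obstacle I anticipate is the bookkeeping of dimensions and codimensions: $[\Delta_{X'}]$ has dimension $n$ in a variety of dimension $2n$, while $\Omega_X^e\cdot\Omega_Y^l$ is a codimension-$(e+l)=n$ class, so $\Pi^*(\Omega_X^e\cdot\Omega_Y^l)$ is also dimension $n$ — the inequality compares two dimension-$n$ classes and we pair against $\gamma$ of codimension $n$. Getting the restriction-to-$V$ argument to land in exactly the right graded piece, and making sure that the Siu-type inequality is applied to a \emph{big and nef} restriction (which requires checking that $\Pi^*\Omega_X$ restricted to the relevant complete-intersection subscheme of a general fiber of the composed fibration remains big, not merely nef), is the delicate step. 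I would handle bigness by noting $\pi$ is generically finite and $\omega_X$ big nef, so $\pi^*\omega_X$ is big nef, hence $\Omega_X = p_1^*\pi^*\omega_X + p_2^*\pi^*\omega_X$ restricted to $\Delta_{X'}$ equals $2$ times the big nef pullback of $\omega_X$ under the isomorphism $\pi_1|_{\Delta_{X'}}$. The constant $C$ produced this way will depend only on $n$, $l$, $e$ and the chosen divisors — as required — coming out of the exponents $(e-k+1)^k$-type factors in Theorem \ref{thm_siu_gen} together with the intersection numbers $(\omega_X^n)$, $(\omega_Y^l)$ in the denominators.
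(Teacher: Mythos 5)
There is a genuine gap in this proposal: you correctly identify the heart of the matter (controlling $[\Delta_{X'}]$ by a complete-intersection class), but you provide no actual mechanism to dominate the diagonal. Siu's inequality (Theorem \ref{thm_siu_gen}, Corollary \ref{cor_siu_gen}) compares two nef/pliant classes; the class $[\Delta_{X'}]$ is merely effective of middle dimension, not pliant, and there is no a priori reason it can be bounded by a product of pullbacks of ample divisors. Your step (ii), ``apply Siu's inequality by restriction to $V$ with $b = \Pi^*\Omega_X$ to control $[\Delta_{X'}]$'', does not parse into a precise argument: Siu-type inequalities can only be applied once you already know $[\Delta_{X'}]$ is comparable to something nef, and that is exactly what is at stake.

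The paper's proof supplies the missing geometric input in two pieces. First, it proves Lemma \ref{lem_homogen}: when $X$ is a \emph{homogeneous} projective variety, a Kleiman-transversality argument (moving $\Delta_X$ by a one-parameter subgroup so it intersects properly every stratum of the non-flat locus of $\pi\times\pi$, then invoking \cite[Example 11.4.8.(b)]{fulton}) gives $[\Delta_{X'}] \leqslant (\pi\times\pi)^*[\Delta_X]$ in $\num_n(X'\times X')$. Second, for $X = \Pg^l\times\Pg^e$ the diagonal decomposes as $\Delta_{\Pg^l}\times\Delta_{\Pg^e}$, and each factor's diagonal is dominated (up to a pliant correction) by a power of the ample class $p_1^*\omega + p_2^*\omega$, because the strongly pliant cone has nonempty interior. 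This gives the desired positivity in the homogeneous model case. The general statement is then obtained by two rounds of reduction: from general $q:X\to Y$ to $X = Y\times\Pg^e$ via a closed immersion into $Y\times\Pg^N$ and a linear projection, and from $Y\times\Pg^e$ to $\Pg^l\times\Pg^e$ via a rational map $Y\dashrightarrow\Pg^l$; each reduction trades one $\Omega$ for another via Siu's inequality (Theorem \ref{thm_siu_gen}) and Lemma \ref{lem_technical_siu}. Without the homogeneous-case lemma, your ``restrict to a general fiber of the $\Omega_Y$-fibration'' idea has no way to produce a cycle-level comparison between $[\Delta_{X'}]$ and a complete intersection of pullbacks of ample divisors. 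You also correctly flag (but do not resolve) that the projection-formula shortcut fails because pushforward does not preserve the strongly pliant cone; the paper avoids this precisely by working on $X'\times X'$ throughout and only ever \emph{pulling back} to $X''$ or $\Gamma_f$.
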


\begin{rem} The fact that the constant $C>0$ does not depend on $\pi$ but only on $H_X$, $H_Y$ is crucial in the applications. 
Theorem \ref{thm_ku_pullback} implies that the difference $ (\pi \times \pi)^* (H_X^e \cdot H_Y^l) - [\Delta_X'] $ belongs to the dual cone of the cone $\BPF_{e+l}(X'\times X')_\mathbb{R}$ with respect to the intersection product, however we conjecture that this class should be pseudo-effective:
\begin{equation} \label{eq_diag_strong}
[\Delta_{X'}] \leqslant C \psi_{X'\times X'}((\pi\times \pi)^* (H_X^e \cdot H_Y^l)) \in \num_{l+e}(X'\times X')_\mathbb{R}.
\end{equation}

\end{rem}

We shall use several times the following lemma which is proved at the end of this section.
\begin{lem} \label{lem_technical_siu} 
Let $  X_1/_{q_1} Y_1 \relrat{f}{g} X_2/_{q_2} Y_2 $ be two dominant rational maps where $\dim Y_1 = \dim Y_2 =l$ and $\dim X_1 = \dim X_2 = e+l$ and where $q_1,q_2$ are regular surjective morphisms.
We denote by $\Gamma_f$ and $\Gamma_g$ the normalizations of the graph of $f$ and $g$ in $X_1\times X_2$ and $Y_1\times Y_2$ respectively, $\pi_1,\pi_2, \pi_1', \pi_2'$ are the projections from $\Gamma_f$ and $\Gamma_g$ on the first and the second factor respectively.
Then there exists a constant $C>0$ such that for any surjective generically finite morphism $\pi: X' \to  \Gamma_f$, any integer $0 \leqslant j \leqslant l$ and any class $\beta \in \Cc^{e+l-j}(X')$, one has:
\begin{equation*}
( \beta \cdot \pi ^*\pi_2^* q_2^* H_{Y_2}^j)  \leqslant C \dfrac{\deg_j(g)}{(H_{Y_1}^l)}  \times (\beta \cdot  \pi^* \pi_1^* q_1^* H_{Y_1}^j),
\end{equation*}
where $\deg_j(g)$ is the $j$-th degree of the rational map $g$ with respect to the divisors $H_{Y_1}$ and $H_{Y_2}$.

\end{lem}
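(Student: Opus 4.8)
The plan is to reduce the inequality to an application of the generalized Siu inequality (Corollary \ref{cor_siu_gen}) on a suitable model, after using the defining property of the $j$-th degree of $g$ to compare the two pullbacks $\pi_2^* q_2^* \omega_{Y_2}$ and $\pi_1^* q_1^* \omega_{Y_1}$ modulo classes supported in the fibers of $q_1 \circ \pi_1$. First I would observe that, since $q_2 \circ \pi_2 = g \circ q_1 \circ \pi_1$ holds on $\Gamma_f$ (after normalizing), the class $\pi_2^* q_2^* \omega_{Y_2}$ is the pullback to $\Gamma_f$ of the divisor class $\pi_2'^* \omega_{Y_2}$ on $\Gamma_g$ via the natural morphism $\varpi : \Gamma_f \to \Gamma_g$; likewise $\pi_1^* q_1^* \omega_{Y_1} = \varpi^* \pi_1'^* \omega_{Y_1}$. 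Thus the statement on $\Gamma_f$ (and then on $X'$ after pulling back by $\pi$) is really a statement about the two big nef divisor classes $\pi_2'^* \omega_{Y_2}$ and $\pi_1'^* \omega_{Y_1}$ on the $l$-dimensional variety $\Gamma_g$, pulled back along $q_1 \circ \pi_1 \circ \pi : X' \to \Gamma_g$.

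Next I would apply Corollary \ref{cor_siu_gen} (the version of Siu's inequality for strongly pliant classes) \emph{on} $\Gamma_g$, to the big nef divisor $\beta' := \pi_2'^* \omega_{Y_2}$ and the big nef divisor $\omega' := \pi_1'^* \omega_{Y_1}$ in $\numd^1(\Gamma_g)_\mathbb{R}$: this gives
\begin{equation*}
\pi_2'^* \omega_{Y_2} \leqslant l \, \frac{(\pi_2'^* \omega_{Y_2} \cdot \pi_1'^* \omega_{Y_1}^{l-1})}{(\pi_1'^* \omega_{Y_1}^l)} \, \pi_1'^* \omega_{Y_1},
\end{equation*}
hence, raising to the $j$-th power and keeping track of the numerical content, $\pi_2'^* \omega_{Y_2}^j \leqslant C_0 \, \deg_j(g)/(\omega_{Y_1}^l) \cdot \pi_1'^* \omega_{Y_1}^j$ in $\numd^j(\Gamma_g)_\mathbb{R}$, where $C_0$ depends only on $l$ and $j$ and where we used that $(\pi_2'^* \omega_{Y_2}^j \cdot \pi_1'^* \omega_{Y_1}^{l-j}) = \deg_j(g)$ by definition of the $j$-th degree of $g$ (noting $(\pi_1'^* \omega_{Y_1}^l) = (\omega_{Y_1}^l)$ since $\pi_1'$ is generically finite of degree one, as $\Gamma_g$ is the normalization of the graph). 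I would then pull this inequality back by $\varpi$ and then by $\pi$, both of which preserve the relation $\leqslant$ since pullback of a pseudo-effective class in $\numd^\bullet$ by a proper morphism is pseudo-effective (indeed $\Cc^j$ is stable under pullback by Theorem \ref{thm_classes_pliantes}.(4), and the difference here is strongly pliant). Finally, intersecting the resulting inequality in $\numd^j(X')_\mathbb{R}$ with the class $\beta \in \Cc^{e+l-j}(X')$, which is nef by Theorem \ref{thm_classes_pliantes}.(5), preserves the inequality of the numbers, giving exactly the claimed bound.

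The main obstacle I anticipate is bookkeeping rather than conceptual: one must be careful that raising the codimension-$1$ Siu inequality to the $j$-th power really does produce the constant $\deg_j(g)/(\omega_{Y_1}^l)$ with the correct normalization, which requires iterating Siu as in the proof of Theorem \ref{thm_siu_gen} (cutting down by ample sections representing $\pi_1'^* \omega_{Y_1}$) rather than naively exponentiating a single inequality — the intermediate denominators telescope. A secondary technical point is that $\deg_j(g)$ is defined via the graph $\Gamma_g$ with its two projections, so one must identify $(\pi_2'^* \omega_{Y_2}^j \cdot \pi_1'^* \omega_{Y_1}^{l-j})$ on $\Gamma_g$ with $\deg_j(g)$ precisely as in the definition, and one must check the degrees $(\pi_i'^* \omega_{Y_i}^l)$ reduce correctly; both are immediate from the projection formula once one recalls that the normalization map of a graph onto the source is birational. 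Everything else — passing from $\Gamma_g$ to $\Gamma_f$ to $X'$, and from an inequality of classes to an inequality of intersection numbers — is formal from the functoriality statements already established in Theorem \ref{thm_num_sum_up} and Theorem \ref{thm_classes_pliantes}.
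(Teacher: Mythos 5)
Your observation that $\pi_2^*q_2^*\omega_{Y_2}=\varpi^*\pi_2'^*\omega_{Y_2}$ and $\pi_1^*q_1^*\omega_{Y_1}=\varpi^*\pi_1'^*\omega_{Y_1}$, and the identification of $(\pi_2'^*\omega_{Y_2}^j\cdot\pi_1'^*\omega_{Y_1}^{l-j})$ with $\deg_j(g)$ and of $(\pi_1'^*\omega_{Y_1}^l)$ with $(\omega_{Y_1}^l)$, are all correct and are used in the paper elsewhere. The problem is in the step where you pull the Siu inequality back from $\Gamma_g$ to $X'$.

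The parenthetical justification ``the difference here is strongly pliant'' is false. Corollary \ref{cor_siu_gen} asserts that $C_0\,\pi_1'^*\omega_{Y_1}^j-\pi_2'^*\omega_{Y_2}^j$ lies in $\psefd^j(\Gamma_g)$, i.e.\ $\psi_{\Gamma_g}$ of the difference is a pseudo-effective $(l-j)$-cycle; it does \emph{not} lie in $\Cc^j(\Gamma_g)$. So Theorem \ref{thm_classes_pliantes}.(4) cannot be invoked. And there is no general statement that $\psefd^j$ is preserved by pullback along a proper morphism that is not flat: $\psefd^j$ is defined through $\psi$, and the map $h=\varpi\circ\pi:X'\to\Gamma_g$ has positive relative dimension $e$, so the usual projection-formula trick $h_*\psi_{X'}(h^*\gamma)=\gamma\actd h_*[X']$ yields $0$ and gives no control on $\psi_{X'}(h^*\gamma)$. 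Equally, even if one tried to avoid pulling back by moving $\beta$ forward, one would need $C_0\,\pi_1'^*\omega_{Y_1}^j-\pi_2'^*\omega_{Y_2}^j$ to lie in $\nefd^j(\Gamma_g)$ (so as to pair non-negatively with the pseudo-effective class $h_*\psi_{X'}(\beta)\in\num_j(\Gamma_g)$); but $\psefd^j$ and $\nefd^j$ are distinct cones on a singular $\Gamma_g$, and Siu gives only the former. So the plan as written has a genuine gap precisely at the point you flagged as ``formal from functoriality.''

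The paper's proof avoids this by never applying Siu on the base $\Gamma_g$ at all. It first writes $\beta$ as (a limit of) a product of nef divisors $D_1\cdots D_{e_1+e+l-j}$ on a flat model $p:X_1'\to X'$ (using Proposition \ref{prop_representabilite_numd}), perturbs the first $e_1+e$ factors to ample divisors to obtain a complete-intersection class $\alpha_\epsilon$ with $\psi_{X_1'}(\alpha_\epsilon)=[V_\epsilon]$ for a subscheme $V_\epsilon$ of dimension exactly $l$ whose projections to $Y_1$ and $Y_2$ are generically finite, and then applies Theorem \ref{thm_siu_gen} \emph{on} $V_\epsilon$. This produces a pseudo-effective cycle of dimension $l-j$ in $\num_{l-j}(X_1')$, which pairs non-negatively with the remaining $l-j$ nef divisor factors $D_{e_1+e+1}\cdots D_{e_1+e+l-j}$; the constant $\deg_j(g)/(\omega_{Y_1}^l)$ falls out of the projection formula applied to the generically finite morphism $V_\epsilon\to Y_1$. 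Everything stays on the cycle side, so no pullback of $\psefd$ is ever required. If you want to repair your argument, you would need to incorporate this ``cut $\beta$ down to a variety of dimension $\dim\Gamma_g$ and apply Siu there'' step, rather than applying Siu on $\Gamma_g$ and pulling back.
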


\textit{Proof of Theorem \ref{thm_ku_pullback}}.
By Siu's inequality, we can suppose that both the classes $H_X$ and $H_Y$ are ample in $X$ and $Y$ respectively.
We proceed in three steps. Fix $\pi : X' \to X$.

\medskip
\textbf{Step 1}: We suppose first that $X = \Pg^l \times \Pg^e$, $Y = \Pg^l$ and $q$ is the projection onto the first factor.
Since $X\times X$ is smooth, the pullback $(\pi\times \pi)^*$ is well-defined in $\num_{l+e}(X\times X)_\mathbb{R}$ because the morphism $\psi_{X\times X} : \numd^{l+e}(X\times X)_\mathbb{R} \to \num_{l+e}(X\times X)_\mathbb{R}$ is an isomorphism. 
Our objective is to prove that there exists a constant $C_1>0$ such that 
\begin{equation*}
[\Delta_{X'}] \leqslant C_1  \psi_{X'\times X'}((\pi\times \pi)^* (H_X^e \cdot H_Y^l)) \in \num_{l+e}(X'\times X')_\mathbb{R}.
\end{equation*}
 As $X\times X$ is homogeneous, we apply the following lemma analogous to \cite[Lemma 4.4]{tuyen2} which we prove at the end of the section.
 \begin{lem} \label{lem_homogen} Let $X$ be a homogeneous projective variety of dimension $n$ and let $\pi : X' \to X$ be a surjective generically finite morphism. Then one has that :
  \begin{equation*}
[\Delta_{X'}] \leqslant (\pi\times \pi)^* [\Delta_X] \in \num_{n}(X'\times X')_\mathbb{R}.
\end{equation*}
 \end{lem}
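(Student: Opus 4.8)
The plan is to deduce the inequality from a degeneration argument: move the diagonal $\Delta_X$ inside $X\times X$ to a generic translate (this is where the homogeneity enters), pull back, and then degenerate back to $\Delta_X$, reading off the multiplicity of $\Delta_{X'}$ in the limit cycle.

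Since $X$ is homogeneous it is smooth and carries a transitive action of a connected algebraic group; replacing the group by its reduced subgroup (legitimate since $\kappa$ is perfect) we obtain a connected smooth group $G$ acting transitively on $X$, hence a connected smooth group $H:=G\times G$ acting transitively on $X\times X$. For $h=(g_1,g_2)\in H$ the translate $\Gamma_h:=h\cdot\Delta_X=\{(g_1 x,g_2 x):x\in X\}$ is the graph of the automorphism $y\mapsto g_2g_1^{-1}y$ of $X$; in particular $\Gamma_e=\Delta_X$, and since $\{\Gamma_h\}_{h\in H}$ is an algebraic family of $n$-cycles over the connected base $H$ we have $[\Gamma_h]=[\Delta_X]$ in $\num_n(X\times X)_\mathbb{R}$ for every $h$. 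As $X\times X$ is smooth, $\psi_{X\times X}$ is an isomorphism (Theorem~\ref{thm_cano_mor_prop}), so $(\pi\times\pi)^*$ is defined on $\num_n(X\times X)_\mathbb{R}$ and $(\pi\times\pi)^*[\Gamma_h]=(\pi\times\pi)^*[\Delta_X]$ for all $h$.

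Next I would invoke Kleiman's generic transversality theorem, which applies because $X\times X$ is homogeneous under the connected group $H$ (in positive characteristic one uses its dimension-theoretic form, or reduces to a flag variety where it is classical): writing $B_d\subset X\times X$ for the locus where $\pi\times\pi$ has fibres of dimension $\geqslant d$, one has $\dim B_d\leqslant 2n-d$ because $\pi$ is generically finite, hence for $h$ in a dense open $H^{\circ}\subset H$ the intersection $\Gamma_h\cap B_d$ has dimension $\leqslant n-d$, so that the scheme-theoretic preimage $(\pi\times\pi)^{-1}(\Gamma_h)$ is purely $n$-dimensional. Its fundamental cycle $C_h:=[(\pi\times\pi)^{-1}(\Gamma_h)]$ is then effective and, by the compatibility of the flat/refined pullback with $\psi$ coming from Section~\ref{section_num} and \cite{fulton}, satisfies $[C_h]=(\pi\times\pi)^*[\Gamma_h]=(\pi\times\pi)^*[\Delta_X]$. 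Choosing a smooth curve $T\subset H$ through $e$ with $T\smallsetminus\{e\}\subset H^{\circ}$, spreading out the family $(C_h)_{h\in T\smallsetminus\{e\}}$ and taking its flat closure over $T$ inside $(X'\times X')\times T$, the special fibre $C_0$ over $e$ is an effective cycle with $[C_0]=(\pi\times\pi)^*[\Delta_X]$, supported on $(\pi\times\pi)^{-1}(\Delta_X)=X'\times_X X'$.

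It then remains to bound below by $1$ the multiplicity of $\Delta_{X'}$ in $C_0$. This is a local question at the generic point $\xi$ of $\Delta_{X'}$: since $\pi$ is generically finite, it is finite and flat over a dense open of $X$ containing the generic point, hence $\pi\times\pi$ is flat near $\xi$; over this locus the formation of $C_h$ is honest flat pullback, the flat limit $C_0$ coincides there with $[X'\times_X X']$, and $\Delta_{X'}$ appears in the latter with multiplicity equal to the local (possibly inseparable) degree of $\pi$, which is $\geqslant 1$. Thus $C_0-[\Delta_{X'}]$ is an effective cycle, and passing to numerical classes yields $(\pi\times\pi)^*[\Delta_X]-[\Delta_{X'}]\in\psef_n(X'\times X')$, the asserted inequality. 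The main obstacle is exactly this coordination: effectivity of the pullback forces one to move $\Delta_X$ to a \emph{generic} translate (the only place homogeneity is used, through Kleiman), but such a translate no longer contains $\Delta_{X'}$, so the diagonal component must reappear in the flat limit as $h\to e$ — and controlling that limiting multiplicity, rather than merely knowing $C_0$ is effective, is the delicate step; the remaining ingredients are standard intersection theory in the sense of \cite{fulton}.
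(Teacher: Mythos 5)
Your argument follows the same route as the paper's proof: use Kleiman's transversality on the homogeneous space $X\times X$ to move $\Delta_X$ to a generic translate $\Gamma_h$ whose preimage under $\pi\times\pi$ has the right dimension, identify that preimage with the pullback class, and then degenerate along a one-parameter family back to $h=e$, reading off effectivity and the coefficient of $\Delta_{X'}$ in the limit. The only difference is cosmetic: the paper works with the reduced closure $S\subset X'\times X'\times\mathbb{P}^1$, takes scheme-theoretic fibres $S_t$, and simply notes that $\Delta_{X'}$ lies in the support of $S_1$, whereas you compute the limiting multiplicity explicitly via flatness of $\pi\times\pi$ at the generic point of $\Delta_{X'}$.
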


We denote by $p_1', p_2'$ (resp. $p_1'', p_2''$) the projections from $Y\times Y$ (resp. from $X\times X$) onto the first and the second factor respectively. 
Since the basepoint free cone has a non-empty interior by Theorem \ref{thm_classes_pliantes}.(i) and since the class $p_1'^*H_Y+ p_2'^* H_Y$ is ample on $Y \times Y$, there exists a constant $C_2>0$ such that the class $ - [\Delta_Y] + C_2 (p_1'^* H_Y + p_2'^* H_Y)^{l} \in \numd^{l}(Y\times Y)_\mathbb{R} $ is basepoint free.
Since $\Delta_X = \Delta_Y \times \Delta_{\Pg^e}$ and by intersection and by pullback, we have that the class:
\begin{equation*}
-[\Delta_{X}] + C_2   H_Y^l \cdot p^*[\Delta_{\Pg^e}] \in \numd^{e+l}(X\times X)
\end{equation*}
is basepoint free where $p$ denotes the projection from $X\times X$ to $\Pg^e\times \Pg^e$.
By the same argument, there exists a constant $C_3>0$ such that the class $-p^*[\Delta_{\Pg^e}] + C_3 H_X^e \in \numd^e(X\times X)_\mathbb{R}$ is basepoint free. 
We have proved that the class:
\begin{equation*}
- [\Delta_X] + C_2 C_3 H_Y^l \cdot H_X^e \in \numd^{e+l}(X\times X)_\mathbb{R}
\end{equation*}
is basepoint free.
Since the basepoint free cone is stable by pullback, we have thus:
\begin{equation*}
[\Delta_{X'}] \leqslant (\pi\times \pi)^* [\Delta_X] \leqslant C_1   \psi_{X'\times X'}( (\pi\times \pi)^* (H_Y^l \cdot H_X^e)) \in \num_{l+e}(X'\times X')_\mathbb{R},
\end{equation*}
where $C_1 = C_2 \times C_3$ as required.
\medskip

\textbf{Step 2}: We now suppose that $X = Y \times \Pg^e$.
Since $Y$ is projective, there exists a dominant rational map $\phi : Y \dashrightarrow \Pg^l$ ($\phi$ can be chosen as the composition of an embedding in $\Pg^N$ with a linear projection on a linear hypersurface). 
Let $Y'$ be the normalization of the graph of $\phi$ in $X \times \Pg^e \times \Pg^l$ and we denote by $\phi_1$ and $\varphi_1$ the projections from $Y'$ onto the first and the second factor respectively. 
Let $\varphi_2: Y' \times \Pg^e \to \Pg^l \times \Pg^e$ (resp. $\phi_2 : Y'\times \Pg^e \to X$) the map induced by $\varphi_1$ (resp. $\phi_1$).
Let $X''$ be the fibred product of $X'$ with $Y'\times\Pg^e$ so that $\phi_3$, $\pi'$ are the projections from $X''$ onto $X'$ and $Y'\times \Pg^e$ respectively.
We obtain the following commutative diagram:
\begin{equation*}
\xymatrix{ X' \ar[d]^\pi& \ar[l]^{\phi_3} X'' \ar[d]^{\pi'} \\
Y \times \Pg^e \ar[dd]^q& \ar[l]^{\phi_2} Y' \times \Pg^e \ar[rd]^{\varphi_2} \ar[dd]^{p_{Y'}}& \\
&& \Pg^l \times \Pg^e \ar[dd]^{p_{\Pg^l}}\\
Y & \ar[l]^{\phi_1} Y' \ar[rd]^{\varphi_1}\\
& & \Pg^l }
\end{equation*} 
where $p_{Y'}$ and $p_{\Pg^l}$ are the projections from $Y' \times \Pg^e$ and $\Pg^l \times \Pg^e$ onto $Y'$ and $\Pg^l$ respectively and where the horizontal arrows are birational maps.
Let us prove that there exists a constant $C_4>0$ which does not depend on the morphism $\pi: X' \to X$ such that for any basepoint free class $\gamma' \in \Cc^{e+l}(X''\times X'')$, one has:
\begin{equation*}
(\gamma' \actd [\Delta_{X''}] ) \leqslant C_4 (\gamma' \cdot (\phi_3 \times \phi_3)^* (\pi\times \pi)^* (H_X^e \cdot H_Y^l)).
\end{equation*} 
Fix a class $\gamma' \in \Cc^{e+l}(X''\times X'')$.
We apply the conclusion of the first step to the surjective generically finite morphism $\pi'':=\varphi_2 \circ \pi': X'' \to \Pg^l \times \Pg^e$. 
There exists a constant $C_1>0$ such that 
\begin{equation} \label{eq_step_2_diag}
[\Delta_{X''}] \leqslant C_1  \psi_{X''\times X''}((\pi'' \times \pi'')^* (H_{\Pg^l}^l \cdot H_{\Pg^l \times \Pg^e}^e)) \in \num_{l+e}(X''\times X'')_\mathbb{R},
\end{equation}
where $H_{\Pg^l\times \Pg^e}$ is an ample Cartier divisor in $(\Pg^l \times \Pg^e )^2$ and $H_{\Pg^l}$ is the pullback by $p_{\Pg^l} \times p_{\Pg^l}$ of an ample Cartier divisor in $\Pg^l \times \Pg^l$.
Let us apply Theorem \ref{thm_siu_gen} to the class $(\pi''\times \pi'')^* H_{\Pg^l \times \Pg^e}^e$ and to the class $ (\pi'\times \pi')^* (\phi_2\times \phi_2)^* H_X$, there exists a constant $C_5>0$ such that:
\begin{multline*}
( \pi'' \times \pi'')^* H_{\Pg^l \times \Pg^e}^e \leqslant C_5 \dfrac{ ((\pi'\times \pi')^* ((\phi_2\times \phi_2)^* H_X^{2l+e} \cdot (\varphi_2\times\varphi_2)^* H_{\Pg^l \times \Pg^e}^e ))}{((\pi'\times\pi')^*(\phi_2\times \phi_2)^* H_X^{2(l+e)})}   \\
\times (\pi'\times \pi')^* (\phi_2\times \phi_2)^* H_X^e \in \numd^e(X''\times X'')_\mathbb{R}.
\end{multline*}
Since $((\pi'\times\pi')^* \alpha) = \deg( \pi') (\alpha)$ for any class $\alpha \in \numd^{2l+2e}((Y'\times\Pg^e)^2)_\mathbb{R}$, we have thus:
\begin{equation} \label{eq_ku_siu_1}
(\pi'' \times \pi'') H_{\Pg^l \times \Pg^e}^e \leqslant C_6 (\pi'\times \pi')^* (\phi_2\times\phi_2)^* H_X^e \in \numd^e(X''\times X'')_\mathbb{R},
\end{equation}
where $C_6 = C_5 ((\phi_2\times \phi_2)^* H_X^{2l+e} \cdot (\varphi_2\times\varphi_2)^* H_{\Pg^l\times\Pg^e}^e )/ ((\phi_2\times \phi_2)^* H_X^{2l+2e})>0$ does not depend on $\pi : X' \to X$.
Using \eqref{eq_ku_siu_1} and \eqref{eq_step_2_diag}, we obtain:
\begin{equation}
[\Delta_{X''}] \leqslant C_7 \psi_{X''\times X''}((\pi''\times \pi'')^* H_{\Pg^l}^l \cdot  (\phi_3\times \phi_3)^*(\pi\times \pi)^* H_X^e) \in \num_{l+e}(X''\times X'')_\mathbb{R},
\end{equation}
where $C_7 = C_6\times C_1$.
Since the basepoint free cone is contained in the nef cone by Theorem \ref{thm_classes_pliantes}.(v), we have thus:
\begin{equation} \label{eq_step_2_siu_first}
(\gamma' \actd [\Delta_{X''}]) \leqslant C_7 (\gamma' \cdot  (\phi_3 \times \phi_3)^* (\pi\times \pi)^*H_X^e \cdot (\pi''\times \pi'')^* H_{\Pg^l}^l ).
\end{equation}
Let us denote by $X_1 = (Y \times \Pg^e)^2$, $X_2 = (\Pg^l \times \Pg^e)^2$, $Y_1 = Y \times Y$, $Y_2 = \Pg^l \times \Pg^l$ and let $f := (\varphi_2 \circ \phi_2^{-1} \times \varphi_2\circ  \phi_2^{-1}) : X_1 \dashrightarrow X_2$ and $g:=(\varphi_1 \circ \phi_1^{-1} \times \varphi_1\circ  \phi_1^{-1}) : Y_1 \dashrightarrow Y_2$ be the corresponding dominant rational maps. 
Let us apply Lemma \ref{lem_technical_siu} to the class $(\pi' \times \pi')^* (\varphi_2\times \varphi_2)^* H_{\Pg^l}^l$ and to the class $(\pi'\times \pi')^* (\phi_2 \times \phi_2)^* H_Y^l$, there exists a constant $C_8>0$ which is independent of the morphism $\pi'\times \pi' : X''\times X'' \to (Y'\times \Pg^e)^2$ such that for any class $\beta \in \Cc^{2e+l}(X''\times X'')$:
\begin{equation} \label{eq_apply_siu_technical}
(\beta \cdot  (\pi' \times \pi')^* (\varphi_2\times \varphi_2)^* H_{\Pg^l}^l) \leqslant C_8 \dfrac{\deg_l(g)}{(H_Y^{2l})} (\beta \cdot (\phi_3\times \phi_3)^*(\pi \times \pi)^*  H_Y^l). 
\end{equation} 
Using \eqref{eq_step_2_siu_first} and \eqref{eq_apply_siu_technical} to the class $\beta = \gamma' \cdot (\phi_3\times \phi_3)^* (\pi\times \pi)^*H_X^e \in \Cc^{l+2e}(X''\times X'')$, we obtain:
\begin{equation*}
(\gamma' \actd [\Delta_{X''}]) \leqslant C_4 (\gamma' \cdot (\phi_3\times \phi_3)^*(\pi\times \pi)^* (H_X^e \cdot H_Y^l )),
\end{equation*}
where $C_4= C_4 \times C_8 (\deg_l(g))/ (H_Y^{2l})>0$ does not depend on $\pi$.  
The conclusion of the theorem follows from the projection formula and from the fact that $\phi_3 \times \phi_3$ is a birational map.
Indeed, we apply the previous inequality to $\gamma' = (\phi_3 \times \phi_3)^* \gamma$ where $\gamma \in \Cc^{l+e}(X'\times X')$, we obtain
\begin{equation*}
(\gamma \actd [\Delta_{X'}]) = ( (\phi_3 \times \phi_3)^* \gamma \actd [\Delta_{X''}]) \leqslant C_4 ( \gamma \cdot (\pi \times \pi)^* (H_X^e \cdot H_Y^l))
\end{equation*} 
as required.
\medskip

\textbf{Step 3}: We prove the theorem in the general case.
Suppose $q : X \to Y$ is a surjective morphism of relative dimension $e$ and fix a class $\beta \in \Cc^{l+e}(X'\times X')$.
Since $X$ is projective over $Y$, there exists a closed immersion 
$i:X \to Y\times \Pg^N $ such that $q = p'_{Y} \circ i$ where $p'_Y$ is the projection of $Y \times \Pg^N$ onto $Y$.
 Let us choose a projection $Y \times \Pg^N \dashrightarrow Y \times \Pg^e $ so that the composition with $i$ gives a dominant rational map $f : X \dashrightarrow Y \times \Pg^e$. 
Let us denote by $\Gamma_f$ the normalization of the graph of $f$ in $X \times Y \times \Pg^e$ and $\pi_1, \pi_2$ the projections of $\Gamma_f$ onto the first and the second factor respectively.
We set $X''$ the fibred product of $X'$ with $\Gamma_f$ and we denote by $\pi' $ and $\phi$ the projection of $X''$ to $\Gamma_f$ and $X'$ respectively. 
We get the following commutative diagram:
\begin{equation*}
\xymatrix{ 
X' \ar[d]^{\pi}& \ar[l]^{\phi } \ar[d]^{\pi'} X'' \\
X \ar[d]^{q} \ar@{-->}[rd]^{f}& \Gamma_f \ar[d]^{\pi_2} \ar[l]_{\pi_1} \\
Y & \ar[l]^{p_Y} Y \times \Pg^e 
}
\end{equation*}
where $p_Y$ is the projection of $Y\times \Pg^e$ onto $Y$.
We apply the result of Step 2 to the class $(\phi \times \phi)^* \beta \in \Cc^{l+e}(X'' \times X'')$ and to the diagonal of $X''$. 
There exists a constant $C_4>0$ which does not depend on $\pi$ such that:
\begin{equation} \label{eq_step_3_apply_step_2}
( (\phi\times \phi)^* \beta \actd [\Delta_{X''}]) \leqslant C_4 ((\phi\times \phi)^*(\beta \cdot (\pi\times \pi)^* H_Y^l) \cdot ((\pi_2 \circ \pi')\times (\pi_2 \circ \pi'))^* H_{Y\times \Pg^e}^e). 
\end{equation} 
Let us apply Theorem \ref{thm_siu_gen} to the class $((\pi_2 \circ \pi')\times (\pi_2 \circ \pi'))^* H_{Y\times \Pg^e}^e$ and to the class $(\phi \times \phi)^* (\pi \times \pi) H_X$.
There exists a constant $C_9>0$ such that:
\begin{multline*}
((\pi_2 \circ \pi')\times (\pi_2 \circ \pi'))^* H_{Y\times \Pg^e}^e \leqslant C_9 \dfrac{( (\pi'\times\pi')^* (  (\pi_2\times \pi_2)^* H_{Y\times \Pg^e}^e \cdot (\pi_1\times\pi_1)^* H_X^{2l+e} ))  }{((\pi'\times \pi')^*(\pi_1\times \pi_1)^* H_X^{2l+2e})} \\
(\phi \times \phi)^* (\pi \times \pi) H_X^e \in \numd^{e}(X''\times X'')_\mathbb{R}.
\end{multline*}
Since $( (\pi'\times\pi')^* (  (\pi_2\times \pi_2)^* H_{Y\times \Pg^e}^e \cdot (\pi_1\times\pi_1)^* H_X^{2l+e} ))  /((\pi'\times \pi')^*(\pi_1\times \pi_1)^* H_X^{2l+2e}) = \deg_{e}(f\times f)/ (H_X^{2l+2e})$and using \eqref{eq_step_3_apply_step_2}, we obtain:
\begin{equation*}
((\phi\times \phi)^* \beta \actd [\Delta_{X''}] )\leqslant C ( (\phi\times \phi)^* ( \beta \cdot (\pi\times \pi)^* (H_X^e \cdot H_Y^l)) ),
\end{equation*}
where $C=C_4 C_9 \deg_e(f\times f) / (H_X^{2l+2e}) $. 
Since the morphism $\pi_1: \Gamma_f \to X$ is birational, the map $\phi: X'' \to X'$ is also birational and we conclude using the projection formula and since $(\phi\times \phi)_* [\Delta_{X''}] = [\Delta_{X'}]$:
\begin{equation*}
(\beta \actd [\Delta_{X'}] ) \leqslant C (\beta \cdot (\pi\times \pi)^* (H_X^e \cdot H_Y^l)).
\end{equation*}
\qed

Recall that $X,Y$ are normal projective varieties and $H_X,H_Y$ are ample divisors on $X$ and $Y$ respectively.
\begin{cor} \label{cor_ku_pullback} Let $q: X \to Y$ be a surjective morphism of relative dimension $e$ where $\dim Y = l$. Then there exists a constant $C>0$ such that for any surjective generically finite morphism $\pi : X' \to X$ such that for any class $\alpha \in \Cc^i(X')$ and any class $\beta \in \Cc^{l+e-i}(X')$, one has:
\begin{equation} \label{eq_ku_pull}
(\beta \cdot \alpha)  \leqslant  C \sum_{\max(0, i-l) \leqslant j \leqslant \min(i,e)} U_j(\pi_* \psi_{X'}(\alpha))\times  (\beta\cdot \pi^* (q^* H_Y^{i-j} \cdot H_X^j)),
\end{equation} 
where $U_j(\pi_* \psi_{X'}(\alpha)) = ( H_X^{e-j} \cdot q^* H_Y^{l-i+j} \actd \pi_* \psi_{X'}(\alpha))$.
\end{cor}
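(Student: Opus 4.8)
The plan is to deduce the corollary from Theorem~\ref{thm_ku_pullback} by the usual diagonal device: realize the intersection number $(\beta\cdot\alpha)$ on $X'$ as the evaluation of a strongly pliant class against the fundamental class of the diagonal in $X'\times X'$, and then expand the right-hand side of \eqref{eq_diag_weak} by multilinearity. Set $n=l+e$; since $\pi$ is generically finite we have $\dim X'=\dim X=n$, so $\Cc^{l+e-k}(X')=\Cc^{n-k}(X')$. Write $p_1,p_2\colon X'\times X'\to X'$ for the two projections and $i\colon X'\hookrightarrow X'\times X'$ for the diagonal embedding, so that $p_1\circ i=p_2\circ i=\Id_{X'}$ and $[\Delta_{X'}]=i_*[X']$. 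Put $\mu:=\pi^*\omega_X$ and $\nu:=\pi^*q^*\omega_Y$ in $\numd^1(X')$, and introduce
\[ \gamma:=p_1^*\beta\cdot p_2^*\alpha\in\numd^{l+e}(X'\times X'). \]
By Theorem~\ref{thm_classes_pliantes}.(4) each $p_i^*$ preserves strongly pliant classes and by Theorem~\ref{thm_classes_pliantes}.(3) products of such are strongly pliant, so $\gamma\in\Cc^{l+e}(X'\times X')$ and Theorem~\ref{thm_ku_pullback} applies to $\gamma$. Since $i^*$ is a morphism of graded rings (Theorem~\ref{thm_num_sum_up}.(3)) and $p_1\circ i=p_2\circ i=\Id$, one gets $i^*\gamma=\beta\cdot\alpha$, hence by the projection formula (Theorem~\ref{thm_num_sum_up}.(4)) $(\gamma\actd[\Delta_{X'}])=(i^*\gamma\actd[X'])=(\beta\cdot\alpha)$. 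Therefore Theorem~\ref{thm_ku_pullback} yields a constant $C>0$, independent of $\pi$, with
\[ (\beta\cdot\alpha)\ \leqslant\ C\,\bigl(\gamma\cdot(\pi\times\pi)^*(\Omega_X^e\cdot\Omega_Y^l)\bigr). \]

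Next I would expand the right-hand side. Since $(\pi\times\pi)^*\Omega_X=p_1^*\mu+p_2^*\mu$ and $(\pi\times\pi)^*\Omega_Y=p_1^*\nu+p_2^*\nu$, the binomial theorem gives
\[ (\pi\times\pi)^*(\Omega_X^e\cdot\Omega_Y^l)=\sum_{a=0}^{e}\sum_{b=0}^{l}\binom{e}{a}\binom{l}{b}\,p_1^*(\mu^a\nu^b)\cdot p_2^*(\mu^{e-a}\nu^{l-b}), \]
and multiplying by $\gamma=p_1^*\beta\cdot p_2^*\alpha$ (using again that $p_i^*$ is multiplicative) the $(a,b)$-term of $\gamma\cdot(\pi\times\pi)^*(\Omega_X^e\cdot\Omega_Y^l)$ becomes $\binom{e}{a}\binom{l}{b}\,p_1^*(\beta\cdot\mu^a\nu^b)\cdot p_2^*(\alpha\cdot\mu^{e-a}\nu^{l-b})$. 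The crucial observation is a degree count: $\beta\cdot\mu^a\nu^b$ lives in $\numd^{\,n-k+a+b}(X')$, which vanishes as soon as $a+b>k$ because $\numd^m(X')=0$ for $m>\dim X'=n$; and $\alpha\cdot\mu^{e-a}\nu^{l-b}$ lives in $\numd^{\,k+n-(a+b)}(X')$, which vanishes unless $a+b\geqslant k$. Hence only the balanced terms $a+b=k$ survive, and for those both factors lie in $\numd^n(X')\cong\mathbb{R}$. For top-codimensional classes one has the Künneth-type identity $(p_1^*u\cdot p_2^*v\actd[X'\times X'])=(u\actd[X'])(v\actd[X'])$ for $u,v\in\numd^n(X')$ (which by bilinearity reduces to the case of products of Cartier divisors pulled back from $X'$, where it is classical), so the surviving $(a,b)$-term contributes $\binom{e}{a}\binom{l}{b}\,\bigl(\beta\cdot\pi^*(\omega_X^{a}\cdot q^*\omega_Y^{b})\bigr)\bigl(\alpha\cdot\pi^*(\omega_X^{e-a}\cdot q^*\omega_Y^{l-b})\bigr)$, where I used $\mu^a\nu^b=\pi^*(\omega_X^a\cdot q^*\omega_Y^b)$.

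To conclude I would reindex by $j:=a$, so $b=k-j$; the ranges $0\leqslant a\leqslant e$ and $0\leqslant b\leqslant l$ turn into exactly $\max(0,k-l)\leqslant j\leqslant\min(k,e)$, the factor $\bigl(\beta\cdot\pi^*(\omega_X^{a}q^*\omega_Y^{b})\bigr)$ becomes $\bigl(\beta\cdot\pi^*(q^*\omega_Y^{k-j}\cdot\omega_X^{j})\bigr)$, and by the projection formula (Theorem~\ref{thm_num_sum_up}.(4))
\[ \bigl(\alpha\cdot\pi^*(\omega_X^{e-j}\cdot q^*\omega_Y^{l-k+j})\bigr)=\bigl(\omega_X^{e-j}\cdot q^*\omega_Y^{l-k+j}\actd\pi_*\psi_{X'}(\alpha)\bigr)=U_j\bigl(\pi_*\psi_{X'}(\alpha)\bigr). \]
Finally the binomial coefficients $\binom{e}{j}\binom{l}{k-j}\leqslant 2^{l+e}$ are absorbed into the constant, giving the stated inequality with a constant that, like the one produced by Theorem~\ref{thm_ku_pullback}, does not depend on $\pi$. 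As for the difficulty: essentially all the content is already contained in Theorem~\ref{thm_ku_pullback}; the argument above is pure bookkeeping, and the only point requiring a word of care is the Künneth identity in top codimension on the possibly singular product $X'\times X'$, which I would handle by the reduction to complete intersection classes indicated above.
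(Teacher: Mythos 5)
Your proposal is correct and follows essentially the same route as the paper: apply Theorem \ref{thm_ku_pullback} to $\gamma = p_1^*\beta\cdot p_2^*\alpha$, identify $(\gamma\actd[\Delta_{X'}])$ with $(\beta\cdot\alpha)$, expand $(\pi\times\pi)^*(\Omega_X^e\cdot\Omega_Y^l)$ by the binomial theorem, use degree reasons in $\numd^\bullet(X')$ to kill all terms with $a+b\neq k$, and reindex by $j=a$. You spell out the Künneth-type factorisation and the binomial bookkeeping more explicitly than the paper does, but the strategy, the vanishing observation, and the final reindexing coincide.
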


\begin{rem} Note that when $i\leqslant e$, then the inequality is already a consequence of Siu's inequality (Theorem \ref{thm_siu_gen}). 
Indeed, the term on the right hand side of \eqref{eq_ku_pull} with $j = i$ corresponds exactly to the term $C (\pi^*H_X^{n-i} \cdot  \alpha) \times \pi^* H_X^i$. 
\end{rem}

\begin{rem}Equation \eqref{eq_ku_pull} proves that the class
\begin{equation*}
-\psi_{X'}(\alpha) + C  \sum_{\max(0, i-l) \leqslant j \leqslant \min(i,e)} U_j(\pi_* \psi_{X'}(\alpha))  \psi_{X'}(\pi^* (q^* H_Y^{i-j} \cdot H_X^j)) \in \num_{n-i}(X')_\mathbb{R}
\end{equation*}
is in the dual of the basepoint free cone $\Cc^{n-i}(X')$. 
Moreover, if \eqref{eq_diag_strong} is satisfied, then this class is pseudo-effective.
\end{rem}

\begin{proof}
We apply Theorem \ref{thm_ku_pullback} to the class $\gamma = p_1^* \beta \cdot p_2^* \alpha \in \Cc^n(X'\times X')$.
There exists a constant $C_1>0$ such that for any surjective generically finite morphism $\pi: X'\to X$ and any class $\gamma \in \Cc^n(X'\times X')$, one has:
\begin{equation*}
(\gamma \actd [\Delta_{X'}]) \leqslant C_1 (\gamma \cdot (\pi\times\pi)^* (H_X^e \cdot H_Y^l)). 
\end{equation*}
We denote by $p_1$ and $p_2$ the projections of $X'\times X'$ onto the first and the second factors respectively.
Fix $\alpha \in \Cc^i(X')$ and $\beta \in \Cc^{n-i}(X')$. 
Let us apply the previous inequality to $\gamma = p_1^* \beta \cdot p_2^* \alpha \in \Cc^n(X'\times X')$. 
We obtain:
\begin{equation*}
(\beta\cdot \alpha) = (p_1^* \beta \cdot p_2^* \alpha \actd [\Delta_{X'}]) \leqslant C_1( p_1^*\beta \cdot p_2^* \alpha \cdot (\pi\times \pi)^* (H_X^e \cdot H_Y^l)).
\end{equation*}
Since $ ( p_1^*\pi^*( H_X^m \cdot q^*H_Y^j) \cdot p_2^*(\pi^*(q^* H_Y^{l-m} \cdot H_X^{e-j}) \cdot \gamma)) = 0 $ when $m + j \neq i$, we obtain :
\begin{equation*}
(\beta \cdot\alpha ) \leqslant   C \sum_{ \max( 0 ,i-l) \leqslant j \leqslant \min(e,i)} (\pi^*(q^* H_Y^{l-i+j} \cdot H_X^{e-j} ) \cdot \alpha)  (  \pi^*(q^* H_Y^{i-j} \cdot H_X^{j}) \cdot \beta ) .
\end{equation*}
where $C = C_1 \left ( 1 + \max \left ( \left ( \begin{array}{l}
   e \\
j
\end{array}    \right ) \left ( \begin{array}{l}
l \\
i-j
\end{array} \right ) \right ) \right )$.
Hence by the projection formula, we have proved the required inequality:
\begin{equation*}
( \beta \cdot\alpha) \leqslant C \sum_{ \max( 0 ,i-l) \leqslant j \leqslant \min(e,i)} U_j(\pi_* \psi_{X'} (\alpha) ) \times ( \beta \cdot \pi^* (q^* H_Y^{i-j} \cdot H_X^j))) .
\end{equation*}
\end{proof}

\textit{Proof of Lemma \ref{lem_homogen}}: (see \cite[Lemma 4.4]{tuyen2})
Since $X$ is homogeneous, it is smooth.
Let $G$ be the automorphism group of $X \times X$, we denote by $\cdot $ the (transitive) action of $G$ on $X\times X$. 
By generic flatness (see \cite[Theorem 5.12]{FGA}), there exists a non empty open subset $V \subset X \times X$ such that the restriction of $\pi \times \pi$ to $U := (\pi \times \pi)^{-1}(V)$ is flat over $V$.
Recall that two subvarieties $F \subset X\times X$ and $W \subset X\times X$ intersect properly in $X \times X$ if $\dim (F \cap W) = \dim F + \dim W - 2n$. 
Since $G$ acts transitively on $X \times X$, there exists by \cite[Lemma B.9.2]{fulton} a Zariski dense open subset $O \subset G$ such that for any point $g \in O$,
 the cycle $g \cdot [\Delta_X] $ intersects properly every component of $X\times X \setminus V$. 
 In particular, there exists a one parameter subgroup $ \tau : \mathbb{G}_m \to G$ such that $\tau(1) = \Id \in G $ and such that $\tau$ maps the generic point of $\mathbb{G}_m$ to a point in $O$.
 Let $S$ be the closure in $X'\times X'\times \Pg^1$ of the set $ \{ (x',t) \in U \times \mathbb{G}_m \ | \ (\pi\times \pi)(x') \in \tau(t) \cdot \Delta_X \} $.
Let $p: X'\times X' \times \Pg^1 \to X'\times X'$ be the projection onto $X'\times X'$ and let $f : S \to \Pg^1$ be the morphism induced by the projection of  $X'\times X' \times \Pg^1$ onto $\Pg^1$.
 As in \cite[Section 1.6]{fulton}, we denote by $S_t:= p_*[f^{-1}(t)] \in Z_{n}(X'\times X')$ for any $t \in \mathbb{G}_m$. 
 By construction the cycle $S_1 \in Z_{n}(X'\times X')$ is effective and its support contains the diagonal $\Delta_{X'}$ in $X'\times X'$, hence:
 \begin{equation*}
 [\Delta_{X'}] \leqslant S_1 \in \num_{n}(X'\times X')_\mathbb{R}.
 \end{equation*}
 Let $t \in \mathbb{G}_m$ such that $\tau(t) \in O $.
 Since $S_1 = S_t \in A_{n}(X'\times X')$ for any $t \in \Pg^1$, we have thus:
 \begin{equation*}
 [\Delta_{X'}] \leqslant S_t \in \num_{n}(X'\times X')_\mathbb{R}.
 \end{equation*}
Since the cycle $\tau(t) \cdot [\Delta_X]$ intersects properly every component of $X\times X\setminus V$ and since the restriction of $\pi \times \pi$ to $U = (\pi\times \pi)^{-1}(V)$ is flat over $V$, \cite[Example 11.4.8.(b)]{fulton} asserts that the pullback of $(\pi\times \pi) ^* \tau(t) \cdot [\Delta_X]$ is rationnally equivalent to the cycle $ [ \overline{(\pi\times \pi)_{|U}^{-1} (\tau(t) \cdot \Delta_X) }]$. We have thus:
  $$S_t = [ \overline{(\pi\times \pi)_{|U}^{-1} (\tau(t) \cdot \Delta_X) }] = (\pi\times \pi)^* [\Delta_X] \in A_n(X'\times X').$$
Hence:
\begin{equation*}
[\Delta_{X'}] \leqslant (\pi\times \pi)^* [\Delta_X] \in \num_{n}(X'\times X')_\mathbb{R}.
\end{equation*}
\qed

\bigskip

\textit{Proof of Lemma \ref{lem_technical_siu}}.
Observe that one has the following commutative diagram:
\begin{equation*}
\xymatrix{
& X' \ar[d]^{\pi}&  \\
& \Gamma_f \ar[ld]_{\pi_1} \ar[rd]^{\pi_2} & 
\\
X_1 \ar@{-->}[rr]^{f} \ar[d]^{q_1}& & X_2\ar[d]^{q_2} \\
Y_1  \ar@{-->}[rr]^{g}& & Y_2\\
 & \Gamma_g \ar[ul]^{\pi_1'} \ar[ur]_{\pi_2'} & }
\end{equation*}
Fix a class $\beta \in \Cc^{e+l-j}(X')$.  
By linearity and by Proposition \ref{prop_representabilite_numd}, we can suppose that the class $\beta$ is induced by a product of nef divisors $D_1 \cdot \ldots  \cdot D_{e_1 + e+ l -j}$ where $D_i$ are nef divisors on $ X_1'$ where $p: X_1' \to X'$ is a flat morphism of relative dimension $e_1$.
The intersection $(\beta \cdot \pi^* \pi_2^* q_2^* H_{Y_2}^j)$ is thus given by the formula:
\begin{equation*}
(\beta \cdot \pi^* \pi_2^* q_2^* H_{Y_2}^j )= (D_1 \cdot \ldots \cdot D_{e_1 + e + l-j} \cdot p^*  \pi^* \pi_2^* q_2^* H_{Y_2}^j).
\end{equation*}
Take $A$ an ample Cartier divisor on $X_1'$ and set $\alpha_{\epsilon } = (D_1 + \epsilon A) \cdot \ldots (D_{e_1 + e }+ \epsilon A) \in \numd^{e_1+e}(X_1')_\mathbb{R}$ for any $\epsilon>0$. 
Since the class $\alpha_\epsilon$ is a complete intersection and since the morphisms $q_i$ are surjective, there exists a cycle $V_\epsilon \in Z_{l}(X'_1)_\mathbb{R}$ such that $\psi_{X'_1 }(\alpha_\epsilon) = \{ V_{\epsilon} \} \in \num_{l}(X'_1)_\mathbb{R}$ and such that the restrictions of the morphisms $\pi_1 \circ \pi \circ p$ and $\pi_2 \circ \pi\circ p$ to the support of $V_\epsilon$ are surjective and generically finite onto $Y_1$ and $Y_2$ respectively.
 We apply Theorem \ref{thm_siu_gen} to the class $(p^*\pi^* \pi_2^* q_2^* H_{Y_2}^j)_{|V_\epsilon}$ and to ${(p^*\pi^* \pi_1^* q_1^* H_{Y_1})}_{|V_\epsilon}$, we get:
 \begin{equation*}
p^* \pi^* \pi_2^* q_2^* H_{Y_2}^j \cdot \alpha_\epsilon \leqslant C \dfrac{ (p^*\pi^*( \pi_2^* q_2^* H_{Y_2}^j \cdot \pi_1^* H_{Y_1}^{l-j} ) \actd \{V_\epsilon\} )}{(p^* \pi^*\pi_1^* q_1^* H_{Y_1}^l \actd \{V_\epsilon\})} \times p^*\pi^* \pi_1^* q_1^* H_{Y_1}^j \cdot \alpha_\epsilon \in \numd^{j+e_1 + e}(X'_1)_\mathbb{R}.
 \end{equation*}
 By the projection formula applied to the morphism $\pi \circ p$, we have that $$(p^*\pi^*( \pi_2^* q_2^* H_{Y_2}^j \cdot \pi_1^* H_{Y_1}^{l-j} ) \actd \{V_\epsilon\} )/ ( p^*\pi^*\pi_1^* q_1^* H_{Y_1}^l \actd \{V_\epsilon\}) = \deg_j(g) / (H_{Y_1}^l),$$ hence:
 \begin{equation*}
 p^*\pi^* \pi_2^* q_2^* H_{Y_2}^j \cdot \alpha_\epsilon \leqslant C \dfrac{\deg_j(g)}{(H_{Y_1}^l)} p^*\pi^* \pi_1^* q_1^*  H_{Y_1}^j \cdot \alpha_\epsilon \in \numd^{j+e_1 + e}(X'_1)_\mathbb{R}.
 \end{equation*}
 We intersect with the class $(D_{e_1 + e + 1} \cdot \ldots \cdot D_{e_1 + e + l -j}) \in \numd^{l-j}(X_1')_\mathbb{R}$ and take the limit as $\epsilon$ tends to zero. We obtain:
 \begin{equation*}
(\beta \cdot \pi^* \pi_2^* q_2^* H_{Y_2}^j )= (D_1 \cdot \ldots \cdot D_{e_1 + e + l-j} \cdot p^*  \pi^* \pi_2^* q_2^* H_{Y_2}^j)  \leqslant C \dfrac{\deg_j(g)}{(H_{Y_1}^l)}(\beta \cdot \pi^* \pi_1^* q_1^* H_{Y_1}^j ),
 \end{equation*}
 as required.
\qed

\subsection{Submultiplicativity of mixed degrees}

\begin{defi} 
Let $ X_1/_{q_1} Y_1 \relrat{f}{g} X_2/_{q_2}Y_2$ be rational maps where $e = \dim X_i - \dim Y_i$ and $l = \dim Y_i$ for $i=1,2$. We fix some ample divisors $H_{X_i} $ and $H_{Y_i}$ on each variety respectively. We define for any integer $0 \leqslant i \leqslant n$:
$$a_{i,j} (f) :=  \left \lbrace \begin{array}{lll}
((H_{Y_1}^{l-j} \cdot H_{X_1}^{e+j-i}) \actd f^{\bullet,i} (H_{X_2}^i)) & \text{ \normalfont if }   \max(0,i-e) \leqslant j \leqslant l,\\
 0 & \text{ \normalfont otherwise}.
\end{array}  \right. $$
\end{defi}

\begin{rem} For $j = 0$, it is the $i$-th relative degree $a_{i,0} (f) = \reldeg_i(f)$ and
when $j= l $, it corresponds to the $i$-th degree of $f$, $a_{i,l}(f) = \deg_i(f)$.
\end{rem}

\begin{thm} \label{thm_fonda} Let $q_1: X_1 \to Y_1$, $q_2 : X_2 \to Y_2$, $q_3: X_3 \to Y_3$ be three surjective morphisms such that $\dim X_i = e+ l$ and $\dim Y_i = l$ for all $i \in \{ 1,2,3\}$.
Then there exists a constant $C>0$ such that for any rational maps $ X_1/_{q_1} Y_1 \relrat{f_1}{g_1} X_2/_{q_2}Y_2$, $ X_2/_{q_2} Y_2 \relrat{f_2}{g_2} X_3/_{q_3}Y_3$ and for all integers $0 \leqslant j_0 \leqslant l$:
\begin{equation*}
a_{i,j_0}(f_2 \circ f_1) \leqslant C \sum_{\max(0,i-l ) \leqslant j \leqslant \min (e , i)} \deg_{i-j}(g_1) a_{i, i-j}(f_2) a_{j, j + j_0 -i}(f_1).
\end{equation*}
\end{thm}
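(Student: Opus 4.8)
The statement is the mixed-degree submultiplicativity inequality, and the strategy is the one used in the absolute case (Theorem~\ref{thm_sub_multipicativity}) combined with the diagonal estimate of Corollary~\ref{cor_ku_pullback} applied on the graph of $f_1$. The plan is as follows. First I would set up the ladder of graphs as in the proof of Theorem~\ref{thm_sub_multipicativity}: let $\Gamma_{f_i}$ (resp.\ $\Gamma_{g_i}$) be the normalized graphs, $\pi_1,\dots,\pi_4$ the projections onto the factors, and $\Gamma$ the normalized graph of $\pi_3^{-1}\circ f_1\circ\pi_1:\Gamma_{f_1}\dashrightarrow\Gamma_{f_2}$ with projections $u:\Gamma\to\Gamma_{f_1}$ and $v:\Gamma\to\Gamma_{f_2}$. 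The composite $f_2\circ f_1$ has a rational map to $\Gamma_{f_1}\times_{X_2}\Gamma_{f_2}$, and on $\Gamma$ the relevant intersection numbers of $f_2\circ f_1$ factor through $u$ and $v$; in particular $a_{k,j_0}(f_2\circ f_1)$ can, up to the topological degree of the auxiliary projection (which cancels at the end, exactly as in Theorem~\ref{thm_sub_multipicativity}), be expressed as an intersection product on $\Gamma$ of pullbacks of $\omega_{Y_1}^{l-j_0}\cdot\omega_{X_1}^{e+j_0-k}$ via $u\circ\pi_1$ and of $\omega_{X_3}^k$ via $v\circ\pi_4$.

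The core of the argument is to apply Corollary~\ref{cor_ku_pullback} on $X_2$ with the morphism $q_2:X_2\to Y_2$ of relative dimension $e$, and with the generically finite morphism $\pi:=\pi_2\circ u=\pi_3\circ v:\Gamma\to X_2$ (after possibly replacing $\Gamma$ by a smooth alteration, which does not affect the numerical identities by Theorem~\ref{thm_cano_mor_prop} and stability of strongly pliant classes under pullback, Theorem~\ref{thm_classes_pliantes}.(4)). Taking $\alpha=\pi_2^*\omega_{X_2}^{\,?}$-type classes coming from $f_1$ and $\beta$ the strongly pliant class $v^*\pi_4^*\omega_{X_3}^k$ cut down appropriately, the corollary yields
\[
(\beta\cdot\alpha)\leqslant C\sum_{\max(0,k-l)\leqslant j\leqslant\min(e,k)}U_j\bigl(\pi_*\psi_\Gamma(\alpha)\bigr)\times\bigl(\beta\cdot\pi^*(q_2^*\omega_{Y_2}^{k-j}\cdot\omega_{X_2}^j)\bigr),
\]
where $U_j(\pi_*\psi_\Gamma(\alpha))=(\omega_{X_2}^{e-j}\cdot q_2^*\omega_{Y_2}^{l-k+j}\actd\pi_*\psi_\Gamma(\alpha))$. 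The point is now to identify the two factors on the right: the factor $U_j(\cdots)$, where $\alpha$ is built from the $f_1$-side, is precisely (a multiple of) the mixed degree $a_{j,j+j_0-k}(f_1)$ — here the exponents match because $\pi_*\psi_\Gamma$ of the $f_1$-class equals $\pi_{1*}\psi_{\Gamma_{f_1}}(\text{class on }X_1)$ pushed onto $X_2$, which is how $f_1^{\bullet,j}$ is defined — while the factor $(\beta\cdot\pi^*(q_2^*\omega_{Y_2}^{k-j}\cdot\omega_{X_2}^j))$, once the $v$-pushforward is carried out via the projection formula and the generic finiteness of $v$, becomes (a multiple of) the mixed degree $a_{k,k-j}(f_2)$. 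The leftover intersection multiplicities involving $\omega_{Y_2}$, $\omega_{X_2}$ get converted to the $\deg_{k-j}(g_1)$ factor by the same computation as in equation~\eqref{eq_simp_1} of the proof of Theorem~\ref{thm_sub_multipicativity}, namely $\pi_2^*q_2^*\omega_{Y_2}^{\,m}$ and $\pi_1^*q_1^*\omega_{Y_1}^{\,m}$ differ by the ratio $\deg_m(g_1)/(\omega_{Y_1}^m)$ on $\Gamma_{f_1}$ because both pull back from $\Gamma_{g_1}$.

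Assembling these identifications and dividing out the common auxiliary factors (the topological degree of the graph projection and the $\deg_l(g_1)/(\omega_{Y_1}^l)$-type normalization, exactly as at the end of the proof of Theorem~\ref{thm_sub_multipicativity}) gives
\[
a_{k,j_0}(f_2\circ f_1)\leqslant C\sum_{\max(0,k-l)\leqslant j\leqslant\min(e,k)}\deg_{k-j}(g_1)\,a_{k,k-j}(f_2)\,a_{j,j+j_0-k}(f_1),
\]
which is the claim. The main obstacle I expect is bookkeeping: checking that the exponents of $\omega_{X}$ and $\omega_{Y}$ on each side really line up with the definitions of $a_{k,k-j}(f_2)$ and $a_{j,j+j_0-k}(f_1)$ (including the correct handling of the index shift $j_0$, which enters only through the class $\alpha$ on the $f_1$-side), that the strongly pliant hypotheses needed to invoke Corollary~\ref{cor_ku_pullback} and Theorem~\ref{thm_classes_pliantes}.(5) are genuinely met after the reduction to ample divisors and a smooth alteration, and that the range $\max(0,k-l)\leqslant j\leqslant\min(e,k)$ of the sum, together with the convention that $a_{k,j}=0$ outside $\max(0,k-e)\leqslant j\leqslant l$, absorbs the boundary terms. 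None of these steps is conceptually hard once the diagonal estimate is in place; the work is in the careful intersection-theoretic accounting.
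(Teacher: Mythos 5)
The high-level strategy is right: you correctly identify that the proof should apply Corollary~\ref{cor_ku_pullback} on $X_2$ with the generically finite morphism $\pi = \pi_2\circ u = \pi_3\circ v : \Gamma \to X_2$ and then identify the two factors on the right-hand side as mixed degrees of $f_1$ and $f_2$. But there is a genuine structural error in the allocation of $\alpha$ and $\beta$, and it cannot be fixed by bookkeeping.

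Consider the exact inequality of Corollary~\ref{cor_ku_pullback}: the factor $U_j(\pi_*\psi_{\Gamma}(\alpha)) = (\omega_{X_2}^{e-j}\cdot q_2^*\omega_{Y_2}^{l-k+j}\actd\pi_*\psi_{\Gamma}(\alpha))$ is an intersection number on $X_2$ with fixed exponents $e-j$ and $l-k+j$ determined by $j$. For this factor to equal the relative degree $a_{k,k-j}(f_2) = (\omega_{Y_2}^{l-k+j}\cdot\omega_{X_2}^{e-j}\actd f_2^{\bullet,k}(\omega_{X_3}^k))$, one needs $\pi_*\psi_{\Gamma}(\alpha) = f_2^{\bullet,k}(\omega_{X_3}^k)$; since $\pi = \pi_3\circ v$ and $f_2^{\bullet,k}$ is $\pi_{3*}\circ\psi_{\Gamma_{f_2}}\circ\pi_4^*$, the only choice of $\alpha$ that makes this an \emph{exact} identity is $\alpha = \tfrac{1}{\deg v}\,v^*\pi_4^*\omega_{X_3}^k$, i.e.\ the class coming from the \emph{$f_2$-side}. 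Your allocation, with $\alpha$ built from the $f_1$-side, pushes $\psi_{\Gamma}(\alpha)$ to $X_2$ via $\pi_{2*}$, producing $(f_1)_{\bullet,\,\cdot}$ of the $f_1$-class; matching $U_j$ of this with $a_{j,j+j_0-k}(f_1)$ would force $e-j=j$ and $l-k+j=0$ simultaneously, i.e.\ two numerical constraints on $j$ that do not hold in general and certainly not for every $j$ in the summation range. Symmetrically, with $\beta$ built from $v^*\pi_4^*\omega_{X_3}^k$, the required ``cut-down'' class would have to depend on $j$ (and can have negative exponents), while $\beta$ must be fixed before summing over $j$. In short, the $U_j$-factor is forced by its exponent pattern to carry the $f_2$-data, and $\beta$ must carry the $f_1$-data $u^*\pi_1^*(\omega_{X_1}^{e+j_0-k}\cdot q_1^*\omega_{Y_1}^{l-j_0})$, the reverse of what you propose.

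A secondary gap: once $\beta$ carries the $f_1$-data, the term $(\beta\cdot\pi^*(q_2^*\omega_{Y_2}^{k-j}\cdot\omega_{X_2}^j))$ lives on $\Gamma$ and needs to be compared with $a_{j,j+j_0-k}(f_1)\,\deg_{k-j}(g_1)$. You invoke the exact identity~\eqref{eq_simp_1}, ``$\pi_2^*q_2^*\omega_{Y_2}^m$ and $\pi_1^*q_1^*\omega_{Y_1}^m$ differ by $\deg_m(g_1)/(\omega_{Y_1}^m)$'', but that equality holds only at top degree $m=l$, where both sides are multiples of the fiber class. For the intermediate powers $m=k-j<l$ appearing here one must use Lemma~\ref{lem_technical_siu}, a Siu-type \emph{inequality} (with a constant $C$ depending on the fixed divisors) obtained by restricting to a generic complete intersection; this is what introduces the constant in the statement and cannot be replaced by an exact proportionality.
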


\begin{proof} 
Since we are in the same situation as Theorem \ref{thm_sub_multipicativity}, we can consider the diagram \eqref{big_diagram_dyn} and we keep the same notations. 
We denote by $n=e+l$ the dimension of $X_i$.

Let us denote by $d$ the topological degree of the map $f_2$.
We apply Corollary \ref{cor_ku_pullback} to the pliant class $ \alpha:=  (1/d) v^* \pi_4^* H_{X_3}^i \in \Cc^i(\Gamma)$, to the class $\beta := u^* \pi_1^* (H_{X_1}^{e-i+j_0} \cdot q_1^* H_{Y_1}^{l-j_0}) \in \Cc^{n-i}(\Gamma)$ and to the morphism $\pi = \varphi \circ\pi_3 \circ  v  $.  
There exists a constant $C_1 >0$ which depends only on the choice of divisors $H_{Y_2 \times \Pg^e}$ and $H_{Y_2}$ such that:
\begin{equation*}
a_{i,j_0}(f_2\circ f_1) \leqslant C_1 \sum_{\max(0,i-l ) \leqslant j \leqslant \min (e , i)} U_j(\pi_* \psi_{\Gamma}(\alpha)) ( \beta \cdot \pi^* (H_{X_2}^j \cdot q_2^* H_{Y_2}^{i-j})),
\end{equation*}
where $U_j( \gamma) = (H_{X_2 }^{e-j} \cdot q_2^*H_{Y_2}^{l-i+j} \actd \gamma)$ for any class $\gamma\in \num_{n-i}(X_2)_\mathbb{R}$.
We observe that $U_j(\pi_* \psi_{\Gamma}(\alpha))= a_{i,i-j}(f_2)$. 
We have thus:
\begin{equation} \label{eq_sub_1}
a_{i,j_0}(f_2\circ f_1) \leqslant C_1 \sum_{\max(0,i-l ) \leqslant j \leqslant \min (e , i)} a_{i,i-j}(f_2) (u^* (\pi_1^* (H_{X_1}^{e-i+j_0} \cdot q_1^* H_{Y_1}^{l-j_0})\cdot  \pi_2^*(H_{X_2}^j \cdot q_2^* H_{Y_2}^{i-j}))).
\end{equation}
Applying Lemma \ref{lem_technical_siu} to the class $ u^* \pi_2^* q_2^* H_{Y_2}^{i-j} \in \Cc^{i-j}(\Gamma)$ and to $\beta' = \beta\cdot u^* \pi_2^* H_{X_2}^j \in \Cc^{n-i+j}(\Gamma)$, there exists a constant $C_2 >0$ such that :
\begin{equation*}
(  \beta' \cdot u^* \pi_2^* q_2^* H_{Y_2}^{i-j}) \leqslant C_2 \deg_{i-j}(g_1) ( u^* (\pi_1^* (H_{X_1}^{e-i+j_0} \cdot q_1^*H_{Y_1}^{l-j_0 + i-j}) \cdot \pi_2^* H_{X_2}^j)).
\end{equation*}
Since the map $u : \Gamma \to \Gamma_{f_1}$ is birational, we have that:
\begin{equation} \label{eq_sub_2}
(u^* (\pi_1^* (H_{X_1}^{e-i+j_0} \cdot q_1^* H_{Y_1}^{l-j_0})\cdot  \pi_2^*(H_{X_2}^j \cdot q_2^* H_{Y_2}^{i-j}))) \leqslant C_2 \deg_{i-j}(g_1) a_{j, j_0 +j -i}(f_1).
\end{equation} 
Finally, \eqref{eq_sub_1} and \eqref{eq_sub_2} imply:
\begin{equation*} 
a_{i,j_0}(f_2\circ f_1) \leqslant C \sum_{\max(0,i-l ) \leqslant j \leqslant \min (e , i)} a_{i,i-j}(f_2) a_{j,j_0 + j-i}(f_1) \deg_{i-j}(g_1),
\end{equation*}
where $C = C_2 C_1>0$ is a constant which is independent of $f_1$ and $f_2$ as required.

\end{proof}

\subsection{Proof of Theorem \ref{thm_int_C}}

Recall that we want to prove the following formula:
\begin{equation*}
\lambda_i(f) = \max_{j\leqslant i} (\lambda_j(f, X/Y) \lambda_{i-j}(g)).
\end{equation*}
By definition of the relative degrees, we are reduced to prove the theorem when $q: X\to Y$ is a proper surjective morphism.
Recall that $\dim X = n$ and $\dim Y = l$ such that $q: X \to Y$ has relative dimension $e= n-l$.
Let us consider the following commutative diagram:
\begin{equation}
\xymatrix{ & \Gamma_f \ar[rd]^{\pi_2} \ar[ld]_{\pi_1} \ar@/^1pc/[ddd]^{\varpi} &  \\
X \ar@{-->}[rr]^{f} \ar[d]^{q}&  & X \ar[d]^{q}\\
Y\ar@{-->}[rr]^{g}& & Y \\
& \Gamma_g \ar[ru]_{\pi_2'} \ar[lu]^{\pi_1'}& }
\end{equation}
where $f: X \dashrightarrow X$, $g : Y \dashrightarrow Y$ are dominant rational maps, $\Gamma_f, \Gamma_g$ are the normalization of the graph of $f$ and $g$ respectively, $\pi_1, \pi_2, \pi_1', \pi_2'$  are the projections from $\Gamma_f$ and $\Gamma_g$ onto the first and second factor respectively and $\varpi: \Gamma_f \to \Gamma_g$ is the restriction of $q \times q$ to $\Gamma_f$.

The following lemma proves that $\max_{j \leqslant i }( \lambda_j(f,X/Y) \lambda_{i-j}(g)) \leqslant \lambda_i(f)$. 
\begin{lem} \label{lem_lower_ineq} For any integer $ \max(0, i-l) \leqslant j \leqslant \min( i , e)$, there exists a constant $C>0$ such that for any rational map $X/_q Y \relrat{f}{g} X/_q Y$,  we have  $\deg_{i-j}(g ) \reldeg_j(f) \leqslant C \deg_i(f)$.
\end{lem}

Granting the above lemma, then we obtain the lower bound on $\lambda_i(f)$ as:
\begin{equation*}
\lambda_i(f) \geqslant \lambda_j(f,X/Y) \lambda_{i-j}(g).
\end{equation*}

\begin{proof}

 It suffices to consider the product $(\pi_1^* (H_X^{e-j} \cdot q^* H_Y^{l-i+j}) \cdot  \pi_2^* (H_X^j \cdot q^* H_Y^{i-j})) $. Since $\pi_i \circ q = \varpi \circ \pi_i'$ for $i \in \{ 1,2\}$, we obtain:
 \begin{equation*}
 (\pi_1^* (H_X^{e-j} \cdot q^* H_Y^{l-i+j}) \cdot  \pi_2^* (H_X^j \cdot q^* H_Y^{i-j})) =( \varpi^* ({\pi_1'}^* H_Y^{l-i+j} \cdot \pi_2'^* H_Y^{i-j}) \cdot \pi_1^* H_X^{e-j} \cdot \pi_2^* H_X^{j} ).
 \end{equation*}
Moreover, one has that $\pi_1'^* H_Y^{l-i+j} \cdot \pi_2'^* H_Y^{i-j} = (\pi_1'^* H_Y^{l-i+j} \cdot \pi_2'^* H_Y^{i-j})\ [p_0] = \deg_{i-j}(g) \ [p_0]$ where $p_0$ is a general point in $\Gamma_g$. 
We can hence apply Proposition \ref{prop_class_restriction} to the morphism $\varpi: \Gamma_f \to \Gamma_g$ and obtain:
\begin{equation*}
(\pi_1^* (H_X^{e-j} \cdot q^* H_Y^{l-i+j}) \cdot  \pi_2^* (H_X^j \cdot q^* H_Y^{i-j})) = \deg_{i-j} (g) ( \pi_1^* H_X^{e-j} \cdot \pi_2^* H_X^{j} \actd [{\Gamma_f}_{p_0}] ).
\end{equation*}
Since $\pi_1'$ is a birational morphism, a general fiber of $\varpi$ is equal to a general fiber of $\pi_1' \circ \varpi$. 
In other words, we have that $\res_{\Gamma_f/\Gamma_g} = \res_{\Gamma_f/Y}$ and since $\pi_1^* H_X^{e-j} \cdot \pi_2^* H_X^{j} \actd [{\Gamma_f}_{p_0}]  = \res_{\Gamma_f / \Gamma_g} (\pi_1^* H_X^{e-j} \cdot \pi_2^* H_X^j)$, we obtain:
\begin{equation*}
(\pi_1^* (H_X^{e-j} \cdot q^* H_Y^{l-i+j}) \cdot  \pi_2^*( H_X^j \cdot q^* H_Y^{i-j})) = \deg_{i-j}(g) \times \reldeg_j(f).
\end{equation*}
As $H_X$ is ample, we apply Theorem \ref{thm_classes_pliantes} to the classes $\pi_2^*q^* H_Y$ and $\pi_2^*H_X$:
\begin{equation*}
\pi_2^* q^*H_Y^{i-j} \leqslant (n-i+j+1)^{i-j} \dfrac{(\pi_2^* q^* H_Y^{i-j} \cdot \pi_2^* H_X^{n-i+j})}{(\pi_2^* H_X^n)} \pi_2^*H_X^{i-j} = C_1 \pi_2^* H_X^{i-j} \in \numd^{i-j}(X)_\mathbb{R}, 
\end{equation*}
where $C_1 = (n-i+j+1)^{i-j}(q^* H_Y^{i-j} \cdot H_X^{n-i+j}) / (H_X^n)$ depends only on $n,i$ and the choice of big nef Cartier divisors. 
Intersecting with $\pi_1^* H_X^{n-i} \cdot \pi_2^* H_X^j$, one obtains:
\begin{equation*}
\deg_{i-j}(g) \cdot \reldeg_j(f) \leqslant C_1 (\pi_2^* H_X^{i} \cdot  \pi_1^* (H_X^{e-j} \cdot q^* H_Y^{l-i+j})).
\end{equation*}
By the same argument, there exists a constant $C_2>0$ which depends only on $H_Y$, $H_X$ and $i$ such that:
\begin{equation*}
\pi_1^* q^* H_Y^{l-i+j} \leqslant C_2 \pi_1^* H_X^{l-i+j}.
\end{equation*}
Hence, we obtain:
\begin{equation*}
\deg_{i-j}(g) \reldeg_j(f) \leqslant C \deg_i(f),
\end{equation*}
where $C = C_1 C_2$. 
\end{proof}

Let us prove the converse inequality.
We fix an integer $0 \leqslant i \leqslant n$. 
Let us apply Theorem \ref{thm_fonda} to $f_1=f$, $f_2=f^p$, $g_1 =  g$ and $g_2=g^p$, we can rewrite the inequality as:
\begin{equation} \label{eq_rec_formula}
a_{i,j_{0}} (f^{p+1}) \leqslant C \sum_{\max(0 , i-e) \leqslant j \leqslant \min(i,l)} \deg_j(g) a_{i-j, j_0 -j}(f) a_{i, j}(f^p).
\end{equation}
Let us denote by $U_i(f)$ the column vector given by:
\begin{equation*}
U_i(f) = ( a_{i,j}(f))_{0 \leqslant j \leqslant l} = \left ( \begin{array}{l}
a_{i,0}(f) \\
\ldots \\
a_{i, l}(f)
\end{array} \right ).
\end{equation*}
Let us also denote by $M_i(f)$ the $(l+1)\times (l+1)$ lower-triangular matrix given by:
\begin{equation*}
M_i(f) :=( \deg_{j}(g) a_{i-j,m-j}(f) \times \chi_{ [i-e, \min(i,l)]}(j)  )_{ 0 \leqslant  m \leqslant l, 0 \leqslant j \leqslant l}   ,
\end{equation*}
where $\chi_{A}$ denotes the characteristic function of the set $A$. 
Therefore, \eqref{eq_rec_formula} can be rewritten as:
\begin{equation*}
U_i(f^{p+1}) \leqslant C M_i(f) \cdot U_i(f^p),
\end{equation*}
where $\cdot$ denotes the linear action on $\mathbb{Z}^{l+1}$.
A simple induction proves:
\begin{equation*}
U_i(f^p) \leqslant C^p (M_i(f))^{p-1}\cdot U_i(f)
\end{equation*}
Since the $(l+1)$-th entry of the vector $U_i(f^p)$ corresponds to $\deg_i(f^p)$, we deduce that:
\begin{equation} \label{eq_asymp_f}
\deg_i(f^p)^{1/p} \leqslant C \left \langle e_{l} ,\left ( M_i(f)\right )^p\cdot  U_i(f)\right  \rangle^{1/p},  
\end{equation}
where $(e_0, \ldots, e_{l})$ denotes the canonical basis of $\mathbb{Z}^{l+1}$.
In particular, $\deg_i(f^p)^{1/p}$ is controlled up to a constant by the eigenvalues of the matrix $M_i(f)$ which are $\deg_j(g) \reldeg_{i-j}(f)$ for $\max(0,i-e)\leqslant j\leqslant \min(i,l)$ since $M_i(f)$ is lower-triangular. 
Applying \eqref{eq_asymp_f} to $f^{r}$, we get:

\begin{equation*}
\deg_i(f^{pr})^{1/(pr)} \leqslant C^{1/r} ||  U_i(f^r)||^{1/pr} \max_{ \max(0,i-e) \leqslant j \leqslant \min(i,l) } (\deg_{j}(g^r) \reldeg_{i-j} (f^r))^{1/r}. 
\end{equation*}
We conclude by taking the $\limsup$ as $r \rightarrow +\infty, p\rightarrow + \infty$:
\begin{equation*}
\lambda_i(f) \leqslant \max_{\max(0, i-l)\leqslant j \leqslant \min(i,e)} \lambda_{i-j}(g) \lambda_{j}(f,X/Y).
\end{equation*}

\begin{rem} Note that the previous theorem gives information only on the dynamical degrees of $f$. Lemma \ref{lem_lower_ineq} provides a lower bound on the degree of $f^p$. However, one cannot find an upper bound for $\deg_i(f^p)$ which would only depend on the relative degrees and the degree on the base. If $X = E \times E$ is a product of two elliptic curves and if $f : (z, w) \in E \times E \rightarrow (z , z + w) $ is an automorphism of $X$, then the degree growth of $f^p$ is equivalent to $p^2$ whereas the degree on the base and on any fiber are trivial.
\end{rem}

\section{K\"ahler case} \label{section_kahler}

We prove the submultiplicativity of the $k$-th degrees in the case where $(X,\omega)$ is a complex compact K\"ahler manifold. For any  closed smooth $(p,q)$-form $\alpha$ on $X$, we denote by $\{\alpha \}$ its class in the Dolbeault cohomology $H^{p,q} (X)_\mathbb{R}$. 
\begin{defi} Let $(X,\omega)$ be a compact K\"ahler manifold. A class $\alpha \in H^{1,1} (X)_\mathbb{R}$ is nef if for any $\epsilon >0$,  the class $\alpha + \epsilon \{  \omega \}$ is represented by a K\"ahler metric.

A class $\alpha$ of degree $(i,i)$ is pseudo-effective if it can be represented by a closed positive current $T$. Moreover, one says that $\alpha$ is big if there exists a constant $\delta>0$ such that $T -\delta \omega $ is a closed positive current and we write $T \geqslant \delta \omega^i$.
\end{defi}

\begin{thm}\label{thm_xiao_ineq} (cf \cite[Remark 3.1]{xiao}, \cite{popovici})Let $(X,\omega)$ be a compact K\"ahler manifold of dimension $n$. Let $k$ be an integer and $  \alpha,\beta$ be two nef classes in $H^{1,1}(X)$ such that $\alpha^i \in H^{i,i}(X)$ is  big and such that $ \int_X {\alpha^n} - \left ( \begin{array}{l}
n \\
i
\end{array} \right ) \int_X \alpha^{n-i} \wedge \beta ^i >0$. Then the class $ \alpha^i - \beta^i $ is big.  
\end{thm}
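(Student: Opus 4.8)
\textbf{Proof plan for Theorem \ref{thm_xiao_ineq} (Xiao's inequality).}

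The plan is to follow Xiao's original argument, which proceeds by a mass-concentration/resolution argument for complex Monge--Amp\`ere equations. First I would set up the problem as follows: since $\alpha$ is nef and $\alpha^k$ is big, after replacing $\alpha$ by $\alpha+\varepsilon\{\omega\}$ and letting $\varepsilon\to 0$ at the end, one may assume $\alpha$ is a K\"ahler class; similarly one may perturb $\beta$ to a K\"ahler class, since bigness is an open condition and the hypothesis $\int_X\alpha^n - \binom{n}{k}\int_X \alpha^{n-k}\wedge\beta^k>0$ is an open condition too. The goal is to produce a closed positive current in the class $\alpha^k-\beta^k$ bounded below by a multiple of $\omega^k$. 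The key is to apply Yau's theorem (the Calabi conjecture): there is a K\"ahler metric $\alpha'$ in the class $\alpha$ whose volume form $(\alpha')^n$ is prescribed; one chooses it to concentrate on the region where $\alpha'$ dominates $\beta$. More precisely, following \cite{xiao}, one uses the solvability of the degenerate Monge--Amp\`ere equation to find, for each point, a positive $(1,1)$-current $T$ with $T\le C\alpha'$ and with $\det$ controlled, then uses the pointwise linear-algebra inequality relating $\det(A-B)$, $\det A$ and the mixed discriminants of $A$ and $B$ for Hermitian matrices $A\ge B\ge 0$.

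The heart of the argument is the following pointwise inequality: if $A$ and $B$ are Hermitian $n\times n$ matrices with $A>0$, then
\begin{equation*}
\binom{n}{k}^{-1}\Big(\det A\Big)^{1-k/n}\Big(\det A\Big)^{k/n} \ \text{controls}\ \text{the }k\text{-th mixed terms},
\end{equation*}
and more usefully an inequality of the form: if $A\ge B\ge 0$ then the $(k,k)$-form $A^k - B^k$ is $\ge 0$ in the sense of strong positivity, with a quantitative lower bound depending on $\det A - \binom{n}{k}\,(\text{mixed term})$. The plan would be: (i) solve $(\alpha_u)^n = c\,\chi\,\omega^n$ where $\alpha_u = \alpha + i\partial\bar\partial u$ and $\chi$ is a suitable nonnegative density concentrating mass, using Yau together with Ko\l{}odziej-type estimates to ensure $\alpha_u$ stays bounded; (ii) at points where $\alpha_u \ge \lambda\,\beta$ pointwise with $\lambda$ close to $1$, use the matrix inequality to conclude $(\alpha_u)^k - \beta^k \ge \delta\,\omega^k$; (iii) estimate the total mass where this fails using the volume hypothesis $\int_X\alpha^n > \binom{n}{k}\int_X\alpha^{n-k}\wedge\beta^k$, which forces the "good" region to carry positive mass; (iv) patch these local currents, or rather directly produce a global closed positive current representing $\alpha^k - \beta^k$ minus $\delta\omega^k$ by taking $(\alpha_u)^k - \beta^k$ itself once $u$ is chosen globally.

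The main obstacle I expect is step (iii)--(iv): ensuring that the current $(\alpha_u)^k - \beta^k$ is globally positive, not merely positive on a large region, and extracting a uniform $\delta>0$. This requires a genuinely global argument---one cannot just work pointwise---and is where Xiao's use of the resolution of Monge--Amp\`ere equations with $L^p$ (rather than just $L^1$) densities, combined with Demailly's regularization of closed positive currents, becomes essential. Here I would instead appeal directly to \cite{xiao} and \cite{popovici} for this statement, as the excerpt does: the theorem is quoted as \cite[Remark 3.1]{xiao}, and in the body of the paper (Section \ref{section_kahler}) it is used as a black box deep input (resting on Yau's theorem) in order to bypass the regularization techniques of Dinh--Sibony. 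So the "proof" at this point in the paper is really a citation; the role of Theorem \ref{thm_xiao_ineq} is to feed into the K\"ahler analogue of Corollary \ref{cor_siu_gen}, from which submultiplicativity of the $k$-th degrees on K\"ahler manifolds follows by the same pullback argument as in Theorem \ref{thm_sub_multipicativity}.
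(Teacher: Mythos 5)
The paper does not give a proof of this theorem: it is stated with the citation \cite[Remark 3.1]{xiao} and used purely as an external input, which is exactly what you conclude at the end of your proposal. Your sketch of Xiao's actual argument (solving a complex Monge--Amp\`ere equation via Yau to concentrate volume, then a pointwise eigenvalue/mixed-discriminant estimate, then a mass-counting argument using Lamari-type duality) is a reasonable summary of the underlying proof, but since the paper treats the statement as a black box, deferring to \cite{xiao} and \cite{popovici} is precisely the right move and matches the paper's approach.
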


Recall that the degree of a meromorphic selfmap $f : X \dashrightarrow X$ when $(X,\omega)$ is given by:
\begin{equation*}
\deg_i(f) := \int_{\Gamma_f} \pi_1^* \omega^{n-i} \wedge \pi_2^* \omega^i, 
\end{equation*}
where $\Gamma_f$ is the desingularization of the graph of $f$ and $\pi_j$ are the projections from $\Gamma_f$ onto the first and the second factor respectively. 

\begin{rem}When $X$ is a projective variety and $\omega$ represents the class of a hyperplane section $H_X$, then the intersection of the form coincides with the cup-product in cohomology, hence $\deg_i(f) = \deg_{i,H_X}(f)$.
\end{rem} 
\begin{cor} Let $(X_1,\omega_{X_1})$, $(X_2,\omega_{X_2})$ and $(X_3,\omega_{X_3})$ be some compact K\"ahler manifolds of dimension $n$. Then there exists a constant $C> 0$ which depends only on the choice of the K\"ahler classes $\omega_{X_j}$ such that for any dominant meromorphic maps $f_1: X_1 \dashrightarrow X_2$ and $f_2 : X_2 \dashrightarrow X_3$, one has:
\[\deg_{i}(f_2 \circ f_1) \leqslant C \deg_{i }(f_1) \deg_{i}(f_2).\]
Moreover, the constant $C$ may be chosen to be equal to $ \left ( \begin{array}{l}
n\\
i
\end{array} \right ) / (\int_{X_2} \omega_{X_2}^n)$.
\end{cor}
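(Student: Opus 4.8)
\textit{Proof (proposal).} The plan is to transport all three degrees onto a single smooth model and then to compare the relevant $(k,k)$-classes there by means of Theorem \ref{thm_xiao_ineq}, using a K\"ahler perturbation to meet its hypotheses.

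\textbf{Step 1 (a common K\"ahler model).} Let $V\subset X_1\times X_2\times X_3$ be the closure of the graph of the meromorphic map $x\mapsto(f_1(x),f_2(f_1(x)))$, and let $\mu\colon W\to V$ be a desingularization; then $W$ is a compact K\"ahler manifold, since a desingularization of an analytic subvariety of the compact K\"ahler manifold $X_1\times X_2\times X_3$ is again compact K\"ahler. Composing $\mu$ with the three coordinate projections yields holomorphic morphisms $g_1\colon W\to X_1$, $h\colon W\to X_2$, $g_3\colon W\to X_3$. As $f_1$ and $f_2\circ f_1$ are dominant, $g_1$ is bimeromorphic and $h$ is surjective and generically finite, of degree $d\ge 1$ equal to the topological degree of $f_1$; the images of $W$ under $(g_1,h)$, $(g_1,g_3)$, $(h,g_3)$ are the closures of the graphs of $f_1$, $f_2\circ f_1$, $f_2$ respectively, and after one further blow-up of $W$ we may assume $W$ dominates desingularizations of these three graphs compatibly with the projections. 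Since the degrees are independent of the chosen desingularization, the projection formula for proper (bi)meromorphic morphisms gives
\begin{equation*}
\deg_k(f_2\circ f_1)=\int_W g_1^*\omega_{X_1}^{\,n-k}\wedge g_3^*\omega_{X_3}^{\,k},\qquad
\deg_k(f_1)=\int_W g_1^*\omega_{X_1}^{\,n-k}\wedge h^*\omega_{X_2}^{\,k},
\end{equation*}
and, because $(h,g_3)$ has generic degree $d$ onto the graph of $f_2$,
\begin{equation*}
\int_W h^*\omega_{X_2}^{\,n-k}\wedge g_3^*\omega_{X_3}^{\,k}=d\,\deg_k(f_2),\qquad
\int_W h^*\omega_{X_2}^{\,n}=d\int_{X_2}\omega_{X_2}^{\,n}.
\end{equation*}

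\textbf{Step 2 (Xiao's inequality on $W$).} Fix a K\"ahler form $\omega_W$ on $W$. The class $g_3^*\omega_{X_3}$ is nef, and for each $\varepsilon>0$ the class $\alpha_\varepsilon:=h^*\omega_{X_2}+\varepsilon\{\omega_W\}$ is K\"ahler (sum of a nef and a K\"ahler class), so $\alpha_\varepsilon^{\,k}$ is big. For $\lambda>0$ so large that $\lambda^k>\binom{n}{k}\,(\int_W\alpha_\varepsilon^{\,n-k}\wedge g_3^*\omega_{X_3}^{\,k})/(\int_W\alpha_\varepsilon^{\,n})$, the pair $(\lambda\alpha_\varepsilon,\,g_3^*\omega_{X_3})$ satisfies the hypotheses of Theorem \ref{thm_xiao_ineq}, hence $\lambda^k\alpha_\varepsilon^{\,k}-g_3^*\omega_{X_3}^{\,k}$ is big, in particular pseudo-effective. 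Letting $\lambda^k$ decrease to the displayed bound and then $\varepsilon\to 0$, using that the pseudo-effective cone of $H^{k,k}(W)_\mathbb{R}$ is closed, that the intersection numbers depend continuously on the classes, and that $\int_W h^*\omega_{X_2}^{\,n}=d\int_{X_2}\omega_{X_2}^{\,n}>0$, I obtain that the class
\begin{equation*}
\binom{n}{k}\,\frac{\int_W h^*\omega_{X_2}^{\,n-k}\wedge g_3^*\omega_{X_3}^{\,k}}{\int_W h^*\omega_{X_2}^{\,n}}\;h^*\omega_{X_2}^{\,k}\;-\;g_3^*\omega_{X_3}^{\,k}
\end{equation*}
is pseudo-effective in $H^{k,k}(W)_\mathbb{R}$. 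By the ratio identities of Step 1 the factor $d$ cancels, so this coefficient equals $C\,\deg_k(f_2)$ with $C=\binom{n}{k}\big/\int_{X_2}\omega_{X_2}^{\,n}$.

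\textbf{Step 3 (conclusion).} Represent the pseudo-effective class of Step 2 by a closed positive $(k,k)$-current $T$, so that $C\deg_k(f_2)\,h^*\omega_{X_2}^{\,k}-g_3^*\omega_{X_3}^{\,k}=\{T\}$. Wedging with the closed semipositive form $g_1^*(\eta_{X_1})^{\,n-k}$, where $\eta_{X_1}$ is a K\"ahler metric representing $\omega_{X_1}$ (approximate it by the K\"ahler forms $g_1^*\eta_{X_1}+\varepsilon'\omega_W$ and let $\varepsilon'\to 0$), gives $\int_W g_1^*\omega_{X_1}^{\,n-k}\wedge T\ge 0$, that is
\begin{equation*}
\int_W g_1^*\omega_{X_1}^{\,n-k}\wedge g_3^*\omega_{X_3}^{\,k}\;\le\;C\,\deg_k(f_2)\int_W g_1^*\omega_{X_1}^{\,n-k}\wedge h^*\omega_{X_2}^{\,k}.
\end{equation*}
By the first two identities of Step 1 this is exactly $\deg_k(f_2\circ f_1)\le C\,\deg_k(f_1)\deg_k(f_2)$ with $C=\binom{n}{k}/\int_{X_2}\omega_{X_2}^{\,n}$, as claimed.

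The step I expect to be the main obstacle is reconciling the ``$\alpha^k$ big'' hypothesis of Theorem \ref{thm_xiao_ineq} with the natural choice $\alpha=h^*\omega_{X_2}$, which is nef with $\int_W\alpha^n>0$ but whose power $\alpha^k$ need not be big for $1<k<n$ since $h$ may contract subvarieties; this is precisely why I introduce the K\"ahler perturbation $\alpha_\varepsilon=h^*\omega_{X_2}+\varepsilon\{\omega_W\}$ and pass to the limit via the closedness of the pseudo-effective cone. The construction of the common model $W$ with the correct (bi)meromorphic projections, the tracking of the cancellation of the topological degree $d$, and the final positivity pairing between a positive current and a semipositive form are all routine.
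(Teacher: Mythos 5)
Your proof is correct and follows essentially the same route as the paper: set up a common smooth K\"ahler model dominating the (desingularized) graphs of $f_1$, $f_2$ and $f_2\circ f_1$, apply Xiao's Theorem \ref{thm_xiao_ineq} there to bound $g_3^*\omega_{X_3}^{\,k}$ by a multiple of $h^*\omega_{X_2}^{\,k}$, and then pair with $g_1^*\omega_{X_1}^{\,n-k}$; the intermediate degree of the middle factor cancels exactly as in the text. The one place you go beyond the paper's write-up is in making explicit the passage to the limit $\alpha_\varepsilon=h^*\omega_{X_2}+\varepsilon\omega_W\to h^*\omega_{X_2}$ to reconcile the bigness hypothesis ``$\alpha^k$ big'' of Theorem \ref{thm_xiao_ineq} with the merely big-and-nef class $h^*\omega_{X_2}$, a point the paper leaves implicit when it applies \eqref{eq_Xiao} with $\beta=v^*\pi_3^*\omega_{X_2}$; this is a legitimate and careful addition rather than a divergence in strategy.
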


\begin{proof} The previous theorem gives that for any big nef class $\beta^i \in H^{i,i}(X)$, for any nef class ${\alpha} \in H^{1,1}(X)$, one has:
\begin{equation} \label{eq_Xiao}
\alpha^i \leqslant \left ( \begin{array}{l}
n \\
i
\end{array} \right ) \dfrac{ \int_X \alpha^i \wedge \beta^{n-i}}{ \int_X \beta^n } \times \beta^i.
\end{equation} 
Then, the proof is formally the same as Theorem \ref{thm_sub_multipicativity}. Indeed, one only needs to consider the diagram \eqref{big_diagram_dyn} where $Y_1 = Y_2 = Y_3$ are reduced to a point and where $\Gamma_{f_1}, \Gamma_{f_2}, \Gamma$ are the desingularizations of the graph of $f_1, f_2$ and $\pi_3^{-1} \circ  f_1 \circ \pi_1$ respectively. 
We apply \eqref{eq_Xiao} to $\alpha = v^* \pi_4^* \omega_{X_3}$ and $\beta = v^* \pi_3^* \omega_{X_2}$ to obtain:
\begin{equation*}
v^* \pi_4^* \omega_{X_3}^i \leqslant \left ( \begin{array}{l}
n \\
i
\end{array} \right ) \dfrac{\deg_i(f_2)}{\int_{X_2}\omega_{X_2}^n} \times v^* \pi_3^* \omega_{X_2}^i.
\end{equation*}
By intersecting the previous inequality with the class $u^* \pi_1^* \omega_{X_1}^{n-i}$, we finally get:
\begin{equation*}
\deg_i(f_2 \circ f_1) \leqslant \left ( \begin{array}{ll
}
n \\
i
\end{array} \right ) \dfrac{\deg_i(f_2)  \deg_i(f_1)}{\int_{X_2}\omega_{X_2}^n}.
\end{equation*}
\end{proof}

\section{Comparison with Fulton's approach}
\label{appendix}

In \cite[Chapter 19]{fulton}, a cycle $z \in Z_i(X)$ on a variety $X$ is defined to be numerically trivial if $(c\actd z)$ for any product $ c =c_{i_1}(E_1) \cdot \ldots \cdot c_{i_p}(E_p) \in A^i(X)$ of Chern classes $c_{i_j}(E_j)$ where $E_j$ is a vector bundle on $X$ and $i_1 + \ldots + i_p = i$. This appendix is devoted to the proof of the following result:

\begin{thm_app} \label{thm_pull_num_class} Let $X$ be a normal projective variety of dimension $n$. For any $z \in Z_i(X)$, the following conditions are equivalent:
\begin{enumerate}
\item[(i)] For any product of Chern classes $c = c_{i_1}(E_1) \cdot \ldots \cdot c_{i_p}(E_p) \in A^i(X)_\mathbb{R}$ where $E_j$ are vector bundles on $X$ and $i_1 + \ldots + i_p = i$, we have $(c \actd z) = 0$.  
\item[(ii)] For any integer $e$, any flat morphism $p_1 : X_1 \to X$ of relative dimension $e$ where $X_1$ is a projective scheme and any Cartier divisors $D_1 , \ldots , D_{e+i}$ on $X_1$, we have $(D_1 \cdot \ldots\cdot D_{e+i} \actd p_1^*z) =0 $. 
\item[(iii)]For any integer $e$, any flat morphism $p_1 : X_1 \to X$ of relative dimension $e$ between normal projective varieties and any Cartier divisors $D_1 , \ldots , D_{e+i}$ on $X_1$, we have $(D_1 \cdot \ldots\cdot D_{e+i} \actd p_1^*z) =0 $. 
\end{enumerate}
\end{thm_app}

The implication $(ii) \Rightarrow (i)$ follows immediately from the definition of Chern classes. 
The implication $(ii) \Rightarrow (iii)$ is also straightforward.
For the converse implications $(i) \Rightarrow (ii)$ and $(i) \Rightarrow (iii)$, we rely on the following proposition. 

\begin{prop_app} Let $q : X \to Y$ be a flat morphism of relative dimension $e$ where $X$ is a projective scheme and $Y$ is a normal projective variety. For any Cartier divisors $D_1, \ldots , D_{e+i}$ be some ample Cartier divisors on $X$, there exist vector bundles $E_j$, and a homogeneous polynomial $c=P(c_{i_1}(E_1),  \ldots , c_{i_p}(E_p))$ of degree $i$ with respect to the weight $(i_1, \ldots , i_p)$, with rational coefficients such that for any cycle $z \in Z_i(X)$, $(c \cdot z ) = (D_1 \cdot \ldots \cdot D_{e+i} \cdot q^* z)$.
\end{prop_app}

\begin{proof}

We take some ample Cartier divisors $D_1 , \ldots , D_{e+i} $ on $X$. We denote by $\mathcal{L}_i$ the line bundle $\mathcal{O}_X(D_i)$.
By Grauert's Theorem (cf \cite[Corollary 12.9]{hartshorne}), the sheaves $R^i q_*(\mathcal{L}_{1}^{m_1} \otimes \ldots \otimes \mathcal{L}_{e+i}^{m_{e+i}})$ are locally free.
By \cite[Theorem 8.8]{hartshorne}, we have that $R^i q_* (\mathcal{L}_{1}^{m_1} \otimes \ldots \otimes \mathcal{L}_{e+i}^{m_{e+i}}) = 0$ for $i>0$ and $m_i$ large enough since the line bundle $\mathcal{L}_{i}$ are ample.
So the sheaf $q_* (\mathcal{L}_1^{m_1} \otimes \ldots \otimes \mathcal{L}_{e+i}^{m_{e+i}})$ is locally free and we have in $K_0(Y)$:
\begin{equation} \label{eq_K0}
q_* [\mathcal{L}_1^{m_1} \otimes \ldots \otimes {\mathcal{L}}_{e+i}^{m_{e+i}}] = \sum (-1)^{i} [R^i q_* (\mathcal{L}_1^{m_1} \otimes \ldots \otimes {\mathcal{L}}_{e+i}^{m_{e+i}})] =[ q_* (\mathcal{L}_1^{m_1} \otimes \ldots \otimes {\mathcal{L}}_{e+i}^{m_{e+i}})].
\end{equation}

\begin{lem_app} For any $j \leqslant i $:
\begin{enumerate}
\item The function $(m_1, \ldots, m_{e+i}) \rightarrow \Ch_j ( q_* (\mathcal{L}_1^{m_1} \otimes \ldots \otimes {\mathcal{L}}_{e+i}^{m_{e+i}})) \in \numd^j(Y)_\mathbb{R}$ is a polynomial of degree $e+j$ with coefficients in $\numd^j(Y)$.
\item For any cycle $z \in Z_j(Y)$, the coefficient in $m_1 \cdot \ldots \cdot m_{e+i}$ in $(\Ch_j(q_* (\mathcal{L}_1^{m_1} \otimes \ldots \otimes {\mathcal{L}}_{e+i}^{m_{e+i}})) \actd  z)$ is $ ((D_1 \cdot \ldots \cdot  D_{e+i}) \actd q^* z)$.
\end{enumerate}
\end{lem_app}

\begin{proof} Let us set $\mathcal{F} = \mathcal{L}_1^{m_1} \otimes \ldots \otimes {\mathcal{L}}_{e+i}^{m_{e+i}}$. We prove the result by induction on $0 \leqslant j \leqslant i$. 
\smallskip

For $j = 0$, choosing a point $y \in Y(\C)$, the number $\Ch_0(q_* (\mathcal{F}))$ is equal to $h^0( X_y , \mathcal{F}_{|X_y} )$. 
By asymptotic Riemann-Roch, for $m_1, \ldots, m_{e+i}$ large enough, it is a polynomial of degree $\dim X_y = e$. Moreover, Snapper's theorem (see \cite[Definition 1.7]{debarre_higher}) states that the coefficient in $m_1 \cdot \ldots \cdot m_{e+i} $ is the number $(D_1 \cdot \ldots \cdot D_{e+i} \actd [X_y])$.

\medskip
We suppose by induction that $\Ch_i( q_* (\mathcal{F}))$ is a polynomial of degree $e +i$ for any $i \leqslant j$ where $j \leqslant i-1$. For any subvariety $V$ of dimension $j+1$ in $Y$, we denote by $W$ its scheme-theoretic preimage by $q$.

\smallskip 
For any scheme $V$, let us denote by $\tau_V$ the morphisms:
\begin{equation*}
\tau_V : K_0(V) \otimes \mathbb{Q} \to A_\bullet (V) \otimes \mathbb{Q}.
\end{equation*} 
We refer to \cite[Theorem 18.3]{fulton} for the construction of this morphism and its properties. 
We apply Grothendieck-Riemann-Roch's theorem for singular varieties (see \cite[Theorem 18.3.(1)]{fulton}) and using \eqref{eq_K0}, we get in $A_\bullet(Y)_\mathbb{Q}$: 
\begin{equation} \label{eq_GRR} 
\Ch( q_* (\mathcal{L}_1^{m_1} \otimes \ldots \otimes {\mathcal{L}}_{e+i}^{m_{e+i}}) ) \actd \tau_V( \mathcal{O}_V ) =  q_* ( \Ch (\mathcal{L}_1^{m_1} \otimes \ldots \otimes {\mathcal{L}}_{e+i}^{m_{e+i}}) \actd \tau_W(\mathcal{O}_W)). 
\end{equation}
The term in $A_0(Y)_\mathbb{Q}$ in the left handside of the previous equation is equal to:
\begin{equation*}
\Ch_{j+1} ( q_* (\mathcal{F})) \actd [V] + \sum_{i \leqslant j} \Ch_i ( q_*(\mathcal{F})) \actd \tau_{V,i} (\mathcal{O}_V), 
\end{equation*}
where $\tau_{V,i} (\mathcal{O}_V)$ is the term in $A_i(Y)$ of $\tau_V(\mathcal{O}_V)$.
By the induction hypothesis, every $\Ch_i(q_* \mathcal{F})$ is a polynomial of degree $e+i$, and the right hand side of equation \eqref{eq_GRR} is a polynomial of degree $e+j + 1$, so $\Ch_{j+1}(q_*( \mathcal{L}_1^{m_1} \otimes \ldots \otimes {\mathcal{L}}_{e+i}^{m_{e+i}}))$ is also a polynomial of degree $e + j + 1$. Now we identify the coefficients in $m_1 \cdot \ldots \cdot m_{e+i}$ of the term in $\num_0(Y)$ in equation \eqref{eq_GRR}. 
It follows from \cite[example 18.3.11]{fulton} that $\tau_W(\mathcal{O}_W) = [W] + R_W$ where  $R_W$ is a linear combination of cycles of dimension $< e+i $. Therefore, the coefficient in $m_1 \cdot \ldots \cdot m_{e+i}$ of the right hand side of equation \eqref{eq_GRR} in $\num_0(Y)$ is $((D_1 \cdot \ldots \cdot D_{e+i}) \actd [W])$ if $j+1 = i$ or $0$ otherwise. 
 
We have proved that the coefficient of $\Ch_{j+1}(q_* ( \mathcal{L}_1^{m_1} \otimes \ldots \otimes {\mathcal{L}}_{e+i}^{m_{e+i}})) \actd [V]$ is $((D_1 \cdot \ldots \cdot D_{e+i}) \actd [W])$ if $\dim V = i$ or $0$ otherwise. Extending it by linearity, one gets the desired result.
 
\end{proof}

We have that $\Ch_i( q_* (\mathcal{L}_1^{m_1} \otimes \ldots \otimes \mathcal{L}_{e+i}^{m_{e+i}}))$ is by definition a polynomial in Chern classes of vector bundles on $Y$. Using the previous lemma, the coefficient $U(D_1, \ldots , D_{e+i})$ in $m_1 \cdot \ldots \cdot m_{e+i}$ of $\Ch_i( q_* (\mathcal{L}_1^{m_1} \otimes \ldots \otimes \mathcal{L}_{e+i}^{m_{e+i}}))$ is equal to $P(c_{i_1}(E_1) , \ldots , c_{i_p}(E_p))$ where $P$ is a homogeneous polynomial with rational coefficients of degree $i$ with respect to the weight $(i_1, \ldots, i_p)$ and $E_i$ are vector bundles on $Y$. We have proven that for any cycle $z \in Z_i(Y)$: 
\begin{equation*}
(P(c_{i_1}(E_1) , \ldots ,c_{i_p}(E_p) ) \actd z ) = ((D_1 \cdot \ldots \cdot D_{e+i} )\actd q^*z).
\end{equation*}
As any Cartier divisor can be written as a difference of ample Cartier divisors. The proposition provides a proof for the implication $(i) \Rightarrow (ii)$ of Theorem \ref{thm_pull_num_class}. 

\end{proof}

\begin{rem_app} In codimension $1$, the intersection product $(D_1 \cdot \ldots \cdot D_{e+1} \actd q^*z) $ is represented by Deligne's product $I_X(\mathcal{O}_X(D_1), \ldots , , \mathcal{O}_X(D_{e+1})) \in \numd^1(X)_\mathbb{R}$ (see \cite{garcia} for a reference). Indeed, one has by \cite[Section 6]{garcia} that for any cycle $z \in \num_1(X)$: 
\begin{equation*}
c_1( I_X( \mathcal{O}_X(D_1), \ldots , , \mathcal{O}_X(D_{e+1})) ) \actd z =  D_1 \cdot \ldots \cdot D_{e+1} \actd q^*z.
\end{equation*}
 
\end{rem_app}

This gives an answer to the question of numerical pullback formulated in \cite[section 1.2]{fulger_lehmann}.
\begin{cor_app} \label{cor_push_num} Let $q: X \to Y$ be a flat morphism of relative dimension $e$ between normal projective varieties. Then the morphism $q^* : A_{\bullet}(Y)_\mathbb{Q} \to A_{e+ \bullet}(X)_\mathbb{Q}$ induces a morphism of abelian groups $q^* : \numd_\bullet(Y)_\mathbb{Q} \to \num_{e+\bullet}(X)_\mathbb{Q}$. 
By duality, the morphism $q_* : A^{\bullet}(X)_\mathbb{Q} \to A^{\bullet - e}(Y)_\mathbb{Q} $ induces a morphism of abelian groups $q_*: \numd^{\bullet}(X)_\mathbb{Q} \to \numd^{\bullet - e}(Y)_\mathbb{Q}$.
\end{cor_app}


\bibliographystyle{amsalpha}
\bibliography{ref_degre_dynamique}


\affiliationone{
   Nguyen-Bac Dang\\
   CMLS, \'Ecole polytechnique, CNRS, Universit\'e Paris-Saclay, 91128 Palaiseau Cedex,  \\
   France
   \email{nguyen-bac.dang@polytechnique.edu}}

\end{document}